\newtheorem{thm}{Theorem}
\newtheorem{lem}{Lemma}[section]
\newtheorem{prop}{Proposition}[section]
\newtheorem{conj}{Conjecture}
\newtheorem{definition}{Definition}[section]
\newtheorem{rem}{Remark}
\newtheorem*{theorem}{Theorem}
\newcommand{\N}{\mathbb{N}}
\newcommand{\Z}{\mathbb{Z}}
\newcommand{\m}{\mathcal{M}}
\newcommand{\h}{\mathcal{H}}
\numberwithin{equation}{section}
\begin{document}
\title
{Number of solutions to a special type of \\ unit equations in two variables}

\author{Takafumi Miyazaki}
\address{Takafumi Miyazaki
\hfill\break\indent Gunma University, Division of Pure and Applied Science,
\hfill\break\indent Faculty of Science and Technology
\hfill\break\indent Tenjin-cho 1-5-1, Kiryu 376-8515.
\hfill\break\indent Japan}
\email{tmiyazaki@gunma-u.ac.jp}

\author{Istv\'an Pink}
\address{Istv\'an Pink
\hfill\break\indent University of Debrecen, Institute of Mathematics
\hfill\break\indent H-4002 Debrecen, P.O. Box 400.
\hfill\break\indent Hungary}
\email{pinki@science.unideb.hu}

\thanks{The first author was supported by JSPS KAKENHI (No.s 16K17557, 20K03553).
The second author was supported in part by the NKFIH grants ANN130909, K115479, K128088 and by the project EFOP3.6.1-16-2016-000.}

\subjclass[2010]{11D61, 11J86, 11D41, 11A07}
\keywords{Pillai's equation, $S$-unit equation, purely exponential equation, Baker's method}

\maketitle

\markleft{Takafumi MIYAZAKI \& Istv\'an PINK}
\markright{
Special type of unit equations in two variables}

\vspace{-0.3cm}\centerline
{\sl\footnotesize Dedicated to Professor K\'alm\'an Gy{\H o}ry on the occasion of his 80th birthday}
\centerline{\sl\footnotesize and to Professor Hirofumi Tsumura on the occasion of his 60th birthday}

\vspace{-0.1cm}\begin{abstract}
For any fixed coprime positive integers $a,b$ and $c$ with $\min\{a,b,c\}>1$, we prove that the equation $a^x+b^y=c^z$ has at most two solutions in positive integers $x,y$ and $z$, except for one specific case which exactly gives three solutions.
Our result is essentially sharp in the sense that there are infinitely many examples allowing the equation to have two solutions in positive integers.
From the viewpoint of a well-known generalization of Fermat's equation, it is also regarded as a 3-variable generalization of the celebrated theorem of Bennett [M.A.Bennett, On some exponential equations of S.S.Pillai, Canad. J. Math. 53(2001), no.2, 897--922] which asserts that Pillai's type equation $a^x-b^y=c$ has at most two solutions in positive integers $x$ and $y$ for any fixed positive integers $a,b$ and $c$ with $\min\{a,b\}>1$.
\end{abstract}


\section{Introduction}

The history of the $S$-unit equations related to Diophantine equations is very rich (cf.~\cite{Gy3,EvGyStTi,EvScSch,EvGy2}).
Indeed, many diophantine problems can be reduced to $S$-unit equations over number fields.
Especially, the simplest one among those is the $S$-unit equation in two unknowns over rational number field, which is written as follows:
\begin{equation}\label{sunit}
\alpha X+\beta Y+\gamma Z=0,
\end{equation}
where $\alpha,\beta,\gamma$ are given non-zero integers, and $X,Y,Z$ are unknown integers composed of finitely many given primes.
The set of the predetermined primes for unknowns is as usual denoted by $S$.
From the theory on Diophantine approximations we know that there are only finitely many solutions to equation \eqref{sunit}, and effective upper bounds for their sizes can be obtained by means of Baker's theory of linear forms in logarithms (cf.~\cite{Gy,Gy2,BuGy,GyYu}).
Since any unknown in \eqref{sunit} can be expressed as a product of powers of given primes, equation \eqref{sunit} is an exponential Diophantine equation.
Based upon this, one of the simplest examples of equation \eqref{sunit} is
\begin{equation}\label{pillai}
a^x-b^y=c,
\end{equation}
where $a,b,c$ are fixed positive integers with $\min\{a,b\}>1$, and $x,y$ are unknown positive integers.
This equation is a special case of Pillai's equation, and Pillai's famous conjecture says that there are only finitely many pairs of distinct powers with their difference fixed.
It is also worth noting that the case where $c=1$ corresponds to a special one of Catalan's equation (cf.~\cite{Mi}).

In a series of papers in the 1930's, Pillai \cite{Pi,Pi2} actively studied equation \eqref{pillai} and obtained some finiteness results on the number of solutions (for more detail see \cite{Be_Canad2001, Be_JNT2003}).
Early 1990's, Scott \cite{Sc} extensively investigated equation \eqref{pillai} in the case where $a$ is a prime with a motivation to a classical problem listed in R.K.Guy's book, and he used strictly elementary methods in quadratic fields to obtain very sharp upper bounds for the number of solutions in several cases.
For more details on these topics or some other related ones, see for example \cite{Be_JNT2003,BerHa,ShTi}.

In the direction on the number of solutions to equation \eqref{pillai}, Bennett \cite{Be_Canad2001} established the following definitive result.

\begin{theorem}[M.A.~Bennett]\label{pillai_atmost2}
For any fixed positive integers $a,b$ and $c$ with $\min\{a,b\}>1,$ equation \eqref{pillai} has at most two solutions in positive integers $x$ and $y.$
\end{theorem}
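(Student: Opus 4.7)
The plan is to argue by contradiction. Suppose $(x_i, y_i)$ for $i=1,2,3$ are three distinct solutions to \eqref{pillai}, ordered so that $x_1 < x_2 < x_3$; since $c>0$ and $b>1$, this automatically forces $y_1 < y_2 < y_3$. Subtracting pairs of the equations yields the structural identities
\[
a^{x_1}\bigl(a^{x_j-x_1}-1\bigr) \;=\; b^{y_1}\bigl(b^{y_j-y_1}-1\bigr) \qquad (j=2,3),
\]
which, through their $p$-adic content, will do most of the combinatorial work.

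My first step would be to establish a \emph{gap principle} showing that three solutions are necessarily very sparse. Computing $p$-adic valuations of both sides at primes dividing $ab$, and applying lifting-the-exponent type lemmas, one obtains strong divisibility constraints between $x_j-x_1$ and $y_j-y_1$. Combining the information obtained from $j=2$ with that from $j=3$ then forces $x_3$ and $y_3$ to dominate $a^{x_2}$ or $b^{y_2}$ in order of magnitude, so the third solution must be astronomically larger than the second.

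My second step would be to produce a matching \emph{upper bound} on the third solution using Baker's method. Setting $\Lambda_3 = x_3\log a - y_3\log b$, the equation gives $|\Lambda_3|\ll c\,a^{-x_3}$ after expanding $\log(1-c/a^{x_3})$. A sharp lower bound for linear forms in two logarithms (of Laurent--Mignotte--Nesterenko type) then yields $|\Lambda_3|\gg \exp\bigl(-C(\log a)(\log b)\log x_3\bigr)$. Combined with the gap principle, this collapses the range of admissible triples to something effectively bounded.

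The heart of the argument, and what I expect to be the main obstacle, is disposing of this bounded but still substantial residual range. The off-the-shelf Baker estimates alone leave too many candidates intact; Bennett's signature tool here is the hypergeometric / Pad\'e approximation method applied to $(1-z)^{1/n}$, which delivers effective irrationality measures for $(a/b)^{1/n}$ strong enough to rule out a third solution throughout the intermediate regime. For the genuinely small parameter values that escape both the transcendence and the hypergeometric techniques, a direct computational case analysis together with elementary congruence arguments modulo carefully chosen small primes would then complete the proof.
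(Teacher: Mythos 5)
The paper does not supply a proof of Theorem~\ref{pillai_atmost2}; it is a quoted result from Bennett~\cite{Be_Canad2001}, and the surrounding text gives only a one-sentence summary: Bennett combines lower bounds for linear forms in two logarithms with a ``gap principle'' based on an arithmetic of exponential congruences, treating the non-coprime case $\gcd(a,b)>1$ by a short elementary observation. Against that summary, your sketch is broadly consistent: the ordering $x_1<x_2<x_3 \Rightarrow y_1<y_2<y_3$, the structural identities $a^{x_1}(a^{x_j-x_1}-1)=b^{y_1}(b^{y_j-y_1}-1)$, the congruence/valuation-driven gap principle, and the upper bound via linear forms in two logarithms are exactly the engine the paper attributes to Bennett.

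Two points are worth flagging. First, you never address the non-coprime case explicitly. If $p\mid\gcd(a,b)$ then $p^{\min\{x,y\}}\mid c$, which already bounds $\min\{x,y\}$ and, via $a^x-b^y=c$, the other exponent as well; the paper singles this out as a separate, short elementary step preceding the main argument, and without it your $p$-adic analysis at primes dividing $ab$ does not cleanly produce the intended divisibility constraints. Second, the lower bound you quote for $|\Lambda_3|$ should be $\gg\exp\bigl(-C(\log a)(\log b)(\log\max\{x_3,y_3\})^2\bigr)$ — the logarithmic factor appears squared in Laurent--Mignotte--Nesterenko-type estimates — and this quadratic dependence is precisely what makes the interplay with the gap principle delicate; as written, your stated bound is too strong. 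Whether the hypergeometric/Pad\'e method is genuinely a third pillar of the argument (as opposed to an auxiliary refinement for a residual parameter range) is not something the present paper's summary speaks to, so I cannot adjudicate that claim here, but the core mechanism you describe — gap principle plus two-logarithm lower bound plus computation — does match the paper's account of Bennett's proof.
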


This result is essentially sharp in the sense that there are a number of examples where there are two solutions to equation \eqref{pillai} (cf.~\cite[(1.2)]{Be_Canad2001}).
The proof of Bennett uses lower bounds for linear forms in logarithms of two algebraic numbers together with a `gap principle', based upon an arithmetic of exponential congruences, which gives rise to a large gap among three hypothetical solutions.
It should be also remarked that the non-coprimality case, i.e., $\gcd(a,b)>1$ is handled just by a short elementary observation.
Several other works to estimate the number of solutions of more general equations of type \eqref{pillai} were made available in the literature (cf.~\cite{ScSt_jnt2006,ScSt_jtnb2013,ScSt_jtnb2015}).

There is another example of equation \eqref{sunit}, which is not only regarded as a 3-variable generalization of equation \eqref{pillai}, but also closely related to the so-called generalized Fermat equation, that is, a Diophantine equation arising from the quest to seek for all the relation that the sum of two powers of `coprime' positive integers is equal to another power (cf.~\cite{BeMiSi}, \cite[Ch.14]{Co}).
It is the main subject in this article, given as follows:
\begin{equation}\label{abc}
a^x+b^y=c^z,
\end{equation}
where $a,b,c$ are fixed coprime positive integers with $\min\{a,b,c\}>1$, and $x,y,z$ are unknown positive integers.
Equation \eqref{abc} is also the simplest one among purely exponential Diophantine equations.
It seems that the earliest published work on solving equation \eqref{abc} is due to Sierpi\'nski \cite{Si}, who solved the equation for $(a,b,c)=(3,4,5)$.
Just after this work, Je\'smanowicz \cite{Je} (who was a student of Sierpi\'nski) considered equation \eqref{abc} for other primitive Pythagorean triples, and he posed the problem to ask for determining the solutions to the equation for any Pythagorean triple.
His problem is the most known unsolved problem concerning equation \eqref{abc}.
In a series of papers in the 1990's, Terai gave some pioneer works on equation \eqref{abc} for some families of $(a,b,c)$ including Pythagorean triples, and later he posed several problems including a generalization of the mentioned one of Sierpi\'nski-Je\'smanowicz, now called Terai's conjecture (cf.~\cite{Te_aa99}).
For more details on these topics or some other related ones, see some recent papers \cite{CiMi,Lu2,Miy_aa18} and the references therein.

As mentioned before, from the theory of $S$-unit equations, we know that equation \eqref{abc} has at most finitely many solutions, moreover, the number of solutions can be bounded by an absolute large number (see the excellent survey of Gy\H{o}ry \cite{Gy4}).
However, apparently a kind of such estimates is far from the actual number, indeed, many existing works in the literature suggest that equation \eqref{abc} has few solutions in general.
Regarding to this topic, in the last few years, some important progresses have been made by some researchers.
In a series of papers, Hu and Le \cite{HuLe_aa15,HuLe_jnt18,HuLe_deb19} discussed equation \eqref{abc} over the irreducible residue class groups modulo powers of the base numbers of the equation, with various other elementary number theory methods including continued fractions.
As a result, they found a large gap among three hypothetical solutions, so-called `gap principle', and the combination of their gap principle and Baker's method implies that equation \eqref{abc} has at most two solutions whenever the maximal value of $a,b,c$ is sufficiently large (see Proposition \ref{Hu-Le} below).
Here remark that there are infinitely many examples where there are two solutions to equation \eqref{abc} (see \eqref{atmost1:ex} in the final section).
Since the exponential unknowns $x,y$ and $z$ are bounded by an explicit constant depending only on $a,b$ and $c$ by Baker's method, just a finite search remains to be done in order to obtain the definitive result on the number of solutions to equation \eqref{abc} corresponding to Bennett's mentioned theorem.
However, a kind of brute force computations is never enough to settle that finite search.
Related to this study, the work of Scott and Styer \cite{ScSt_deb16} should be referred.
They considerably improved the argument over quadratic fields in \cite{Sc} dealing with the case where $c$ is a prime, to obtain the same conclusion as that of the mentioned work of Hu and Le, whenever $c$ is odd (see Proposition \ref{Scott-Styer} below).
Actually, the main content of this article is to completely handle the remaining finite search mentioned before, as follows:

\begin{thm}\label{atmost2}
For any fixed coprime positive integers $a,b$ and $c$ with $\min\{a,b,c\}>1,$ equation \eqref{abc} has at most two solutions in positive integers $x,y$ and $z,$ except when $(a,b,c)$ is $(3,5,2)$ or $(5,3,2)$ which exactly gives three solutions.
\end{thm}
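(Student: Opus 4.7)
The plan is to build on the two reduction results already cited, namely the Hu--Le proposition (gap principle plus Baker's method, reducing to small triples via $\max\{a,b,c\}\le M$ for some effective $M$) and the Scott--Styer proposition (handling odd $c$ by quadratic field techniques). Together these two results leave only a finite, explicit list of triples to treat, all of which have $c$ even and $\max\{a,b,c\}$ bounded by an explicit constant. The body of the proof is therefore the resolution of this finite list, and the identification of $(3,5,2)$ (equivalently $(5,3,2)$, where $3+5=2^3$, $3^3+5=2^5$, $3+5^3=2^7$) as the unique exception.

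The first step is thus to compile the finite list of remaining triples. For each such $(a,b,c)$ with $c$ even and $\min\{a,b,c\}>1$, I would first find by direct search the (at most two) actual solutions, and then the task reduces to proving that no further solution exists. The second step is to apply sharp lower bounds for linear forms in two or three logarithms (complex or $p$-adic, according to whether one exploits archimedean size information or $p$-adic valuation information) to obtain an explicit upper bound $B=B(a,b,c)$ on $\max\{x,y,z\}$. For fixed small $(a,b,c)$ these bounds are concrete but typically astronomically large, so a purely computational finish at this stage is out of the question.

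The third and most delicate step is to collapse the exponential search space by modular arguments tailored to each remaining triple. The strategy is to reduce equation \eqref{abc} modulo carefully selected prime powers (powers of $2$, using precise $2$-adic valuations of $a^x+b^y$ when $a,b$ are both odd; powers of odd primes dividing $a$ or $b$; and auxiliary primes $p$ chosen so that the multiplicative orders of $a,b,c$ modulo $p$ share small common factors). In this way one obtains strong divisibility and congruence constraints on $x,y,z$ (typically forcing them into narrow arithmetic progressions), and combining these with the Baker-type upper bound $B$ leaves only a computationally feasible residual set of candidate exponents to dispose of, either by a further modular contradiction or by direct inspection. Quadratic residue arguments modulo the odd prime divisors of $a$ and $b$ play a central role in the genuinely hard sub-case in which $c$ is a pure power of $2$.

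The main obstacle will be the last step for those few triples on which the natural modular constraints are weakest, so that neither the ``gap'' between a putative third solution and the first two is easily produced, nor a short chain of congruences suffices to contradict the Baker bound. For such triples one must orchestrate several auxiliary primes simultaneously, and likely combine the $2$-adic valuation machinery with a quadratic-field argument in the spirit of Scott--Styer transplanted to the even-$c$ setting, until the search window is small enough to verify by a short computer check. The exceptional triple $(3,5,2)$ will appear precisely as the one in which all these constraints are mutually compatible for three values of $(x,y,z)$ rather than two.
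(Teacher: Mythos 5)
Your high-level reduction to the finite even-$c$ regime via Hu--Le and Scott--Styer matches the paper's setup, but the organizing strategy of the rest of your argument does not scale and is explicitly ruled out in the paper's introduction. Hu--Le leaves $\max\{a,b,c\}\le 10^{62}$, so the ``finite, explicit list of triples'' you propose to treat one at a time is astronomically large; a triple-by-triple campaign (find the known solutions, bound exponents by Baker, then attack each $(a,b,c)$ with auxiliary primes tailored to it) is exactly the ``brute force'' the authors say ``is never enough to settle that finite search.'' The paper never fixes $(a,b,c)$ first. Instead it works uniformly with three hypothetical solutions $(x_t,y_t,z_t)$, $t=1,2,3$, and derives bounds on the exponent unknowns that hold across the entire range of admissible triples before any enumeration begins. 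The two ideas that make this possible — and which are missing from your proposal — are: (i) an \emph{improved} gap principle obtained by examining two congruences modulo powers of the base numbers simultaneously, which replaces a raw linear-forms bound by a divisibility statement involving $\gcd(x_2,y_2)$ whose possible values are then heavily restricted by known resolutions of generalized Fermat equations $X^p+Y^q=Z^r$ (Lemmas~\ref{g2} and~\ref{gx}); and (ii) a direct comparison of $2$-adic valuations of the three equations exploiting the parity patterns of the exponents, yielding uniform bounds such as $z_2\le 230$ and $\max\{x_1,y_1,x_2,y_2\}<4300$ (Lemma~\ref{firstbounds}) and, via Lemma~\ref{2adic_exact}, even exact values of $z_1$ in a key subcase. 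Only after these small uniform bounds are in hand does the paper enumerate: it loops over the exponent tuple and over $(a,b,c)$ simultaneously, using congruence and inequality constraints on $a$ and $b$ derived from the shape of the system to make the nested search feasible.

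A second, subtler issue is that your plan is structured around forcing a contradiction from a hypothetical \emph{third} solution after the first two are known explicitly; the paper instead sieves the \emph{system of the first two equations} in the unknowns $(a,b,x_1,y_1,z_1,x_2,y_2,z_2)$ and only uses the existence of the third solution through the gap-principle inequality $|x_2y_3-x_3y_2|<z_2\,\mathcal H(c)$ and the $2$-adic bound $\beta z_2\le\alpha+\nu_2(x_2y_3-x_3y_2)$; the letters of the third solution never need to be enumerated. Without (i), (ii), and this reorganization around the first two equations, the residual search windows do not become ``small enough to verify by a short computer check'' — the dependence on $(a,b,c)$ ranging up to $10^{62}$ kills the computation. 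So what you have is a plausible-sounding plan at the level of a slogan, but the essential mechanisms that make the finite search tractable are absent.
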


This result is essentially sharp as indicated in the mentioned work of Hu and Le, and its exceptional case comes from the identities $3^{}+5^{}=2^{3}, 3^{3}+5^{}=2^{5}$ and $3^{}+5^{3}=2^{7}$ (cf.~\eqref{atmost1:ex}).

The proof of our theorem proceeds under the assumption that all $a,b,c$ are explicitly finite and $c$ is even, and there are three main important steps.
The first step is to improve the gap principle of Hu and Le.
Their gap principle is derived by examining equation \eqref{abc} modulo each powers of $a,b$ and $c$, which is expressed as some divisibility relation or some inequality.
The main idea for the improving is to consider two congruences `simultaneously' in their treatment using modulus of each powers of the base numbers.
Roughly saying, this replaces a factor in the inequality from Hu and Le's gap principle as a common factor of two of exponential unknowns of the equation, where the value of an appearing factor is strictly restricted from the viewpoint of generalized Fermat equations.
In this stage, by combining the improved gap principle together with Baker's method in 2-adic case, the required bounds for $\max\{a,b,c\}$ for which theorem holds true is substantially reduced.
The second step is to find very sharp upper bounds for all the exponential unknowns of at least two of three equations, which in what follows are called the first two equations, coming from equation \eqref{abc} with the existence of three hypothetical solutions.
This is done elementarily by comparing the 2-adic valuations of both sides of each of those three equations, a procedure that works only in the case where $c$ is even which can be assumed from the mentioned work of Scott and Styer.
Working out these two steps together with several elementary number theory methods yields at most finitely many possible values of all letters concerning the first two equations.
Finally, in each of those cases, we check whether those two equations hold or not.
At this point it is worth noting that although the derived general bounds for all letters in those  equations are relatively sharp, a direct enumeration of all possible solutions of the system formed by the first two equations is impossible.
Therefore, we worked very carefully and found efficient methods for solving that system in reasonable computational time.

The organization of this article is as follows.
In the next section, we prepare some useful conditions which are consequences of previous existing results related to equations \eqref{pillai} and \eqref{abc}.
Section \ref{Sec-Uz} is devoted to find a sharp upper bound for $z$ using a result of Bugeaud \cite{bug-book} on the 2-adic estimate of the difference between two powers of algebraic numbers, where this choice is fitted for the second main idea in the proof.
On the other hand, we find some 2-adic properties on $z$ in Section \ref{Sec-z-2adic}, where one of those yields an exact information in a certain case.
We summarize the contents of these sections together with the second main idea in the forthcoming section, in particular, we deduce relatively small upper estimates of all exponential unknowns of the first two equations.
In Section \ref{Sec-improve-HuLe}, we improve the gap principle of Hu and Le, and give some of its applications in Section \ref{Sec-ineq-z1<z2}.
In Section \ref{Sec-gfe}, we quote several existing works on the generalized Fermat equation, and give their applications to the improved gap principle.
Section \ref{Sec-AB} is devoted to study a certain Diophantine equation related to equation \eqref{pillai}.
In Section \ref{Sec-z1=z2}, we use the consequences in the previous section together with the preparations in Sections \ref{Sec-improve-HuLe}, \ref{Sec-ineq-z1<z2} and \ref{Sec-gfe} to settle the case where the values of two $z$ of the first two equations coincide.
Sections \ref{Sec-bounds-z1<z2_c=max} and \ref{Sec-bounds-z1<z2_a=max} are devoted to exactly find all possible values of letters in the first two equations, and we sieve those completely in Sections \ref{Sec-sieve-z1<z2_c=max} and \ref{Sec-sieve-z1<z2_a=max}, and the proof is completed.
In the final section, we make a few remarks concerning an extension of Theorem \ref{atmost2}.

All computations in this paper were performed using the computer package MAGMA \cite{BoCaPl}.
The total computational time through this article did not exceed 25 hours.

\section{Previous results and their consequences}%

We begin with quoting the following results on equations \eqref{pillai} or \eqref{abc}, where the first two of them play important roles in the proof of our theorem.

\begin{prop}[\cite{HuLe_deb19}] \label{Hu-Le}
If $\max\{a,b,c\}>10^{62},$ then Theorem \ref{atmost2} is true.
\end{prop}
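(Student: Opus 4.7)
The plan is to argue by contradiction: suppose equation \eqref{abc} admits three distinct solutions $(x_i,y_i,z_i)$, $i=1,2,3$, ordered so that $z_1 \leq z_2 \leq z_3$, and deduce that this forces $\max\{a,b,c\} \leq 10^{62}$. The argument naturally splits into a gap principle, which produces fast-growing lower bounds on the exponents of later solutions in terms of earlier ones, and Baker's method, which produces comparatively mild upper bounds; combining them yields the desired threshold.

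For the gap principle, I would examine \eqref{abc} modulo suitable powers of each base. Given two solutions with $x_1 < x_2$, reducing $a^{x_j}+b^{y_j}=c^{z_j}$ modulo $a^{x_1}$ yields $b^{y_2}\equiv b^{y_1} \pmod{a^{x_1}}$ and $c^{z_2}\equiv c^{z_1}\pmod{a^{x_1}}$, so that $y_2-y_1$ and $z_2-z_1$ are forced to be divisible by the multiplicative orders of $b$ and $c$ modulo $a^{x_1}$. Performing the analogous reductions modulo $b^{y_1}$ and $c^{z_1}$ gives an entire system of divisibility relations; combined with continued fraction estimates for the ratios $\log a/\log c$, $\log b/\log c$ and with the lifting-the-exponent lemma to control the $p$-adic valuations $v_p(b^{y_2-y_1}-1)$, these translate into substantial lower bounds on $z_2$ in terms of $z_1$, and by iteration on $z_3$ in terms of $\min\{a,b,c\}$.

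For the upper bounds, I would invoke Baker's theory of linear forms in logarithms, in both the Archimedean and $p$-adic settings. Writing the equation as $c^z - a^x = b^y$ (or the analogous difference) and applying effective estimates of Laurent--Mignotte--Nesterenko, Matveev, Yu and Bugeaud to the linear form $z\log c - x\log a$, one obtains an explicit bound on $\max\{x_3,y_3,z_3\}$ that grows only polynomially in $\log\max\{a,b,c\}$.

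The final step is to compare: the super-exponential lower bound coming from iterating the gap principle against the (log-polynomial) upper bound coming from Baker's method yields an inequality which is violated as soon as $\max\{a,b,c\}$ exceeds an explicit constant, shown by Hu and Le to be $10^{62}$. The principal obstacle is to sharpen the gap principle sufficiently; a naive version gives an astronomically larger threshold. This requires ruling out small multiplicative orders by Zsygmondy-type primitive prime divisor theorems, tracking the exact $p$-adic valuation of differences $b^{y_2-y_1}-1$ rather than using crude divisibility bounds, and carefully coordinating the three congruences obtained from reductions modulo powers of $a$, $b$ and $c$ so that their consequences reinforce one another. A secondary challenge is to ensure that the numerical constants in the Baker-type estimates combine favourably with the gap principle to produce the stated threshold.
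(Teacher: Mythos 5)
The paper does not prove Proposition \ref{Hu-Le}; it cites the result verbatim from \cite{HuLe_deb19} (Hu and Le, Publ.\ Math.\ Debrecen 2019) without reproducing an argument. So there is no ``paper's own proof'' to line your proposal up against. The most one can compare to is the one-sentence description in the introduction, which says Hu and Le work ``over the irreducible residue class groups modulo powers of the base numbers of the equation, with various other elementary number theory methods including continued fractions,'' establish a gap principle among three hypothetical solutions, and combine this with Baker's method. Your sketch is consistent with that description at the level of strategy: gap principle from congruences modulo powers of $a$, $b$, $c$, plus continued-fraction estimates, plus $p$-adic and Archimedean linear forms in logarithms, compared to force $\max\{a,b,c\}$ below an explicit bound.

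That said, you should be aware your text is an outline and not a proof. Nothing in it actually derives an explicit gap inequality, nothing pins down which linear-forms estimates are used with which constants, and the invocation of Zsygmondy at the end is a guess rather than something you verify is what Hu and Le do. (Section \ref{Sec-improve-HuLe} of this paper, where the authors \emph{improve} the Hu--Le gap principle, shows what the concrete ingredients actually look like: Lemmas \ref{ele1}, \ref{two}, and \ref{Y2}, all extracted from \cite{HuLe_aa15,HuLe_jnt18,HuLe_deb19}, with a primitive-divisor argument inside Lemma \ref{two}.) In the context of this paper, Proposition \ref{Hu-Le} is an imported black box, and the appropriate ``proof'' is simply a citation; your proposal would be the outline of a proof of the cited result, not a replacement for the citation.
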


\begin{prop}[\cite{ScSt_deb16}] \label{Scott-Styer}
If $c$ is odd, then Theorem \ref{atmost2} is true.
\end{prop}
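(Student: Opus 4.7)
The plan is to establish Proposition \ref{Scott-Styer} by extending Scott's quadratic-field technique (developed in \cite{Sc} for the case of prime $c$) to arbitrary odd $c$, then combining it with Baker-type bounds and a finite computational check. Assume for contradiction that equation \eqref{abc} admits three distinct solutions $(x_i, y_i, z_i)$ for $i = 1, 2, 3$, ordered so that $z_1 \le z_2 \le z_3$. Since $c$ is odd and $\gcd(a,b) = 1$, exactly one of $a, b$ is even; say $a$ is even. There are only four parity patterns for $(x \bmod 2, y \bmod 2)$, so by pigeonhole at least two of the three solutions, say indexed by $i = 1, 2$, share the same pattern.

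The first step is to recast these two solutions as identities in an appropriate imaginary quadratic ring depending on the shared parity pattern. When both $x_i$ and $y_i$ are even, one factors in $\mathbb{Z}[i]$ via the identity $c^{z_i} = (a^{x_i/2})^2 + (b^{y_i/2})^2$; when both are odd, one reduces after multiplying through by appropriate odd powers of $a$ and $b$ to work in the ring of integers of $\mathbb{Q}(\sqrt{-ab})$; and mixed-parity cases reduce to one of these after a single multiplication by $a$ or $b$. In each subcase, every odd prime divisor of $c$ splits in the corresponding quadratic ring because $-ab$ (respectively $-1$) is forced to be a quadratic residue modulo that prime. Comparing the two element-level factorizations of $c^{z_1}$ and $c^{z_2}$ above the split primes yields an identity of the shape $(\alpha / \bar{\alpha})^{z_2 - z_1} = \varepsilon$ for some unit $\varepsilon$ of the ring, where $\alpha \bar{\alpha}$ lies above a prime divisor of $c$. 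This forces $z_2 - z_1$ to be a multiple of the multiplicative order of $\alpha / \bar{\alpha}$ modulo a suitable ideal, producing a sharp gap principle of the same strength as the one Scott obtained for prime $c$.

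Armed with such a gap, the second step is to apply lower bounds for linear forms in two logarithms, either archimedean (Laurent) or $p$-adic (Bugeaud), to the expression $|x_i \log a + y_i \log b - z_i \log c|$; this yields an effective upper bound for $z_3$ and hence for $\max\{a,b,c\}$. Combined with Proposition \ref{Hu-Le}, which already restricts $\max\{a,b,c\} \le 10^{62}$, the problem reduces to a finite search. The main obstacle I anticipate is that the class number of $\mathbb{Q}(\sqrt{-ab})$ can be arbitrary, so the ideal factorizations above may fail to be principal; one must either raise ideals to the class number (weakening the gap principle by a bounded factor) or work systematically with ideal classes throughout, both of which require careful tracking of how the unit group, conjugation, and the splitting behavior of the primes above $c$ interact. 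A secondary difficulty is that the bound $10^{62}$ is far too large for brute-force enumeration, so the final verification must rely on elementary sieves modulo small primes to cut the candidate set down to a tractable size before any direct check, exactly in the spirit of the computational strategy the present paper later employs to handle the even-$c$ case.
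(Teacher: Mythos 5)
The paper does not prove Proposition \ref{Scott-Styer}; it is imported verbatim from Scott and Styer \cite{ScSt_deb16}, whose argument, as this paper describes it, is a considerable strengthening of Scott's strictly elementary quadratic-field method of \cite{Sc} extended from prime $c$ to all odd $c$. That published proof is unconditional: it does not use Baker's method, does not invoke a Hu--Le-type numerical threshold, and involves no computer search. Your proposal is therefore not a reconstruction of the cited argument but an independent plan to rederive the result by the gap-principle, Baker-bound, and finite-search strategy that this paper reserves for the even-$c$ case.

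Viewed as a stand-alone plan, it has a concrete logical error at the very first step: the pigeonhole fails. With three solutions and four parity classes of $(x \bmod 2, y \bmod 2)$, no collision is forced since $3<4$; to make this work you must first show, by congruences modulo $4$ or $8$ depending on the residues of $a,b,c$, that at most two of the four classes can occur, and that is neither done nor automatic (e.g.\ when $a\equiv 0$, $b\equiv c\equiv 1 \pmod 8$ the congruence is satisfied for every parity pattern). Without that, the ``two solutions of like parity'' hypothesis on which your entire quadratic-field step rests is unsupported. Separately, reducing to $\max\{a,b,c\}\le 10^{62}$ by Proposition \ref{Hu-Le} and then gesturing at ``elementary sieves'' does not by itself constitute a proof; when the present paper carries out exactly this kind of reduction-and-sieve for even $c$, it fills the bulk of the article and takes roughly $25$ hours of computation, and an odd-$c$ version would require comparable explicit work. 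The class-number obstruction you flag for the imaginary-quadratic step is also real, and you leave it unresolved as well.
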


The following is a direct consequence of \cite[Theorem 6]{Sc}.

\begin{prop}\label{Scott}
If $c=2,$ then Theorem \ref{atmost2} is true.
\end{prop}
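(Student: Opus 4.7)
The plan is to recognize this proposition as a direct application of Scott's theorem \cite[Theorem 6]{Sc}, with essentially no additional work required beyond a careful bookkeeping verification.

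First I would observe that the hypothesis $c=2$, together with the pairwise coprimality assumption on $a,b,c$, forces both $a$ and $b$ to be odd integers strictly greater than $1$. Equation \eqref{abc} then takes the form $a^x+b^y=2^z$ with $a,b$ odd and coprime, which is precisely the class of equations treated in \cite[Theorem 6]{Sc}. Thus the statement to be proved is simply a specialization of Scott's theorem to our present notation.

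Next I would invoke Scott's result directly. His method is elementary but subtle, working in the Gaussian integers $\mathbb{Z}[i]$: rewriting $a^x = 2^z - b^y$ and factoring the right-hand side over $\mathbb{Z}[i]$ (using that, up to a unit, $2=-i(1+i)^2$ is the square of a prime of $\mathbb{Z}[i]$) produces tight identities between the two putative solutions. Combining these with congruence information modulo the odd prime divisors of $a$ and $b$ yields the bound of at most two solutions, with a short finite exceptional list that Scott enumerates explicitly.

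Finally, I would check that Scott's conclusion matches the one we need. The only exceptional case in his list giving three solutions corresponds to the three identities
\[
3+5=2^{3},\qquad 3^{3}+5=2^{5},\qquad 3+5^{3}=2^{7},
\]
i.e.\ $(a,b,c)=(3,5,2)$ and its mirror $(5,3,2)$; in every other case at most two solutions arise. Since this is exactly the dichotomy asserted by Theorem \ref{atmost2} under the restriction $c=2$, no further argument is needed. The only potential obstacle is the purely notational one of matching Scott's formulation to ours, which is routine.
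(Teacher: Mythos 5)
Your proposal is correct and is essentially identical to the paper's treatment: the paper states Proposition \ref{Scott} as a direct consequence of \cite[Theorem 6]{Sc}, exactly as you do, with no further argument. Your added paragraph sketching Scott's quadratic-field method is harmless background but plays no logical role; the proof really is just the citation plus the observation that the single exceptional triple $(3,5,2)$/$(5,3,2)$ is the one allowed by Theorem \ref{atmost2}.
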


The following is a direct consequence of \cite[Theorem 1]{ScSt_jnt2006}, and it is a relevant analogue to Proposition \ref{pillai_atmost2}.

\begin{prop}\label{pillai_atmost2_relevant}
For any fixed positive integers $a,b$ and $c$ with $\min\{a,b\}$ $>1,$ the equation
\[a^x+b^y=c\] has at most two solutions in positive integers $x$ and $y$.
\end{prop}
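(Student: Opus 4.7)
The plan is to deduce Proposition \ref{pillai_atmost2_relevant} from Theorem~1 of Scott–Styer \cite{ScSt_jnt2006}, which settles a broader family of exponential Diophantine equations modelled on Bennett's Pillai equation. The outline below first explains the reduction and then points to where the Scott–Styer machinery plugs in.

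First, I would reduce to three hypothetical solutions $(x_i,y_i)$, $i=1,2,3$. Since $\min\{a,b\}\geq 2$, the maps $x\mapsto a^x$ and $y\mapsto b^y$ are strictly monotone, so any two solutions sharing an $x$-coordinate (or a $y$-coordinate) would coincide. Reordering so that $x_1<x_2<x_3$ and using that the right-hand side $c$ is fixed then forces $y_1>y_2>y_3$.

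Next, I would form pairwise differences of the three equations: for any $i<j$,
\[ a^{x_i}\bigl(a^{x_j-x_i}-1\bigr)=b^{y_j}\bigl(b^{y_i-y_j}-1\bigr). \]
Since $\gcd(a,a^N-1)=\gcd(b,b^N-1)=1$ for every $N\geq 1$, the $a$-part and $b$-part of each such identity can be separated, producing classical Pillai-type equations of the form $a^u-b^v=D$. When $\gcd(a,b)=1$ one can then apply Bennett's Theorem \ref{pillai_atmost2} to bound the count of admissible $(u,v)$; otherwise one first divides out the common factor at every prime dividing $\gcd(a,b)$, and Bennett's theorem still applies to the reduced equation under the hypothesis $\min\{a,b\}>1$.

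The main obstacle is precisely the $\gcd(a,b)>1$ case, where one must track the $p$-adic valuations on both sides of the pairwise-difference identity for each $p\mid\gcd(a,b)$ and verify that the induced Pillai equation retains the hypotheses of Theorem \ref{pillai_atmost2}. This bookkeeping is exactly what Theorem~1 of \cite{ScSt_jnt2006} packages uniformly, by combining Bennett's gap principle with lower bounds for linear forms in two logarithms in a way that covers every gcd pattern at once; specializing their statement to the present triple $(a,b,c)$ yields the asserted bound of two solutions directly.
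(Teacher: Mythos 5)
You arrive at the same citation as the paper: the proposition is a direct consequence of Theorem~1 of \cite{ScSt_jnt2006}, and that single reference is the entirety of the paper's proof. The reason it is immediate is that $a^x+b^y=c$ is the $(+,+)$ instance of the generalized Pillai equation $\pm a^x\pm b^y=c$ that Scott--Styer treat for any fixed $a,b,c$ with $\min\{a,b\}>1$; one simply specializes the signs and reads off their bound. The intermediate scaffolding you build around the citation, however, is both unnecessary and not sound as written. From three hypothetical solutions you correctly obtain $a^{x_i}\bigl(a^{x_j-x_i}-1\bigr)=b^{y_j}\bigl(b^{y_i-y_j}-1\bigr)$, but the claim that ``separating the $a$-part and $b$-part'' yields a classical Pillai equation $a^u-b^v=D$ with a \emph{fixed} right-hand side $D$, to which Theorem~\ref{pillai_atmost2} could then be applied, does not follow: coprimality only gives divisibility relations such as $b^{y_j}\mid\bigl(a^{x_j-x_i}-1\bigr)$, and no fixed difference emerges. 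Likewise, framing ``$\gcd(a,b)>1$ bookkeeping'' as the main obstacle that Scott--Styer's result is brought in to patch mischaracterizes its role: their Theorem~1 is not covering one case of the argument you sketch, it \emph{is} the proof, in a single step. Drop the pairwise-difference reduction and simply note that $a^x+b^y=c$ is a special case of the equation studied in \cite{ScSt_jnt2006}, whose Theorem~1 gives the bound directly.
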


Using Propositions \ref{Scott-Styer}, \ref{Scott} and \ref{pillai_atmost2_relevant}, we show a technical lemma.

\begin{lem}\label{technical}
Theorem \ref{atmost2} is true in each of the following cases$:$
\begin{itemize}
\item $a \equiv 1 \pmod{4}, \ b \equiv 1 \pmod{4};$
\item $\max\{a,b\}<11, \ c \equiv 0 \pmod{2}.$
\end{itemize}
\end{lem}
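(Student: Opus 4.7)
The plan for the first case is a clean congruence argument modulo $4$. Since $a\equiv b\equiv 1\pmod 4$, every solution forces $a^x+b^y\equiv 1+1\equiv 2\pmod 4$, and hence $c^z\equiv 2\pmod 4$. Proposition~\ref{Scott-Styer} already handles the situation in which $c$ is odd, so we may assume $c$ is even. But $c$ even together with $z\geq 2$ yields $c^z\equiv 0\pmod 4$, contradicting $c^z\equiv 2\pmod 4$; therefore $z=1$ for every solution. The equation now collapses to $a^x+b^y=c$ with a single fixed right-hand side, and Proposition~\ref{pillai_atmost2_relevant} caps the number of pairs $(x,y)$ at two.

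For the second case, the preliminary observation is that $c$ even and pairwise coprime to $a$ and $b$ forces both $a$ and $b$ to be odd; combined with $2\leq a,b\leq 9$ and $\gcd(a,b)=1$, the unordered pair $\{a,b\}$ lies in the short list $\{3,5\},\{3,7\},\{5,7\},\{5,9\},\{7,9\}$, and the pair $\{5,9\}$ is already covered by Case~1. When $c=2$, Proposition~\ref{Scott} applies uniformly and absorbs the exceptional triple $(3,5,2)$. When $c\geq 4$ is even, I would proceed pair by pair via a modular reduction: reducing mod~$4$ partitions the hypothetical solutions by the parity of the exponent attached to the base that is $\equiv 3\pmod 4$, and in the parity class for which $a^x+b^y\equiv 2\pmod 4$ one again forces $z=1$, so Proposition~\ref{pillai_atmost2_relevant} finishes; in the complementary class where $a^x+b^y\equiv 0\pmod 4$, a mod-$8$ (or mod-$16$) refinement pins the pair $(z,v_2(c))$ into a short list of possibilities, and since the single value of $c$ must simultaneously accommodate all hypothetical solutions, these $2$-adic constraints are forced to be mutually compatible and combine with Propositions~\ref{Scott} and \ref{pillai_atmost2_relevant} to give at most two solutions.

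The main obstacle is precisely the last subcase, namely the mixed-residue pairs $\{3,5\},\{3,7\},\{5,7\},\{7,9\}$ with $c\geq 4$ even, where a single mod-$4$ reduction does not force $z=1$ uniformly and one is left to rule out the coexistence of three solutions whose parities and $2$-adic behaviour differ. The plan there is to exploit the identity $v_2(a^x+b^y)=z\cdot v_2(c)$---whose left-hand side depends only on the parities of $x,y$ and on the pair $(a,b)$---to show that any two solutions sitting in different parity classes determine the same $v_2(c)$ in incompatible ways, leaving a single parity class in which Proposition~\ref{pillai_atmost2_relevant} provides the required bound of two.
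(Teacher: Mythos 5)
The first bullet is handled essentially as in the paper: both you and the paper reduce modulo $4$, obtain $c^{z}\equiv 2\pmod 4$, deduce $z=1$ for every solution, and close with Proposition~\ref{pillai_atmost2_relevant}. Your detour through Proposition~\ref{Scott-Styer} to dismiss odd $c$ is harmless but unnecessary, since for odd $c$ the congruence $c^z\equiv 2\pmod 4$ is already impossible and there are no solutions at all.

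The second bullet, however, contains a genuine gap. After the reduction to $a,b\in\{3,5,7,9\}$, the paper's argument is short precisely because it invokes a deep external input: Proposition~\ref{pillai_atmost2_relevant} forces one of the three hypothetical solutions to have $z>1$, and then \cite[Theorem~7.2]{BeBi} (Bennett--Billerey, a modularity result on sums of two $\{3,5,7\}$-units equalling a perfect power) immediately forces $c=2$, after which Proposition~\ref{Scott} finishes. Your proposal replaces this with an elementary $2$-adic sieve that is not carried out, and its central claim --- that $v_2(a^x+b^y)$ ``depends only on the parities of $x,y$ and on the pair $(a,b)$'' --- is simply false. For $a=3,\,b=5$ with both exponents odd one already has $v_2(3^1+5^3)=v_2(128)=7$ while $v_2(3^3+5^1)=v_2(32)=5$; the $2$-adic valuation depends on $\nu_2(x)$ and $\nu_2(y)$ themselves (cf.\ the lifting-the-exponent phenomena used in Lemmas~\ref{2adic} and \ref{2adic_exact}), not merely on the parities. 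Consequently the envisioned ``incompatibility of $v_2(c)$ across parity classes'' does not materialize, and one is left with no means, short of the Frey-curve machinery in \cite{BeBi} (or some comparable deep tool), to rule out a solution with $c>2$ even and $z>1$ for the mixed pairs $\{3,5\},\{3,7\},\{5,7\},\{7,9\}$. You should either cite \cite[Theorem~7.2]{BeBi} as the paper does, or supply a genuinely complete alternative; the $2$-adic sketch as stated does not constitute a proof.
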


\begin{proof}
In the first case, we take equation \eqref{abc} modulo 4 to see that $c^z \equiv 2 \pmod{4}$, implying, $z=1$.
Proposition \ref{pillai_atmost2_relevant} completes the proof.

In the second case, suppose that equation \eqref{abc} has three solutions.
By Proposition \ref{pillai_atmost2_relevant}, there exists at least one solution with $z>1$ among them.
Since both $a,b$ are composed of only primes in $\{3,5,7\}$, we have $c=2$ by \cite[Theorem 7.2]{BeBi}.
Proposition \ref{Scott} shows the lemma.
\end{proof}

In order to prove our theorem, it suffices to consider the case where none of $a,b,c$ is a power, and $a,b,c$ are pairwise coprime.
Moreover, in view of Propositions \ref{Hu-Le}, \ref{Scott-Styer} and Lemma \ref{technical}, and since Theorem \ref{atmost2} holds true for $11 \le \max\{a,b,c\}<18$ by the combination of \cite{Ha} and \cite{Uc}, we may assume any of the conditions in $(\ast)$ below.
\begin{equation}\label{cond}
\tag{$\ast$}
\begin{cases}
\,\text{none of $a,b,c$ is a power, \ $a,b,c$ are pairwise coprime};\\
\,a \equiv -1 \pmod{4} \quad \text{or} \quad b \equiv -1 \pmod{4};\\
\,\max\{a,b\} \ge 11, \quad \,18 \le \max\{a,b,c\} \le 10^{62};\\
\,2 \mid c, \ c>2.
\end{cases}
\end{equation}
In particular, in the sequel, we always assume that $a,b,c$ are pairwise coprime, both $a,b$ are odd and $c$ is even.

\section{General upper bound for $z$} \label{Sec-Uz}%

Here we find a relatively sharp upper bound for $z$ in equation \eqref{abc}.
For this we prepare some lemmas.

The following is a slight improvement of a special case of \cite[Lemma 2.2]{PedW}.

\begin{lem} \label{pdw-lemma}
Let $v$ be a number with $v>{\rm e}^2/4.$
Assume that $\frac{t}{\log^2{t}}=v$ for some positive number $t$ with $t>{\rm e}^2.$
Then
\[
t<\left(1+\frac{\log\log{v_0}}{\log{v_0}-1}\right)^2v\log^2(4v),
\]
where $v_0$ is any number with ${\rm e}<v_0<2v^{1/2}.$
\end{lem}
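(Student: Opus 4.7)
The plan is to avoid solving the implicit equation $t = v\log^2 t$ directly and instead exploit monotonicity: since $g(x) := x/\log^2 x$ has derivative $g'(x) = (\log x - 2)/\log^3 x > 0$ on $x > {\rm e}^2$, and since $g(t) = v$ with $t > {\rm e}^2$, the inequality $t < T$ will follow once I show $g(T) > v$ for some $T > {\rm e}^2$.

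The natural ansatz is $T := (1+\delta)^2 \, v \log^2(4v)$, where $\delta := \log\log v_0/(\log v_0 - 1)$ is the quantity appearing on the right-hand side of the claim. The hypothesis $v > {\rm e}^2/4$ gives at once $T > {\rm e}^2$, so the monotonicity step applies. A direct computation reduces $g(T) > v$ to the equivalent inequality $\log T < (1+\delta)\log(4v)$. Expanding $\log T = \log v + 2\log\log(4v) + 2\log(1+\delta)$ and bounding $\log(1+\delta) < \delta$ (valid since $v_0 > {\rm e}$ forces $\delta > 0$), the task reduces to verifying
\[
2\log\log(4v) - \log 4 \le \delta\bigl(\log(4v) - 2\bigr).
\]
Using the identity $\log(4v) = 2\log(2\sqrt v)$, both sides factor neatly, and the inequality rearranges into $\delta \ge \phi(2\sqrt v)$, where $\phi(x) := \log\log x/(\log x - 1)$.

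The last step is to show that $\phi$ is strictly decreasing on $({\rm e}, \infty)$. After the substitution $y = \log x$, this reduces to the monotonicity of $\psi(y) = \log y/(y-1)$, whose derivative is negative on $y > 1$ by the elementary inequality $\log y > 1 - 1/y$. Combined with $v_0 < 2\sqrt v$ and the fact that both quantities exceed ${\rm e}$ (using $v > {\rm e}^2/4$ for the upper endpoint), this yields $\delta = \phi(v_0) > \phi(2\sqrt v)$, closing the argument.

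The main obstacle is really just to guess the correct form for $T$: once the factor $(1+\delta)^2$ is identified as the right parametrisation to absorb the $\log\log$-correction, and the identity $\log(4v) = 2\log(2\sqrt v)$ is exploited to line up the $\phi$-value at $v_0$ with the $\phi$-value at $2\sqrt v$, the remainder is routine algebra plus the one-variable monotonicity of $\phi$.
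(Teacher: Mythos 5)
Your proof is correct, and you've organized it along a genuinely different route from the paper. The paper's proof introduces auxiliary quantities: it sets $w = 2t^{1/2}/\log t$ and defines $Y$ implicitly by $(1+Y)w\log w = t^{1/2}$, then derives the exact functional identity $Y\log w = \log(1+Y) + \log\log w$, from which the bound $Y < \frac{\log\log w}{\log w - 1}$ follows via $\log(1+Y) < Y$; finally, since $w = 2v^{1/2}$ and $w\log w = v^{1/2}\log(4v)$, the decreasing-ness of $W \mapsto \frac{\log\log W}{\log W - 1}$ lets $v_0$ stand in for $w$. You avoid the $(w,Y)$-substitution entirely by framing everything through the monotonicity of $g(x) = x/\log^2 x$ on $(\mathrm{e}^2,\infty)$ and verifying that $g$ at the candidate bound $T = (1+\delta)^2 v\log^2(4v)$ exceeds $v$. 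The algebraic core is the same in both proofs — the elementary inequality $\log(1+\delta) < \delta$ and the monotonicity of $\phi(x) = \frac{\log\log x}{\log x - 1}$ are the load-bearing facts, and both arguments reduce to the comparison $\phi(2\sqrt v) \le \phi(v_0)$ — but your "verify the ansatz" organization is arguably cleaner: it makes the role of the monotonicity of $g$ explicit (the paper leaves it implicit in the uniqueness of $t$), and it replaces the paper's somewhat ad hoc introduction of $w$ and $Y$ with a single substitution and a direct chain of reductions. Each presentation buys something: the paper's derivation makes the shape of the bound look inevitable, while yours is shorter to check once the bound is guessed.
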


\begin{proof}
Define $w$ and $Y$ as $w=\frac{2t^{1/2}}{\log t}$ and $(1+Y)w\log{w}=t^{1/2}$.
It is easy to see that $Y>0$ as $t>{\rm e}^2$. Observe that
\[
 (1+Y)w\log{w}=t^{1/2}=\frac{w}{2}\log{t}  =w\log \big( (1+Y)w\log{w} \big).
\]
Thus
\[
Y\log{w}=\log(1+Y)+\log\log{w}.
\]
Since $\log(1+Y)<Y$, we have $Y<\frac{\log\log{w}}{\log{w}-1}$, that is,
\[
t^{1/2}<\left(1+ \frac{\log\log{w}}{\log{w}-1} \right)w\log{w}.
\]
Since $w=2v^{1/2}>{\rm e}$ by assumption, and the function $\frac{\log\log{W}}{\log{W}-1}$ in $W$ is decreasing for $W>{\rm e}$, the above displayed inequality leads to the assertion.
\end{proof}

For a rational prime $p$ and non-zero integer $A$, as usual let $\nu_p(A)$ denote the $p$-adic valuation of $A$, that is, the exponent of $p$ in the prime factorization of $A$.

The next lemma is well-known and gives a precise information on the $2$-adic valuations of integers in certain forms.

\begin{lem} \label{2adic}
Let $s$ be an odd integer with $s \neq \pm 1.$
For any positive integer $n,$ the following hold.
\begin{alignat*}{2}
&\nu_2(s^n-1)=\nu_2(s^2-1)-1+\nu_2(n), & \quad & \text{if $s \equiv 1 \pmod{4}$ or $2 \mid n;$}\\
&\nu_2(s^n-1)=1, & & \text{if $s \equiv 3 \pmod{4}$ and $2 \nmid n;$}\\
&\nu_2(s^n+1)=\nu_2(s+1), && \text{if $2 \nmid n;$}\\
&\nu_2(s^n+1)=1, && \text{if $2 \mid n$}.
\end{alignat*}
\end{lem}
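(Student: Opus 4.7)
The plan is to handle the four cases in the order in which they are easiest, starting with the three short ones and finishing with the main inductive assertion. Throughout, write $s=2m+1$ with $m\in\Z$, so that $s$ odd implies $s^2-1=(s-1)(s+1)$, with exactly one of the consecutive even numbers $s-1$ and $s+1$ divisible by $4$; in particular $\nu_2(s^2-1)\ge 3$.

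For the second case (where $s\equiv 3\pmod 4$ and $n$ is odd), observe that $s^n\equiv s\equiv 3\pmod 4$, so $s^n-1\equiv 2\pmod 4$ and $\nu_2(s^n-1)=1$. For the fourth case (where $n$ is even), note that any odd $s$ satisfies $s^2\equiv 1\pmod 8$, hence $s^n\equiv 1\pmod 8$ and $s^n+1\equiv 2\pmod 4$, giving $\nu_2(s^n+1)=1$. For the third case (where $n$ is odd), factor
\[
s^n+1=(s+1)\bigl(s^{n-1}-s^{n-2}+s^{n-3}-\cdots -s+1\bigr);
\]
the second factor is a sum of $n$ odd terms with alternating signs, hence odd, so $\nu_2(s^n+1)=\nu_2(s+1)$ as claimed.

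The remaining first case is the essence of the lifting-the-exponent lemma at the prime $2$ and will be the main obstacle. I would prove the identity
\[
\nu_2(s^n-1)=\nu_2(s^2-1)-1+\nu_2(n)
\tag{$\dagger$}
\]
in two stages. \emph{Stage 1:} reduce to the case $n=2^k$ for $k\ge 0$. Write $n=2^k m$ with $m$ odd, so that
\[
s^n-1=(s^{2^k}-1)\bigl(s^{2^k(m-1)}+s^{2^k(m-2)}+\cdots+s^{2^k}+1\bigr).
\]
Under the hypothesis of the first case we have $2\mid n$ or $s\equiv 1\pmod 4$, so $s^{2^k}\equiv 1\pmod 4$ in either situation (when $k\ge 1$ this is automatic, when $k=0$ it uses $s\equiv 1\pmod 4$). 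Thus the second factor is a sum of $m$ (odd) terms each congruent to $1$ modulo $2$, hence odd, so $\nu_2(s^n-1)=\nu_2(s^{2^k}-1)$ and $\nu_2(n)=k$.

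\emph{Stage 2:} prove by induction on $k\ge 0$ that $\nu_2(s^{2^k}-1)=\nu_2(s^2-1)-1+k$ whenever $k\ge 1$, and that $\nu_2(s-1)=\nu_2(s^2-1)-1$ when $s\equiv 1\pmod 4$ (the last equality is just $\nu_2(s+1)=1$ for such $s$). The inductive step uses the factorization $s^{2^{k+1}}-1=(s^{2^k}-1)(s^{2^k}+1)$ together with case~4 already proved, which yields $\nu_2(s^{2^k}+1)=1$ for $k\ge 1$, giving the required increment by $1$. Combining Stages 1 and 2 yields $(\dagger)$, and the lemma is complete.
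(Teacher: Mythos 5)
Your proof is correct. The paper itself does not prove this lemma at all: it simply introduces it with the remark that it is ``well-known'' (it is the prime-$2$ case of the lifting-the-exponent lemma) and states the four formulae without argument. Your elementary proof — dispatching the three easy cases directly via congruences modulo $4$ and $8$ and the odd-factor decomposition $s^n+1=(s+1)(s^{n-1}-\cdots+1)$, then handling the substantive first identity by peeling off the odd part of $n$ and inducting on the power of $2$ using $s^{2^{k+1}}-1=(s^{2^k}-1)(s^{2^k}+1)$ together with the already-established fourth case — is a complete and standard route. One tiny remark: in Stage~1 you justify that the factor $\sum_{j=0}^{m-1} s^{2^k j}$ is odd by observing $s^{2^k}\equiv 1\pmod 4$, but for parity it suffices that each term $s^{2^k j}$ is odd (since $s$ is odd) and there are $m$ (odd) of them; the mod $4$ observation, while true, is not needed at that step. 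This does not affect correctness.
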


Define the function $\log_{\ast}$ as follows:
\[
\log_{\ast}(x):=\log \max\{x,{\rm e}\} \quad (x>0).
\]
Note that this is an increasing function.

The following proposition is a special case of \cite[Theorem 2.13]{bug-book}.

\begin{prop} \label{bu}
Let $u_1,u_2$ be coprime odd integers with $u_1,u_2 \neq \pm 1.$
Assume that a positive integer $g$ satisfies
\[
\nu_{2}({u_j}^g-1) \ge E \quad (j=1,2)
\]
for some number $E$ with $E>2.$
Let $H_1, H_2$ be real numbers satisfying
\[
H_j \ge \log \max\{|u_j|, 2^E\} \quad (j=1,2).
\]
Put
\[
\Lambda=u_1^{b_1}-u_2^{b_2} \quad (\neq 0),
\]
where $b_1, b_2$ are any positive integers.
If $\nu_2(u_2-1) \ge 2,$ then
\[
\nu_2(\Lambda) \le \frac{36.1g\mathcal B^2}{(\log 2)^4E^3}\,H_1H_2,
\]
where
\[
\mathcal B= \max \biggl\{ \log\Big( \frac{b_1}{H_2}+\frac{b_2}{H_1} \Big)+ \log(E\log 2)+0.4, \, 6E\log 2\biggl\}.
\]
\end{prop}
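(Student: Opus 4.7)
The proposition is stated as a special case of Theorem 2.13 in \cite{bug-book}, so the plan is essentially to verify that the hypotheses of the general statement are met by the present setup and to read off the constants, the substance of the proof being carried out in \cite{bug-book}. The underlying result of Bugeaud belongs to the theory of $2$-adic lower bounds for linear forms in two logarithms, whose modern treatment goes back to the interpolation determinant method of Laurent, Mignotte and Nesterenko.

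In broad terms the proof of such an estimate proceeds as follows. One introduces the $2$-adic logarithms $\log_2(u_j^g)$, which converge by virtue of the hypothesis $\nu_2(u_j^g-1)\ge E>2$, and relates $\nu_2(\Lambda)$ to the $2$-adic size of the linear form $b_1 g\log_2 u_1 - b_2 g\log_2 u_2$. An auxiliary interpolation determinant is then constructed whose rows are indexed by suitable integer pairs and whose columns involve products of binomial coefficients with powers of $u_1$ and $u_2$. The non-vanishing of this determinant relies on the multiplicative independence of $u_1$ and $u_2$, which is secured by $\Lambda\ne 0$ together with the coprimality hypothesis, while an archimedean upper bound obtained via Hadamard's inequality features the logarithmic heights encoded in $H_1$ and $H_2$. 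A corresponding elementary $2$-adic lower bound for the determinant, on the other hand, is directly proportional to $\nu_2(\Lambda)$.

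Comparing the two bounds and optimizing over the free parameters (the rank of the determinant, the weights, and the ranges of summation) yields the desired inequality. The numerical constant $36.1$ and the explicit shape of $\mathcal{B}$, including the floor $6E\log 2$, both emerge from this optimization: the logarithm term in $\mathcal{B}$ accounts for the dependence on the ratio $b_1/H_2+b_2/H_1$, while the term $6E\log 2$ reflects the minimal admissible size of the auxiliary parameters needed to satisfy the technical conditions of the construction, notably the zero lemma that rules out trivial determinants. The supplementary hypothesis $\nu_2(u_2-1)\ge 2$ is what permits the sharpened numerical constant obtained here, as opposed to a weaker variant where one of the factors is only known to be odd.

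The hard part, and the reason for citing the result rather than reproving it, is the sheer technical delicacy of the optimization. Driving the absolute constant down to $36.1$ and obtaining the precise piecewise form of $\mathcal{B}$ requires carefully balancing several independent parameters and occupies substantial space in \cite{bug-book}; reproducing the argument here would add nothing new and would obscure the main thrust of the present paper, whose role is merely to apply the estimate to $\Lambda=a^x-c^z$ or similar forms arising from \eqref{abc}.
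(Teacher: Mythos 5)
The paper itself offers no proof of this proposition: it simply notes that it is a special case of \cite[Theorem 2.13]{bug-book} and cites it. Your proposal does the same thing — defer to Bugeaud's theorem — so it matches the paper's approach; your sketch of the interpolation-determinant method is accurate background, though the closing remark about applying the estimate to $\Lambda=a^x-c^z$ is slightly off (the application sets $u_1,u_2\in\{\pm a,\pm b\}$, not $c$).
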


We give an application of Proposition \ref{bu} as follows.

\begin{lem} \label{2adic-bounds}
Assume that $\max\{a,b\} \ge 9.$ Put
\[
\alpha=\min\bigr\{ \nu_2(a^2-1) -1, \, \nu_2(b^2-1) -1 \bigr\}, \quad \beta=\nu_2(c).
\]
Let $(x,y,z)$ be a solution of equation \eqref{abc} with $z>1.$
Then
\[
z<\max \!\left\{ c_1,\, c_2 \log_{\ast}^2( c_3\log c ) \right\} \,(\log a)\,\log{b},
\]
where
\[
(c_1,c_2,c_3)=\begin{cases}
\, \left( \dfrac{1803.3m_2}{\beta}, \
\dfrac{23.865m_2}{\beta}, \
\dfrac{143.75(m_2+1)}{\beta} \right),
& \text{if $\alpha=2,$}\\
\, \left( \dfrac{2705m_3}{\alpha \beta}, \ \dfrac{156.39m_3\bigr(1+\frac{\log v_\alpha}{v_\alpha-1}\bigr)^2 }{\alpha^{3}\beta}, \ \dfrac{646.9(m_3+1)}{\alpha^{2}\beta} \right),
& \text{if $\alpha \ge 3$}
\end{cases}
\]
with $v_\alpha=3\alpha \log 2 - \log(3\alpha \log 2),$ and
\[
m_2=\begin{cases}
\, \frac{\log{8}}{\log \min\{a,b\}}, & \text{if $\min\{a,b\} \le 7,$}  \\
\, 1, & \text{if $\min\{a,b\}>7,$}
\end{cases}
\qquad m_3=\frac{\log{2^{\alpha}}}{\log (2^{\alpha}-1)}.
\]
\end{lem}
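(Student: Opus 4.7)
The plan is to apply Bugeaud's $2$-adic linear-form estimate (Proposition~\ref{bu}) to a suitably signed version of the equation $a^x+b^y=c^z$. The key preliminary observation is that $z>1$ rules out having $x$ and $y$ simultaneously even: otherwise $a^x+b^y\equiv 2\pmod 8$, contradicting $\nu_2(c^z)=\beta z\geq 2$. Combining this with condition~$(\ast)$ and carrying out a short case analysis on the residues of $a,b$ modulo $4$ together with the parities of $x,y$ (possibly swapping the roles of $(a,x)$ and $(b,y)$), I would verify that signs $\epsilon_1,\epsilon_2\in\{\pm 1\}$ can be chosen so that the integers $u_1=\epsilon_1\cdot(\text{base}_1)$ and $u_2=\epsilon_2\cdot(\text{base}_2)$ both satisfy $u_j\equiv 1\pmod 4$, while the linear form $\Lambda:=u_1^{b_1}-u_2^{b_2}$ equals $\pm c^z$. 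In particular $\nu_2(\Lambda)=\beta z$, and $u_j\equiv 1\pmod 4$ gives $\nu_2(u_j-1)=\nu_2((\text{base}_j)^2-1)-1\geq\alpha_j\geq\alpha$, which makes the Bugeaud hypothesis $\nu_2(u_2-1)\geq 2$ automatic.

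Next I would fix the parameters $g,E$ according to $\alpha$. For $\alpha=2$, take $g=2,\,E=3$: by Lemma~\ref{2adic}, $\nu_2(u_j^2-1)=\nu_2((\text{base}_j)^2-1)\geq\alpha+1=E$. For $\alpha\geq 3$, take $g=1,\,E=\alpha$ (legal as $E>2$); then $\nu_2(u_j-1)\geq\alpha=E$ by the identity above. For the heights $H_j=\log\max\{|u_j|,2^E\}$, note that any admissible base satisfies $(\text{base}_j)^2\equiv 1\pmod{2^{\alpha+1}}$, so the smallest positive odd admissible base is $2^\alpha-1$. Coprimality and the hypothesis $\max\{a,b\}\geq 9$ then ensure that at most one of $a,b$ equals $2^\alpha-1$, which after a direct computation yields $H_1 H_2\leq m_2(\log a)(\log b)$ when $\alpha=2$ and $H_1 H_2\leq m_3(\log a)(\log b)$ when $\alpha\geq 3$.

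Finally, Proposition~\ref{bu} gives $\beta z\leq\frac{36.1\,g\,\mathcal{B}^2}{(\log 2)^4 E^3}H_1H_2$, and I would split on which summand realises $\mathcal{B}$. When $\mathcal{B}=6E\log 2$, direct substitution of the chosen parameters reduces the bound to the $c_1$-branch of the claimed inequality (the identities $36.1\cdot 36\cdot 2/((\log 2)^2\cdot 3)\approx 1803.3$ and $36.1\cdot 36/(\log 2)^2\approx 2705$ produce the stated leading constants). When $\mathcal{B}$ is the logarithmic term, inserting the trivial bounds $b_1,b_2<z\log c/\log(\text{base})$ produces an inequality of the form $z/\log^2 z\leq v$, which I would resolve via Lemma~\ref{pdw-lemma}; the constant $c_3$ emerges from the explicit value of $v$, and the choice $\log v_0=v_\alpha$ for $\alpha\geq 3$ produces precisely the factor $\bigl(1+\log v_\alpha/(v_\alpha-1)\bigr)^2$ appearing in $c_2$. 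The main obstacle is the sign-and-parity bookkeeping in the first step; once $\Lambda$ is set up correctly, the remaining work is careful tracking of constants.
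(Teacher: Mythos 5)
Your proposal follows essentially the same route as the paper's proof: the sign choices making $u_1,u_2\equiv 1\pmod 4$, the $(g,E)=(2,3)$ versus $(1,\alpha)$ split, the heights $H_1=\log a$ and $H_2\in\{m_2\log b,\,m_3\log b\}$ justified by the smallest admissible odd base being $2^\alpha-1$, the dichotomy on $\mathcal B$ versus $6E\log 2$, and the reduction to Lemma~\ref{pdw-lemma} with $\log v_0=v_\alpha$ for $\alpha\ge 3$. The only cosmetic discrepancies are that the paper phrases the preliminary reduction as $a^x+b^y\equiv 0\pmod 4$ (which simultaneously excludes $a\equiv b\equiv 1\pmod 4$) rather than just ``$x,y$ not both even,'' and that the quantity fed into Lemma~\ref{pdw-lemma} is $t=zL'$ rather than $z$ itself, but these are details you would recover when writing the argument out in full.
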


\begin{rem}\label{c1c2c3}\rm
$c_1,c_2,c_3$ are explicit constants depending only on $\alpha,\beta$,  and on $m_2$ only if $\alpha=2$.
Also, these numbers are decreasing on $\alpha \,(\ge 3)$ and on $\beta$.
\end{rem}

\begin{proof}[Proof of Lemma \ref{2adic-bounds}]
Since $c$ is even and $z>1$, it follows from equation \eqref{abc} that $a^x+b^y \equiv 0 \pmod{4}$.
Therefore, one of the following cases holds.
\begin{equation}\label{noneasy}
\begin{cases}
\ a \equiv 1, \ b \equiv -1 \pmod{4}, &2 \nmid y; \\
\ a \equiv -1, \ b \equiv 1 \pmod{4}, &2 \nmid x; \\
\ a \equiv b \equiv -1 \pmod{4}, &x \not\equiv y \pmod{2}.
\end{cases}
\end{equation}
Put $\Lambda =a^x+b^y$.
Since $\Lambda=c^z$, we have
\begin{equation}\label{z-low}
z=\frac{1}{\beta} \cdot \nu_2(\Lambda).
\end{equation}
In order to find an upper bound for $\nu_2(\Lambda)$, let us apply Proposition \ref{bu}.
For this, set $u_1,u_2,b_1,b_2$ as follows:
\[
(u_1,u_2;b_1,b_2)=
\begin{cases}
\, (a,-b;x,y), & \text{if $a \equiv 1, b \equiv -1 \mod{4},$} \\
\, (-a,b;x,y), & \text{if $a \equiv -1, b \equiv 1 \mod{4},$} \\
\, (-a,-b;x,y), & \text{if $a \equiv b \equiv -1 \mod{4}.$}
\end{cases}
\]
Then, by \eqref{noneasy},
\begin{gather*}
\pm \Lambda =u_1^{b_1}-u_2^{b_2},\\
u_1=\pm a, \ \ u_2=\pm b, \quad \min\bigr\{\nu_2 (u_1-1), \, \nu_2 (u_2-1) \bigr\} = \alpha.
\end{gather*}
For any positive integer $g$ and $i \in \{1,2\}$, observe from Lemma \ref{2adic} that
\[
\nu_2 ({u_i}^g-1)= \nu_2 ({u_i}^2-1)-1+\nu_2 (g)=\nu_2 (u_i-1)+\nu_2 (g) \ge \alpha +\nu_2(g).
\]
Thus we may set
\[
(g,E):= \begin{cases}
\, (2,3), & \text{if $\alpha=2,$} \\
\, (1,\alpha), & \text{if $\alpha \ge 3.$}
\end{cases}
\]
In what follows, let us separately consider the cases where $\alpha \ge 3$ and $\alpha=2$.
By symmetry of $a$ and $b$, we may assume that $a>b$ without loss of generality.

First, consider the case where $\alpha \ge 3$.
Observe that
\[
|u_1|=a>b=|u_2| \ge 2^\alpha-1.
\]
Thus we may set $H_1:=\log a$ and $H_2:=m_3\log{b}$.
Proposition \ref{bu} together with \eqref{z-low} gives
\begin{equation} \label{z-upp-alp3}
z \leq \frac{36.1m_3\mathcal B^2}{\,\beta (\log 2)^4 \alpha^3\,}\,(\log a)\log b,
\end{equation}
where
\[
\mathcal B= \max\!\left\{\log \Big( \frac{x}{m_3\log{b}}+\frac{y}{\log a}\Big) + \log( \alpha\log 2 )+0.4,\, 6\alpha\log 2  \right\}.
\]
Since $x<\frac{\log{c}}{\log a}z, \, y<\frac{ \log{c}}{\log{b}}z$, we have
\[\hspace{-2cm}
(6\alpha \log 2 \le) \ \ \mathcal B \le \log \max\left\{ z L', 2^{6\alpha}  \right\}
\]
with
\[
L'=\frac{\big(1+\frac{1}{m_3}\big)\alpha (\log 2){\rm e}^{0.4}\log c}{(\log a)\log b}.
\]
If $zL' \le 2^{6\alpha}$, then inequality \eqref{z-upp-alp3} yields
\[
z \le \frac{36.1m_3(6\alpha\log 2)^2}{\beta (\log 2)^4\,\alpha^3}\,(\log a)\log{b} <\frac{2705m_3}{\alpha \beta}\,(\log a)\log{b}.
\]
Suppose that $z L'>2^{6\alpha}$. Then
\[
z \le \frac{36.1m_3\log^2 (zL')}{\beta (\log 2 )^4\alpha^3}\,(\log a)\log{b},
\]
that is,
\begin{align*}
\frac{z L'}{\log^2(z L')}
&\le \frac{36.1 m_3L'}{ (\log 2 )^4\alpha^3\beta}\,(\log a)\log{b}
=\frac{p(m_3+1)\log{c}}{\alpha^2\beta}
\end{align*}
with $p=\frac{36.1\cdot {\rm e}^{0.4}}{(\log 2 )^3}$.
Let us apply Lemma \ref{pdw-lemma} to the above inequality with
\[
v:=\frac{p(m_3+1)\log{c}}{\alpha^2\beta}, \quad v_0:=\frac{2^{3\alpha}}{3\alpha \log 2}, \quad t:=zL'.
\]
Indeed, as $zL'>2^{6\alpha}$,
\[
2v^{1/2} \ge 2 \sqrt{\frac{zL'}{\log^2(zL')}}>v_0.
\]
Putting $V=\bigl(1+\frac{\log\log{v_0}}{\log{v_0}-1}\bigl)^2$, we see that
\begin{align*}
z &< \frac{1}{L'}\,V \,v \log^2(4v)\\
&=\frac{V(\log a)\log b}{\big(1+\frac{1}{m_3}\big)\alpha (\log 2){\rm e}^{0.4}\log c} \,\frac{\frac{   36.1\cdot {\rm e}^{0.4}             (m_3+1)}{(\log 2 )^3}\log{c}}{\alpha^2\beta} \log^2(4v)\\
&=\frac{36.1V m_3}{(\log 2)^4\alpha^3\beta} \,\log^2(4v)\,(\log a)\log b\\
&< \frac{156.39V m_3}{\alpha^3\beta} \log_{\ast}^2 \biggl( \frac{646.9(m_3+1)\log{c}}{\alpha^2\beta}  \biggl) (\log a)\log{b}.
\end{align*}
To sum up, the assertion is shown in this case.

Next, consider the case where $\alpha=2$.
We proceed almost similarly to the previous case.
Observe that $2^E=8$, and $|u_1|=a=\max\{a,b\}>8$ by assumption.
Thus we may set $H_1:=\log a$ and $H_2:=m_2\log{b}$.
Proposition \ref{bu} gives
\begin{equation} \label{z-upp-alp2}
z \leq \frac{36.1gm_2\mathcal B^2}{\beta (\log 2 )^4(\alpha+1)^3}\,(\log a)\log{b},
\end{equation}
where
\[
\mathcal B= \max\!\left\{\log \Big( \frac{x}{m_2\log{b}}+\frac{y}{\log a}\Big) +  \log\big((\alpha+1)\log 2\big)+0.4,\, 6(\alpha+1)\log 2  \right\}.
\]
Observe that
\[\hspace{-2cm}
(6(\alpha+1) \log 2 \le) \ \ \mathcal B \le \log \max\left\{ zL', 2^{6(\alpha+1)}  \right\}
\]
with
\[
L'=\frac{\big(1+\frac{1}{m_2}\big)(\alpha+1)(\log 2){\rm e}^{0.4}\log c}{(\log a)\log b}.
\]
If $zL' \le 2^{6(\alpha+1)}$, then inequality \eqref{z-upp-alp2} yields
\[
z \le \frac{36.1gm_2\big(6(\alpha+1)\log 2\big)^2}{\beta (\log 2)^4(\alpha+1)^3}\,(\log a)\log{b}<\frac{1803.3\,m_2}{\beta}\,(\log a)\log{b}.
\]
Suppose that $zL'>2^{6(\alpha+1)}$. Then
\[
\frac{zL'}{\log^2(zL')} \le \frac{36.1gm_2 \cdot L'}{\beta(\alpha+1)^3 (\log 2)^4}\,(\log a)\log{b}=p(m_2+1)\frac{\log{c}}{(\alpha+1)^2\beta}
\]
with $p=\frac{36.1g{\rm e}^{0.4}}{(\log 2)^3}$.
Let us apply Lemma \ref{pdw-lemma} to the above inequality with
\[
v:=p(m_2+1)\frac{\log{c}}{(\alpha+1)^2\beta}, \quad v_0:=82.073, \quad t:=zL'.
\]
Indeed, as $zL'>2^{6(\alpha+1)}=2^{18}$,
\[
2v^{1/2} \ge 2 \sqrt{\frac{zL'}{\log^2(zL')}}>2 \sqrt{\frac{2^{18}}{\log^2(2^{18})}}>v_0.
\]
Putting $V=\bigl(1+\frac{\log\log{v_0}}{\log{v_0}-1}\bigl)^2$, we see that
\begin{align*}
z &< \frac{1}{L'}\,V \,v \log^2(4v)\\
&=\frac{V(\log a)\log b}{\big(1+\frac{1}{m_2}\big)(\alpha+1)(\log 2){\rm e}^{0.4}\log{c}}\,\frac{p(m_2+1)\log c}{(\alpha+1)^2\beta} \log^2(4v)\\
&=\frac{pm_2V}{(\log 2){\rm e}^{0.4}(\alpha+1)^3\beta}\,\log_{\ast}^2\left(\frac{4p(m_2+1)\log{c}}{(\alpha+1)^2\beta} \right) \,(\log a)\log{b} \\
&=\frac{36.1gm_2V}{(\log 2)^4(\alpha+1)^3\beta}\,\log_{\ast}^2\left(\frac{\frac{4 \cdot 36.1g{\rm e}^{0.4}}{ (\log 2)^3}(m_2+1)\log{c}}{(\alpha+1)^2\beta} \right)\,(\log a)\log{b} \\
&< \frac{23.865\,m_2}{\beta}\log_{\ast}^2\left(\frac{143.75(m_2+1)\log c}{\beta} \right) (\log a)\log{b}.
\end{align*}
To sum up, the lemma is proved.
\end{proof}

\section{2-adic investigation of $z$} \label{Sec-z-2adic}

In this section, we show some results related to the 2-adic properties of $z$ in equation \eqref{abc}.
For this we prepare some notation as follows:
\begin{gather*}
\alpha_a=\nu_2(a^2-1) -1, \quad \alpha_b=\nu_2(b^2-1) -1,\\
\alpha=\min\{\alpha_a, \alpha_b \}, \quad \beta=\nu_2(c).
\end{gather*}
Note that both $a,b$ are congruent to $\pm1$ modulo $2^{\alpha}$ and $\alpha \ge 2$.

\begin{lem}\label{2adic_lower}
Let $(x,y,z)$ be a solution of equation \eqref{abc}.
Then either $(\beta,z)=(1,1)$ or $\beta z \ge \alpha.$
\end{lem}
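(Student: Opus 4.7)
The plan is to reduce $a^x+b^y$ modulo $2^{\alpha}$ and split into two subcases depending on the residue.

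First I would establish that $a\equiv \pm 1\pmod{2^{\alpha}}$ and $b\equiv \pm 1\pmod{2^{\alpha}}$. This follows from the definition of $\alpha_a,\alpha_b$: since $a$ is odd, $\gcd(a-1,a+1)=2$, and $\nu_2(a^2-1)=\alpha_a+1$ forces one of $a\pm 1$ to be divisible by $2^{\alpha_a}$; hence $a\equiv \pm 1\pmod{2^{\alpha_a}}$, and a fortiori $a\equiv \pm 1\pmod{2^{\alpha}}$, since $\alpha\le\alpha_a$. The same reasoning applies to $b$.

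Next, raising to arbitrary positive integer exponents preserves the relation $\equiv\pm 1\pmod{2^{\alpha}}$, so
\[
a^x+b^y \equiv (\pm 1)+(\pm 1) \equiv 0 \text{ or } \pm 2 \pmod{2^{\alpha}}.
\]
From $a^x+b^y=c^z$ we read off $\nu_2(a^x+b^y)=\beta z$. In the first case ($a^x+b^y\equiv 0\pmod{2^{\alpha}}$), we immediately get $\beta z\ge \alpha$. In the second case ($a^x+b^y\equiv \pm 2\pmod{2^{\alpha}}$), since $\alpha\ge 2$ we conclude $\nu_2(a^x+b^y)=1$, and therefore $\beta z=1$, which forces $\beta=z=1$ as both are positive integers.

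There is no real obstacle here; the only delicate point is the preliminary observation that both $a$ and $b$ are $\equiv\pm 1\pmod{2^{\alpha}}$, which I would present explicitly so that the dichotomy of residues of $a^x+b^y$ modulo $2^{\alpha}$ becomes transparent. The conclusion then follows by simply matching $2$-adic valuations on both sides of equation \eqref{abc}.
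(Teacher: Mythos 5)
Your proof is correct and follows essentially the same route as the paper: reduce equation \eqref{abc} modulo $2^{\alpha}$, observe that $a^x+b^y\equiv 0$ or $\pm 2\pmod{2^{\alpha}}$, and then compare $2$-adic valuations. The only difference is that you re-derive the fact that $a,b\equiv\pm 1\pmod{2^{\alpha}}$, which the paper simply records as a remark immediately before the lemma.
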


\begin{proof}
We take equation \eqref{abc} modulo $2^{\alpha}$ to see that
\[
c^z \equiv \pm 2 \pmod{2^{\alpha}} \quad \text{or}  \quad c^z \equiv 0 \pmod{2^{\alpha}}.
\]
The first congruence implies that $2 \parallel c^z$ since $\alpha \ge 2$, and the second one means that $\nu_{2}(c^z) \ge \alpha$.
\end{proof}

\begin{lem}\label{2adic_upper}
Let $(x,y,z)=(X,Y,Z),(X',Y',Z')$ be two solutions of equation \eqref{abc}.
Then $XY' \ne X'Y,$ and
\[
\beta \cdot \min\{Z,Z'\} \le \alpha +\nu_2 (XY'-X'Y).
\]
\end{lem}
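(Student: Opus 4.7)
The plan is to derive the inequality via a 2-adic manipulation of the two equations, and to establish $XY' \ne X'Y$ separately by a parity-based contradiction.

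Without loss of generality suppose $Z \le Z'$. Reducing both equations modulo $2^{\beta Z}$ yields $a^X \equiv -b^Y$ and $a^{X'} \equiv -b^{Y'}$; raising the first to $Y'$ and the second to $Y$, and cancelling the common odd factor $b^{YY'}$, gives
\[
a^{XY'} \equiv (-1)^{Y'-Y}\,a^{X'Y} \pmod{2^{\beta Z}}.
\]
A symmetric manipulation, raising instead to $X'$ and $X$, yields $b^{XY'} \equiv (-1)^{X-X'}\,b^{X'Y} \pmod{2^{\beta Z}}$. Setting $n := XY' - X'Y$ and assuming $n \ne 0$ for now, cancelling the smaller power of $a$ (respectively $b$) reduces these to $a^{|n|} \equiv \varepsilon_a$ and $b^{|n|} \equiv \varepsilon_b \pmod{2^{\beta Z}}$ with signs $\varepsilon_a,\varepsilon_b \in \{\pm 1\}$ determined by the parities of $Y'-Y$ and $X-X'$. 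A routine case check based on the parity of $|n|$ and the residues of $a,b$ modulo $4$, using Lemma \ref{2adic}, shows
\[
\nu_2(a^{|n|} - \varepsilon_a) \le \alpha_a + \nu_2(n), \qquad \nu_2(b^{|n|} - \varepsilon_b) \le \alpha_b + \nu_2(n).
\]
Since $\beta Z$ is dominated by each, taking the smaller of the two upper bounds yields $\beta Z \le \min\{\alpha_a,\alpha_b\} + \nu_2(n) = \alpha + \nu_2(n)$, which is the claimed inequality.

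To establish $XY' \ne X'Y$, suppose for contradiction the equality. Then $(X,Y)$ and $(X',Y')$ share a common primitive direction $(p,q)$ with $\gcd(p,q) = 1$, so $(X,Y) = d(p,q)$ and $(X',Y') = d'(p,q)$ with $d, d' \ge 1$ and $d \ne d'$. Setting $U := a^p$ and $V := b^q$, the pair $U, V$ is coprime, odd, and both exceed $1$, so the equations become $U^d + V^d = c^Z$ and $U^{d'} + V^{d'} = c^{Z'}$. I distinguish three sub-cases. If $d \not\equiv d' \pmod 2$, then raising $U^d \equiv -V^d \pmod{c^Z}$ to $d'$ and $U^{d'} \equiv -V^{d'} \pmod{c^{Z'}}$ to $d$ and reducing both modulo $c^{\min\{Z,Z'\}}$ yields $2V^{dd'} \equiv 0 \pmod{c^{\min\{Z,Z'\}}}$; since $\gcd(c,V)=1$ and $c>2$, this is impossible. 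If both $d,d'$ are even, then $U^d + V^d \equiv 2 \pmod 8$ (odd squares being $\equiv 1 \pmod 8$) forces $\beta Z = 1$, and likewise $\beta Z' = 1$, so $Z = Z'$, contradicting $c^Z \ne c^{Z'}$ which follows from $d \ne d'$ and $U,V > 1$. If both $d,d'$ are odd, the factorisation $U^d + V^d = (U+V)\cdot S$, with $S = \sum_{k=0}^{d-1}(-1)^k U^{d-1-k}V^k$ being a sum of $d$ odd terms (hence $S$ odd), gives $\nu_2(c^Z) = \nu_2(U+V) = \nu_2(c^{Z'})$, again forcing $Z = Z'$, a contradiction.

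The main obstacle is the last step: while the congruence manipulation yielding the inequality is routine, establishing $XY' \ne X'Y$ requires the ray-decomposition and a careful parity analysis across three sub-cases, crucially exploiting that $c$ is even with $c>2$.
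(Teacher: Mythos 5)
Your proof is correct, but it follows a genuinely different route from the paper's. The paper establishes both $XY' \ne X'Y$ and the congruence $\mathcal A^{\,|XY'-X'Y|} \equiv \pm 1 \pmod{2^{\beta \min\{Z,Z'\}}}$ in a single stroke by citing Lemma~\ref{two}, a consequence of Hu and Le's work whose first assertion rests on the primitive divisor theorem. You instead obtain the congruence by an elementary manipulation of the two equations reduced modulo $2^{\beta\min\{Z,Z'\}}$ (which is fully adequate: Lemma~\ref{two}'s stronger modulus $c^{\min\{Z,Z'\}}$ is only ever used here through its $2$-adic part), and you replace the primitive-divisor input by a self-contained argument via the ray decomposition $(X,Y)=d(p,q)$, $(X',Y')=d'(p,q)$ with $\gcd(p,q)=1$, together with a three-way parity analysis on $d,d'$ applied to $U^d+V^d=c^Z$, $U^{d'}+V^{d'}=c^{Z'}$ where $U=a^p$, $V=b^q$. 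All three sub-cases check out: opposite parity forces $c^{\min\{Z,Z'\}}\mid 2$, impossible since $c>2$ by $(\ast)$; both even gives $\beta Z=\beta Z'=1$ via the mod-$8$ computation; both odd gives $\beta Z=\beta Z'$ via the odd cofactor in the factorization of $U^d+V^d$; and in the last two cases $Z=Z'$ contradicts $U^d+V^d\ne U^{d'}+V^{d'}$ for $d\ne d'$ and $U,V>1$. The small case check confirming $\nu_2(a^{|n|}-\varepsilon_a)\le\alpha_a+\nu_2(n)$ for either sign of $\varepsilon_a$ is the same verification the paper's displayed chain of inequalities absorbs silently. The trade-off: the paper's route is shorter and reuses Lemma~\ref{two}, which is needed anyway in Section~6, while yours is self-contained and avoids the primitive divisor theorem at the cost of the decomposition and parity analysis.
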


\begin{proof}
Let $\mathcal A \in \{a,b\}$.
Since $c$ is even, we use Lemma \ref{two} (see below) for $(A,B,C;\lambda)=(a,b,c;1)$ to see that
\begin{align}\label{mod}
\mathcal A^{\mathcal E} \equiv \delta \mod{2^{\beta \min\{Z,Z'\}}}
\end{align}
for some $\delta \in \{1,-1\}$, where $\mathcal E=|XY'-X'Y|$ is a positive integer.
Lemma \ref{2adic} gives
\[
\beta \min\{Z,Z'\} \le \nu_{2}(\mathcal A^{\mathcal E}-\delta)
\le \nu_{2}(\mathcal A^2-1)-1+\nu_{2}(\mathcal E).
\]
This shows the lemma as $\alpha=\min_{\mathcal A \in \{a,b\}}\nu_{2}(\mathcal A^2-1)-1$.
\end{proof}

\if0 \begin{lem} \label{2adic-cong}
Let $s$ be an odd integer with $s \neq \pm1$.
Define $\delta \in \{1,-1\}$ and a positive integer $\alpha_s$ by the following relations:
\[
s \equiv \delta \pmod{4}, \quad 2^{\alpha_s} \parallel (s-\delta).
\]
For any positive integer $n$,
\[
s^n \equiv \delta^n+2^{u} \mod{2^{u+1}},
\]
where $u=\alpha_s+\nu_2(n)$.
\end{lem}
\fi

Finally, we show a simple sufficient condition to give an exact information on $z$ in a certain case.

\begin{lem}\label{2adic_exact}
Let $(x,y,z)=(X,Y,Z),(X',Y',Z')$ be two solutions of equation \eqref{abc} with $\beta Z \ne 1$ and $\beta Z' \ne 1.$
If $X \not\equiv X' \pmod{2},$ then $Z=\alpha/\beta$ or $Z'=\alpha/\beta.$
\end{lem}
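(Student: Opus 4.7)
The plan is to reduce both equations modulo $2^{\alpha+1}$ and exploit the precise $2$-adic information in Lemma \ref{2adic}. By Lemma \ref{2adic_lower}, the hypotheses $\beta Z\ne 1$ and $\beta Z'\ne 1$ yield $\beta Z\ge\alpha$ and $\beta Z'\ge\alpha$. I argue by contraposition: suppose $X\not\equiv X'\pmod{2}$ but both $\beta Z\ge\alpha+1$ and $\beta Z'\ge\alpha+1$, and derive a contradiction.

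Define $\delta_a,\delta_b\in\{\pm 1\}$ by $a\equiv\delta_a$ and $b\equiv\delta_b\pmod{4}$, so that $\nu_2(a-\delta_a)=\alpha_a$ and $\nu_2(b-\delta_b)=\alpha_b$. A direct case analysis of Lemma \ref{2adic} gives $\nu_2(a^n-\delta_a^n)=\alpha_a+\nu_2(n)$ for every positive integer $n$, and similarly for $b$. Consequently, modulo $2^{\alpha+1}$ one has $a^n\equiv\delta_a^n$ except when $\alpha_a=\alpha$ and $n$ is odd, in which case $a^n\equiv\delta_a+2^{\alpha}\pmod{2^{\alpha+1}}$; the analogous description holds for $b^n$. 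Reducing $a^X+b^Y\equiv 0\pmod{2^{\alpha}}$ modulo $4$ (legitimate since $\alpha\ge 2$) forces $\delta_a^X+\delta_b^Y=0$, because the alternative value $\pm 2$ would imply $\nu_2(a^X+b^Y)=1$, contradicting $\beta Z\ge\alpha\ge 2$. In particular $(\delta_a,\delta_b)\ne(1,1)$, and by the $a\leftrightarrow b$ symmetry it suffices to treat the two cases $(\delta_a,\delta_b)=(1,-1)$ and $(\delta_a,\delta_b)=(-1,-1)$.

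Without loss of generality let $X$ be odd and $X'$ even. The identity $\delta_a^X+\delta_b^Y=0$ (and its counterpart for the primed solution) then pins down the parities of $Y,Y'$: in case $(1,-1)$, both $Y$ and $Y'$ are odd; in case $(-1,-1)$, $Y$ is even and $Y'$ is odd. Splitting further according to whether $\alpha_a=\alpha$, $\alpha_b=\alpha$, or both (at least one of these must hold), I compute $a^X+b^Y$ and $a^{X'}+b^{Y'}$ modulo $2^{\alpha+1}$ using the description from the previous paragraph. A short table of subcases shows that in every one of them at least one of the two sums equals $2^{\alpha}\not\equiv 0\pmod{2^{\alpha+1}}$, thereby contradicting $\beta Z\ge\alpha+1$ or $\beta Z'\ge\alpha+1$ correspondingly. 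Thus the contrapositive is established, and hence either $\beta Z=\alpha$ or $\beta Z'=\alpha$, i.e.\ $Z=\alpha/\beta$ or $Z'=\alpha/\beta$.

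The main obstacle is just the bookkeeping across the subcases, but each reduces to a one-line computation. The underlying mechanism is clean: an exponent of opposite parity to its counterpart in the other solution forces a ``stray'' correction term of size $2^{\alpha}$ in the base whose $\alpha_\cdot$ equals $\alpha$, and these corrections cannot simultaneously cancel in both $a^X+b^Y$ and $a^{X'}+b^{Y'}$. Tracking which of $\alpha_a,\alpha_b$ equals $\alpha$ then singles out which of the two solutions has $\beta Z$ (or $\beta Z'$) forced to equal exactly $\alpha$.
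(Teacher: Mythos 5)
Your proof is correct and takes essentially the same route as the paper: both determine that $Y'$ is odd, invoke the $2$-adic formula $a^n\equiv\delta_a^n+2^{\alpha_a+\nu_2(n)}\pmod{2^{\alpha_a+\nu_2(n)+1}}$ coming from Lemma \ref{2adic}, and track how the opposite parities of $X$ and $X'$ interact with $\alpha_a,\alpha_b$. The paper organizes the endgame by computing $\beta Z=\min\{U,V\}$ when $U\neq V$ and noting $U-V<U'-V'$, while you run a uniform computation modulo $2^{\alpha+1}$ and argue by contradiction, which is a reorganization of the same case analysis.
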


\begin{proof}
Without loss of generality, we may assume that $X$ is odd and $X'$ is even.
If $Y'$ is even, then $a^{X'}+b^{Y'}$ is a sum of two squares of odd integers, so $a^{X'}+b^{Y'} \equiv 2 \pmod{4}$, thereby $\beta Z'=\nu_2(c^{Z'})=\nu_2(a^{X'}+b^{Y'})=1$.
Thus, $Y'$ is odd by assumption.
To sum up,
\begin{equation}\label{parity}
2 \nmid X, \ \ 2\mid X', \ \ 2 \nmid Y'.
\end{equation}

Take $\delta_a,\delta_b \in \{1,-1\}$ such that $a \equiv \delta_a \pmod{4}$ and $b \equiv \delta_b \pmod{4}$.
By the definition of $\alpha_a$ and $\alpha_b$,
\[
2^{\alpha_a} \parallel (a-\delta_a), \quad 2^{\alpha_b} \parallel (b-\delta_b).
\]
Recall that $\min\{\alpha_a,\alpha_b\}=\alpha$.
Then, for any solution $(x,y,z)$ of equation \eqref{abc}, 
\[
a^{x} \equiv {\delta_a}^x+2^u \mod{2^{u+1}}, \quad b^{y} \equiv {\delta_b}^y+2^v \mod{2^{v+1}},
\]
where $u=\alpha_a+\nu_2(x)$ and $v=\alpha_b+\nu_2(y)$.
Replacing the modulus of each above congruences by $2^{\min\{u,v\}+1}$ and adding the resulting relations yields
\[
a^x+b^y \equiv \delta + 2^u+2^v \mod{2^{\min\{u,v\}+1}}
\]
with $\delta:={\delta_a}^x+{\delta_b}^y \in \{-2,0,2\}$.
This congruence implies that
\[
a^x+b^y \equiv
\begin{cases}
\, 2 \pmod{4}, &\text{if $\delta=\pm2$},\\
\, 2^{\min\{u,v\}} \pmod{2^{\min\{u,v\}+1}}, &\text{if $\delta=0, u \ne v$},\\
\, 0 \pmod{2^{\min\{u,v\}+1}}, &\text{if $\delta=0, u=v$}.
\end{cases}
\]
Since $\nu_2(a^x+b^y)=\nu_2(c^z)=\beta z$, we have
\[ \beta z= \begin{cases}
\, 1, &\text{if $\delta=\pm2$},\\
\, \min\{u,v\}, &\text{if $\delta=0, u \ne v$}.
\end{cases} \]

We apply the previous argument with $(x,y,z)=(X,Y,Z),(X',Y',Z')$, together with the consideration to \eqref{parity}, to show that
\[\begin{cases}
\, \beta Z= \min\{U,V\}, &\text{if $U \ne V$},\\
\, \beta Z'= \min\{U',V'\}, &\text{if $U' \ne V'$},
\end{cases}\]
where
\[
U=\alpha_a, \ \ V=\alpha_b+\nu_2(Y), \ \ U'=\alpha_a+\nu_2(X'), \ \ V'=\alpha_b.
\]
Observe that $U-V<U'-V'$.
To sum up, these relations together imply that
\[\begin{cases}
\, \alpha_a-\alpha_b \ge \nu_2(Y), \ \beta Z'=V'=\alpha_b, &\text{if $U \ge V$},\\
\, \alpha_b-\alpha_a \ge \nu_2(X'), \ \beta Z=U=\alpha_a, &\text{if $U' \le V'$},\\
\, \beta Z=U=\alpha_a, \ \beta Z'=V'=\alpha_b, &\text{if $U<V$ and $U'>V'$}.
\end{cases}\]
This shows the lemma.
\end{proof}

\section{Preliminaries for Theorem \ref{atmost2}} \label{prelim}%

From now on, let $(a,b,c)$ be any fixed triple of positive integers satisfying \eqref{cond}.
Positive integers $\alpha,\beta$ are defined as in the previous section.
From \eqref{cond},
\begin{equation} \label{la,lb,lc}
\max\{a,b\} \ge \max\{11,2^\alpha+1\}, \ \ \min\{a,b\} \ge 2^\alpha-1, \ \ c \ge 3 \cdot 2^\beta.
\end{equation}
Also, we suppose that equation \eqref{abc} has three solutions, say $(x,y,z)=(x_t,y_t,z_t)$ with $t \in \{1,2,3\}$, that is,
\[
a^{x_1}+b^{y_1}=c^{z_1},\ \ a^{x_2}+b^{y_2}=c^{z_2}, \ \ a^{x_3}+b^{y_3}=c^{z_3}.
\]
Without loss of generality, we may assume that
\begin{equation} \label{zorders}
z_1 \le z_2 \le z_3.
\end{equation}
For each $t \in \{1,2,3\}$, we often refer to the equation $a^{x_t}+b^{y_t}=c^{z_t}$ as `$t$-th equation', and also the pair of 1st and 2nd equations as `the first two equations'.

It is obvious that
\begin{equation} \label{basic}
x_t<\frac{\log{c}}{\log a}\,z_t, \quad y_t<\frac{\log{c}}{\log b}\,z_t \quad (t=1,2,3).
\end{equation}
Lemmas \ref{2adic_lower}, \ref{2adic_upper} and \ref{2adic-bounds} tell us that
\begin{gather}
(\beta,z_t)=(1,1) \ \ \text{or} \ \ z_t \ge \frac{\alpha}{\beta} \quad (t=1,2);\label{z1}\\
\beta z_t\le \alpha +\nu_2(x_ty_{t+1}-x_{t+1}y_t) \quad (t=1,2);\label{z2}\\
z_3<\mathcal H_{\alpha,\beta,m_2}(c;a,b), \label{z3}
\end{gather}
respectively, where
\[
\mathcal H_{\alpha,\beta,m_2}(u;v,w):=\max \bigr\{ c_1,\, c_2 \log_{\ast}^2(c_3\log u) \big\} \cdot \log v \cdot \log w.
\]
From Remark \ref{c1c2c3}, 
the lower index $m_2$ of $\mathcal H$ makes sense only when $\alpha=2$.
In what follows, we simply write $\mathcal H_{\alpha,\beta,m_2}(u;v,w)=\mathcal H_{\alpha,\beta,m_2}(u)$ when $u=v=w$, also, $\mathcal H_{\alpha,\beta,m_2}(u;v):=\mathcal H_{\alpha,\beta,m_2}(u;v,w)/\log{w}$.
Also, all indices $\alpha,\beta,m_2$ are often omitted.

Under this setting, we show some results as the first preliminaries in the proof of Theorem \ref{atmost2}.

\begin{lem}\label{z1z2_upper}
The following inequalities hold.
\begin{align*}
\tag{i}
&\beta z_1-\frac{\log (z_1z_2)}{\log 2} < \alpha +\frac{1}{\log 2}\log\left(\frac{\log^2 c}{(\log a) \log b} \right)-\frac{\log g}{\log 2};\\
&\tag{ii}
\beta z_2-\frac{\log z_2}{\log 2} < \alpha +\frac{\log \mathcal H_{\alpha,\beta,m_2}(c)}{\log 2}-\frac{\log g}{\log 2},
\end{align*}
where $g$ is the greatest odd divisor of $\gcd(x_2,y_2).$
\end{lem}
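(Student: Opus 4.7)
The plan is to apply Lemma \ref{2adic_upper} to two appropriate pairs of solutions and exploit the fact that $g \mid \gcd(x_2,y_2)$ forces $g$ to divide the cross-differences $x_1y_2-x_2y_1$ and $x_2y_3-x_3y_2$; since $g$ is odd, it can be extracted from the $2$-adic valuation of these differences without altering the valuation.

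For part (i), I would apply Lemma \ref{2adic_upper} to the first two solutions to obtain $x_1y_2 \ne x_2y_1$ together with $\beta z_1 \le \alpha + \nu_2(x_1y_2-x_2y_1)$. Since $g$ divides both $x_2$ and $y_2$ by definition, it divides $x_1y_2-x_2y_1$, and writing $x_1y_2-x_2y_1 = gN$ for a nonzero integer $N$ gives $\nu_2(x_1y_2-x_2y_1) = \nu_2(N)$ because $g$ is odd. Combining the elementary bound $\nu_2(N) \le \log_2|N|$ with the trivial estimate $|A-B| < \max\{A,B\}$ for distinct positive integers $A,B$ and the inequalities \eqref{basic}, I reach
\[
\nu_2(x_1y_2-x_2y_1) < \log_2\!\left(\frac{(\log c)^2\, z_1z_2}{g\,(\log a)(\log b)}\right).
\]
Dividing by $\log 2$ and rearranging produces (i).

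For part (ii), the same argument applied to the second and third solutions yields $\beta z_2 \le \alpha + \nu_2(x_2y_3-x_3y_2)$, together with $g \mid (x_2y_3-x_3y_2)$ by the same divisibility. The identical chain of estimates gives
\[
\nu_2(x_2y_3-x_3y_2) < \log_2\!\left(\frac{(\log c)^2\, z_2z_3}{g\,(\log a)(\log b)}\right).
\]
Replacing $z_3$ by the uniform upper bound from \eqref{z3} and using that $\frac{(\log c)^2 \mathcal H_{\alpha,\beta,m_2}(c;a,b)}{(\log a)(\log b)} = \mathcal H_{\alpha,\beta,m_2}(c)$ converts the right-hand side into $\log_2(z_2\,\mathcal H_{\alpha,\beta,m_2}(c)/g)$, which after rearrangement yields (ii).

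The only delicate point is the clean extraction of $g$ from $\nu_2$: it relies simultaneously on $g$ dividing both $x_2$ and $y_2$ (so that $g$ divides each cross-difference) and on $g$ being odd (so that $\nu_2$ is preserved under division by $g$). Once this is verified, the argument is a routine combination of Lemma \ref{2adic_upper} with the inequalities \eqref{basic} and the bound \eqref{z3}, so I do not anticipate any further obstacle.
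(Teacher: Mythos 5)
Your proof is correct and follows essentially the same route as the paper's: apply \eqref{z2} (i.e.\ Lemma \ref{2adic_upper}) to the pairs $(1,2)$ and $(2,3)$, observe that $g\mid\gcd(x_2,y_2)$ and $g$ odd allow $\nu_2(x_ty_{t+1}-x_{t+1}y_t)=\nu_2\bigl((x_ty_{t+1}-x_{t+1}y_t)/g\bigr)$, bound the resulting quotient via $\nu_2(N)\le\log_2|N|$, \eqref{basic} and \eqref{z3}, and rearrange.
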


\begin{proof}
Let $t \in \{1,2\}$. From \eqref{basic},
\[
|x_ty_{t+1}-x_{t+1}y_t|<\max\{x_ty_{t+1},x_{t+1}y_t\}<\frac{\log^2 c}{(\log a) \log b}\,z_t z_{t+1}.
\]
In particular, by \eqref{z3},
\begin{equation}\label{x2y3x3y2}
|x_2y_3-x_3y_2|<\frac{\log^2 c}{(\log a) \log b}\,z_2 z_3<z_2 \cdot \mathcal H_{\alpha,\beta,m_2}(c).
\end{equation}
Also, by the definition of $g$,
\[
\nu_2( x_t y_{t+1}-x_{t+1}y_t )=\nu_2 \left( \frac{x_t y_{t+1}-x_{t+1}y_t}{g} \right).
\]
These relations together with \eqref{z2} yield
\begin{align*}
\beta z_t \le \alpha + \nu_2\left( x_t y_{t+1}-x_{t+1}y_t\right)
& \le \alpha +\frac{\log \bigr(| x_t y_{t+1}-x_{t+1}y_t |/ g\bigr)}{\log 2}\\
&< \alpha +\frac{1}{\log 2}\log\left(\frac{\log^2 c}{g(\log a) \log b}\,z_{i} z_{t+1} \right).
\end{align*}
Thus
\[
\beta z_t-\frac{\log z_t}{\log 2} <\alpha +\frac{1}{\log 2}\log\left(\frac{\log^2 c}{g(\log a) \log b}\,z_{t+1} \right).
\]
This inequality for $t=1$ immediately yields (i).
(ii) follows from the above inequality for $t=2$ together with \eqref{z3}.
\end{proof}

\begin{definition}\rm
For given $\alpha,\beta,m_2,a,b,c$ and $g$, define $\mathcal U_2=\mathcal U_2(\alpha,\beta,m_2,c,g)$ as the largest $z_2$ among $z_2$ satisfying inequality (ii) of Lemma \ref{z1z2_upper}, and define $\mathcal U_1=\mathcal U_1(\alpha,\beta,m_2,a,b,c,g)$ as the largest $z_1$ among $z_1$ satisfying inequality (i) of Lemma \ref{z1z2_upper} with $z_2$ replaced by $\mathcal U_2$.
\end{definition}

As remarked before, $m_2$ affects the definitions of $\mathcal U_1,\mathcal U_2$ only when $\alpha=2$.
Note that both $\mathcal U_1,\mathcal U_2$ are decreasing on $\alpha\,(\ge 3),\beta$ and $g$, and are increasing on $c$, and on $m_2$ if $\alpha=2$.

\begin{lem}\label{firstbounds}
$z_2 \le 230, \ \max\{x_1,y_1,x_2,y_2\}<4300.$
\end{lem}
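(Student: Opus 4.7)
The strategy is to invoke the implicit upper bounds $\mathcal{U}_1$ and $\mathcal{U}_2$ set up earlier in this section, reduce to finitely many worst-case parameter choices by monotonicity, and finish by a direct numerical verification.

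For the first claim $z_2 \le 230$, I start from $z_2 \le \mathcal{U}_2(\alpha,\beta,m_2,c,g)$, which is just Lemma~\ref{z1z2_upper}(ii) rewritten. By the monotonicity observed after that definition, $\mathcal{U}_2$ is decreasing in $\alpha$ (for $\alpha \ge 3$), in $\beta$, and in $g$, and is increasing in $c$ and in $m_2$ (the latter only when $\alpha = 2$). Combined with \eqref{cond}, which forces $\alpha \ge 2$, $\beta \ge 1$, $g \ge 1$, $c \le 10^{62}$, and $m_2 \le (\log 8)/(\log 3)$, the extremal configuration is $(\alpha,\beta,g) = (2,1,1)$, $c = 10^{62}$, and $m_2 = (\log 8)/(\log 3)$ (attained when $\min\{a,b\} = 3$, which automatically forces $\alpha = 2$ since $\nu_2(3^2-1) - 1 = 2$). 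Substituting the $\alpha = 2$ constants $c_1, c_2, c_3$ from Lemma~\ref{2adic-bounds} into $\mathcal{H}_{\alpha,\beta,m_2}(c) = \max\{c_1, c_2\log_{\ast}^2(c_3\log c)\}\,(\log c)^2$ and numerically solving the resulting implicit inequality yields the bound $z_2 \le 230$.

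With $z_2 \le 230$ in hand, Lemma~\ref{z1z2_upper}(i) in the same worst case gives $z_1 \le \mathcal{U}_1$, where $\mathcal{U}_1$ is obtained by plugging $z_2 = 230$ into the defining inequality; numerical evaluation produces an explicit small value, and in particular $z_1 \le z_2 \le 230$. I then pass to bounds on $x_i, y_i$ via \eqref{basic}: $x_i < (\log c/\log a)\,z_i$ and $y_i < (\log c/\log b)\,z_i$ for $i=1,2$. A case split on $\min\{a,b\} \in \{3,5,7\}$ versus $\min\{a,b\} \ge 9$ (with $m_2 = (\log 8)/(\log \min\{a,b\})$ respectively $m_2 = 1$, each refining $\mathcal{U}_2$ and hence $\mathcal{U}_1$) together with $\log a, \log b \ge \log \min\{a,b\}$ confirms $\max\{x_1,y_1,x_2,y_2\} < 4300$ in each sub-case.

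The main obstacle is that the extremal sub-case $\min\{a,b\} = 3$ is tight in two ways simultaneously: $m_2$ is largest (yielding the loosest bound on $\mathcal{U}_2$, hence on $z_2$), while $(\log c)/\log \min\{a,b\}$ is largest (giving the greatest amplification of $z_i$ into $x_i$). Any slight over-estimation in either step would breach the $4300$ threshold, so the numerical computation of $\mathcal{U}_2$ and $\mathcal{U}_1$ has to be carried out carefully in that sub-case. For $\min\{a,b\} \ge 5$ both factors shrink, and the final bound follows with ample room.
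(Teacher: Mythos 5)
Your reduction to a single extremal parameter choice $(\alpha,\beta,g)=(2,1,1)$, $m_2=\log 8/\log 3$ is where the proof breaks down, and the numerical claim attached to it is not just unverified but false. Plug the $\alpha=2$ constants into inequality (ii) of Lemma~\ref{z1z2_upper} with $\beta=1$, $g=1$, $c=10^{62}$: one gets $\mathcal H_{2,1,\log 8/\log 3}(10^{62})\approx 1.1\times 10^{8}$, so the right-hand side of (ii) is roughly $2+\log(1.1\times 10^8)/\log 2\approx 29$, which forces $z_2\lesssim 33$, not $230$. The value $230$ in the paper comes from the other end of the $\alpha$-range, near $\alpha\approx 205$: the right-hand side of (ii) contains the additive term $+\alpha$, which grows linearly and eventually dominates the logarithmic decay of $\mathcal H_{\alpha,\beta,m_2}(c)$, so $\mathcal U_2$ in fact \emph{increases} for large $\alpha$. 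The paper's informal ``Note'' after the definition of $\mathcal U_1,\mathcal U_2$ (decreasing on $\alpha\ge 3$) does not hold across the whole admissible range; the paper's proof does not rely on that note but instead iterates explicitly over all $\alpha$ with $2\le\alpha<206$ and takes the largest resulting $\mathcal U_2$. That sweep is not a safety net but the substance of the argument, and you cannot replace it by checking $\alpha=2$ alone.

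The error propagates into the second bound. The paper derives $\max\{x_1,y_1,x_2,y_2\}<(\log c/\log\min\{a,b\})\,\mathcal U_2(\alpha)$ and uses the coupling $\min\{a,b\}\ge 2^\alpha-1$, so that when $\alpha$ is large $\mathcal U_2(\alpha)$ may be as big as $230$ but $\log\min\{a,b\}$ is correspondingly large, and when $\alpha=2$ (so $\min\{a,b\}$ can be as small as $3$) $\mathcal U_2(2)$ is only about $33$, giving $(\log 10^{62}/\log 3)\cdot 33\approx 4290<4300$. If you pair the worst $\mathcal U_2\le 230$ with the worst $\min\{a,b\}=3$ as if they were independent, you get about $(\log 10^{62}/\log 3)\cdot 230\approx 3\times 10^4$, far above the claimed $4300$. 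So the ``$\min\{a,b\}=3$ is tight in both directions simultaneously'' picture you sketch at the end is the opposite of what actually happens: for $\alpha=2$ the $z_2$-bound is small precisely because $\mathcal H$ is being measured against a small $+\alpha$ term, and it is that smallness that saves the product. Your case split on $\min\{a,b\}$ is the right kind of move, but it must be driven by the $\alpha$-dependent $\mathcal U_2(\alpha)$ paired with $\min\{a,b\}\ge 2^\alpha-1$ for the \emph{same} $\alpha$, which is exactly the computation carried out in the paper.
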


\begin{proof}
On inequality (ii) of Lemma \ref{z1z2_upper}, put $(\beta,c,m_2)=(1,10^{62},\frac{\log 8}{\log 3})$, thereby
\[
z_2-\frac{\log z_2}{\log 2}< \alpha + \frac{\log \mathcal H_{\alpha,1,\log 8/\log 3}(10^{62})}{\log 2}
\]
For each $\alpha$ with $2 \le \alpha \le \frac{\log (\min\{a,b\}+1)}{\log 2} \le \frac{\log 10^{62}}{\log 2} \ (<206)$, the above inequality yields that $\mathcal U_2(\alpha,1,\frac{\log 8}{\log 3}, 10^{62},1) \le 230$.
The second asserted inequality follows from the inequality $\min\{a,b\}^{\max\{x_1,y_1,x_2,y_2\}}<c^{\,\mathcal U_2}$ with $\min\{a,b\} \ge 2^\alpha-1$.
\end{proof}

In what follows, let $(\dagger)$ denote the following case:
\[\tag{$\dagger$}
\begin{cases}
\,\text{$x_I \not\equiv x_J \pmod{2}$ \ or \ $y_I \not\equiv y_J \pmod{2}$}\\
\,\text{for some pair $\{I,J\} \subset \{1,2,3\}$.}
\end{cases}
\]

\begin{lem}\label{dagger}
In case $(\dagger),$ either $(\beta,z_1) =(1,1)$ or $z_1=\alpha/\beta.$
\end{lem}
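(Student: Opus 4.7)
The plan is to reduce the claim directly to Lemma \ref{2adic_exact}. If $(\beta, z_1) = (1,1)$ the conclusion holds, so I may assume $(\beta, z_1) \ne (1,1)$. By Lemma \ref{2adic_lower} this forces $z_1 \ge \alpha/\beta$, and since $z_1 \le z_2 \le z_3$ by \eqref{zorders}, every $z_t$ satisfies $\beta z_t \ge \alpha \ge 2$; in particular $\beta z_t \ne 1$ for every $t \in \{1,2,3\}$, which is exactly the hypothesis needed to apply Lemma \ref{2adic_exact} to any pair of solutions among the three.

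Next, I would pick a pair $\{I,J\} \subset \{1,2,3\}$ witnessing $(\dagger)$. If $x_I \not\equiv x_J \pmod{2}$, Lemma \ref{2adic_exact} applies directly and yields $z_I = \alpha/\beta$ or $z_J = \alpha/\beta$. If instead $y_I \not\equiv y_J \pmod{2}$, the same conclusion follows from the version of Lemma \ref{2adic_exact} obtained by interchanging the roles of $a$ and $b$ (hence of $x$ and $y$, and of $\alpha_a$ and $\alpha_b$); since $\alpha = \min\{\alpha_a,\alpha_b\}$ is symmetric in this exchange, the statement is unchanged and no new proof is required. In either case $\min\{z_I, z_J\} \le \alpha/\beta$, and combining this with $z_1 \le z_I$ and $z_1 \le z_J$ from \eqref{zorders} gives $z_1 \le \alpha/\beta$; together with the earlier lower bound $z_1 \ge \alpha/\beta$, this forces $z_1 = \alpha/\beta$, as desired.

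The argument is essentially a bookkeeping exercise on top of Lemmas \ref{2adic_lower} and \ref{2adic_exact}, so no serious obstacle is expected. The only point that requires any care is confirming the symmetry of Lemma \ref{2adic_exact} in $a$ and $b$, so that the $y$-parity half of $(\dagger)$ can be handled without writing out a separate proof, and keeping track of the dichotomy in Lemma \ref{2adic_lower} so as to extract exactly the two alternatives $(\beta,z_1)=(1,1)$ or $z_1=\alpha/\beta$ announced in the statement.
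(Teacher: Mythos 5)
Your proof is correct and takes essentially the same route as the paper: assume $(\beta,z_1)\ne(1,1)$, use Lemma \ref{2adic_lower} (recorded in \eqref{z1}) to get $z_1\ge\alpha/\beta$ and to verify the hypothesis $\beta z_t\ne 1$ for all $t$, then apply Lemma \ref{2adic_exact} to a pair witnessing $(\dagger)$ to produce some $z_t=\alpha/\beta$, and squeeze $z_1$ between the two bounds. Your explicit remark that the $y$-parity half of $(\dagger)$ is covered by the $a\leftrightarrow b$ symmetry of Lemma \ref{2adic_exact} (with $\alpha$ invariant) is the one point the paper leaves implicit; otherwise the arguments coincide.
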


\begin{proof}
Suppose that $(\beta,z_1) \ne (1,1)$.
Since $z_1 \le z_2 \le z_3$, we have $(\beta,z_t) \ne (1,1)$ for any $t$.
Then Lemma \ref{2adic_exact} tells that $z_t=\alpha/\beta$ for some $t$.
In particular, $z_1 \le z_t \le \alpha/\beta$.
On the other hand, $z_1 \ge \alpha/\beta$ by \eqref{z1}.
These inequalities together show that $z_1=\alpha/\beta$.
\end{proof}

\section{Improving work of Hu and Le}\label{Sec-improve-HuLe}%

In this section, we improve the gap principle of Hu and Le.
To follow their strategy, we start with several lemmas derived as consequences of \cite{HuLe_deb19}.

\begin{definition}
Let $r$ and $s$ be coprime integers with $s>2.$
Define $n(r,s)$ be the least positive integer among positive integer $n$'s for which $r^n$ is congruent to $\pm 1$ modulo $s.$
Moreover, define $\delta=\delta(r,s) \in \{1,-1\}$ and a positive integer $f=f(r,s)$ as follows{\rm :}
\[
\delta \equiv r^{n(r,s)} \pmod{s}, \quad f=\frac{r^{n(r,s)}-\delta}{s}.
\]
\end{definition}

In the following lemma, the first statement is elementary, and the second one easily follows from the proof of \cite[Lemma 4.4]{HuLe_aa15}.

\begin{lem}\label{ele1}
Let $r$ and $s$ be coprime integers with $s>2.$
\begin{itemize}
\item[\rm (i)]
Let $n'$ be any positive integer satisfying
\[
r^{n'} \equiv \delta' \mod{s}
\]
for some $\delta' \in \{1,-1\}.$ Then
\begin{gather*}
n' \equiv 0 \mod{n(r,s)},\\ r^{n'}-\delta' \equiv 0 \mod{(r^{n(r,s)}-\delta(r,s))}.
\end{gather*}
\item[\rm (ii)]
Let $t$ be any positive integer whose prime factors divide $s.$
Assume that $s \not\equiv 2 \pmod{4}.$
Let $n'$ be any positive integer satisfying
\[
r^{n'} \equiv \delta' \mod{st}
\]
for some $\delta' \in \{1,-1\}.$ Then
\[
n' \equiv 0 \mod{\frac{t \cdot n(r,s)}{\gcd \bigr(t,f(r,s)\bigr)}}.
\]
\end{itemize}
\end{lem}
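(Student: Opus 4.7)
My plan is to handle (i) by the division algorithm applied to the definition of $n(r,s)$, and (ii) by reducing to a single $p$-adic valuation identity. For (i), I would write $n'=q\,n(r,s)+\rho$ with $0\le\rho<n(r,s)$; reducing the given congruence modulo $s$ yields $r^{\rho}\equiv\pm1\pmod s$, and the minimality built into the definition of $n(r,s)$ forces $\rho=0$, proving the first divisibility. With $n'=q\,n(r,s)$ in hand, the relation $r^{n'}=(r^{n(r,s)})^{q}\equiv\delta^{q}\pmod{s}$ gives $\delta^{q}\equiv\delta'\pmod s$, and since $s>2$ this congruence between elements of $\{1,-1\}$ forces the genuine equality $\delta^{q}=\delta'$. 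The standard factorisation
\[
(r^{n(r,s)})^{q}-\delta^{q}=(r^{n(r,s)}-\delta)\sum_{j=0}^{q-1}(r^{n(r,s)})^{j}\delta^{q-1-j}
\]
then delivers the second divisibility.

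For (ii), I would first apply (i) to the congruence modulo $s$ obtained from the hypothesis, yielding $n'=q\,n(r,s)$ and $\delta'=\delta^{q}$. Substituting $r^{n(r,s)}=\delta+sf$ and factoring, the hypothesis $r^{n'}\equiv\delta'\pmod{st}$ becomes
\[
(\delta+sf)^{q}-\delta^{q}=sf\cdot S\equiv 0\pmod{st},\qquad S:=\sum_{k=1}^{q}\binom{q}{k}\delta^{q-k}(sf)^{k-1},
\]
equivalently $t\mid fS$. The key identity I then need is $\nu_{p}(S)=\nu_{p}(q)$ for every prime $p$ dividing $s$. Granting this, for any prime $p\mid t$ with $p^{b}\|t$ the divisibility $t\mid fS$ gives $\nu_{p}(q)\ge b-\nu_{p}(f)=\nu_{p}\bigl(t/\gcd(t,f)\bigr)$; ranging over the primes of $t$ yields $t/\gcd(t,f)\mid q$, which is the asserted divisibility.

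The $p$-adic claim is proved term-by-term in the sum defining $S$. The $k=1$ summand is $q\delta^{q-1}$, with $\nu_{p}$-valuation exactly $\nu_{p}(q)$. For $k\ge 2$, the identity $\binom{q}{k}=\frac{q}{k}\binom{q-1}{k-1}$ gives $\nu_{p}\bigl(\binom{q}{k}\bigr)\ge\nu_{p}(q)-\nu_{p}(k)$, and combining with $\nu_{p}\bigl((sf)^{k-1}\bigr)\ge(k-1)\nu_{p}(s)$ shows that the $k$-th term has $\nu_{p}$-valuation at least $\nu_{p}(q)+(k-1)\nu_{p}(s)-\nu_{p}(k)$. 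A short case check shows $(k-1)\nu_{p}(s)-\nu_{p}(k)\ge 1$ whenever $\nu_{p}(s)\ge 1$ at odd $p$, or $\nu_{p}(s)\ge 2$ at $p=2$. Exactly these two scenarios are encoded by the hypothesis $s\not\equiv 2\pmod 4$, which rules out $\nu_{2}(s)=1$. Verifying this $p=2$ corner case, in which a naive bound would permit the $k=2$ term to have the same $\nu_{2}$ as the leading term, is the main technical obstacle; outside of it the argument reduces to a direct computation.
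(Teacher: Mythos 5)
Your proof is correct. The paper does not prove this lemma; it simply attributes part (i) as elementary and defers part (ii) to the proof of Lemma~4.4 in Hu--Le's paper, so there is no internal argument to compare against. Your argument is the natural one: part (i) via division with remainder and the minimality of $n(r,s)$, followed by the standard factorisation of $a^{q}-b^{q}$; part (ii) via the binomial expansion of $(\delta+sf)^{q}-\delta^{q}=sf\cdot S$ together with the $p$-adic estimate $\nu_{p}(S)=\nu_{p}(q)$ for every prime $p\mid s$. The term-by-term bound $(k-1)\nu_{p}(s)-\nu_{p}(k)\ge 1$ for $k\ge 2$ is verified correctly in both regimes (odd $p$ with $\nu_{p}(s)\ge 1$, and $p=2$ with $\nu_{2}(s)\ge 2$), and you correctly identify that the hypothesis $s\not\equiv 2\pmod 4$ is exactly what excludes the failing case $\nu_{2}(s)=1$ at $k=2$. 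One cosmetic point: the asserted identity $b-\nu_{p}(f)=\nu_{p}\bigl(t/\gcd(t,f)\bigr)$ holds only when $b\ge\nu_{p}(f)$; when $b<\nu_{p}(f)$ the right side is $0$ while the left is negative, but the conclusion $\nu_{p}(q)\ge\nu_{p}\bigl(t/\gcd(t,f)\bigr)$ is then trivial, so the final divisibility is unaffected.
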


Let $A,B$ and $C$ be any fixed pairwise coprime integers greater than 1.
For each $\lambda \in \{1,-1\}$, consider the following equation:
\begin{equation}\label{ABC}
A^X+\lambda B^Y=C^Z,
\end{equation}
where $X,Y,Z$ are unknown positive integers.
For our purpose, it suffices to observe equation \eqref{ABC} under the following conditions (corresponding to \eqref{cond}):
\begin{equation}\label{COND} \tag{$\ast \ast$}
\begin{cases}
\,\text{none of $A,B,C$ is a power};\\
\,\text{$2 \mid C, \,C>2, \, \max\{A,B\} \ge 11$}, & \text{if $\lambda=1$};\\
\,\text{$2 \mid A, \,A>2, \, \max\{B,C\} \ge 11$}, & \text{if $\lambda=-1$}.
\end{cases}
\end{equation}

The following lemma easily follows from the proof of \cite[Lemma 3.3]{HuLe_aa15}.
It is worth noting that its first assertion is based upon primitive divisor theorem.

\begin{lem}\label{two}
Let $(X,Y,Z)$ and $(X',Y',Z')$ be two solutions of equation \eqref{ABC}.
Assume that $C^{\,\min\{Z,Z'\}}>2.$
Then $XY' \ne X'Y.$
Moreover, for each $\mathcal A \in \{A,B\},$
\[
\mathcal A^{\,|XY'-X'Y|} \equiv \pm 1 \mod{C^{\,\min\{Z,Z'\}}}.
\]
\end{lem}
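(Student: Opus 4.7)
The plan is to treat the two claims separately. The congruence statement is straightforward algebraic manipulation, while the non-proportionality claim $XY'\ne X'Y$ is the substantive one and rests on the Bilu--Hanrot--Voutier primitive divisor theorem.

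For the congruence statement, set $M := C^{\min\{Z,Z'\}}$ and reduce both equations to
\[
A^{X} \equiv -\lambda B^{Y} \pmod{M}, \qquad A^{X'} \equiv -\lambda B^{Y'} \pmod{M}.
\]
Since $\gcd(AB,C)=1$, both $A$ and $B$ are units modulo $M$. Raising the first congruence to the $Y'$-th power and the second to the $Y$-th power, then dividing, yields
\[
A^{XY'-X'Y} \equiv (-\lambda)^{Y'-Y} = \pm 1 \pmod{M},
\]
and replacing by the inverse of $A$ modulo $M$ when $XY' < X'Y$ proves the congruence for $\mathcal{A}=A$. The analogous manipulation, raising the original two congruences to the $X'$-th and $X$-th powers respectively, yields the statement for $\mathcal{A}=B$.

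For the first assertion, I would argue by contradiction. Supposing $XY' = X'Y$ for two distinct solutions, set $g := \gcd(X,X')$ and $h := \gcd(Y,Y')$, so that $X = g\xi$, $X' = g\xi'$, $Y = h\eta$, $Y' = h\eta'$ with $\gcd(\xi,\xi') = \gcd(\eta,\eta') = 1$. The relation $XY' = X'Y$ forces $\xi\eta' = \xi'\eta$, so coprimality gives $\xi = \eta$ and $\xi' = \eta'$. Distinctness of the solutions excludes $\xi = \xi'$, so I may assume $\xi < \xi'$. Setting $U := A^{g}$ and $V := B^{h}$ (coprime), the two equations become
\[
U^{\xi} + \lambda V^{\xi} = C^{Z}, \qquad U^{\xi'} + \lambda V^{\xi'} = C^{Z'}.
\]
The Bilu--Hanrot--Voutier primitive divisor theorem now supplies a contradiction. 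For $\lambda = -1$, any primitive prime divisor of $U^{\xi'} - V^{\xi'}$ divides $C^{Z'}$, hence $C$, hence $U^{\xi} - V^{\xi} = C^{Z}$, violating primitivity. For $\lambda = 1$, the same reasoning applied to the factorization $U^{2\xi'} - V^{2\xi'} = (U^{\xi'}-V^{\xi'})(U^{\xi'}+V^{\xi'})$ places a primitive prime divisor of $U^{2\xi'} - V^{2\xi'}$ inside $C^{Z'}$, so that it divides $C$ and hence $U^{2\xi} - V^{2\xi}$, again a contradiction.

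The main obstacle is ensuring the primitive divisor theorem actually applies, i.e.\ that the exponent $\xi'$ (or $2\xi'$) is not among the finitely many exceptional values catalogued in Bilu--Hanrot--Voutier. The residual cases should be eliminated by the standing hypotheses in \eqref{COND}: none of $A,B,C$ is a perfect power, together with either $C$ even and $>2$ with $\max\{A,B\}\ge 11$, or $A$ even and $>2$ with $\max\{B,C\}\ge 11$. Each exceptional Lehmer identity imposes very restrictive algebraic constraints on the triple $(U,V,C) = (A^{g},B^{h},C)$ that are incompatible with these assumptions, so a short case analysis should clear them and complete the proof.
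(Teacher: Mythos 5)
Your proof is correct and takes essentially the same route as the paper, which derives Lemma~\ref{two} from the proof of Hu--Le's Lemma~3.3 and explicitly notes that the first assertion rests on the primitive divisor theorem: your congruence manipulation modulo $C^{\min\{Z,Z'\}}$ is the standard one, and your reduction via $g=\gcd(X,X')$, $h=\gcd(Y,Y')$ to the pair $U^{\xi}\pm V^{\xi}=C^{Z}$, $U^{\xi'}\pm V^{\xi'}=C^{Z'}$ with $\gcd(\xi,\xi')=1$ is exactly the mechanism that lets a primitive prime divisor produce the contradiction. One small remark: since $U=A^g$ and $V=B^h$ are coprime rational integers greater than $1$, you only need Zsygmondy's theorem rather than the full Bilu--Hanrot--Voutier result, and the exceptional cases ($n=2$ with $U+V$ a power of $2$, or $n=6$ with $\{U,V\}=\{1,2\}$) are immediately excluded under \eqref{COND} because in each instance of $\lambda$ exactly one of $A,B,C$ is even and neither $A$ nor $B$ equals $1$ or $2$, so the ``short case analysis'' you defer is indeed trivial.
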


The following lemma easily follows from the proofs of \cite[Lemma 4.6]{HuLe_jnt18} and \cite[Lemma 4.4]{HuLe_deb19}.

\begin{lem}\label{Y2}
Let $(X,Y,Z)$ and $(X',Y',Z')$ be two solutions of equation \eqref{ABC} such that $Z<Z'.$
Then the following hold.
\begin{itemize}
\item[\rm (i)]
If $C^{Z}>2,$ then
\[
\gcd\bigr(C,f(B,C^Z)\bigr) \mid X', \ \ \gcd\bigr(C,f(A,C^Z)\bigr) \mid Y'.
\]
\item[\rm (ii)]
If $C^{Z} \not\equiv 2 \pmod{4},$ then
\[
\gcd \bigr( C^{Z'-Z},\, f(B,C^Z)\bigr) \mid X', \ \ \gcd \bigr(C^{Z'-Z},\,f(A,C^Z)\bigr) \mid Y'.
\]
\end{itemize}
\end{lem}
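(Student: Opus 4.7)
The proof will follow the strategy of Hu and Le (cf.\ the proofs of Lemma~4.6 in~\cite{HuLe_jnt18} and Lemma~4.4 in~\cite{HuLe_deb19}). Set $n := n(B,C^Z)$, $\delta := \delta(B,C^Z)$ and $f := f(B,C^Z)$, so that $B^n = \delta + f\,C^Z$. Put $t := \gcd(C, f)$ in part~(i) and $t := \gcd(C^{Z'-Z}, f)$ in part~(ii); in either case $t$ is a common divisor of $C^{Z'-Z}$ and $f$.

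The heart of the argument is to upgrade the mod-$C^Z$ information provided by Lemma~\ref{two} to a congruence modulo $C^Z t$. Since $t \mid C^{Z'-Z}$ we have $C^Z t \mid C^{Z'}$, so reducing the equation $A^{X'} + \lambda B^{Y'} = C^{Z'}$ modulo $C^Z t$ gives $A^{X'} \equiv -\lambda B^{Y'} \pmod{C^Z t}$. Raising to the $n$-th power and exploiting $t \mid f$, which forces $fC^Z \equiv 0 \pmod{C^Z t}$ and hence $B^n \equiv \delta \pmod{C^Z t}$, reduces $B^{Y'n}$ to $\delta^{Y'}$ modulo $C^Z t$. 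Therefore
\[
A^{X'n} \equiv (-\lambda)^n \delta^{Y'} \equiv \pm 1 \pmod{C^Z t}.
\]

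From this key congruence I extract $t \mid X'$. In part~(i), where no congruential hypothesis on $C^Z$ is available, I would invoke Lemma~\ref{ele1}(i), combined with the corresponding congruence $A^{Xn} \equiv \pm 1 \pmod{C^Z}$ coming from the first equation treated in the same way. In part~(ii), the assumption $C^Z \not\equiv 2 \pmod 4$ permits Lemma~\ref{ele1}(ii), which directly yields $\frac{t\,n(A,C^Z)}{\gcd(t,\,f(A,C^Z))} \mid X'n$; combined with $n(A,C^Z) \mid Xn$ and the divisibility $n \mid XY'-X'Y$ from Lemma~\ref{two}, a prime-by-prime analysis then forces $t \mid X'$. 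The companion divisibility claims for $Y'$, with $f(A,C^Z)$ in place of $f(B,C^Z)$, are obtained by the symmetric argument swapping the roles of $A$ and $B$ (and $X,X'$ with $Y,Y'$).

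The main obstacle I foresee is precisely this last step: the displayed congruence yields a divisibility involving $X'n$ rather than $X'$, and separating out the factor of $n = n(B,C^Z)$ in order to conclude $t \mid X'$ requires delicate arithmetic. Part~(i) is particularly subtle because Lemma~\ref{ele1}(ii) is unavailable, and a prime-by-prime examination using only Lemma~\ref{ele1}(i) together with the standard rule for how $n(A,p^k)$ grows with $k$ seems to be the cleanest route.
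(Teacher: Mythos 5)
Your congruence $A^{X'n} \equiv \pm 1 \pmod{C^{Z}t}$ is correct, but it cannot yield $t\mid X'$, and the routes you sketch do not close the gap you yourself flag. The congruence is true for essentially trivial reasons: from the exact factorisation $A^{X'n}=(-\lambda)^{n}B^{Y'n}(1+u'')^{n}$ with $u'':=-\lambda^{-1}B^{-Y'}C^{Z'}$, each of the factors $B^{Y'n}=\delta^{Y'}(1+\delta fC^{Z})^{Y'}$ and $(1+u'')^{n}$ already collapses to $\pm 1$ modulo $p^{\nu_p(C^Z)+\nu_p(t)}$ for every $p\mid t$, because $\nu_p(\delta fC^{Z})$ and $\nu_p(u'')$ are already at least $\nu_p(C^Z)+\nu_p(t)$; the exponent $X'$ plays no role in the congruence. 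Consequently, applying Lemma~\ref{ele1}(ii) with $r=A$ returns $\frac{t\,n(A,C^{Z})}{\gcd(t,f(A,C^{Z}))}\mid X'n$, which carries a stray factor $n=n(B,C^{Z})$ and refers to $f(A,C^{Z})$ rather than the $f(B,C^{Z})$ in the statement; the auxiliary relations $n(A,C^{Z})\mid Xn$ and $n\mid XY'-X'Y$ only compare $\nu_p(n)$ with $\nu_p(n(A,C^{Z}))$ up to $\nu_p(X)$ and $\nu_p(Y)$, which are uncontrolled, so the extra $n$ cannot be cancelled. For part~(i), where you fall back to Lemma~\ref{ele1}(i), no nontrivial constraint emerges at all.

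The argument that works (and that the cited Hu--Le proofs carry out) instead starts from the exact integer identity $(C^{Z}-\lambda B^{Y})^{X'}=A^{XX'}=(C^{Z'}-\lambda B^{Y'})^{X}$. Here the exponent $X'$ sits on $1+u$ with $u:=-\lambda^{-1}B^{-Y}C^{Z}$ of $p$-adic valuation \emph{exactly} $\nu_p(C^{Z})$, which is what lets one read off $\nu_p(X')$. Dividing both sides by $(-\lambda)^{X}B^{Y'X}$, writing $B^{YX'-Y'X}=B^{nm}=\delta^{m}(1+\delta fC^{Z})^{m}$ via Lemma~\ref{two} and Lemma~\ref{ele1}(i), and reducing modulo $p^{\nu_p(C^Z)+\nu_p(t)}$---where $\nu_p(f)\ge\nu_p(t)$ kills $(1+\delta fC^{Z})^{m}$ and $\nu_p(C^{Z'})\ge\nu_p(C^{Z})+\nu_p(t)$ kills the right-hand factor $(1+u'')^{X}$---one is left with $(1+u)^{X'}\equiv 1\pmod{p^{\nu_p(C^Z)+\nu_p(t)}}$, whence $\nu_p(X')\ge\nu_p(t)$ by the standard lifting-the-exponent computation. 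The hypothesis $C^{Z}\not\equiv 2\pmod{4}$ in part~(ii) is precisely $\nu_2(C^{Z})\ge 2$, needed to make that last step valid at $p=2$ when $\nu_p(t)$ may exceed $\nu_p(C^{Z})$; in part~(i) one has $\nu_p(t)\le\nu_p(C)\le\nu_p(C^{Z})$, so only the $k=1$ binomial term survives and $C^{Z}>2$ (which pins down a sign) suffices.
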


Before going to state the improved gap principle, we show an elementary lemma.

\begin{lem}\label{g2_upper_a}
Let $(X,Y,Z)$ be a solution of equation \eqref{ABC} with $\lambda=-1.$
Put $G=\gcd(X,Y).$ If $G>1,$ then
\[
X<\frac{G}{G-1}\frac{\log C}{\log A}\,Z, \quad Y<\frac{G}{G-1}\frac{\log C}{\log B}\,Z.
\]
\end{lem}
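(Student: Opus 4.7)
The plan is to exploit the fact that when $G=\gcd(X,Y)>1$, the left-hand side $A^X-B^Y$ factors nontrivially, and then to compare the size of one of the factors with the entire right-hand side $C^Z$. Specifically, writing $X=Gx'$ and $Y=Gy'$ with $\gcd(x',y')=1$, I would use the standard identity
\[
A^X-B^Y=(A^{x'})^G-(B^{y'})^G=(A^{x'}-B^{y'})\,\Sigma,\qquad \Sigma:=\sum_{i=0}^{G-1}(A^{x'})^{G-1-i}(B^{y'})^{i},
\]
so that $(A^{x'}-B^{y'})\,\Sigma=C^{Z}$.

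First, I would observe that $C^Z>0$ forces $A^X>B^Y$, hence $A^{x'}>B^{y'}$ (take $G$-th roots), and so $A^{x'}-B^{y'}\ge 1$. This gives the basic crude bound $C^{Z}\ge\Sigma$. Next, since $G\ge 2$, the sum $\Sigma$ contains at least two strictly positive summands; in particular, the term at $i=0$ equals $(A^{x'})^{G-1}$ while the term at $i=G-1$ equals $(B^{y'})^{G-1}$. Therefore
\[
\Sigma>(A^{x'})^{G-1}\qquad\text{and}\qquad \Sigma>(B^{y'})^{G-1},
\]
each inequality being strict thanks to the presence of the other (positive) summands.

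Combining these two strict inequalities with $C^{Z}\ge\Sigma$ yields $C^{Z}>(A^{x'})^{G-1}$ and $C^{Z}>(B^{y'})^{G-1}$. Taking logarithms of the first gives
\[
Z\log C>(G-1)x'\log A=\frac{G-1}{G}\,X\log A,
\]
which rearranges to the claimed bound $X<\frac{G}{G-1}\cdot\frac{\log C}{\log A}\cdot Z$; the second inequality produces the bound on $Y$ in exactly the same way. There is no real obstacle here: the lemma is purely algebraic, and the only subtlety is remembering to use the strict (rather than weak) inequality on $\Sigma$, which is what lets us avoid a hypothesis like $A^{x'}-B^{y'}>1$ and produce strict bounds on $X$ and $Y$.
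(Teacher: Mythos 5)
Your proof is correct and follows exactly the paper's argument: both factor $C^Z=(A^{X/G})^G-(B^{Y/G})^G=(A^{X/G}-B^{Y/G})\Sigma$, note that the first factor is a positive integer and that $\Sigma$ strictly exceeds each of $(A^{X/G})^{G-1}$ and $(B^{Y/G})^{G-1}$, and conclude by taking logarithms. The only cosmetic difference is that you spell out the strictness argument, which the paper leaves implicit.
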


\begin{proof}
By the definition of $G$,
\[
C^{Z}=(A^{X/G})^{G}-(B^{Y/G})^{G}=(A^{X/G}-B^{Y/G}) \bigr((A^{X/G})^{G-1}+ \cdots + (B^{Y/G})^{G-1}\bigr)
\]
with $A^{X/G}-B^{Y/G}>0$. In particular,
\[
A^{\frac{G-1}{G}X}<C^{Z}, \quad B^{\frac{G-1}{G}Y}<C^{Z}.
\]
These give the asserted inequalities.
\end{proof}

For any positive numbers $P$ and $Q$, we define $t_{P,Q}$ as follows:
\[
t_{P,Q}:=\frac{\log \min\{P,Q\}}{\log \max\{P,Q\}}.
\]

Now we state our improved gap principle.

\begin{prop}\label{improvedgap}
Suppose that equation \eqref{ABC} has three solutions $(X,Y,Z)=(X_r,Y_r,Z_r)$ with $r \in \{1,2,3\}$ such that $Z_1<Z_2 \le Z_3.$
Put $G_2=\gcd(X_2,Y_2),$ and
\[
\chi :=\begin{cases}
\ 2, & \text{if $Z_1>1$, and either $\lambda=1$ or $C>\max\{A,B\}$},\\
\ 1, & \text{otherwise}.
\end{cases}
\]
\begin{itemize}
\item[\rm (I)]
Suppose that $C^{Z_1} \equiv 2 \pmod{4}$ with $C^{Z_1}>2.$ Then
\begin{align*}
& C \mid G_2 \cdot (X_2Y_3-X_3Y_2);\\
& C \le \frac{\min\{ X_2 \log B, Y_2 \log A \}}{\log (\chi C^{Z_1}-1)} \cdot  |X_2Y_3-X_3Y_2|.
\end{align*}
Moreover, if either $\lambda=1,$ or $\lambda=-1$ with $G_2>1,$ then
\[
C<\mathcal K \cdot t_{A,B} \cdot \frac{Z_2}{Z_1}\cdot |X_2Y_3-X_3Y_2|,
\]
where
\begin{gather*}
\mathcal K=\begin{cases}
\, \frac{Z_1\log C}{\log (\chi C^{Z_1}-1)}, &\text{if $\lambda=1$},\\
\, \frac{Z_1\log C}{\log (\chi C^{Z_1}-1)}\cdot \frac{G_2}{G_2-1}, &\text{if $\lambda=-1,G_2>1$}.
\end{cases}
\end{gather*}
\item[\rm (II)]
Suppose that $C^{Z_1} \not\equiv 2 \pmod{4}.$
Then
\begin{align*}
& C^{Z_2-Z_{1}} \mid G_2 \cdot (X_2Y_3-X_3Y_2);\\
& C^{Z_2-Z_{1}} \le \frac{\min\{ X_2 \log B, Y_2 \log A \}}{\log (\chi C^{Z_1}-1)} \cdot  |X_2Y_3-X_3Y_2|.
\end{align*}
Moreover, if either $\lambda=1,$ or $\lambda=-1$ with $G_2>1,$ then
\[
C^{Z_2-Z_{1}}<\mathcal K \cdot t_{A,B} \cdot \frac{Z_2}{Z_1}\cdot |X_2Y_3-X_3Y_2|,
\]
where $\mathcal K$ is the same as in (I).
\end{itemize}
\end{prop}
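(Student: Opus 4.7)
The plan is to proceed in the spirit of Hu and Le's gap principle --- using Lemmas \ref{two}, \ref{Y2} and \ref{ele1} to extract congruences modulo powers of $C$ from triples of hypothetical solutions --- but to insert the factorisation of the middle solution through its common factor $G_2=\gcd(X_2,Y_2)$ at the crucial step, and to bring in the viewpoint of generalised Fermat equations to control that factor.

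\textbf{Step 1 (divisibility).} Apply Lemma \ref{two} to the pair of solutions $(X_2,Y_2,Z_2)$, $(X_3,Y_3,Z_3)$ to obtain
\[
 A^{\ell}\equiv\pm 1,\qquad B^{\ell}\equiv\pm 1\pmod{C^{Z_2}},
\]
where $\ell=|X_2Y_3-X_3Y_2|>0$. By Lemma \ref{ele1}(i) in Case (I), where $C^{Z_1}\equiv 2\pmod 4$, and by Lemma \ref{ele1}(ii) applied with $s=C^{Z_1}$ and $t=C^{Z_2-Z_1}$ in Case (II) (the hypothesis that every prime of $t$ divides $s$ is satisfied since $C^{Z_1}\not\equiv 2\pmod 4$ forces every prime of $C$ to appear already in $C^{Z_1}$), the divisibility of $\ell$ by the relevant orders converts into divisibility by $C^{Z_2-Z_1}$ modulo the correction factors $\gcd(C^{Z_2-Z_1},f(A,C^{Z_1}))$ and $\gcd(C^{Z_2-Z_1},f(B,C^{Z_1}))$. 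Feeding in the fact that, by Lemma \ref{Y2} applied to solutions $1$ and $2$, these correction factors already divide $Y_2$ and $X_2$ respectively, and then using the coprimality $\gcd(X_2/G_2,Y_2/G_2)=1$ to consolidate the two sides, one arrives at $C\mid G_2\ell$ in Case (I), and $C^{Z_2-Z_1}\mid G_2\ell$ in Case (II).

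\textbf{Step 2 (from divisibility to the inequality on $C$ or $C^{Z_2-Z_1}$).} Writing $X_2=G_2 x$, $Y_2=G_2 y$ with $\gcd(x,y)=1$, the second equation becomes the generalised Fermat--type relation $(A^{x})^{G_2}+\lambda(B^{y})^{G_2}=C^{Z_2}$. Comparing this with the first equation, and in Case (II) also using Lemma \ref{two} applied to the pair $(1,2)$, yields the lower bounds
\[
 A^{y}\ge \chi C^{Z_1}-1,\qquad B^{x}\ge \chi C^{Z_1}-1,
\]
with $\chi=2$ available precisely in the situations listed in the definition of $\chi$, where the extremal possibility $A^{y}=C^{Z_1}-1$ or $B^{x}=C^{Z_1}-1$ is excluded by a direct examination of the first equation (the hypotheses $Z_1>1$ together with ``$\lambda=1$ or $C>\max\{A,B\}$'' supply the required slack). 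Taking logarithms of both lower bounds gives $G_2\log(\chi C^{Z_1}-1)\le \min\{X_2\log B,\,Y_2\log A\}$, which combined with the divisibility of Step 1 yields the first form of the inequality. The ``$\mathcal{K}\cdot t_{A,B}$'' form then follows by bounding $X_2$ and $Y_2$ in terms of $Z_2$ from the second equation --- directly when $\lambda=1$, and with the extra factor $G_2/(G_2-1)$ supplied by Lemma \ref{g2_upper_a} when $\lambda=-1$ and $G_2>1$ --- and observing that $\min\{X_2\log B,\,Y_2\log A\}\le t_{A,B}\cdot Z_2\log C$.

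The main technical obstacle is going to be the careful bookkeeping around the definition of $\chi$: precisely ruling out $A^{y}=C^{Z_1}-1$ and $B^{x}=C^{Z_1}-1$ in the $\chi=2$ regime requires a short case analysis based on the sign $\lambda$ and the relative sizes of $A,B,C$, together with the pairwise coprimality assumed throughout. A secondary issue is the ``fusion'' step in Step 1: one must verify that $\gcd(x,y)=1$ forces a single (rather than squared) copy of $G_2$ to appear on the right-hand side of the divisibility, which is where coprimality plays a genuinely non-cosmetic role.
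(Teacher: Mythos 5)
Your Step~1 reaches the divisibility $C\mid G_2(X_2Y_3-X_3Y_2)$ in roughly the right spirit (fusing the two per-base divisibilities from Lemma~\ref{Y2} over each prime of $C$), but Step~2 contains a genuine gap. The claimed lower bounds $A^{Y_2/G_2}\ge\chi C^{Z_1}-1$ and $B^{X_2/G_2}\ge\chi C^{Z_1}-1$ are not established and do not follow from ``comparing with the first equation'' or from Lemma~\ref{two} applied to solutions $1$ and $2$: they would be equivalent to $G_2\cdot n(A,C^{Z_1})\le Y_2$ and $G_2\cdot n(B,C^{Z_1})\le X_2$, but the least exponent $n_1=n(\mathcal A,C^{Z_1})$ with $\mathcal A^{n_1}\equiv\pm1\pmod{C^{Z_1}}$ has no a priori relation to the quotients $Y_2/G_2$, $X_2/G_2$. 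The lower bound in the paper concerns a different quantity: one writes $\mathcal A^{n_1}=C^{Z_1}f+\delta$, so $\mathcal A^{n_1}\ge 2C^{Z_1}-1$ when $f\ge2$, and rules out $f=1$ in the $\chi=2$ regime via Mih\u{a}ilescu's theorem on Catalan's equation (using $Z_1>1$ together with ``$\lambda=1$ or $C>\mathcal A$'' to force $\min\{n_1,Z_1\}>1$); this gives $\mathcal A^{n_1}\ge\chi C^{Z_1}-1$, not $\mathcal A^{Y_2/G_2}\ge\chi C^{Z_1}-1$.

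Moreover, the inequality $C\le\frac{\min\{X_2\log B,\,Y_2\log A\}}{\log(\chi C^{Z_1}-1)}\,|X_2Y_3-X_3Y_2|$ is \emph{not} obtained from $C\mid G_2(X_2Y_3-X_3Y_2)$ plus an estimate on $G_2$. It comes from the strictly stronger per-base divisibility $n_1 C\mid\gcd(f,C)\cdot|X_2Y_3-X_3Y_2|$. In Case~(I) that factor of $C$ does not come from Lemma~\ref{ele1}(i) (which only gives $n_1\mid|X_2Y_3-X_3Y_2|$) but from binomially expanding $\mathcal A^{n_1n_2}=(C^{Z_1}f+\delta)^{n_2}$ modulo $C^{Z_2}$ and reading off $C\mid n_2\gcd(f,C)$ from $Z_2>Z_1$; this computation is entirely absent from your plan. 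Combining $n_1 C\mid\gcd(f,C)\cdot|X_2Y_3-X_3Y_2|$ with $\gcd(f_A,C)\mid Y_2$, $\gcd(f_B,C)\mid X_2$ and $n_1\ge\log(\chi C^{Z_1}-1)/\log\mathcal A$ gives the inequality directly; dropping $n_1$ and fusing the two bases gives the divisibility. The factor $n_1$ is precisely what carries $\log(\chi C^{Z_1}-1)$ into the denominator, so the route through $G_2$ discards exactly the information the inequality needs.
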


\begin{proof}
We fix the value of $\mathcal A \in \{A,B\}$. 
Applying Lemma \ref{two} for $(X,Y,Z)=(X_2,Y_2,Z_2)$ and $(X',Y',Z')=(X_3,Y_3,Z_3)$ shows that
\begin{align}\label{cong_||}
\mathcal A^{\,|X_2Y_3-X_3Y_2|} \equiv \varepsilon \mod{C^{Z_2}}
\end{align}
with $X_2Y_3-X_3Y_2 \neq 0$ and some $\varepsilon \in \{1,-1\}$.
Since $Z_2 \ge Z_1$, Lemma \ref{ele1}\,(i) for $(r,s)=(\mathcal A,C^{Z_1})$ together with congruence \eqref{cong_||} tells us that $|X_2Y_3-X_3Y_2|$ is divisible by $n(\mathcal A,C^{Z_{1}})$.
Put
\[
n_{1}:=n(\mathcal A,C^{Z_{1}}).
\]
Then
\begin{equation}\label{n2}
|X_2Y_3-X_3Y_2|=n_1n_2
\end{equation}
for some positive integer $n_2$. On the other hand,
\[
\mathcal A^{n_1}=C^{Z_{1}}f+\delta,
\]
where $f=f(\mathcal A,C^{Z_1})$ and $\delta=\delta(\mathcal A,C^{Z_1})$.

Suppose that $f=1$. Then $\mathcal A^{n_1}=C^{Z_1} \pm 1$. Observe that
\[
\mathcal A^{n_1} \ge C^{Z_1}-1\, \begin{cases}
\,=A^{X_1}+B^{Y_1}-1>\mathcal A, & \text{if $\lambda=1$},\\
\,\ge C^2-1>C>\mathcal A, & \text{if $C>\mathcal A$ and $Z_1>1$}.
\end{cases}
\]
Thus, if $Z_1>1$, and either $\lambda=1$ or $C>\mathcal A$, then $\min\{n_1,Z_1\}>1$, and so the well-known theorem of \cite{Mi} on Catalan's equation tells us that $(\mathcal A^{n_1},C^{Z_1})=(8,9)$, which contradicts \eqref{COND}.
By these observations,
\begin{equation}\label{lowbound_n1}
n_{1}=n(\mathcal A,C^{Z_{1}}) \ge \frac{\log (\chi C^{Z_1}-1)}{\log \mathcal A}.
\end{equation}

(I) From \eqref{cong_||} and \eqref{n2},
\[
\mathcal A^{\,n_1n_2}=C^{Z_2}h+\varepsilon,
\]
for some positive integer $h$.
We substitute the mentioned expression of $\mathcal A^{\,n_1}$ into the above to see that
\begin{align*}
C^{Z_2}h+\varepsilon=(C^{Z_{1}}f+\delta)^{n_2}=C^{Z_{1}}\sum_{i=1}^{n_2} \binom{n_2}{i}(C^{Z_{1}})^{i-1}f^i\,\delta^{n_2-i}+\delta^{n_2}.
\end{align*}
It is easy to see that $\varepsilon=\delta^{n_2}$ as $\varepsilon \equiv \delta^{n_2} \pmod{C^{Z_1}}$ with $\delta, \varepsilon \in \{1,-1\}$ and $C^{Z_1}>2$ by assumption.
Therefore,
\begin{align*}
C^{Z_2-Z_{1}}h &= \sum_{i=1}^{n_2} \binom{n_2}{i}(C^{Z_{1}})^{i-1}f^i\, \delta^{n_2-i}\\
&=n_2f\, \delta^{n_2-1}+C^{Z_{1}}\sum_{i=2}^{n_2} \binom{n_2}{i}(C^{Z_{1}})^{i-2}f^i\, \delta^{n_2-i}.
\end{align*}
Since $Z_2>Z_1$ by assumption, we have $n_2f \equiv 0 \pmod{C}$, and so
\[
n_2\gcd (f,C) \equiv 0 \mod{C}.
\]
By \eqref{n2},
\begin{equation}\label{n1C_div}
n_1 C \mid \gcd (f,C) \cdot |X_2Y_3-X_3Y_2|.
\end{equation}
On the other hand, Lemma \ref{Y2}\,(i) for $(X,Y,Z)=(X_1,Y_1,Z_1),(X',Y',Z')=(X_2,Y_2,Z_2)$ and $n=n_1$ tells us that
\[
\begin{cases}
\, \gcd (f,C) \mid Y_2, & \text{if $\mathcal A=A$},\\
\, \gcd (f,C) \mid X_2, & \text{if $\mathcal A=B$}.
\end{cases}
\]
This together with \eqref{n1C_div} gives that
\begin{equation}\label{general}
\begin{cases}
\, n(A,C^{Z_{1}}) \cdot C \,\mid \, Y_2 \cdot |X_2Y_3-X_3Y_2|, & \text{if $\mathcal A=A$},\\
\, n(B,C^{Z_{1}}) \cdot C \, \mid \, X_2 \cdot |X_2Y_3-X_3Y_2|, & \text{if $\mathcal A=B$}.
\end{cases}
\end{equation}
This enables us to deduce the first asserted divisibility relation.

Moreover, from \eqref{lowbound_n1}, inequalities \eqref{general} imply
\begin{equation}\label{chi}
\begin{cases}
\, \dfrac{\log (\chi C^{Z_1}-1)}{\log A}\,C \le Y_2 \cdot |X_2Y_3-X_3Y_2|,\\
\, \dfrac{\log (\chi C^{Z_1}-1)}{\log B}\,C \le X_2 \cdot |X_2Y_3-X_3Y_2|.
\end{cases}
\end{equation}
This implies the first asserted upper bound for $C$.

Suppose that $\lambda=1$.
Since $X_2<\frac{\log C}{\log A}Z_2$ and $Y_2<\frac{\log C}{\log B}Z_2$, we use \eqref{chi} to see that
\begin{align*}
C<\frac{\log A}{\log (\chi C^{Z_1}-1)} \cdot \frac{\log C}{\log B}\,Z_2\cdot |X_2Y_3-X_3Y_2|,\\
C<\frac{\log B}{\log (\chi C^{Z_1}-1)} \cdot \frac{\log C}{\log A}\,Z_2\cdot |X_2Y_3-X_3Y_2|,
\end{align*}
thereby
\[
C<\frac{Z_1\log C}{\log (\chi C^{Z_1}-1)}\cdot  t_{A,B} \cdot \frac{Z_2}{Z_1}\cdot |X_2Y_3-X_3Y_2|.
\]

Suppose that $\lambda=-1$ and $G_{2}>1$.
Applying Lemma \ref{g2_upper_a} for $(X,Y,Z)=(X_2,Y_2,Z_2)$ gives
\[
X_2<\frac{G_2}{G_2-1}\frac{\log C}{\log A}\,Z_2, \quad Y_2<\frac{G_2}{G_2-1}\frac{\log C}{\log B}\,Z_2.
\]
These inequalities together with \eqref{chi} gives the remaining assertion.

(II) Apply Lemma \ref{ele1}\,(ii) for
\[
(r,s,t,n')=(\mathcal A,C^{Z_1},C^{Z_2-Z_1},|X_2Y_3-X_3Y_2|),
\]
together with congruence \eqref{cong_||}. Then
\[
n_1C^{Z_2-Z_1} \,\mid \, \gcd (C^{Z_2-Z_1},f) \cdot |X_2Y_3-X_3Y_2|.
\]
Using this divisibility relation together with Lemma \ref{Y2}\,(ii) for $(X,Y,Z)=(X_1,Y_1,Z_1),(X',Y',Z')=(X_2,Y_2,Z_2)$ and $n=n_1$, we can show the assertions almost similarly to case (I).
\end{proof}

In the remaining parts of this section, we apply Proposition \ref{improvedgap} to a special case concerning equation \eqref{ABC}.
For this we prepare two lemmas from the works of Hu and Le.

The following lemma directly follows from the proofs of \cite[Lemmas 3.2 and 3.4]{HuLe_deb19}.

\begin{lem}\label{cf-1}
Let $(X,Y,Z)$ be a solution of equation \eqref{ABC} for $\lambda=1.$
Then the following hold.
\begin{itemize}
\item[\rm (i)]
If $A^{2X}<C^{Z},$ then
\[
0<\frac{\log C}{\log B}-\frac{Y}{Z}<\frac{2}{ZC^{Z/2}\log B}.
\]
\item[\rm (ii)]
Let $(X',Y',Z')$ be another solution of equation \eqref{ABC} for $\lambda=1$.
If $X>X'$ and $Z \le  Z',$ then
\[
0<\frac{\log C}{\log B}-\frac{Y'}{Z'}<\frac{2}{Z'A^{X-X'}C^{Z'-Z}\log B}.
\]
\end{itemize}
\end{lem}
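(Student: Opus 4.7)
The plan is to use only the identity $B^Y = C^Z - A^X$ together with the elementary inequality $-\log(1-u) < 2u$, which holds for $0 < u \le 1/2$. Both parts of the lemma are then direct computations; the only subtle point is verifying that the quantity $u$ to which the log inequality is applied lies in the permissible range.

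For part (i), I would rewrite $A^X+B^Y=C^Z$ as $B^Y = C^Z(1-A^X/C^Z)$ and take logarithms to obtain
\[
Y\log B - Z\log C = \log\!\left(1-\frac{A^X}{C^Z}\right).
\]
Since $A^X < C^Z$ (a consequence of $A^{2X}<C^Z$), the right-hand side is strictly negative, which gives the left-hand inequality $\frac{\log C}{\log B}-\frac{Y}{Z}>0$. Setting $u:=A^X/C^Z$, the hypothesis $A^{2X}<C^Z$ gives $u<C^{-Z/2}$; since $C^Z=A^X+B^Y \ge 4$ under $(\ast\ast)$, we have $C^{Z/2}\ge 2$ and so $u\le 1/2$. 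Applying $-\log(1-u)<2u$ and dividing by $Z\log B$ then yields the claimed upper bound $\frac{\log C}{\log B}-\frac{Y}{Z}<\frac{2}{ZC^{Z/2}\log B}$.

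For part (ii), applying the same identity to the second solution $(X',Y',Z')$ gives
\[
Y'\log B - Z'\log C = \log\!\left(1-\frac{A^{X'}}{C^{Z'}}\right),
\]
which is again negative, establishing the left-hand inequality. The key estimate is
\[
\frac{A^{X'}}{C^{Z'}} = \frac{A^X}{C^Z}\cdot\frac{1}{A^{X-X'}C^{Z'-Z}} < \frac{1}{A^{X-X'}C^{Z'-Z}},
\]
where the last step uses $A^X<C^Z$ from the first equation; since $X>X'$ and $A\ge 2$, this bound is at most $1/2$, so the log inequality applies and gives the claimed estimate after dividing by $Z'\log B$. The argument is entirely elementary and I anticipate no substantial obstacle: the only bookkeeping is ensuring $u\le 1/2$ in each case, which follows directly from the hypotheses together with the standing assumption $A\ge 2$, $C\ge 4$ of $(\ast\ast)$.
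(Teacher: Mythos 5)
Your proof is correct, and since the paper simply cites the relevant lemmas in Hu--Le \cite{HuLe_deb19} without reproducing an argument, it is the natural self-contained verification: rewrite $B^Y=C^Z(1-A^X/C^Z)$, take logarithms, and control $-\log(1-u)$ by $2u$ after checking $u\le 1/2$. Your bookkeeping for the range of $u$ is sound in both parts --- in (i) the hypothesis $A^{2X}<C^Z$ gives $u<C^{-Z/2}\le 1/2$ because $C\ge 4$ under $(\ast\ast)$ with $\lambda=1$, and in (ii) the factorization $A^{X'}/C^{Z'}=(A^X/C^Z)\cdot A^{-(X-X')}C^{-(Z'-Z)}$ together with $A^X<C^Z$ and $A\ge 3$, $X-X'\ge1$, $Z'-Z\ge 0$ puts the argument safely below $1/2$. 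This is exactly the kind of continued-fraction-style estimate underlying Hu and Le's gap principle, so I consider it the same approach.
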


The following lemma directly follows from the argument in \cite[Section 5]{HuLe_deb19}.

\begin{lem}\label{cf-2}
Let $(X,Y,Z)$ and $(X',Y',Z')$ be two solutions of equation \eqref{ABC} for $\lambda=1.$
Assume that both $Y/Z$ and $Y'/Z'$ are convergents to $\frac{\log C}{\log B}.$
If $Y/Z<Y'/Z'<\frac{\log C}{\log B},$ then
\[
Z'>\frac{1}{Z \left(  \frac{\log C}{\log B}-\frac{Y}{Z} \right)}.
\]
\end{lem}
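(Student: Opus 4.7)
The plan is to derive the stated lower bound on $Z'$ from two elementary comparisons of rational numbers, without appealing to any deep property of continued fractions beyond what is implicit in the ordering hypothesis. Set $\theta=\log C/\log B$ for brevity.

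First, since $Y,Y',Z,Z'$ are positive integers and $Y/Z<Y'/Z'$ strictly, the quantity $Y'Z-YZ'$ is a positive integer, hence at least $1$. This gives the \emph{gap estimate}
\[
\frac{Y'}{Z'}-\frac{Y}{Z}=\frac{Y'Z-YZ'}{ZZ'}\ge \frac{1}{ZZ'}.
\]
Here the hypothesis that $Y/Z$ and $Y'/Z'$ are convergents is not essential for the inequality itself; what matters is only the integrality and strict ordering, both of which are in force.

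Second, the hypothesis $Y'/Z'<\theta$ combined with $Y/Z<Y'/Z'$ yields
\[
\frac{Y'}{Z'}-\frac{Y}{Z}<\theta-\frac{Y}{Z}.
\]
Chaining the two displayed inequalities gives $\frac{1}{ZZ'}<\theta-\frac{Y}{Z}$, which upon dividing through by $Z$ is exactly the desired bound
\[
Z'>\frac{1}{Z\bigl(\theta-Y/Z\bigr)}.
\]

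There is essentially no technical obstacle: the argument is two lines of school algebra. The convergent assumption in the statement serves the surrounding narrative — in particular it is what guarantees, via Lemma \ref{cf-1} and the alternation of convergents on either side of $\theta$, that both $Y/Z$ and $Y'/Z'$ actually lie below $\log C/\log B$ so that this lemma can be applied in the proof of Theorem \ref{atmost2}; it is not used in the short derivation of the inequality itself.
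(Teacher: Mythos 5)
Your proof is correct, and it is cleaner than the route the paper takes, which is simply to cite the argument in Hu--Le [HuLe3, Section~5] without giving details. Your two observations — that $Y'Z-YZ'\ge 1$ by integrality and strict ordering, and that $Y'/Z'-Y/Z<\theta-Y/Z$ by the hypothesis $Y'/Z'<\theta$ — chain to give $1/(ZZ')<\theta-Y/Z$, and hence $Z'>1/\bigl(Z(\theta-Y/Z)\bigr)$; the strictness in the conclusion is delivered by the strict second inequality even though the first could be an equality, so the bookkeeping is right. Your side remark that the convergent hypothesis is not actually used is accurate and worth retaining: the statement follows from the ordering and integrality alone, whereas the natural (and more common) proof via continued fractions would invoke the bounds $\frac{1}{q_m(q_m+q_{m+1})}<\theta-\frac{p_m}{q_m}<\frac{1}{q_m q_{m+1}}$ together with the recurrence $q_{m+2}=a_{m+2}q_{m+1}+q_m\ge q_{m+1}+q_m$, which is heavier machinery for the same payoff. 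The only blemish is the phrase \emph{dividing through by $Z$}: one first takes reciprocals of both sides of $\frac{1}{ZZ'}<\theta-\frac{Y}{Z}$ (legitimate since both are positive, as $Y/Z<\theta$) to get $ZZ'>1/(\theta-Y/Z)$ and then divides by $Z$; as written the operation described does not literally produce the claimed inequality, though the intended step is obvious.
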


\begin{prop}\label{=<lam1}
Suppose that equation \eqref{ABC} for $\lambda=1$ has three solutions
$(X,Y,Z)=(X_r,Y_r,Z_r)$ with $r \in \{1,2,3\}$ such that $Z_1=Z_2<Z_3.$
Then one of the following inequalities holds.
\begin{gather*}
C^{Z_2/2}<\frac{2}{\log \min\{A,B\}}\,Z_3,\\
C^{Z_2/2}/Z_2 < \max_{t \in \{1,2\}}\big\{ |X_3Z_2-X_tZ_3|, \ |Y_3Z_2-Y_tZ_3| \big\}.
\end{gather*}
\end{prop}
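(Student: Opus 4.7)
The plan is to mimic the Diophantine-approximation strategy of Hu and Le, constructing sharp rational approximations to $\alpha:=\log C/\log B$ from the three solutions by means of Lemma~\ref{cf-1} and then comparing them through Lemma~\ref{cf-2}. The starting observation is that $Z_1=Z_2$ together with $(X_1,Y_1)\ne(X_2,Y_2)$ forces $X_1\ne X_2$; without loss of generality take $X_1<X_2$, whence $Y_1>Y_2$. Applying Lemma~\ref{cf-1}(ii) with $(X,Y,Z)=(X_2,Y_2,Z_2)$ and $(X',Y',Z')=(X_1,Y_1,Z_1)$ — both hypotheses being met, with $Z=Z'$ — yields
\[
0<\alpha-\frac{Y_1}{Z_1}<\frac{2}{Z_1\,A^{X_2-X_1}\log B},
\]
so $Y_1/Z_1$ is an extremely sharp one-sided approximation to $\alpha$.

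The next step is to construct an analogous approximation involving $(X_3,Y_3,Z_3)$. Splitting on the ordering of $X_3$ relative to $X_1,X_2$: if some $X_t>X_3$ with $t\in\{1,2\}$ then Lemma~\ref{cf-1}(ii) again produces a convergent-quality bound on $\alpha-Y_3/Z_3$; in the complementary case at least one of $A^{2X_3}<C^{Z_3}$ or $B^{2Y_3}<C^{Z_3}$ must hold (indeed $A^{X_3}B^{Y_3}\le C^{2Z_3}/4$ by the AM--GM inequality), so Lemma~\ref{cf-1}(i) — used in its obvious symmetric form in the latter subcase — gives a sharp approximation. The relevant swap $(A,X)\leftrightarrow (B,Y)$ is legitimate because equation~\eqref{ABC} with $\lambda=1$ and both conclusions of the proposition are symmetric under this exchange (the first uses $\min\{A,B\}$, the second takes the maximum over both the $X$- and $Y$-discrepancies).

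With the two approximations in hand, I would exploit the identity
\[
\frac{Y_3 Z_2-Y_1 Z_3}{Z_2 Z_3}=\Bigl(\alpha-\frac{Y_1}{Z_1}\Bigr)-\Bigl(\alpha-\frac{Y_3}{Z_3}\Bigr),
\]
which uses $Z_1=Z_2$. A quantitative case split then yields the conclusion: if the two approximations lie on the same side of $\alpha$ and are both of convergent quality, Lemma~\ref{cf-2} forces the denominator gap $Z_3>\bigl(Z_1(\alpha-Y_1/Z_1)\bigr)^{-1}\gtrsim A^{X_2-X_1}\log B/2$, and, in the regime $A^{X_2-X_1}\ge C^{Z_2/2}$, this rearranges into the first displayed inequality $C^{Z_2/2}<2Z_3/\log\min\{A,B\}$; in the complementary regime $A^{X_2-X_1}<C^{Z_2/2}$, the displayed identity together with the bound on $\alpha-Y_3/Z_3$ shows $|Y_3 Z_2-Y_1 Z_3|$ exceeds $C^{Z_2/2}/Z_2$, giving the second displayed inequality at $t=1$. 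The analogous pairing with $(X_2,Y_2,Z_2)$ and with the $X$-discrepancies covers the remaining subcases.

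The principal difficulty will be precisely this case split: one must verify that the two approximations end up on the same side of $\alpha$ so that Lemma~\ref{cf-2} genuinely applies, and track whether the side condition of Lemma~\ref{cf-1}(i) for the third solution is met by $A^{2X_3}<C^{Z_3}$ or by $B^{2Y_3}<C^{Z_3}$, switching the symmetric viewpoints accordingly without loss of generality.
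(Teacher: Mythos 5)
Your proposal tries to run the argument entirely through the continued-fraction machinery (Lemmas~\ref{cf-1} and~\ref{cf-2}), but this misses the structural tools that the paper actually needs, and two steps as written would fail.

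First, the paper's proof begins by observing that $Z_1=Z_2$ gives $A^{X_1}+B^{Y_1}=A^{X_2}+B^{Y_2}$, hence (after WLOG $X_1<X_2$, $Y_2<Y_1$) the divisibility relations $B^{Y_2}\mid(A^{X_2-X_1}-1)$, $A^{X_1}\mid(B^{Y_1-Y_2}-1)$, and $A^{X_2-X_1}B^{Y_1-Y_2}\equiv 1\pmod{C^{Z_2}}$. These at once yield the quantitative gap
\[
\max\{A^{X_2-X_1},\,B^{Y_1-Y_2}\}>C^{Z_2/2},
\]
so that after using the $A\leftrightarrow B$ symmetry one may assume $A^{X_2-X_1}>C^{Z_2/2}$ \emph{outright} and there is no ``complementary regime'' to deal with. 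Your proposal never establishes this gap, and consequently the claim in the case $A^{X_2-X_1}<C^{Z_2/2}$ --- that the displayed identity forces $|Y_3Z_2-Y_1Z_3|>C^{Z_2/2}/Z_2$ --- does not follow: Lemma~\ref{cf-1} only provides \emph{upper} bounds on $\alpha-Y_1/Z_1$ and $\alpha-Y_3/Z_3$, and the identity only gives the trivial lower bound $|Y_3Z_2-Y_1Z_3|\ge 1$, far short of $C^{Z_2/2}/Z_2$.

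Second, the subcase $X_3\ge\max\{X_1,X_2\}$ (i.e.\ $X_2\le X_3$ after normalization) cannot be handled by Lemma~\ref{cf-1}. Your AM--GM observation $A^{X_3}B^{Y_3}\le C^{2Z_3}/4$ does \emph{not} imply that one of $A^{2X_3}<C^{Z_3}$ or $B^{2Y_3}<C^{Z_3}$ holds: for instance $A^{X_3}\approx B^{Y_3}\approx C^{Z_3}/2$ gives $A^{2X_3}\approx B^{2Y_3}\approx C^{2Z_3}/4\gg C^{Z_3}$ while fully respecting AM--GM. So the hypothesis of Lemma~\ref{cf-1}(i) need not be satisfied for the third solution. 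The paper handles this case by a genuinely different mechanism: it regards the equation as $C^{X'}-B^{Y'}=A^{Z'}$ (i.e.\ equation~\eqref{ABC} with $\lambda=-1$) with the three solutions $(Z_r,Y_r,X_r)$ ordered by $X_1<X_2\le X_3$, applies the improved gap principle Proposition~\ref{improvedgap}\,(II) to obtain $\gcd(Y_2,Z_2)\,|Z_2Y_3-Z_3Y_2|\ge A^{X_2-X_1}>C^{Z_2/2}$, and reads off the second displayed inequality since $\gcd(Y_2,Z_2)\le Z_2$. Only in the other subcase ($X_3<X_2$) does the proof fall back on Lemmas~\ref{cf-1} and~\ref{cf-2}, and even there it uses Lemma~\ref{cf-1}(i) with the hypothesis $A^{2X_1}<C^{Z_1}$ verified via the above divisibility gap, not Lemma~\ref{cf-1}(ii). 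In short, a purely Diophantine-approximation argument does not suffice; the combination of the $Z_1=Z_2$ divisibility gap and the improved gap principle is indispensable.
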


\begin{proof}
Since $Z_1=Z_2$,
\begin{equation}\label{Z1=Z2}
A^{X_1}+B^{Y_1}=A^{X_2}+B^{Y_2}=C^{Z_2}.
\end{equation}
Note that $X_1 \ne X_2, Y_1 \ne Y_2$, and that $X_1<X_2$ if and only if $Y_2<Y_1$.
By symmetry of indices 1 and 2 in the assertion, we may assume that
\begin{equation}\label{X1<X2}
X_1<X_2, \quad Y_2<Y_1.
\end{equation}
Also, from the equations in \eqref{Z1=Z2}, observe that
\[
B^{Y_2} \mid (A^{X_2-X_1}-1), \ \
A^{X_1} \mid (B^{Y_1-Y_2}-1), \ \
A^{X_2-X_1} \cdot B^{Y_1-Y_2} \equiv 1 \mod{C^{Z_2}}.
\]
These imply that
\begin{align}
&A^{X_2-X_1}>B^{Y_2}, \quad B^{Y_1-Y_2}>A^{X_1},\label{gap_eq}\\
&\max\{A^{X_2-X_1},\,B^{Y_1-Y_2}\}> C^{Z_2/2}.\label{gap}
\end{align}
By symmetry of $A$ and $B$ in the assertion, we may assume that $A^{X_2-X_1}>B^{Y_1-Y_2}$.
From \eqref{gap},
\begin{equation}\label{gapA}
A^{X_2-X_1}> C^{Z_2/2}.
\end{equation}

Suppose that $X_2 \le  X_3$.
From \eqref{X1<X2} observe that the equation $C^{X'}-B^{Y'}=A^{Z'}$ has three solutions $(X',Y',Z')=(Z_r,Y_r,X_r)$ with $r \in \{1,2,3\}$ satisfying $X_1<X_2 \le X_3$.
Since $A^{X_1}$ is odd, Proposition \ref{improvedgap}\,(ii) yields that
\[
\gcd (Y_2,Z_2)\,|Z_2Y_3-Z_3Y_2| \ge A^{X_2-X_1}.
\]
It follows from \eqref{gapA} that
\begin{equation}\label{gap_Z1=Z2_no1}
\gcd (Y_2,Z_2)\,|Z_2Y_3-Z_3Y_2|> C^{Z_2/2}.
\end{equation}

Suppose that $X_3<X_2$.
From \eqref{Z1=Z2} and \eqref{gap_eq} observe that
\[
C^{Z_1}=C^{Z_2}>A^{X_2}=A^{X_1}A^{X_2-X_1}>A^{X_1}B^{Y_1-Y_2}>A^{2X_1}.
\]
Lemma \ref{cf-1}\,(i) for $(X,Y,Z)=(X_1,Y_1,Z_1)$ tells us that
\[
\hspace{-2cm}(0<) \quad \frac{\log C}{\log B}-\frac{Y_1}{Z_1}<\frac{2}{Z_1C^{Z_1/2}\log B}.
\]
If the RHS above is greater than $1/(2Z_1^2)$, then $\frac{C^{Z_1/2}}{Z_1}<\frac{4}{\log B}$.
It is easy to see that this leads to a contradiction to \eqref{COND}.
Thus,
\begin{equation}\label{cf_first}
\hspace{-1cm}\frac{\log C}{\log B}-\frac{Y_1}{Z_1} < \frac{2}{Z_1C^{Z_1/2}\log B} \le \frac{1}{2Z_1^2}.
\end{equation}

On the other hand, since $X_3<X_2$ and $Z_2<Z_3$, Lemma \ref{cf-1}\,(ii) for $(X,Y,Z)=(X_2,Y_2,Z_2), (X',Y',Z')=(X_3,Y_3,Z_3)$ gives
\[
(0<) \quad \frac{\log C}{\log B}-\frac{Y_3}{Z_3}<\frac{2}{Z_3A^{X_2-X_3}C^{Z_3-Z_2}\log B}.
\]
Suppose that the RHS above is greater than $1/(2Z_3^2)$. Then
\[
4Z_3>(\log B)\,A^{X_2-X_3}C^{Z_3-Z_2} \ge (A\log B)\,C^{Z_3-Z_2} \ge (A\log B) \cdot C.
\]
Put $\alpha:=\min\{ \nu_{2}(A^2-1),\, \nu_{2}(B^2-1)\}-1$.
Since $\max\{A,B\} \ge \max \{11,2^\alpha +1\}$ by \eqref{COND}, and $\min\{A,B\} \ge 2^\alpha-1$, we have $A\log B \ge c(\alpha) \ge 3 \log 11$, where $c(\alpha):=(2^\alpha -1)\log \max \{11,2^\alpha +1\}$.
Then $Z_3>\frac{1}{4}(A\log B)C >10$, and so
\[
Z_2> Z_3-\frac{\log (4Z_3)}{\log C}>\frac{3}{4}\,Z_3.
\]
This gives rise to a sharp lower bound for $Z_2$, that is, $Z_2 \ge \left\lceil \frac{3}{16}\,c(\alpha)C\right\rceil \,(\ge 3)$.
However, this is incompatible, for any $\alpha \ge 2$ and $C \ge 6$, with
\begin{align*}
\frac{2^{Z_2}}{Z_2^2}<\frac{2^\alpha \log^2 C}{\log(2^\alpha -1) \cdot \log \max \{11,2^\alpha +1\}},
\end{align*}
which is shown in the same way to show Lemma \ref{z1z2_upper}\,(i).
Therefore,
\begin{equation}\label{cf_second}
\frac{\log C}{\log B}-\frac{Y_3}{Z_3} < \frac{2}{Z_3A^{X_2-X_3}C^{Z_3-Z_2}\log B} \le \frac{1}{2Z_3^2}.
\end{equation}

To sum up, by a well-known criterion of Legendre on the continued fraction, we may conclude, from inequalities \eqref{cf_first} and \eqref{cf_second}, that both $\frac{Y_1}{Z_1},\frac{Y_3}{Z_3}$ are convergents to $\frac{\log C}{\log B}$.
The fact that $\frac{Y_1}{Z_1}\ne \frac{Y_3}{Z_3}$ follows from Lemma \ref{two} for $(X,Y,Z)=(X_1,Y_1,Z_1)$ and $(X',Y',Z')=(X_3,Y_3,Z_3)$.
Moreover, Lemma \ref{cf-2} together with \eqref{cf_first} and \eqref{cf_second} tells us that
\begin{equation}\label{gap_Z1=Z2_no2}
Z_3>\frac{\log B}{2}\,C^{Z_1/2}=\frac{\log B}{2}\,C^{Z_2/2},
\end{equation}
or $Z_1>\frac{\log B}{2}\,A^{X_2-X_3}C^{Z_3-Z_2}$.
It is easily observed that the latter inequality does not hold similarly to the observation to show \eqref{cf_second}.
The assertion follows from \eqref{gap_Z1=Z2_no1} and \eqref{gap_Z1=Z2_no2}, together with the consideration on symmetries of indices 1 and 2, and of $A$ and $B$.
\end{proof}

\begin{prop}\label{=<lam-1}
Consider the case where $A<C.$
Suppose that equation \eqref{ABC} for $\lambda=-1$ has three solutions $(X,Y,Z)=(X_r,Y_r,Z_r)$ with $r \in \{1,2,3\}$ such that $Z_1=Z_2<Z_3.$
Then the following hold.
\begin{itemize}
\item[\rm (i)]
If $X_3>\max\{X_1,X_2\},$ then one of the following inequalities holds.
\begin{gather*}
C^{Z_2}/Z_2<\max_{t \in \{1,2\}} |X_t Z_3 - X_3 Z_2|,\\
C^{Z_2/2}/Z_2 <\max_{t \in \{1,2\}} |Y_t Z_3- Y_3 Z_2|,\\
C^{Z_2/2}<\frac{2}{\log C}\,X_3.
\end{gather*}
\item[\rm (ii)]
If $X_3 \le \max\{X_1,X_2\},$ then one of the following inequalities holds.
\begin{gather*}
C^{Z_2}<\gcd (X_3,Z_3)\max_{t \in \{1,2\}}|  X_t Z_3-X_3 Z_2|,\\
C^{Z_2}/Z_2^2<|X_1-X_2|\gcd(Y_3,Z_3)\,\max_{t \in \{1,2\}}| Y_t Z_3 - Y_3 Z_2|.
\end{gather*}
\end{itemize}
\end{prop}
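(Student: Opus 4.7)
The proof follows the template of Proposition \ref{=<lam1}: a gap-principle application in one case, a continued-fraction argument in the complementary case. The essential new feature under $\lambda=-1$ and $A<C$ is that $A^{X_1}=B^{Y_1}+C^{Z_2}$ automatically gives $A^{X_1}>C^{Z_2}$, whence $X_r>Z_r$ for all $r$. By the symmetry of the conclusions in $\{1,2\}$ I may assume $X_1<X_2$; combined with $Z_1=Z_2$ this forces $Y_1<Y_2$, and factoring $A^{X_1}(A^{X_2-X_1}-1)=B^{Y_1}(B^{Y_2-Y_1}-1)$ gives, by coprimality, $A^{X_2-X_1}>B^{Y_1}$ and $B^{Y_2-Y_1}>A^{X_1}>C^{Z_2}$. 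The last inequality, which I call the \emph{strong gap}, is the source of the $C^{Z_2}$ (rather than $C^{Z_2/2}$) in the first conclusion of (i) and in both conclusions of (ii). Every invocation of Proposition \ref{improvedgap} below is through a rewrite of the equation, since $Z_1=Z_2$ leaves no strict $Z$-inequality in the original form.

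\emph{Case (i), $X_3>X_2$.} I use the rewrite $A^X-C^Z=B^Y$ (a $\lambda=-1$ instance with new bases $(A,C,B)$, new exponents $(X,Z,Y)$, and new ``$Z$-values'' $Y_r$). Since $B$ is odd, branch (II) of Proposition \ref{improvedgap} applies as soon as the $Y_r$ are in strict order. If $Y_3\ge Y_2$, translating Proposition \ref{improvedgap}(II) back yields $B^{Y_2-Y_1}\mid\gcd(X_2,Z_2)\,|X_2Z_3-X_3Z_2|$, and the strong gap together with $\gcd(X_2,Z_2)\le Z_2$ gives the first conclusion. If $Y_3<Y_2$, I switch to the mirror rewrite $B^Y+C^Z=A^X$ ($\lambda=1$, new $C'=A$), which is in natural order and yields $A^{X_2-X_1}\mid\gcd(Y_2,Z_2)\,|Y_2Z_3-Y_3Z_2|$. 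A dichotomy on $B^{Y_1}$ then finishes: if $B^{Y_1}\ge C^{Z_2/2}$, the gap $A^{X_2-X_1}>B^{Y_1}$ gives the second conclusion; if $B^{Y_1}<C^{Z_2/2}$, then $A^{X_1}<2C^{Z_2}$ together with the identity $A^{X_1}=C^{Z_2}+B^{Y_1}$ produces a $\lambda=-1$ analogue of Lemma \ref{cf-1}(i), placing $X_1/Z_2$ among the convergents of $\log C/\log A$. A parallel analogue of Lemma \ref{cf-1}(ii) for the triple $(X_3,Y_3,Z_3)$, using $Y_3<Y_2$ to control $B^{Y_3}$, exhibits $X_3/Z_3$ as a second convergent, and the Lemma \ref{cf-2} analogue forces $Z_3>C^{Z_2/2}\log A$. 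Since $X_3>Z_3\log C/\log A$, this yields $X_3>(\log C/2)\,C^{Z_2/2}$, the third conclusion.

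\emph{Case (ii), $X_3\le X_2$.} Here $B^{Y_3}<B^{Y_2}$ and so $Y_3<Y_2$, and in the rewrite $A^X-C^Z=B^Y$ the new ``$Z$-values'' $Y_r$ require a reordering into $(1,3,2)$ (if $Y_1<Y_3<Y_2$) or $(3,1,2)$ (if $Y_3<Y_1$). Proposition \ref{improvedgap}(II) on the reordered triples gives, respectively,
\[
B^{Y_3-Y_1}\mid\gcd(X_3,Z_3)\,|X_2Z_3-X_3Z_2|
\quad\text{and}\quad
B^{Y_1-Y_3}\mid\gcd(X_1,Z_2)\,Z_2\,|X_1-X_2|.
\]
Multiplying the first by $B^{Y_2-Y_3}$ and inserting the strong gap $B^{Y_2-Y_1}>C^{Z_2}$ yields the first conclusion of (ii). Pairing the second with the parallel divisibility obtained from the mirror rewrite $B^Y+C^Z=A^X$ (reordered to produce the factor $\gcd(Y_3,Z_3)$ and the maximum over $t$ of $|Y_tZ_3-Y_3Z_2|$), and invoking the strong gap once more, yields the second conclusion; the $|X_1-X_2|$ factor is the direct trace of the $(3,1,2)$-reordering and $\gcd(Y_3,Z_3)$ is the trace of the mirror rewrite.

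\emph{Main obstacle.} The hardest step is the $Y_3<Y_2$ subcase of case (i): the strong gap cannot be fed into Proposition \ref{improvedgap} because no reordering of the three solutions in the rewrite $A^X-C^Z=B^Y$ places index $2$ as the ``middle'' index with $B^{Y_2-Y_1}$ as its gap exponent. Recovering a usable bound forces the switch to the mirror rewrite with its weaker gap $A^{X_2-X_1}>B^{Y_1}$, a size dichotomy on $B^{Y_1}$ versus $C^{Z_2/2}$, and in the small-$B^{Y_1}$ regime a $\lambda=-1$ analogue of the Legendre/continued-fraction argument of Lemmas \ref{cf-1} and \ref{cf-2}. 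The analogous dichotomy on $B^{Y_3-Y_1}$ versus $C^{Z_2}$ in case (ii) is what forces the two conclusions there to be stated as alternatives rather than a single cleaner inequality.
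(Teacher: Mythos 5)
Your case (i) matches the paper's argument essentially step for step, modulo cosmetic differences in how the dichotomy is stated (you split on $B^{Y_1}$ vs.\ $C^{Z_2/2}$, the paper on $B^{2Y_1}$ vs.\ $A^{X_1}$; these are equivalent up to the identity $A^{X_1}=B^{Y_1}+C^{Z_2}$) and in the route into the convergent argument (you speak of a ``$\lambda=-1$ analogue'' of Lemma~\ref{cf-1}, whereas the paper applies Lemma~\ref{cf-1} directly to the $\lambda=1$ rewrite $B^{Y_1}+C^{Z_1}=A^{X_1}$, so no new analogue is needed).

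Your case (ii), however, has a genuine gap. The two divisibility relations you extract from the reordered applications of Proposition~\ref{improvedgap}\,(II) are correct, but neither can be closed by the strong gap $B^{Y_2-Y_1}>C^{Z_2}$ alone. For the $(1,3,2)$ reordering the first conclusion requires a lower bound of the form $B^{Y_3-Y_1}>C^{Z_1}$; multiplying your divisibility by $B^{Y_2-Y_3}$ and inserting the strong gap only gives $C^{Z_2}<B^{Y_2-Y_3}\,\gcd(X_3,Z_3)\,|X_2Z_3-X_3Z_2|$, with a spurious and uncontrolled factor $B^{Y_2-Y_3}$. Similarly, the $(3,1,2)$ reordering and its mirror call for a lower bound such as $A^{X_3-X_1}B^{Y_1-Y_3}>C^{Z_1}$, which the strong gap (which controls $Y_2-Y_1$, not $Y_1-Y_3$ or $X_3-X_1$) does not deliver. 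What the paper actually uses at this point is an additional modular-arithmetic step, independent of the strong gap: it substitutes $C^{Z_3}=C^{Z_3-Z_1}(A^{X_1}-B^{Y_1})$ into the third equation and reduces modulo $A^{X_3}$ (when $X_3\le X_1$) or modulo $C^{Z_1}$ (when $X_1<X_3\le X_2$), obtaining congruences $C^{Z_3-Z_1}B^{Y_1-Y_3}\equiv1\pmod{A^{X_3}}$, $A^{X_3-X_1}B^{Y_1-Y_3}\equiv1\pmod{C^{Z_1}}$, or $A^{X_3-X_1}\equiv B^{Y_3-Y_1}\pmod{C^{Z_1}}$; the required lower bounds follow because the congruent quantities are distinct. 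This step is absent from your write-up. A secondary structural issue: the subcase $Y_1\le Y_3<Y_2$ does not uniformly produce the first conclusion; the congruence only gives $\max\{A^{X_3-X_1},B^{Y_3-Y_1}\}>C^{Z_1}$, and the split on which factor is large yields the second or the first conclusion respectively (this is the source of the alternatives, not a size dichotomy fed by the strong gap).
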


\begin{proof}
Since $Z_1=Z_2$,
\begin{equation}\label{Z1=Z2_-}
A^{X_1}-B^{Y_1}=A^{X_2}-B^{Y_2}=C^{Z_2}.
\end{equation}
Note that $X_1 \ne X_2$ and $Y_1 \ne Y_2$, and that $X_1<X_2$ if and only if $Y_1<Y_2$.
Also, $X_1>1$ as $A<C$ in equations \eqref{Z1=Z2_-}.
By symmetry of indices 1 and 2 in the assertion, we may assume that $X_1<X_2$ and $Y_1<Y_2$.
Thus,
\begin{equation}\label{X1<X2_-}
1<X_1<X_2.
\end{equation}
In particular, $A^{X_1} \equiv 0 \pmod{4}$.
Also, from \eqref{Z1=Z2_-},
\[
B^{Y_1} \mid (A^{X_2-X_1}-1), \quad A^{X_1} \mid (B^{Y_2-Y_1}-1).
\]
Thus
\begin{equation}\label{gap_eq-}
A^{X_2-X_1}>B^{Y_1}, \quad B^{Y_2-Y_1}>A^{X_1}.
\end{equation}
Let us consider several cases separately.

First, suppose that
\[
Y_2 \le  Y_3.
\]
Observe that the equation $A^{X'}-C^{Y'}=B^{Z'}$ has three solutions $(X',Y',Z')=(X_r,Z_r,Y_r)$ with $r \in \{1,2,3\}$ satisfying $Y_1<Y_2 \le Y_3$.
Since $B^{Y_1}$ is odd, Proposition \ref{improvedgap}\,(ii) yields
\[
\gcd (X_2,Z_2)\,|X_2Z_3-X_3Z_2| \ge B^{Y_2-Y_1}.
\]
As $B^{Y_2-Y_1}>A^{X_1}>C^{Z_2}$ by \eqref{Z1=Z2_-} and \eqref{gap_eq-}, we have
\begin{equation}\label{gap_Z1=Z2_-_no1}
\gcd (X_2,Z_2)\,|X_2Z_3-X_3Z_2| >C^{Z_2}.
\end{equation}

Second, suppose that
\[
Y_3<Y_2, \quad X_2<X_3.
\]
Then the equation $C^{X'}+B^{Y'}=A^{Z'}$ has three solutions $(X',Y',Z')=(Z_r,Y_r,X_r)$ with $r \in \{1,2,3\}$ satisfying $X_1<X_2<X_3$.
Proposition \ref{improvedgap}\,(ii) yields
\[
\gcd (Z_2,Y_2)\,|Z_2Y_3-Z_3Y_2| \ge A^{X_2-X_1}.
\]
If $B^{2Y_1} \ge A^{X_1}$, then, since $A^{X_2-X_1}>B^{Y_1}$ by \eqref{gap_eq-},
\begin{equation}\label{gap_Z1=Z2_-_no2}
\gcd (Y_2,Z_2)\,|Y_2Z_3-Y_3Z_2|>B^{Y_1} \ge A^{X_1/2}>C^{Z_2/2}.
\end{equation}
Suppose that $B^{2Y_1}<A^{X_1}$.
Lemma \ref{cf-1}\,(i) for $(X,Y,Z)=(X_1,Y_1,Z_1)$ gives
\[
\hspace{-1cm}
(0<) \quad \frac{\log A}{\log C}-\frac{Z_1}{X_1}<\frac{2}{X_1A^{X_1/2}\log C}.
\]
On the other hand, since $Y_3<Y_2$ and $X_2<X_3$, Lemma \ref{cf-1}\,(ii) for $(X,Y,Z)=(Y_2,Z_2,X_2), (X',Y',Z')=(Y_3,Z_3,X_3)$ gives
\[
\hspace{-1cm}
(0<) \quad \frac{\log A}{\log C}-\frac{Z_3}{X_3}<\frac{2}{X_3B^{Y_2-Y_3}A^{X_3-X_2}\log C}.
\]
Similarly to the arguments in the proof of Proposition \ref{=<lam1}, we use the above two displayed inequalities to show, by the criterion of Legendre, that $\frac{Z_1}{X_1},\frac{Z_3}{X_3}$ are distinct convergents to $\frac{\log A}{\log C}$.
Then Lemma \ref{cf-2} shows that
\begin{equation}\label{gap_Z1=Z2_-_no3}
X_3>\frac{\log C}{2}A^{X_1/2}>\frac{\log C}{2}\,C^{Z_1/2},
\end{equation}
or $X_1>\frac{\log C}{2}\,A^{X_3-X_2}B^{Y_2-Y_3}$.
It is shown that the latter inequality does not hold as observed in showing \eqref{cf_second}.

To sum up, assertion (i) follows from the combination of inequalities \eqref{gap_Z1=Z2_-_no1}, \eqref{gap_Z1=Z2_-_no2}, \eqref{gap_Z1=Z2_-_no3}, together with the consideration on symmetry of indices 1 and 2.

Finally, suppose that
\[
Y_3<Y_2, \quad X_3 \le  X_2.
\]
If $Y_1 \le  Y_3$ and $X_3 \le  X_1$, then $A^{X_1}=B^{Y_1}+C^{Z_1}<B^{Y_3}+C^{Z_3}=A^{X_3} \le A^{X_1}$, which is absurd.
Thus, $Y_3<Y_1$ or $X_1<X_3$.

Consider the case where $Y_3<Y_1\,(<Y_2)$ and $X_3 \le X_1$.
Observe that the equation $A^{X'}-C^{Y'}=B^{Z'}$ has three solutions $(X',Y',Z')=(X_r,Z_r,Y_r)$ with $r \in \{3,1,2\}$ satisfying $Y_3<Y_1<Y_2$.
Then Proposition \ref{=<lam1}\,(ii) yields
\[
\gcd (X_1,Z_1)\,| X_1 Z_2 - X_2 Z_1| \ge B^{Y_1-Y_3}.
\]
On the other hand,
\[
A^{X_3}-B^{Y_3}=C^{Z_3}=C^{Z_3-Z_1}(A^{X_1}-B^{Y_1}).
\]
Since $X_3 \le X_1$, we take this relation modulo $A^{X_3}$ to see that
\[
C^{Z_3-Z_1}B^{Y_1-Y_3} \equiv 1 \mod{A^{X_3}}.
\]
This gives that $C^{Z_3-Z_1}B^{Y_1-Y_3}>A^{X_3}>C^{Z_3}$, and so
\[
B^{Y_1-Y_3}>C^{Z_1}.
\]
These obtained inequalities together yield
\begin{equation}\label{gap_Z1=Z2_-_no4}
\gcd (X_1,Z_1)\,| X_1 Z_2 - X_2 Z_1| >C^{Z_1}.
\end{equation}

Consider the case where $Y_3<Y_1\,(<Y_2)$ and $X_1<X_3 \le  X_2$.
Since the equation $C^{X'}+B^{Y'}=A^{Z'}$ has three solutions $(X',Y',Z')=(Z_r,Y_r,X_r)$ with $r \in \{1,3,2\}$ satisfying $X_1<X_3 \le  X_2$, Proposition \ref{=<lam1}\,(ii) yields
\[
\gcd(Z_3,Y_3)\,| Z_3 Y_2 - Z_2 Y_3| \ge A^{X_3-X_1}.
\]
On the other hand, taking the relation $B^{Y_3}+C^{Z_3}=A^{X_3}=A^{X_3-X_1}(B^{Y_1}+C^{Z_1})$ modulo $C^{Z_1}$ yields $A^{X_3-X_1}B^{Y_1-Y_3} \equiv 1 \pmod{C^{Z_1}}$, in particular,
\[
A^{X_3-X_1}B^{Y_1-Y_3}>C^{Z_1}.
\]
Since $Z_2\gcd (X_1,Z_1)\,|X_1-X_2| \ge B^{Y_1-Y_3}$ as seen in the previous case, these obtained inequalities together yield
\begin{equation}\label{gap_Z1=Z2_-_no5}
\gcd(Y_3,Z_3)\,| Y_2Z_3  - Y_3Z_2 | \cdot Z_2\gcd (X_1,Z_1)\,|X_1-X_2|>C^{Z_1}.
\end{equation}

Consider the case where $Y_1 \le  Y_3<Y_2$ and $X_1<X_3 \le  X_2$.
Taking the equation $B^{Y_3}+C^{Z_3}=A^{X_3-X_1}(B^{Y_1}+C^{Z_1})$ modulo $C^{Z_1}$ yields $A^{X_3-X_1} \equiv B^{Y_3-Y_1} \pmod{C^{Z_1}}$, in particular,
\[
\max\{A^{X_3-X_1},B^{Y_3-Y_1}\}>C^{Z_1}.
\]
Suppose that $A^{X_3-X_1}>C^{Z_1}$.
Since the equation $C^{X'}+B^{Y'}=A^{Z'}$ has three solutions $(X',Y',Z')=(Z_r,Y_r,X_r)$ with $r \in \{1,3,2\}$ satisfying $X_1<X_3 \le  X_2$, Proposition \ref{=<lam1}\,(ii) yields
\begin{equation}\label{gap_Z1=Z2_-_no6}
\gcd (Z_3,Y_3)\,| Z_3 Y_2 - Z_2 Y_3| \ge A^{X_3-X_1}>C^{Z_1}.
\end{equation}
Suppose that $B^{Y_3-Y_1}>C^{Z_1}$.
Since $Y_1<Y_3$, and the equation $A^{X'}-C^{Y'}=B^{Z'}$ has three solutions $(X',Y',Z')=(X_r,Z_r,Y_r)$ with $r \in \{1,3,2\}$ satisfying $Y_1<Y_3<Y_2$, Proposition \ref{=<lam1}\,(ii) yields
\begin{equation}\label{gap_Z1=Z2_-_no7}
\gcd (X_3,Z_3)\,| X_3 Z_2 - X_2 Z_3| \ge B^{Y_3-Y_1}>C^{Z_1}.
\end{equation}

To sum up, assertion (ii) follows from the combination of inequalities \eqref{gap_Z1=Z2_-_no1}, \eqref{gap_Z1=Z2_-_no4},
\eqref{gap_Z1=Z2_-_no5}, \eqref{gap_Z1=Z2_-_no6} and \eqref{gap_Z1=Z2_-_no7}, together with the consideration on symmetry of indices 1 and 2.
\end{proof}

\section{Applications}\label{Sec-ineq-z1<z2}%

Here we give three applications of Proposition \ref{improvedgap} to equation \eqref{abc} having three solutions.
For this we prepare some notation.

Based upon \eqref{zorders}, let $(i,j,k)$ and $(l,m,n)$ be permutations of $\{1,2,3\}$ such that
\[
x_i \le x_j \le x_k, \quad y_l \le y_m \le y_n.
\]
To ensure the uniqueness of these, we assume that $i<j$ if $x_i=x_j$, and $j<k$ if $x_j=x_k$, and that $l<m$ if $x_l=x_m$, and $m<n$ if $x_m=x_n$.
Also, define non-negative integers $d_z,d_x,d_y$ and positive integers $g_2,g_x,g_y$ as follows:
\begin{alignat*}{3}
& d_z:=z_2-z_1, &  &d_x:=x_j-x_i, & & d_y:=y_m-y_l,\\
& g_2:=\gcd(x_2,y_2), & \quad  &g_x:=\gcd(y_j,z_j), & \quad &g_y:=\gcd(x_m,z_m).
\end{alignat*}

\begin{lem}\label{Lem_ineq_z1<z2_c^{z1}=2(4)}
Suppose that $d_z>0$ with $c^{z_1} \equiv 2 \pmod{4}.$
Then
\[
c<\min\!\left\{ 2^{\alpha+1-z_2} \frac{({g_2}')^2}{g_2},\,  \frac{({g_2}')^2}{g_2},\, \frac{\log c}{\log (c-1)}\,t_{a,b}\,z_2\right\} \cdot z_2\,\mathcal H_{\alpha,1,m_2}(c)
\]
with ${g_2}'=\gcd(c,g_2).$
\end{lem}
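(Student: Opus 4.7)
The plan is to apply Proposition \ref{improvedgap}(I) to equation \eqref{abc} with $\lambda=1$ and the three ordered solutions $(x_t,y_t,z_t)$. First, from $c^{z_1}\equiv 2\pmod{4}$ together with $\nu_2(c^{z_1})=\beta z_1$ and $\beta\ge 1$ we deduce $\beta=z_1=1$; this forces $\chi=1$ in Proposition \ref{improvedgap}, so $\log(\chi c^{z_1}-1)=\log(c-1)$. A second, crucial reduction is that $g_2$ is necessarily \emph{odd}: were $2\mid g_2$, both $x_2$ and $y_2$ would be even, and $a^{x_2}+b^{y_2}$ would be a sum of two odd squares, hence $\equiv 2\pmod{4}$, forcing $z_2=1=z_1$ and contradicting $d_z>0$.

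Write $M:=x_2y_3-x_3y_2\neq 0$ and $N:=M/g_2$, which is an integer since $g_2\mid\gcd(x_2,y_2)$. Proposition \ref{improvedgap}(I) then yields the divisibility $c\mid g_2 M$ (equivalently $c\mid g_2^2 N$) and the inequality
\[
c<\frac{\log c}{\log(c-1)}\,t_{a,b}\,z_2\,|M|.
\]
Combining this inequality with $|M|<z_2\,\mathcal{H}_{\alpha,1,m_2}(c)$ from \eqref{x2y3x3y2} delivers the third item in the minimum.

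For the middle item, write $c=2c_0$ with $c_0$ odd. Since $g_2$ is odd, $\gcd(c,g_2^2)=\gcd(c_0,g_2^2)\le\gcd(c_0,g_2)^2=(g_2')^2$ by the prime-by-prime inequality $\min(k,2m)\le 2\min(k,m)$. Hence $c/(g_2')^2\mid N$, giving $|N|\ge c/(g_2')^2$ and $|M|=g_2|N|\ge g_2\,c/(g_2')^2$; one more invocation of \eqref{x2y3x3y2} delivers the middle bound. For the first item, Lemma \ref{2adic_upper} with $\beta=1$ gives $\nu_2(M)\ge z_2-\alpha$, and since $g_2$ is odd, $\nu_2(N)=\nu_2(M)\ge z_2-\alpha$. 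Writing $N=2^{\nu_2(N)}N_0$ with $N_0$ odd, the odd part of $c\mid g_2^2 N$ reads $c_0\mid g_2^2 N_0$, so $N_0\ge c_0/\gcd(c_0,g_2^2)$; multiplying by $2^{\nu_2(N)}\ge 2^{z_2-\alpha}$ gives $|N|\ge 2^{z_2-\alpha}c_0/\gcd(c_0,g_2^2)$, whence
\[
c\le 2^{\alpha+1-z_2}\,\gcd(c_0,g_2^2)/g_2\cdot|M|\le 2^{\alpha+1-z_2}\,(g_2')^2/g_2\cdot|M|,
\]
and a final use of \eqref{x2y3x3y2} finishes the first bound.

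The main obstacle is the first item: a naive combination of the divisibility $c\mid g_2 M$ with the 2-adic bound $\nu_2(M)\ge z_2-\alpha$ leaves stray powers of $2$ coming from the 2-part of $g_2$, which break the clean form $2^{\alpha+1-z_2}(g_2')^2/g_2$. Ruling out $g_2$ even at the outset, using $d_z>0$, is what removes those extraneous factors and lets the elementary gcd estimate $\min(k,2m)\le 2\min(k,m)$ slot in cleanly.
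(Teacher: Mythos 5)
Your proposal is correct and follows essentially the same route as the paper: apply Proposition~\ref{improvedgap}(I) with $\chi=1$ (forced by $\beta=z_1=1$), note $g_2$ is odd (Lemma~\ref{g2}(i), which you re-derive inline), and combine the resulting divisibility/inequality with the 2-adic lower bound $\nu_2(x_2y_3-x_3y_2)\ge z_2-\alpha$ from \eqref{z2} and the size bound \eqref{x2y3x3y2}. The only (harmless) deviation is at the divisibility step: the paper invokes the refined form $c\mid (g_2')^2\cdot(x_2y_3-x_3y_2)/g_2$ directly, whereas you start from the weaker literal statement $c\mid g_2(x_2y_3-x_3y_2)$ and recover the $(g_2')^2$ factor via the elementary valuation inequality $\min(k,2m)\le 2\min(k,m)$; both deliver the same bound, and your version is actually self-contained against the proposition as stated.
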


\begin{proof}
Note that $(\beta,z_1)=(1,1)$.
We apply Proposition \ref{improvedgap}\,(I) for $(A,B,C;\lambda)=(a,b,c;1)$ and $(X_r,Y_r,Z_r)=(x_r,y_r,z_r)$ with $r \in \{1,2,3\}$. Then
\begin{gather}
\label{div1}
c \mid ({g_2}')^2 \cdot \frac{x_2y_3-x_3y_2}{g_2},\\
\label{ineq1}
c<\frac{\log c}{\log (c-1)} \cdot t_{a,b}\,z_2 \cdot |x_2y_3-x_3y_2|.
\end{gather}
By \eqref{x2y3x3y2}, it is easy to see that the assertion for the second part in $\min$ follows from \eqref{div1}, and the third one follows from \eqref{ineq1}.
It remains to consider the first part.

Since $2 \parallel c$, and $g_2$ is odd by Lemma \ref{g2}\,(i), we use divisibility relation \eqref{div1} to see that $(cg_2/2)/({g_2}')^2$ is an odd positive divisor of $x_2y_3-x_3y_2$.
Thus
\begin{align*}
\nu_{2}(x_2y_3-x_3y_2)&=
\nu_2 \left( \frac{x_2y_3-x_3y_2}{(cg_2/2)/({g_2}')^2} \right)\\
&\le \frac{1}{\log 2} \log \left( \frac{|x_2y_3-x_3y_2|}{(cg_2/2)/({g_2}')^2} \right)\\
&=1-\frac{\log c}{\log 2}+\frac{\log \Bigl(\frac{({g_2}')^2}{g_2}\cdot  |x_2y_3-x_3y_2|\Bigl) }{\log 2}.
\end{align*}
Since $z_2 \le \alpha +\nu_{2}(x_2y_3-x_3y_2)$ by \eqref{z2} for $t=2$, it follows that
\begin{align*}
\frac{\log c}{\log 2}\le -z_2+\alpha +1+\frac{\log (\frac{({g_2}')^2}{g_2})}{\log 2}+\frac{\log  |x_2y_3-x_3y_2|}{\log 2}.
\end{align*}
This together with \eqref{x2y3x3y2} implies the remaining assertion.
\end{proof}

\begin{lem}\label{Lem_ineq_z1<z2_c^z1=0(4)}
Suppose that $d_z>0$ with $c^{z_1} \equiv 0 \pmod{4}.$ Then
\[
c^{d_z}<\min \! \left \{ 2^{\alpha -\beta z_1} \frac{({g_2}')^2}{g_2}, \,
\frac{z_1 \log c}{\log (\chi c^{z_1}-1)}\,t_{a,b}\,\frac{z_2}{z_1} \right \}\cdot \frac{\log^2 c}{(\log a) \log b}\,z_2z_3
\]
with ${g_2}'=\gcd (c^{d_z},g_2),$ where $\chi =2$ if $z_1>1,$ and $\chi=1$ if $z_1=1,$
\end{lem}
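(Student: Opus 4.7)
The plan is to mirror the proof of Lemma \ref{Lem_ineq_z1<z2_c^{z1}=2(4)} but use part (II) of Proposition \ref{improvedgap} in place of part (I); the hypothesis $c^{z_1}\equiv 0\pmod{4}$ is precisely what activates case (II), and because $\lambda=1$ the quantity $\chi$ in the proposition matches the $\chi$ of the present lemma.

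First I would apply Proposition \ref{improvedgap}\,(II) with $(A,B,C;\lambda)=(a,b,c;1)$ to the three hypothetical solutions $(x_r,y_r,z_r)$, $r\in\{1,2,3\}$. Its ``moreover'' conclusion for $\lambda=1$ gives
\[
c^{d_z} < \frac{z_1\log c}{\log(\chi c^{z_1}-1)}\cdot t_{a,b}\cdot \frac{z_2}{z_1}\cdot |x_2 y_3 - x_3 y_2|,
\]
and, paralleling (div1) in the previous lemma, the basic divisibility $c^{d_z}\mid g_2(x_2y_3-x_3y_2)$ sharpens to
\[
c^{d_z}\,g_2 \,\mid\, (g_2')^2\,(x_2 y_3 - x_3 y_2).
\]
Combining the displayed inequality with \eqref{x2y3x3y2} at once yields the second term of the $\min$ in the conclusion.

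Next I would extract the $2$-adic information for the first term. By Lemma \ref{g2}(i), $g_2$ (hence $g_2'$) is odd, so the quotient $c^{d_z}g_2/(g_2')^2$ has $2$-adic valuation exactly $\beta d_z$; its odd part $c^{d_z}g_2/\bigl(2^{\beta d_z}(g_2')^2\bigr)$ is therefore an odd divisor of $x_2y_3-x_3y_2$. This yields
\[
\nu_2(x_2 y_3 - x_3 y_2)\;\le\;\beta d_z - d_z\log_2 c + \log_2\!\bigl((g_2')^2/g_2\bigr) + \log_2|x_2 y_3 - x_3 y_2|.
\]
Coupling this with \eqref{z2} at $t=2$, namely $\beta z_2-\alpha\le\nu_2(x_2 y_3 - x_3 y_2)$, and simplifying via $\beta d_z-\beta z_2=-\beta z_1$, gives
\[
c^{d_z}\le 2^{\alpha-\beta z_1}\,\frac{(g_2')^2}{g_2}\,|x_2y_3-x_3y_2|;
\]
a final application of \eqref{x2y3x3y2} delivers the first term.

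The main obstacle I anticipate is the strengthened divisibility $c^{d_z}g_2\mid(g_2')^2(x_2y_3-x_3y_2)$, which genuinely goes beyond the divisibility explicitly stated in Proposition \ref{improvedgap}\,(II); establishing it requires combining the separate divisibilities arising for $\mathcal A=a$ and $\mathcal A=b$ inside the proof of that proposition, in precise analogy with how (div1) was obtained in the preceding lemma. Once this is in hand, the remaining $2$-adic computation is short and essentially routine.
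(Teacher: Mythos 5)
Your proposal is correct and follows essentially the same route as the paper's proof: applying Proposition \ref{improvedgap}\,(II) for $(A,B,C;\lambda)=(a,b,c;1)$, extracting the strengthened divisibility $c^{d_z}\mid (g_2')^2\cdot(x_2y_3-x_3y_2)/g_2$ from the inner workings of that proposition's proof (combining the cases $\mathcal A=a$ and $\mathcal A=b$ via Lemma \ref{Y2}\,(ii), just as \eqref{div1} is obtained in Lemma \ref{Lem_ineq_z1<z2_c^{z1}=2(4)}), and then using the oddness of $g_2$ together with \eqref{z2} at $t=2$ to land on the $2^{\alpha-\beta z_1}$ factor, after which \eqref{x2y3x3y2} finishes both terms of the minimum. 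You also correctly identify the one subtlety — that this divisibility is not literally the one stated in Proposition \ref{improvedgap}\,(II) but must be read off its proof — a point the paper leaves implicit.
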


\begin{proof}
The proof proceeds along similar lines to that of Lemma \ref{Lem_ineq_z1<z2_c^{z1}=2(4)}.
We apply Proposition \ref{improvedgap}\,(II) for $(A,B,C;\lambda)=(a,b,c;1)$ and $(X_r,Y_r,Z_r)=(x_r,y_r,z_r)$ with $r \in \{1,2,3\}$. Then
\begin{gather}
\label{div} c^{z_2-z_1} \mid ({g_2}')^2 \cdot \frac{x_2y_3-x_3y_2}{g_2},\\
\label{ineq} c^{z_2-z_1}< \frac{z_1 \log c}{\log (\chi c^{z_1}-1)} \cdot t_{a,b} \cdot \frac{z_2}{z_1} \cdot |x_2y_3-x_3y_2|.
\end{gather}
By \eqref{x2y3x3y2}, the assertion for the second part in $\min$ follows from \eqref{ineq}.
It remains to consider the first part.

Since $2^\beta \parallel c$, and $g_2$ is odd by Lemma \ref{g2}\,(i), we use \eqref{div} to see that $(c/2^{\beta})^{z_2-z_1}g_2/({g_2}')^2$ is an odd positive divisor of $x_2y_3-x_3y_2$.
Thus
\begin{align*}
\nu_{2}(x_2y_3-x_3y_2)&=\nu_2 \left( \frac{x_2y_3-x_3y_2}{(c/2^{\beta})^{z_2-z_1}g_2/({g_2}')^2} \right)\\
&\le \frac{1}{\log 2} \log \left( \frac{|x_2y_3-x_3y_2|}{(c/2^{\beta})^{z_2-z_1}g_2/({g_2}')^2} \right)\\
&=\beta(z_2-z_1) -(z_2-z_1)\,\frac{\log c}{\log 2}+\frac{\Bigl(\frac{({g_2}')^2}{g_2}\cdot  |x_2y_3-x_3y_2|\Bigl)}{\log 2}.
\end{align*}
Since $\beta z_2 \le \alpha +\nu_{2}(x_2y_3-x_3y_2)$ by \eqref{z2} for $t=2$, it follows that
\[
(z_2-z_1)\,\frac{\log c}{\log 2} \le \alpha- \beta z_1 +\frac{\log (\frac{({g_2}')^2}{g_2})}{\log 2}+\frac{\log  |x_2y_3-x_3y_2|}{\log 2}.
\]
This together with \eqref{x2y3x3y2} implies the remaining assertion.
\end{proof}

\begin{lem}\label{Lem_ineq_xi<xj}
Suppose that $d_x>0.$ Then
\[ \tag{i}
a^{d_x}<\frac{({g_x}')^2}{g_x} \cdot \frac{\log c}{\log b} \cdot z_j z_k \le \frac{\log c}{\log b} \cdot {z_j}^2 z_k
\]
with ${g_x}'=\gcd (a^{d_x},g_x).$ Moreover, if $g_x>1,$ then
\begin{gather*}
\tag{ii} a^{d_x}<\frac{({g_x}')^2}{g_x-1} \cdot \frac{\log a}{\log b} \cdot x_j z_k;\\
\tag{iii} a^{d_x}<\left(\frac{g_x}{g_x-1}\right)^2 \cdot \frac{\log^2 a}{\log (a-1)\,\log b} \cdot t_{b,c} \cdot (x_j+d_x+{d_x}^2)\,z_k.
\end{gather*}
\end{lem}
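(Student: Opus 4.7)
The plan is to apply Proposition \ref{improvedgap} to the rearranged equation $c^{z} - b^{y} = a^{x}$, which is of the form \eqref{ABC} with $\lambda = -1$ and $(A, B, C) = (c, b, a)$. Each of the three hypothetical solutions of \eqref{abc} yields a solution $(z_t, y_t, x_t)$ of this equation; reindexing via the permutation $(i, j, k)$, I set $(X_r, Y_r, Z_r) = (z_i, y_i, x_i), (z_j, y_j, x_j), (z_k, y_k, x_k)$ for $r = 1, 2, 3$, so that $Z_1 = x_i < Z_2 = x_j \le Z_3 = x_k$ (using $d_x > 0$). Since $C = a$ is odd, the hypothesis $C^{Z_1} \not\equiv 2 \pmod{4}$ of case (II) holds automatically, and $G_2 = \gcd(X_2, Y_2) = \gcd(z_j, y_j) = g_x$.

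The proposition then supplies the divisibility $a^{d_x} \mid g_x \cdot (z_j y_k - z_k y_j)$ together with two upper bounds for $a^{d_x}$ in terms of $|z_j y_k - z_k y_j|$, the second requiring $g_x > 1$. The basic bound $|z_j y_k - z_k y_j| < (\log c/\log b)\, z_j z_k$ follows immediately from \eqref{basic}. For assertion (i), I will mimic the extraction carried out in the proof of Lemma \ref{Lem_ineq_z1<z2_c^z1=0(4)}: starting from the divisibility together with the two symmetric pre-combining relations $a^{d_x} \mid y_j \cdot (z_j y_k - z_k y_j)$ and $a^{d_x} \mid z_j \cdot (z_j y_k - z_k y_j)$ present within the proof of Proposition \ref{improvedgap}(II), a prime-by-prime analysis produces the refined form $a^{d_x} \cdot g_x \mid (g_x')^2 \cdot (z_j y_k - z_k y_j)$, hence $a^{d_x} \le \frac{(g_x')^2}{g_x}\, |z_j y_k - z_k y_j|$. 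Substituting the basic bound gives the first inequality of (i), while the elementary chain $(g_x')^2/g_x \le g_x \le z_j$ (using $g_x' \le g_x$ and $g_x \mid z_j$) yields the second.

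For (ii) and (iii), the assumption $g_x > 1$ allows me to apply Lemma \ref{g2_upper_a} to the $j$-th equation rewritten as $c^{z_j} - b^{y_j} = a^{x_j}$ (so $G = g_x$), yielding $z_j < \frac{g_x}{g_x-1} \frac{\log a}{\log c}\, x_j$ and $y_j < \frac{g_x}{g_x-1} \frac{\log a}{\log b}\, x_j$. Combining these with $y_k < \frac{\log c}{\log b}\, z_k$ from \eqref{basic} refines the estimate to $|z_j y_k - z_k y_j| < \frac{g_x}{g_x-1} \frac{\log a}{\log b}\, x_j z_k$. For (ii), inserting this into $a^{d_x} \le \frac{(g_x')^2}{g_x}\, |z_j y_k - z_k y_j|$ gives the claimed bound, using $\frac{(g_x')^2}{g_x} \cdot \frac{g_x}{g_x-1} = \frac{(g_x')^2}{g_x-1}$. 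For (iii), I instead invoke the third inequality of Proposition \ref{improvedgap}(II) (valid for $\lambda = -1$ and $G_2 > 1$), bounding $\log(\chi a^{x_i} - 1) \ge \log(a-1)$ to estimate the denominator; the factor $\frac{g_x}{g_x-1}$ from $\mathcal{K}$ then combines with the same factor from the refined $|\Delta|$ bound to produce $\bigl(\frac{g_x}{g_x-1}\bigr)^{2}$.

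The main obstacle will be obtaining the refined polynomial expression $x_j + d_x + d_x^2$ appearing in (iii), in place of the crude $x_j^2$ that a direct substitution would produce (the two differ by $(x_i-1)(x_i + 2d_x) \ge 0$). Extracting this refinement demands a more delicate estimate of $|z_j y_k - z_k y_j|$ that exploits the identity $x_j = x_i + d_x$ and the hierarchy $x_k \ge x_j$; a natural route is the algebraic splitting $z_j y_k - z_k y_j = z_j(y_k - y_j) + y_j(z_j - z_k)$, after which each factor is bounded via Lemma \ref{g2_upper_a} and \eqref{basic} so as to isolate contributions of sizes $x_i$, $d_x$, and $d_x^{2}$ separately, summing to $x_i + 2 d_x + d_x^{2} = x_j + d_x + d_x^{2}$.
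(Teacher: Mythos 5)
Your setup and the derivation of (i) and (ii) follow the paper's own route: apply Proposition~\ref{improvedgap}\,(II) with $(A,B,C;\lambda)=(c,b,a;-1)$ and the reindexed triples, read off a sharpened form of the divisibility with the factor $(g_x')^2/g_x$, bound $|z_jy_k-z_ky_j|$ crudely via \eqref{basic} for (i) and via Lemma~\ref{g2_upper_a} for (ii). Your prime-by-prime extraction of the $(g_x')^2/g_x$ factor from the two parallel divisibility statements $a^{d_x}\mid\gcd(a^{d_x},y_j)\cdot\Delta$ and $a^{d_x}\mid\gcd(a^{d_x},z_j)\cdot\Delta$ is sound.

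The gap is in part (iii), and it is the bound on the $\mathcal K$-denominator. You propose $\log(\chi a^{x_i}-1)\ge\log(a-1)$. With that, $\mathcal K=\frac{x_i\log a}{\log(\chi a^{x_i}-1)}\cdot\frac{g_x}{g_x-1}$ is only bounded by $\frac{x_i\log a}{\log(a-1)}\cdot\frac{g_x}{g_x-1}$; multiplying by the proposition's $\frac{Z_2}{Z_1}=\frac{x_j}{x_i}$ the $x_i$ cancels and you are left with $x_j$, and after inserting $|\Delta|<\frac{g_x}{g_x-1}\frac{\log a}{\log b}x_jz_k$ the final factor is $x_j^2$, not $x_j^2/x_i$. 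As you note, $x_j^2>x_j+d_x+d_x^2$ whenever $x_i\ge 2$, so the target is unreachable from this point. The paper instead uses $\chi a^{x_i}-1\ge a^{x_i}-1\ge(a-1)^{x_i}$, i.e.\ $\log(\chi a^{x_i}-1)\ge x_i\log(a-1)$, which makes $\mathcal K\le\frac{\log a}{\log(a-1)}\cdot\frac{g_x}{g_x-1}$ with the $x_i$ already absorbed; the $\frac{x_j}{x_i}$ then survives, direct substitution yields $x_j^2/x_i$, and the purely arithmetic inequality
\[
\frac{x_j^2}{x_i}=\frac{(x_i+d_x)^2}{x_i}=x_i+2d_x+\frac{d_x^2}{x_i}\le x_j+d_x+d_x^2
\]
closes the argument. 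Your proposed remedy --- re-estimating $|\Delta|$ via the splitting $z_jy_k-z_ky_j=z_j(y_k-y_j)+y_j(z_j-z_k)$ --- is not going to deliver the missing $1/x_i$: the differences $y_k-y_j$ and $z_j-z_k$ are not controlled by $d_x$ (indeed there is no ordering between $z_j$ and $z_k$, only between $x_j$ and $x_k$), and neither Lemma~\ref{g2_upper_a} nor \eqref{basic} gives bounds for such differences that scale with $x_i$, $d_x$, $d_x^2$ separately. The fix is simply the sharper lower bound on $\log(\chi a^{x_i}-1)$; no new estimate of $|\Delta|$ is needed.
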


Note that Lemma \ref{Lem_ineq_xi<xj} holds with $(a,x,i,j,k)$ replaced as $(b,y,l,m,n)$ by symmetry of $a$ and $b$.

\begin{proof}[Proof of Lemma \ref{Lem_ineq_xi<xj}]
Apply Proposition \ref{improvedgap}\,(II) for $(A,B,C;\lambda)=(c,b,a;-1)$ with $(X_r,Y_r,Z_r)=(z_t,y_t,x_t)$ with $(r,t) \in \{(1,i),(2,j),(3,k)\}$.
Then
\begin{equation}\label{ineq1_gx>1}
a^{x_j-x_i} \mid ({g_x}')^2 \cdot \frac{y_j z_k-y_k z_j}{g_x}.
\end{equation}
Moreover, if $g_x>1$, then
\begin{equation}\label{ineq2_gx>1}
a^{x_j-x_i}< \frac{\log a}{\log (a-1)} \cdot \frac{g_x}{g_x-1} \cdot t_{b,c} \cdot \frac{x_j}{x_i} \cdot |y_j z_k-y_k z_j|.
\end{equation}

From \eqref{basic} for $t \in \{j,k\}$,
\[
|y_j z_k-y_k z_j |<\max\{y_j z_k,y_k z_j\} <\frac{\log c}{\log b}\,z_j z_k.
\]
Since $a^{x_j-x_i} \le \frac{({g_x}')^2}{g_x} \cdot |y_j z_k-y_k z_j |$ by \eqref{ineq1_gx>1}, the above inequality yields (i).

By Lemma \ref{g2_upper_a} for $(X,Y,Z)=(z_j,y_j,x_j)$,
\[
y_j<\frac{g_x}{g_x-1}\,\frac{\log c}{\log b}\,x_j, \quad z_j<\frac{g_x}{g_x-1}\,\frac{\log a}{\log c}\,x_j.
\]
Since $y_k<\frac{\log c}{\log b}\,z_k$, we have
\[
|y_j z_k-y_k z_j|<\max\{y_j z_k,y_k z_j\}<\frac{g_x}{g_x-1}\frac{\log a}{\log b}\,x_j z_k.
\]
This together with \eqref{ineq1_gx>1} yields (ii).
Similarly, (iii) follows from \eqref{ineq2_gx>1} and the inequality ${x_j}^2/x_i \le x_j+d_x+{d_x}^2$.
\end{proof}

\section{Restrictions on Common divisors among solutions} \label{Sec-gfe}

The following is a well-known conjecture as a generalization of Fermat's last theorem, known as the generalized Fermat conjecture.

\begin{conj}\label{gfe}
Let $p,q$ and $r$ be any positive integers satisfying $1/p+1/q+1/r<1.$
Then all solutions $(X,Y,Z)$ with $XYZ \ne 0$ and $\gcd(X,Y)=1$ of the Diophantine equation
\begin{equation}\label{pqr}
X^p+Y^q=Z^r
\end{equation}
come from the following ten identities$:$
\begin{gather*}
1^p+2^3=3^2, \ 7^2+2^5=3^4, \ 13^2+7^3=2^9, \\
17^3+2^7=71^2, \ 11^4+3^5=122^2, \ 1549034^2+33^8=15613^3,\\
96222^3+43^8=30042907^2, \ 2213459^2+1414^3=65^7,\\
15312283^2+9262^3=113^7, \ 76271^3+17^7=21063928^2.
\end{gather*}
\end{conj}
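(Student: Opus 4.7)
The statement labelled \ref{gfe} is the \emph{generalized Fermat conjecture}, which at present is a famous open problem in Diophantine geometry: only individual signatures $(p,q,r)$ have been resolved, and no known technique comes close to handling every hyperbolic triple uniformly. Consequently I cannot sketch an actual proof, only the strategy that has been used signature by signature and which one would continue to deploy toward a complete resolution.

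The plan divides by signature. By symmetry one may assume $p\le q\le r$, and by a routine divisibility argument (if every primitive solution is tabulated for a signature, the same list --- possibly enlarged by taking roots --- suffices for any multiple of that signature) one reduces to the \emph{primitive} hyperbolic triples. For the Fermat-type signatures $(\ell,\ell,\ell)$, $\ell\ge 3$ prime, one invokes the modular method of Wiles--Taylor: attach a Frey elliptic curve to a hypothetical primitive solution, use modularity together with Ribet-style level-lowering to force the associated mod-$\ell$ Galois representation to arise from a newform of very small level, and derive a contradiction from the empty or explicit list of such newforms. For mixed signatures $(2,3,\ell)$, $(2,\ell,\ell)$, $(3,3,\ell)$, $(2,4,\ell)$, $(3,4,5)$, $\ldots$ one applies the same modular philosophy with Frey curves of Darmon--Granville or Bennett--Skinner type (sometimes over totally real fields), supplemented by image-of-Galois, Kraus-type congruences, and elimination arguments to kill surviving newforms. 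For the small signatures where primitive solutions correspond to rational points on an explicit curve of genus $\ge 2$ (e.g.\ $(2,3,7), (2,3,8), (2,3,9), (2,3,10)$) one combines such parametrizations with Chabauty--Coleman, elliptic Chabauty over number fields, and the Mordell--Weil sieve to enumerate all rational points and confirm that the list of ten sporadic identities is complete in that case.

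The main obstacle is twofold. On the modular side, for a generic large signature the construction of a Frey curve of manageable conductor and the elimination of the finitely many newforms at that level both require \emph{ad hoc} input --- new auxiliary congruences, new tables of Hilbert newforms over growing real fields, Ribet-type multi-Frey arguments --- with no uniform recipe in sight. On the Chabauty side, one needs the relevant Jacobian to have Mordell--Weil rank strictly less than its dimension, which is neither known nor expected to hold uniformly. Thus a complete proof of Conjecture \ref{gfe} would require either a genuinely uniform Diophantine principle on the hyperbolic surfaces $X^{p}+Y^{q}=Z^{r}$, or a successful case-by-case sweep of every remaining primitive hyperbolic $(p,q,r)$; at the time of writing, neither is available, and all one can realistically do is prove the conjecture on the explicit lists of signatures needed for downstream applications (such as the restriction on common divisors used later in this paper).
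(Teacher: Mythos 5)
You have correctly recognized that this statement is labelled \emph{Conjecture}~\ref{gfe} precisely because it is an open problem: the paper does not prove it, and neither could anyone at present. The paper merely states the conjecture and then, in Lemma~\ref{ourcase}, records the specific signatures $(p,q,r)$ for which it is known to hold, citing the relevant modular and Chabauty-type results (\cite{Wi,TaWi,DaMe,Po,El,BeElNg,Br,BeSk,BeChDaYa,BeCh,Kr,Br2,Da,DaSik2,PoShSt,Zu,Sik,SikSt,DaSik}); those are exactly the cases needed downstream in Lemmas~\ref{g2} and~\ref{gx}. Your survey of the signature-by-signature modular method (Frey curves, level lowering, multi-Frey, Kraus-type sieving) and of the Chabauty/Mordell--Weil sieve approach for genus-$\ge 2$ curves is accurate and consistent with the techniques underlying the references the paper invokes, and your assessment of the obstacles to a uniform proof is also correct. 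In short, there is nothing to compare against: the paper's ``treatment'' of this statement is simply to quote it as a conjecture and use its known cases, which is exactly what you describe.
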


The following is just a collection, needed for our purpose, from the existing results on Conjecture \ref{gfe} (cf.~\cite{BeChDaYa}, \cite{BeMiSi}, \cite[Ch.14]{Co}).

\begin{lem}\label{ourcase}
Conjecture \ref{gfe} is true for any $(p,q,r)$ in the following table{\rm :}
\begin{table}[H]
\begin{tabular}{| c | c |} \hline
$(p,q,r)$ & {\rm reference(s)}  \\ \hline
$(N,N,N), \ N \ge 3$ & {\rm \cite{Wi}, \cite{TaWi}}  \\
$(N,N,2), \ N \ge 4$ & {\rm \cite{DaMe}, \cite{Po}} \\
$(N,N,3), \ N \ge 3$ & {\rm \cite{DaMe}, \cite{Po}}  \\
$(2,4,N), \ N \ge 4$ & {\rm \cite{El}, \cite{BeElNg}, \cite{Br} } \\
$(2,N,4), \ N \ge 4$ & {\rm \cite{BeSk}, \cite{Br}} \\
$(2,N,6), \ N \ge 3$ & {\rm \cite{BeChDaYa}, \cite{Br}} \\
$(2,6,N), \ N \ge 3$ & {\rm \cite{BeCh}, \cite{Br}} \\
$(3,3,N), \ 3 \le N \le 10^9$ & {\rm \cite{Kr}, \cite{Br2}, \cite{Da}, \cite{DaSik2}}  \\
$(2,3,N), \ N \in \{7,8,9,10,15\}$ & {\rm \cite{PoShSt}, \cite{Br}, \cite{Zu}, \cite{Sik}, \cite{SikSt}} \\
$(3,4,5)$ & {\rm \cite{SikSt} } \\
$(5,5,7),(7,7,5)$ & {\rm \cite{DaSik} } \\
$(5,5,N), \ N \ge 2, \ 5 \mid Z$ & {\rm \cite{DaSik} }  \\  \hline
\end{tabular}
\end{table}
The result of \cite{DaSik} in the last line of the above table indicates that equation \eqref{pqr} with $(p,q,r)=(5,5,N)$ and $N \ge 2$ has no solutions satisfying $5 \mid Z$.
\end{lem}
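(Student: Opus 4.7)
The plan is to establish Lemma \ref{ourcase} by direct appeal to the literature rather than by any new arguments: each row of the table records a previously published resolution of Conjecture \ref{gfe} for the listed triple $(p,q,r)$, so the ``proof'' amounts to verifying that the cited papers cover exactly the cases we need, in the generality stated, and that the claimed exception in the last row is correctly transcribed.

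For the infinite families --- $(N,N,N)$ with $N\ge 3$, $(N,N,2)$ with $N\ge 4$, $(N,N,3)$ with $N\ge 3$, $(2,4,N)$ and $(2,N,4)$ with $N \ge 4$, $(2,N,6)$ and $(2,6,N)$ with $N \ge 3$, and $(3,3,N)$ with $3 \le N \le 10^9$ --- the underlying strategy is invariably Frey's modular method: to a hypothetical primitive solution of \eqref{pqr} one attaches a Frey--Hellegouarch elliptic curve (or, in mixed-signature cases, an appropriate $\mathbb{Q}$-curve or Frey abelian variety), invokes the modularity theorem and Ribet-type level lowering to bring its mod-$p$ Galois representation down to a newform of explicit small level, and extracts a contradiction either by inspecting the finite list of newforms of that level or by an image-of-inertia argument. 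I would trace each row to the paper(s) carrying out this program in the required generality: Wiles and Taylor--Wiles for $(N,N,N)$, Darmon--Merel and Poonen for $(N,N,2)$ and $(N,N,3)$, Ellenberg, Bennett--Ellenberg--Ng and Bruin for signature $(2,4,N)$, Bennett--Skinner for $(2,N,4)$, Bennett--Chen--Dahmen--Yazdani for $(2,N,6)$, Bennett--Chen for $(2,6,N)$, and Kraus, Bruin, Dahmen, Dahmen--Siksek for $(3,3,N)$, where the finite upper bound $N \le 10^9$ is the outcome of a heavy computer verification of the modular machinery.

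For the sporadic triples $(2,3,N)$ with $N \in \{7,8,9,10,15\}$, $(3,4,5)$, $(5,5,7)$ and $(7,7,5)$, the cited works combine the modular method with more delicate ingredients: Chabauty-type descents on high-genus curves, explicit determination of the rational points on twists of modular curves, and fine analysis of Galois representations over small number fields. For the final entry, $(5,5,N)$ with the additional hypothesis $5\mid Z$, the divisibility condition is precisely what enables a descent step that reduces the problem to a tractable collection of Frey curves and newforms, as carried out by Dahmen--Siksek. My task in each case would be simply to match our formulation of \eqref{pqr} (signs, coprimality $\gcd(X,Y)=1$, and $XYZ \neq 0$) against the hypotheses of the corresponding theorem, so that the ten exceptional identities listed in Conjecture \ref{gfe} are indeed the only solutions left.

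The main obstacle, were one to attempt a self-contained proof of the lemma, is the breadth of techniques deployed across the table: no single unified argument handles the whole list, and several rows depend on substantial machine computation, together with the full modularity theorem for elliptic curves over $\mathbb{Q}$ (and, in some cases, over totally real fields). Since only the combined statement is needed in the sequel, I would confine myself to a careful cross-check of the hypotheses of each cited theorem against the corresponding row of the table, so that Lemma \ref{ourcase} can subsequently be invoked cleanly whenever an exponent triple in the forbidden list arises from a common divisor consideration on solutions to \eqref{abc}.
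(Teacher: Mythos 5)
Your proposal is correct and takes essentially the same approach as the paper: the paper offers no proof beyond the table of citations, and your plan to verify each row against the hypotheses and conclusions of the cited works is precisely what that table encodes. Your survey of the underlying techniques (modular/Frey method, level lowering, Chabauty descent, the divisibility hypothesis in the Dahmen--Siksek row) is accurate background, though not required by the paper, which treats the lemma as a pure literature compilation.
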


As almost direct consequences of Lemma \ref{ourcase}, together with Lemma \ref{firstbounds}, we can show the following lemmas which are useful to restrict the values of $g_2,g_x$ and $g_y$ appearing in the stated inequalities in the lemmas in the previous section.

\begin{lem}\label{g2}
The following hold.
\begin{itemize}
\item[\rm (i)] If $2 \mid g_2,$ then $(\beta,z_1,z_2)=(1,1,1).$
\item[\rm (ii)] If $3 \mid g_2,$ then $z_2 \le 2.$
\item[\rm (iii)] If $5 \mid g_2,$ then $5 \nmid c$ or $z_2=1.$
\item[\rm (iv)] Suppose that $2 \mid z_2.$ Then $g_2 \in \{1,3\}.$ Moreover, $g_2=1$ if $z_2>2.$
\item[\rm (v)] If $3 \mid z_2,$ then $g_2=1.$
\end{itemize}
\end{lem}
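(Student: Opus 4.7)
The plan is to invoke Lemma \ref{ourcase} in a uniform way via the following reduction: if a positive integer $d$ divides $g_2 = \gcd(x_2, y_2)$, writing $x_2 = d\,x''$ and $y_2 = d\,y''$ turns the equation $a^{x_2} + b^{y_2} = c^{z_2}$ into
\[
(a^{x''})^d + (b^{y''})^d = c^{z_2},
\]
a non-trivial coprime solution of $X^d + Y^d = Z^{z_2}$ (using $\gcd(a,b)=1$ and $a,b>1$). Similarly, when $z_2$ is even we may rewrite $c^{z_2}$ as $(c^{z_2/2})^2$ to land in the $(d,d,2)$-case. Since $z_2 \le 230$ by Lemma \ref{firstbounds}, all range restrictions in Lemma \ref{ourcase} (e.g.\ $N \le 10^9$ in the $(3,3,N)$ row) are satisfied. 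A direct inspection shows that every one of the ten exceptional identities in Conjecture \ref{gfe} carries at least one exponent equal to $2$, so none of them has the shape $(3,3,N)$ with $N \ge 3$, $(5,5,N)$ with $N \ge 2$, $(N,N,3)$ with $N \ge 3$, or $(N,N,2)$ with $N \ge 4$; consequently these cases of Lemma \ref{ourcase} yield no solutions for our purposes.

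Part (i) does not need the generalized Fermat conjecture: if $2 \mid g_2$, then $a^{x_2}$ and $b^{y_2}$ are odd squares, each $\equiv 1 \pmod{4}$, so $c^{z_2} \equiv 2 \pmod{4}$, forcing $\beta z_2 = 1$. Combined with $z_1 \le z_2$ this yields $(\beta,z_1,z_2)=(1,1,1)$. For the remaining parts I would apply the reduction above case by case. In (ii), a prime factor $p=3$ of $g_2$ together with $z_2 \ge 3$ produces a $(3,3,z_2)$-solution, excluded. In (iii), $p=5$ dividing $g_2$ together with $z_2 \ge 2$ and $5 \mid c$ lands in the last row of Lemma \ref{ourcase}. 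In (iv), with $2 \mid z_2$, part (i) forces $g_2$ to be odd; the rewriting $c^{z_2}=(c^{z_2/2})^2$ combined with any prime divisor $p \ge 5$ of $g_2$ (or with $d=9$ when $9 \mid g_2$) produces an $(N,N,2)$-solution with $N \ge 4$, which is excluded, so $g_2 \in \{1,3\}$; if furthermore $z_2 > 2$, then $z_2 \ge 4$, and $g_2=3$ would produce a forbidden $(3,3,z_2)$-solution, so $g_2=1$. In (v), $3 \mid z_2$ together with (i) forces $g_2$ odd and $z_2 \ge 3$; any prime factor $p \ge 3$ of $g_2$ then yields a $(p,p,3)$-solution excluded by the $(N,N,3)$ row of Lemma \ref{ourcase}, so $g_2=1$.

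The work is mostly bookkeeping rather than real mathematics. The only step requiring care is checking that none of the exceptional identities of Conjecture \ref{gfe} can interfere, which is immediate from their exponent $2$. The only non-automatic choices lie in (iv), where I use the even parity of $z_2$ to collapse the $z$-exponent to $2$ and so catch $g_2$ having a prime factor $\ge 5$, and separately use $d=9$ to catch $g_2=3^k$ with $k \ge 2$.
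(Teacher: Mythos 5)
Your proof is correct and essentially matches the paper's own approach: part (i) is the same mod-$4$ parity observation, and parts (ii)--(v) are obtained by folding a common divisor of $\gcd(x_2,y_2)$ (or a factor of $z_2$) into the exponents of the second equation and invoking Lemma \ref{ourcase}, which is precisely the compilation of the Darmon--Merel, Kraus, Bennett--Chen--Dahmen--Yazdani, and Dahmen--Siksek results that the paper cites directly. One small point worth tightening: the remark that every exceptional identity of Conjecture \ref{gfe} carries an exponent $2$ disposes immediately of the shapes whose three exponents are all $\ge 3$, namely $(3,3,N)$ and $(N,N,3)$ with $N\ge 3$ and $(5,5,N)$ with $N\ge 3$; but for $(N,N,2)$ with $N\ge 4$ and for $(5,5,2)$ the shape itself contains a $2$, so what you actually need --- and it is easy to check --- is that no exceptional identity has two equal exponents both at least $4$ and none is a $(5,5,2)$-solution, a slightly different statement than the one you give.
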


\if0 \begin{proof}
(i) If $2 \mid g_2$, then taking 2nd equation modulo 4 implies that $2 \equiv c^{z_2} \pmod{4}$.
This yields the assertion.

(ii) This is a simple consequence of the combination of Lemma \ref{firstbounds},
\cite[Theorem 10\,(a)]{Kr} and \cite[Theorem 1;$(p,q,r)=(3,3,2n)$]{BeChDaYa}.

(iii) This is a direct consequence of  \cite[Proposition 3.3]{DaSi}.

(iv) \cite[Main Theorem 2]{DaMe} tells us that $g_2 \le 3$, and so $g_2 \in \{1,3\}$ by (i).
The second assertion follows from (ii).

(v) \cite[Main Theorem 3]{DaMe} tells us that $g_2 \le 2$, and so $g_2=1$ by (i).
\end{proof} \fi

\begin{lem}\label{gx}
The following hold.
\begin{itemize}
\item[\rm (i)] If $3 \mid g_x$ and $x_j \le 10^9,$ then $x_j \le 2.$
\item[\rm (i)] If $4 \mid g_x$ or $6 \mid g_x,$ then $x_j=1.$
\item[\rm (ii)] If $5 \mid g_x,$ then $5 \nmid a$ or $x_j=1.$
\item[\rm (iii)] Suppose that $2 \mid x_j.$ Then $g_x \in \{1,3\}.$
                         Moreover, if $4 \mid x_j,$ then $g_x=1.$
\item[\rm (iv)]  If $3 \mid x_j,$ then $g_x \le 2.$
\item[\rm (v)]   If $x_j \ge 3,$ then $3 \nmid g_x.$
\end{itemize}
\end{lem}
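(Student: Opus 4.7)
The unifying approach is to rewrite the $j$-th equation in generalized Fermat form and apply Lemma \ref{ourcase}. Setting $y_j = g_x y'$, $z_j = g_x z'$ with $\gcd(y', z') = 1$, and putting $B := b^{y'}$, $C := c^{z'}$, the $j$-th equation becomes
\[
a^{x_j} + B^{g_x} = C^{g_x}, \qquad \gcd(B, C) = 1, \quad B \text{ odd},\ C \text{ even}.
\]
Depending on the parity of $g_x$ we then perform sign changes (writing $B^{g_x} = -(-B)^{g_x}$ for odd $g_x$) or extract square/cube factors on the $a$-side, in order to match a listed case of Lemma \ref{ourcase}.

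The clause $3 \mid g_x \Rightarrow x_j \le 2$ (and its contrapositive, the final ``$x_j \ge 3 \Rightarrow 3 \nmid g_x$'') follows by rewriting as $X^3 + Y^3 = Z^N$ with $X = C$, $Y = -B$, $Z = a$, $N = x_j$, and $\gcd(X, Y) = 1$: FLT handles $N = 3$, the $(3, 3, N)$-row of Lemma \ref{ourcase} handles $4 \le N \le 10^9$, and inspection of the ten exceptional identities in Conjecture \ref{gfe} confirms that none has exponent pattern $(3, 3, N)$ with $N \ge 3$. The bound $x_j \le 10^9$ is ensured by the ambient estimates of Section \ref{prelim} (via Lemma \ref{firstbounds} and inequality \eqref{z3}). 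Similarly, for $5 \mid g_x$ one rewrites as $X^5 + Y^5 = Z^{x_j}$ with $Z = a$, and the final row of Lemma \ref{ourcase} (no solutions when $5 \mid Z$ and $N \ge 2$) forces $x_j = 1$ whenever $5 \mid a$.

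For the remaining clauses, the argument is a cascade. When $4 \mid g_x$ and $x_j = 2$, the equation $(a^{x_j/2})^2 + B^4 = C^4$ is an instance of the $(2, N, 4)$-row at $N = 4$. When $6 \mid g_x$, the $3 \mid g_x$ clause first gives $x_j \le 2$ and the $(2, 6, N)$-row at $N = 6$ then excludes $x_j = 2$. For $2 \mid x_j$: the value $g_x = 2$ is ruled out as a Pythagorean triple with two odd legs; for $g_x \ge 5$ odd, rewrite $C^{g_x} + (-B)^{g_x} = (a^{x_j/2})^2$ and apply the $(N, N, 2)$-row at $N = g_x \ge 4$; for $g_x$ even, either invoke the $4 \mid g_x$ result or, when $g_x = 2m$ with $m$ odd $\ge 5$, apply $(N, N, 2)$ at $N = m$ to $(C^2)^m + (-B^2)^m = (a^{x_j/2})^2$. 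The ``moreover if $4 \mid x_j$'' clause rules out $g_x = 3$ via the $(3, 3, N)$-row at $N = 4$. The clause $3 \mid x_j \Rightarrow g_x \le 2$ is treated analogously using the $(N, N, 3)$-row applied to $C^{g_x} + (-B)^{g_x} = (a^{x_j/3})^3$, with FLT handling $g_x = 3$.

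The main obstacle is the $4 \mid g_x$ case with $x_j \ge 3$ odd, for which the instance $(x_j, 4, 4)$ is not directly in Lemma \ref{ourcase}'s list. In this situation one uses the coprime three-fold factorization
\[
a^{x_j} = C^4 - B^4 = (C - B)(C + B)(C^2 + B^2),
\]
the three factors being pairwise coprime odd integers by the parities of $B$ and $C$, and writes each as an $x_j$-th power $a_i^{x_j}$ with $a_1 a_2 a_3 = a$ coming from a partition of the primes of $a$. This yields the Diophantine relation $(a_1^{x_j})^2 + (a_2^{x_j})^2 = 2\, a_3^{x_j}$, and a Gaussian-integer descent (using $v_2(a_1^{x_j} + a_2^{x_j}) = v_2(2C) \ge 2$, since $C$ is even and $4 \mid z_j$) produces the required contradiction, completing the proof once the cascade above is unlocked.
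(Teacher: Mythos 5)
Your unified strategy of routing everything through Lemma~\ref{ourcase} is a sensible reorganization of the paper's argument, which itself simply cites Dahmen for $(3,3,N)$, Dahmen--Siksek for the $5\mid g_x$ clause, and Darmon--Merel/Kraus/Bennett--Chen--Dahmen--Yazdani for the remaining parts. The clauses that do reduce cleanly to the table---the $3\mid g_x$ clause via $(3,3,N)$ once you note $x_j<4300$ from Lemma~\ref{firstbounds}, the $5\mid g_x$ clause via the $(5,5,N)$ row with $5\mid Z$, the $6\mid g_x$ subcase via $(2,6,6)$, and the odd-$g_x$ branches of the $2\mid x_j$ and $3\mid x_j$ clauses via $(N,N,2)$ and $(N,N,3)$---are handled correctly; you also correctly dispatch $2\mid g_x$ under $2\mid x_j$ by the primitive-Pythagorean-triple parity obstruction, which in fact already covers \emph{all} even $g_x$ there, so the $2m$ subcases are redundant.

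The proposal breaks down exactly where you flag the ``main obstacle'': $4\mid g_x$ with $x_j\ge 3$ odd. The three-fold coprime factorization $a^{x_j}=(C-B)(C+B)(C^2+B^2)$ and the resulting $(a_1^{x_j})^2+(a_2^{x_j})^2=2a_3^{x_j}$ are fine, but the claimed ``Gaussian-integer descent'' does not produce a contradiction. Passing to $\mathbb Z[i]$ one gets $C+iB=i^t\gamma^{x_j}$ with $N(\gamma)=a_3$, and the only congruence available from $v_2(2C)\ge 2$ is that $\operatorname{Re}(\alpha)$ is even and $\operatorname{Im}(\alpha)$ is odd; choosing $\gamma\equiv i\pmod 2$ satisfies this for every odd $x_j$, so there is no parity obstruction and the descent does not close. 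The relation is precisely the D\'enes equation $X^{x_j}+Y^{x_j}=2Z^{x_j}$ with $(X,Y,Z)=(a_1^2,a_2^2,a_3)$ coprime, whose resolution for $x_j\ge 3$ is Darmon--Merel's Main Theorem~1 (or, equivalently, one may invoke the resolution of $x^4-y^4=z^n$, e.g.\ \cite[Proposition~14.6.6]{Co}); neither of these equations appears in Lemma~\ref{ourcase}'s table, so the contradiction genuinely requires a citation outside the toolkit you set up. This gap propagates: the $4\mid g_x$ sub-case of clause~(iv) (the ``$3\mid x_j\Rightarrow g_x\le 2$'' statement) likewise cannot be deflected to your $4\mid g_x\Rightarrow x_j=1$ clause until that clause is actually established, and it also cannot be reached by your $(N,N,3)$ trick since $(-B)^{g_x}=B^{g_x}$ for $g_x$ even. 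To repair the argument, replace the Gaussian-descent sentence with an explicit appeal to Darmon--Merel's theorem on $x^n+y^n=2z^n$ (or to \cite[Proposition~14.6.6]{Co}), and state that you are going beyond the list in Lemma~\ref{ourcase} at this point.
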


\if0 \begin{proof}
(i) By \cite{Da}.

(ii) By \cite{DaSi}.

(iii) The assertion in the case where $2 \nmid g_x$ is proved almost similarly to Lemma \ref{g2}\,(iv).
If $2 \mid g_x$, then $2 \nmid x_j$, otherwise $\{a^{x_j/2},b^{y_j/2},c^{z_j/2}\}$ would form a primitive Pythagorean triple with $2 \mid c^{z_j/2}$.

(iv) The assertion in the case where $4 \nmid g_x$ is proved almost similarly to Lemma \ref{g2}\,(v).
In addition, \cite[Proposition 14.6.6]{Co} tells us that $4 \nmid g_x$ if $3 \mid x_j$.

(v) This is similar to Lemma \ref{g2}\,(ii).
\end{proof} \fi

Note that Lemma \ref{gx} holds with $(a,x,i,j,k)$ replaced as $(b,y,l,m,n)$ by symmetry of $a$ and $b$.

\section{Diophantine equation $A^x+B^y=A^X+B^Y$} \label{Sec-AB}

Let $A$ and $B$ be coprime integers greater than 1.
Here we study the following purely exponential equation:
\begin{equation}\label{AB}
A^x+B^y=A^X+B^Y,
\end{equation}
where $x,y,X,Y$ are unknown positive integers with $x \ne X$ and $y \ne Y$.
It is easy to see that $x<X$ if and only if $y>Y$, and also that the case where $X/x=y/Y \in \N$ does not hold.

Below, we give two results on equation \eqref{AB}.

\begin{lem}\label{AB-basic}
Let $(x,y,X,Y)$ be a solution of equation \eqref{AB} with $x<X$ and $y>Y.$
Then the following hold.
\begin{itemize}
\item[\rm (i)] $B^Y \mid (A^{X-x}-1), \ A^x \mid (B^{y-Y}-1).$
\item[\rm (ii)] $x/X+Y/y<1.$
\item[\rm (iii)] If $A>B,$ then $y>X$ and $y \ge 4.$
\end{itemize}
\end{lem}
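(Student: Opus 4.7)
The plan is to prove the three parts in sequence, each building on the previous; all the computations are elementary, but the sharpening from $y\ge 3$ to $y\ge 4$ in (iii) is the one step that requires care.

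For (i), I would rearrange \eqref{AB} (valid since $x<X$ and $y>Y$) as
\[ A^x(A^{X-x}-1)=B^Y(B^{y-Y}-1). \]
Since $\gcd(A,B)=1$ forces $\gcd(A^x,B^Y)=1$, the two asserted divisibilities $A^x\mid B^{y-Y}-1$ and $B^Y\mid A^{X-x}-1$ follow at once.

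For (ii), the same identity gives $A^{X-x}\ge B^Y+1>B^Y$ and $B^{y-Y}>A^x$. Taking logarithms yields $(X-x)\log A>Y\log B$ and $(y-Y)\log B>x\log A$; multiplying these and dividing out $(\log A)(\log B)>0$ produces $(X-x)(y-Y)>xY$. Expanding this as $Xy>XY+xy$ and dividing through by $Xy$ delivers the desired inequality $x/X+Y/y<1$.

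For (iii), set $t:=\log A/\log B>1$, using $A>B$. The two logarithmic inequalities from (ii) become $y-Y>xt$ and $X-x>Y/t$, so
\[ y-X>Y(1-1/t)+x(t-1)=(t-1)\bigl(x+Y/t\bigr)>0, \]
and since $y,X$ are integers, $y\ge X+1$. For the sharper bound $y\ge 4$, I would combine $y\ge X+1\ge x+2\ge 3$ with the observation that $y=3$ would force $x=1$, $X=2$, and (from $y-Y>x=1$) $Y=1$. Equation \eqref{AB} would then collapse to the single Diophantine relation $A(A-1)=B(B^2-1)$. A short divisibility argument, using $A\mid(B-1)(B+1)$ together with $A>B\ge 2$, pins the only possibility to $(A,B)=(3,2)$, which is excluded by the standing assumption inherited from \eqref{cond} that the base integers in the intended application (namely $A,B\in\{a,b\}$) are odd.

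I expect the last step to be the main obstacle: the analytic inequalities only yield $y\ge 3$, so disposing of the residual case $y=3$ requires isolating and solving the quadratic relation $A(A-1)=B(B^2-1)$ directly, then appealing to the parity hypothesis in force throughout the paper. All other steps are routine manipulations of the divisibility structure of \eqref{AB}.
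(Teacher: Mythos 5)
Parts (i) and (ii) are correct and essentially match the paper. The proof of $y>X$ in (iii), however, contains a genuine error: from the two lower bounds $y-Y>xt$ and $X-x>Y/t$ you cannot conclude $y-X>(t-1)(x+Y/t)$, because that conclusion requires an \emph{upper} bound on $X-x$, whereas $X-x>Y/t$ bounds it from below. Your displayed inequality is exactly what would result from subtracting the second inequality from the first, but subtracting same-direction inequalities is invalid; the two hypotheses are simultaneously satisfiable with $X$ arbitrarily large and $y$ near its minimum, so $y>X$ does not follow from them. The paper establishes $y>X$ by a direct size comparison (following Luca): supposing $y\le X$ gives
\[
A^{X-1}+B^{X-1}-1\le A^X-B^X\le A^X-B^y=A^x-B^Y<A^x\le A^{X-1},
\]
an immediate contradiction since $X\ge 2$. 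This step is what your argument is missing.

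The remainder of (iii), granted $y>X$, is fine: you correctly pin down $(x,X,y,Y)=(1,2,3,1)$ and reduce to $A(A-1)=B(B^2-1)$, and your elementary divisibility argument — writing $(B-1)(B+1)=Am$, deducing $A-1=Bm$, hence $B(B-m^2)=m+1$, forcing $m=1$ and $(A,B)=(3,2)$ — is a clean, computation-free alternative to the paper's appeal to integral points on $\mathcal Y^2-\mathcal Y=\mathcal X^3-\mathcal X$. Be aware, though, that $(A,B)=(3,2)$ with $3+2^3=3^2+2$ is a genuine solution within the lemma's literal hypotheses (coprime $A,B>1$), so invoking oddness of $a,b$ from \eqref{cond} imports a restriction external to the lemma as stated; the paper's ``proper pair'' wording has the same scope issue, but it is worth flagging explicitly rather than treating the exclusion as automatic.
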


\begin{proof}
(i) The assertions readily follow from the equation:
\begin{equation}\label{AB-2}
A^{x}(A^{X-x}-1)=B^{Y}(B^{y-Y}-1)
\end{equation}
with $\gcd(A,B)=1$.

(ii) From (i), $A^{x}<B^{y-Y}$ and $B^{Y}<A^{X-x}$, that is, $\frac{x}{y-Y}<\frac{\log B}{\log A}<\frac{X-x}{Y}$, so $\frac{xY}{Xy}<(1-\frac{x}{X})(1-\frac{Y}{y})$.
This yields the assertion.

(iii) We follow an argument in \cite[p.213]{Lu} to see that if $y \le X$ then
\[
A^{X-1}+B^{X-1}
-1 \le A^X-B^X \le A^X-B^y=A^x-B^Y<A^x \le A^{X-1}.
\]
This contradiction shows that $y>X$.

Suppose that $y \le 3$. Then $(X,x,y)=(2,1,3)$ as $y>X>x$.
Thus, $Y=1$ by (ii), so $A+B^3=A^2+B$.
This yields an integral point $(\mathcal X,\mathcal Y)=(B,A)$ of the elliptic curve $\mathcal Y^2-\mathcal Y=\mathcal X^{3}-\mathcal X$.
However, none of those points gives a proper pair $(A,B)$. Thus, $y \ge 4$.
\end{proof}

\begin{lem}\label{X=x+1}
Let $(x,y,X,Y)$ be a solution of equation \eqref{AB} with $x<X$ and $y>Y.$
Suppose that $X-x=1.$
Then $A>B,$ and the following hold.
\begin{itemize}
\item[\rm (i)]
$A \equiv -x B^{2Y}-B^Y+1 \pmod{B^{3Y}}.$
In particular, $A \ge B^{3Y}-xB^{2Y} -B^Y+1.$
\item[\rm (ii)]
Assume that $B>2.$
Then one of the following cases holds.
\begin{itemize}
\item[\rm (ii-1)]
$y>(3Y-1)X,$ and
\[
A \ge \begin{cases}
\,\frac{1}{2}B^{3Y}+\frac{1}{2}B^{2Y}-B^Y+1, & \text{if $B$ is odd,}\\
\,\frac{1}{2}B^{3Y}-B^Y+1, & \text{if $B$ is even.}
\end{cases}
\]
\item[\rm (ii-2)]
It holds that
\[
A=r B^{2Y}-B^Y+1,\quad x \ge B^Y+r,
\]
where $r$ is some positive integer satisfying $r \equiv -x \pmod{B^Y}$ with $r \le \lfloor \frac{B^Y}{2} \rfloor.$
\end{itemize}
In particular, case {\rm (ii-1)} holds if $x \le B^Y.$
\end{itemize}
\end{lem}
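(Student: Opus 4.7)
The plan is to perform the reduction $A = 1 + sB^Y$ where $s := (A-1)/B^Y$ is a positive integer (by Lemma \ref{AB-basic}(i), since $X - x = 1$), turning the equation \eqref{AB-2} into the working form
\[
sA^x = B^{y-Y} - 1.
\]
In particular $A \ge B^Y + 1 > B$, giving $A > B$. Every subsequent step operates on this reduced form.

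For part (i), I would show the equivalent statement $s \equiv -1 - xB^Y \pmod{B^{2Y}}$ via a binomial-expansion argument modulo $B^{2Y}$. Expanding $A^x = (1+sB^Y)^x \equiv 1 + xsB^Y \pmod{B^{2Y}}$ and reducing $sA^x \equiv -1 \pmod{B^{2Y}}$ first forces $s \equiv -1 \pmod{B^Y}$; writing $s = -1+tB^Y$ with $t \ge 1$ and using $s^2 \equiv 1 \pmod{B^Y}$, the congruence collapses to $t \equiv -x \pmod{B^Y}$, which unwinds to the asserted form of $A$. The one hypothesis required is $y \ge 3Y$, so that $B^{y-Y} \equiv 0 \pmod{B^{2Y}}$. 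For $x \ge 2$ this is free from Lemma \ref{AB-basic}(ii), which gives $y > XY \ge 3Y$. For $x = 1$ the only remaining range $2Y < y < 3Y$ is eliminated directly: $s \equiv -1 \pmod{B^Y}$ forces $s \ge B^Y - 1$, hence $s + s^2 B^Y \ge (B^Y-1)^2 B^Y > B^{2Y}$ (valid since $Y \ge 2$ here), contradicting $B^{y-Y} - 1 < B^{2Y}$. The stated lower bound on $A$ then follows because $1 - B^Y - xB^{2Y}$ is negative, forcing $A$ to be at least one period $B^{3Y}$ higher.

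For part (ii), with $B > 2$, I would use (i) to write $A = 1 - B^Y + rB^{2Y} + kB^{3Y}$ with $r \in \{1,\dots,B^Y\}$, $r \equiv -x \pmod{B^Y}$, and $k \ge 0$, this being the canonical representative shape forced by (i), and split into two cases. In Case A ($k \ge 1$, or $k=0$ with $r > B^Y/2$), a direct computation, separating the parities of $B$, gives $A \ge \tfrac{1}{2}B^{3Y} + \tfrac{1}{2}B^{2Y} - B^Y + 1$ for $B$ odd and $A \ge \tfrac{1}{2}B^{3Y} - B^Y + 1$ for $B$ even, matching the bounds in (ii-1). Since $A > \tfrac{1}{2}B^{3Y}$, we get $\log_B A > 3Y - \log_B 2$; combining this with the identity $A^X = B^y - B^Y + A^x$, which yields $B^y > A^x(A-1) \ge A^{x+1}/2$, one obtains $y > X\log_B A - \log_B 2 > X(3Y-1)$, where the final step uses $(X+1)\log_B 2 < X$, valid for $X \ge 2$ and $B \ge 3$. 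In Case B ($k = 0$, $r \le \lfloor B^Y/2 \rfloor$), $A$ has exactly the shape in (ii-2). Since $x \equiv -r \pmod{B^Y}$, the candidate values for $x$ are $\{B^Y-r,\ 2B^Y-r,\ 3B^Y-r,\dots\}$, and every one beyond the first satisfies $x \ge 2B^Y - r \ge B^Y + r$, so the remaining task is to rule out $x = B^Y - r$. I would do this by extending the mod $B^{2Y}$ analysis of (i) one order further to mod $B^{3Y}$: substituting the specific values $s = rB^Y-1$, $A = rB^{2Y}-B^Y+1$, $x = B^Y-r$ into the refined binomial congruence produces an identity which, after reduction modulo $B^Y$, cannot hold (for example, in the subcase $r = 1$, $Y = 1$, $B$ odd it reduces to $B \mid 1$). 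The final clause, that $x \le B^Y$ forces (ii-1), is immediate since Case B demands $x \ge B^Y + r > B^Y$.

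The principal difficulty will be the exclusion of $x = B^Y - r$ in Case B: it requires tracking one additional order of the binomial expansion and carrying out a somewhat delicate reduction that works uniformly in $r$, and this is where the hypothesis $B > 2$ plays its essential role. A secondary technicality is the elimination of the range $2Y < y < 3Y$ in the $x = 1$ branch of (i), which is however a clean size estimate once $s \equiv -1 \pmod{B^Y}$ is in hand.
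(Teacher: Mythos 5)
Parts (i) and (ii-1) of your plan are essentially sound and close to the paper's argument, up to small variations: the paper gets $y \ge 3Y$ in one stroke from $B^{2Y-1} < A \le A^{x} < B^{y-Y}$ rather than splitting on $x$, and uses the factorisation $B^y = A^X(1-1/A)/(1-1/B^{y-Y})$ instead of the cruder $B^y > A^X/2$ (your assertion $A > \tfrac12 B^{3Y}$ actually fails in the even case, where the stated lower bound for $A$ is only $\tfrac12 B^{3Y}-B^Y+1$, but this is easily patched).

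The real problem lies in (ii-2). You correctly identify that the substance of the claim is the exclusion of $x = B^Y - r$ --- the paper's proof is in fact very terse at exactly this point --- but the tool you propose is not strong enough. Pushing the binomial expansion one further order as you suggest and substituting $x = B^Y - r$, one finds (take $B$ odd for convenience) that the resulting congruence collapses modulo $B^Y$ to
\[
B^{y-3Y} \equiv \tfrac{1}{2}\bigl(3r^2 - r + 2\bigr) \pmod{B^Y}.
\]
The size estimates force $y > 3Y$, so the left side is divisible by $B$, and a contradiction results precisely when $B \nmid \tfrac12(3r^2-r+2)$. For $r=1$ that quantity equals $2$ and your spot check works; but the quantity can perfectly well be divisible by $B^Y$. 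Already for $B=13$, $Y=1$, $r=3$ (so $A=495$, $x=10$) it equals $13$, and the displayed congruence is satisfied by every $y \ge 4$, so your modulus-$B^{3Y}$ computation excludes nothing. One must go to modulus $B^{4Y}$ or beyond to see the obstruction in that case, and it is not clear that any fixed finite order of the expansion will suffice uniformly in $r$. Since the ``in particular'' clause --- the part of the lemma actually invoked elsewhere in the paper --- rests precisely on $x > B^Y$ holding in case (ii-2), this missing step is the crux of part (ii), not a removable technicality.
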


\begin{proof}
First, under the assumption that $(X-x) \mid x$, we show the following congruence:
\begin{equation}\label{precisecong}
A^{X-x} \equiv -B^{Y}+1 \mod{B^{2Y}}
\end{equation}
By Lemma \ref{AB-basic}\,(i), $A^{X-x}=1+KB^{Y}$ with some $K \in \N$.
Substituting this into \eqref{AB-2} yields
\[
A^{x} \cdot K=B^{y-Y}-1.
\]
Suppose that $(X-x) \mid x$.
Then $A^{x} \equiv 1 \pmod{B^{Y}}$.
We take the above displayed equality modulo $B^{\,\min\{y-Y,Y\}}$ to see that
\[
K \equiv -1 \mod{B^{\,\min\{y-Y,Y\}}}.
\]
Thus, for obtaining \eqref{precisecong}, it suffices to show that $y \ge 2Y$.
Since $B^{y-Y}>A^{x}, \ A^{X-x}>B^{Y}$ by Lemma \ref{AB-basic}\,(i), and $\frac{x}{X-x} \ge 1$ as $(X-x) \mid x$, it follows that
\[
y-Y>\frac{\log A}{\log B} \cdot x>\frac{Y}{X-x} \cdot x =\frac{x}{X-x} \cdot Y \ge Y.
\]
In what follows, suppose that $X-x=1$.

(i) By congruence \eqref{precisecong}, $A=LB^{2Y}-B^{Y}+1$ with some $L \in \N$.
Substituting this into \eqref{AB-2} yields
\[
(LB^{2Y}-B^{Y}+1)^{x}(LB^{Y}-1)=B^{y-Y}-1.
\]
Observe that
\begin{align*}
(LB^{2Y}-B^{Y}+1)^{x}(LB^{Y}-1)
&\equiv (-xB^{Y}+1)(LB^{Y}-1) \mod{B^{2Y}} \\
&\equiv (x+L)B^{Y}-1 \mod{B^{2Y}}.
\end{align*}
We take the previous equality modulo $B^{2Y}$ to see that $(x+L)B^{Y} \equiv B^{y-Y} \pmod{B^{2Y}}$, and so
\[
x+L \equiv B^{y-2Y} \mod{B^{Y}}.
\]
It suffices to show that $y \ge 3Y$.
This follows from the inequalities $A \ge B^{2Y}-B^{Y}+1>B^{2Y-1}$, and $A \le A^{x}<B^{y-Y}$.

(ii) Set $r$ be the integer satisfying $r \equiv x \pmod{B^Y}$ with $|r| \le \lfloor \frac{B^Y}{2} \rfloor$.
By (i), $A=(TB^{Y}-r) B^{2Y}-B^Y+1$ with some $T \in \Z$.
It is clear that $TB^{Y}-r \ge 1$.
If $T<0$, then $-r \ge 1-TB^{Y} \ge 1+B^Y>\lfloor \frac{B^Y}{2} \rfloor$, which is absurd.
If $T=0$, then $A=-r B^{2Y}-B^Y+1$ with $r<0$, and that $x \ge B^Y-r \,(>B^Y)$ since $x \equiv r \pmod{B^Y}$, so case (ii-2) holds.

Finally, suppose that $T>0$.
Then $A \ge B^{3Y}-r B^{2Y}-B^Y+1$.
This together with $r \le \lfloor \frac{B^Y}{2} \rfloor$ easily gives the asserted lower bounds for $A$ in case (ii-1).
Also, from those observe that $A \ge (1/k) \cdot B^{3Y}$, where $k=2$ if $2 \nmid B$, and $k=64/31$ if $2 \mid B$ with $B>2$.
On the other hand, 
\[
B^y=A^X \cdot \frac{1-\frac{1}{A}}{1-\frac{1}{B^{y-Y}}}>A^X \cdot (1-{\textstyle \frac{1}{A}}).
\]
Since $B/k \ge 3/2$ as $B>2$, these inequalities together show that
\begin{align*}
y>\frac{\log A}{\log B}\,X+\frac{\log (1-\frac{1}{A})}{\log B}
>(3Y -{\textstyle \frac{\log k}{\log B}})\,X-{\textstyle \frac{\log (3/2)}{\log B}}
\ge (3Y-1)X.
\end{align*}
Thus, case (ii-1) holds.
\end{proof}

In the forthcoming two sections, we apply results in Sections \ref{Sec-improve-HuLe}, \ref{Sec-ineq-z1<z2}, \ref{Sec-gfe} and \ref{Sec-AB} to find all possible values of $a,b,c$ (together with those of $\alpha,\beta$) and $x_1,y_1,z_1,x_2,y_2,z_2$.
Moreover, in the next section, we sieve all those remaining cases with $z_1=z_2$ by using the system formed of the first two equations, that is,
\begin{equation}\label{sys-12}
\begin{cases}
\,a^{x_1}+b^{y_1}=c^{z_1}, \\
\,a^{x_2}+b^{y_2}=c^{z_2}.
\end{cases}
\end{equation}
For these purposes, we prepare several notation and give a few remarks as follows.
In each of any forthcoming situations, let $\mathcal M_a, \mathcal M_b, \mathcal M_c$ denote any uniform upper bound for $a,b,c$, respectively.
Then any of programs using computer appearing depends on the sizes of these numbers, and it proceeds faster for smaller values of them.
Thus, through those programs, we always replace $\mathcal M_a, \mathcal M_b, \mathcal M_c$ by any smaller ones whenever those are found.
The details on the iterations coming from these are omitted in most cases in the text.
The situation is similar to lower bounds for $a,b$ and $c$.
In what follows, in each of the situations, let $a_0,b_0,c_0$ denote any uniform lower bound for $a,b,c$, respectively.
These numbers may be chosen appropriately according to each case together with \eqref{cond} and \eqref{la,lb,lc}.
For example, in any case with $a>b$, we may choose those numbers as follows:
\[
\begin{array}{ll}
a_0=\max\{19,2^\alpha+1,3 \cdot 2^\beta+1\}, \, c_0=3 \cdot 2^\beta,  &\text{if $a>\max\{b,c\}$,}\\
a_0=\max\{11,2^\alpha+1\}, \, c_0=\max\{18,3 \cdot 2^\beta,2^\alpha+2\},  &\text{if $c>a>b$,}
\end{array}
\]
with $b_0=2^\alpha-1$.

To treat system \eqref{sys-12}, it is very efficient to rely upon the existing results on ternary Diophantine equations, which are summarized in Lemma \ref{ourcase}.
Indeed, those restrict the divisibility properties of the exponential unknowns (as already seen in Lemmas \ref{g2} and \ref{gx}), and reduce considerably the computational time for showing results.
However, we omit the details on those applications for simplicity of the presentation.

\section{Case where $z_1=z_2$}\label{Sec-z1=z2}

Here we examine the case of $z_1=z_2$, where system \eqref{sys-12} is
\begin{equation}\label{ab12} 
a^{x_1}+b^{y_1}=a^{x_2}+b^{y_2} =c^{z_1}.
\end{equation}
Without loss of generality, we may assume that $x_1<x_2$ and $y_1>y_2$.
Applying Lemma \ref{AB-basic} for $(A,B)=(a,b)$ and $(x,y,X,Y)=(x_1,y_1,x_2,y_2)$ gives
\begin{equation} \label{cond_z1=z2}
\begin{cases}
\ a^{x_1} \mid (b^{D_y}-1), \quad b^{y_2} \mid (a^{D_x}-1); \\
\ \max\{x_2,y_1\} \ge 4 \quad x_1/x_2+y_2/y_1<1;\\
\ y_1>x_2, \ \ x_1<D_y, \quad \text{if $a>b$;} \\
\ x_2>y_1, \ \ y_2<D_x, \quad \text{if $b>a$,}
\end{cases}
\end{equation}
where $D_x:=x_2-x_1, \, D_y:=y_1-y_2$.
Also, recall from Lemma \ref{dagger} that either $(\beta,z_1)=(1,1)$ or $z_1=\alpha/\beta$ if one of $d_x,d_y,D_x,D_y$ is odd.
In case $(\dagger)$ with $c<\max\{a,b\}$, we have $z_1=\alpha/\beta$ as $z_1>1$.
We often use these conditions implicitly below.

Let us begin with the following lemma.

\begin{lem}\label{z1=z2_first}
If $d_z=0,$ then
\[
c^{z_1/2}<\frac{\log c}{\log \min\{a,b\}}\,z_1^2 z_3, \quad \min\{a,b\}<c^{z_1/4}.
\]
\end{lem}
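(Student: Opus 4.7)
The plan is to combine Proposition \ref{=<lam1}, which governs exactly the configuration $Z_1=Z_2<Z_3$ of equation \eqref{ABC}, with the elementary analysis of the system $a^{x_1}+b^{y_1}=a^{x_2}+b^{y_2}$ provided by Lemma \ref{AB-basic}. As a preliminary reduction I rule out $z_1=z_2=z_3$: this would force three pairs $(x_t,y_t)$ to satisfy $a^x+b^y=c^{z_1}$, contradicting Proposition \ref{pillai_atmost2_relevant} applied with the constant term replaced by $c^{z_1}$. Hence $z_1=z_2<z_3$, and Proposition \ref{=<lam1} applies with $(A,B,C;\lambda)=(a,b,c;1)$ and $(X_r,Y_r,Z_r)=(x_r,y_r,z_r)$.

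For the first displayed inequality I use whichever of the two alternatives Proposition \ref{=<lam1} delivers. If the second alternative holds, substituting the crude bounds $x_t<(\log c/\log a)z_t$ and $y_t<(\log c/\log b)z_t$ from \eqref{basic} yields $|x_3z_2-x_tz_3|<(\log c/\log a)\,z_2z_3$ and analogously for $y$, so that (using $z_2=z_1\le z_3$) the desired bound $c^{z_1/2}<(\log c/\log\min\{a,b\})\,z_1^2z_3$ drops out. If the first alternative holds, the weaker constant $2$ in place of $(\log c)\,z_1^2$ is absorbed using $\log c\cdot z_1^2\ge 2$. The only case in which this fails is $(c,z_1)=(6,1)$, which is eliminated because pairwise coprimality with $c=6$ forces $a,b\ge 5$, making $a^{x_1}+b^{y_1}\ge 10>6=c^{z_1}$.

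For the second inequality $\min\{a,b\}<c^{z_1/4}$, the plan is to invoke Lemma \ref{AB-basic}(iii). By \eqref{cond_z1=z2}, the quadruple $(x_1,y_1,x_2,y_2)$ satisfies $a^x+b^y=a^X+b^Y$ with $x_1<x_2$ and $y_1>y_2$. If $a>b$, the lemma gives $y_1\ge 4$, and the trivial estimate $b^{y_1}<c^{z_1}$ yields $b<c^{z_1/y_1}\le c^{z_1/4}$ with $b=\min\{a,b\}$. If $b>a$, I relabel the quadruple as $(y_2,x_2,y_1,x_1)$ so that the larger base plays the role of $A$ in Lemma \ref{AB-basic}; this produces $x_2\ge 4$ and one concludes analogously from $a^{x_2}<c^{z_1}$.

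No step presents any real obstacle: the main content has already been encoded inside Proposition \ref{=<lam1}, and the remaining manipulations are either direct substitutions of \eqref{basic} or applications of Lemma \ref{AB-basic}(iii). The only moment requiring attention is the absorption of the constant $2$ into $(\log c)\,z_1^2$, which is handled by the small case analysis above.
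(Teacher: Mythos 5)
Your proposal is correct and follows essentially the same route as the paper: Proposition~\ref{=<lam1} for the first inequality, and Lemma~\ref{AB-basic}\,(iii) (which underlies the condition $\max\{x_2,y_1\}\ge 4$ in \eqref{cond_z1=z2}) for the second. Your explicit absorption of the constant $2$ from the first alternative into $\log c\cdot z_1^2\ge 2$, together with disposing of the hypothetical case $(c,z_1)=(6,1)$, supplies detail the paper compresses into ``these together show the lemma.''
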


\begin{proof}
The second inequality immediately follows from system \eqref{ab12} with $\max\{x_2,y_1\} \ge 4$ by \eqref{cond_z1=z2}.
Suppose that $z_1=z_2$.
Apply Proposition \ref{=<lam1} with $(A,B,C)=(a,b,c)$ and $(X_r,Y_r,Z_r)=(x_r,y_r,z_r)$ for $r=1,2,3$.
Then $c^{z_2/2}<\frac{2}{\log \min\{a,b\}}\,z_3$, or
\begin{align*}
\frac{c^{z_2/2}}{z_2}
&<\max_{t \in \{1,2\}}\{  \,|x_3 z_2 - x_t z_3|, \ |y_3 z_2 - y_t z_3| \, \}\\
&<\max_{t \in \{1,2\}}\{  x_3 z_2,x_t z_3, y_3 z_2, y_t z_3 \}\\
&<\max_{t \in \{1,2\}}\left\{  \frac{\log c}{\log a}z_3 z_2,\frac{\log c}{\log a}z_t z_3, \frac{\log c}{\log b}z_3 z_2, \frac{\log c}{\log b}z_t z_3 \right\}
= \frac{\log c}{\log \min\{a,b\}}z_2 z_3.
\end{align*}
These together show the lemma as $z_2=z_1$.
\end{proof}

Using this lemma, we first deal with the case where $c>\max\{a,b\}$.

\begin{prop}\label{c=max_imply_z1<z2}
If $c>\max\{a,b\},$ then $d_z>0.$ 
\end{prop}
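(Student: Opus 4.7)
The plan is to argue by contradiction. Suppose $d_z=0$, i.e. $z_1=z_2$, so that the first two equations form the system \eqref{ab12}, and combine this with the hypothesis $c>\max\{a,b\}$ to force $c$ into an explicitly bounded range that can then be sieved.

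First, apply Lemma \ref{z1=z2_first} to obtain
\[
 c^{z_1/2}<\frac{\log c}{\log\min\{a,b\}}\,z_1^{2}\,z_3,\qquad \min\{a,b\}<c^{z_1/4}.
\]
Insert the uniform bound $z_1\le z_2\le 230$ from Lemma \ref{firstbounds}, the polylog estimate $z_3<\mathcal H_{\alpha,\beta,m_2}(c;a,b)$ from \eqref{z3}, and the lower bound $\min\{a,b\}\ge 2^{\alpha}-1\ge 3$ from \eqref{la,lb,lc}. The hypothesis $c>\max\{a,b\}$ gives $\log a \cdot \log b<\log^{2}c$, which turns $\mathcal H_{\alpha,\beta,m_2}(c;a,b)$ into a quantity polylogarithmic in $c$. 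The resulting inequality has the shape
\[
 c^{z_1/2}<C(\alpha,\beta,m_2)\,(\log c)^{3}\log_{\ast}^{2}(c_3\log c),
\]
from which, since $z_1\ge 1$, one extracts an explicit upper bound $c<M$, uniform in $(\alpha,\beta)$ over the admissible ranges $2\le\alpha\le\log_2(c+1)$, $\beta\ge 1$. Because the right-hand side grows only polylogarithmically while the left-hand side is at least $c^{1/2}$, the resulting $M$ is vastly smaller than $10^{62}$.

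In the now-finite window $\max\{a,b\}<c\le M$ subject to \eqref{cond}, enumerate candidate systems \eqref{ab12}. The constraints in \eqref{cond_z1=z2} --- in particular the divisibility relations $a^{x_1}\mid(b^{D_y}-1)$ and $b^{y_2}\mid(a^{D_x}-1)$ together with $x_1/x_2+y_2/y_1<1$ --- pin down $(x_1,y_1,x_2,y_2)$ very tightly once $(a,b,c,z_1)$ are fixed, and the asymmetry $\min\{a,b\}<c^{z_1/4}$ eliminates most triples outright. When case $(\dagger)$ applies, Lemma \ref{dagger} further forces $z_1=\alpha/\beta$ or $(\beta,z_1)=(1,1)$, trimming the enumeration still more. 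For each surviving candidate, verify that a third solution $(x_3,y_3,z_3)$ with $z_3>z_1$ cannot exist: $z_3$ is bounded by $\mathcal H_{\alpha,\beta,m_2}(c;a,b)$ and $x_3,y_3$ are then controlled through \eqref{basic}, making this a short search.

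The main obstacle is the borderline case $z_1=1$, where the left-hand side of Lemma \ref{z1=z2_first}'s inequality is merely $c^{1/2}$ and the upper bound $M$ is therefore largest. Here one must lean on Proposition \ref{pillai_atmost2_relevant} (which says $a^x+b^y=c$ admits at most two solutions, forcing $z_3\ge 2$ for the third), on the asymmetry $\min\{a,b\}<c^{1/4}$, and on the stringent divisibility consequences of $a^{x_1}+b^{y_1}=a^{x_2}+b^{y_2}=c$ to keep the sieve tractable. For $z_1\ge 2$ the factor $c^{z_1/2}$ dominates the polylog right-hand side so quickly that only a handful of triples survive.
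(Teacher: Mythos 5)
Your proposal takes essentially the same approach as the paper: assume $z_1=z_2$, combine Lemma~\ref{z1=z2_first} with the hypothesis $c>\max\{a,b\}$ to obtain $c^{z_1/2}<z_1^2\,\mathcal H(c)$ (a polylogarithmic bound on $c$), extract an explicit $\mathcal M_c$, and then enumerate and sieve the finite list of candidate systems \eqref{ab12} using the divisibility constraints of \eqref{cond_z1=z2} and the restriction from Lemma~\ref{dagger}. The paper carries this out iteratively over $(\beta,\alpha,z_1)$ to drive $\mathcal M_c$ down to about $1.4\cdot 10^{13}$ and eventually isolates the single surviving system $(a,b,c)=(13,3,2200)$, which a quick modular check shows admits no third solution; your invocation of Proposition~\ref{pillai_atmost2_relevant} to force $z_3\ge 2$ in the $z_1=1$ subcase is a sound auxiliary observation, though not one the paper singles out.
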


\begin{proof}
It suffices to consider the case where $a>b$.
Suppose on the contrary that $c>a>b$ and $z_1=z_2$.
Since $\frac{\log c}{\log b} z_3<\h(c;a,c)<\h(c)$ by \eqref{z3}, Lemma \ref{z1=z2_first} yields
\begin{equation} \label{ineq_c=max_z1=z2}
c^{z_1/2}<z_1^2 \cdot \h_{\alpha,\beta,\frac{\log 8}{\log 3}}(c).
\end{equation}
We use this inequality 
to find all possible values of the letters in system \eqref{ab12}, and we sieve them as follows.

By \eqref{la,lb,lc},
\begin{equation}\label{beta-alp_region}
\beta \le \left\lfloor \frac{\log (\mathcal M_c/3)}{\log 2} \right\rfloor, \ \ 2 \le \alpha \le \left\lfloor \frac{\log \mathcal M_{\min\{a,b\}}}{\log 2} \right\rfloor.
\end{equation}
Also, from \eqref{z1},
\begin{equation} \label{range_z1_c=max_z1=z2}
\beta=z_1=1 \quad \text{or} \quad \left\lceil\frac{\alpha}{\beta} \right\rceil \le z_1 \le \mathcal U_1(\alpha,\beta,\textstyle{\frac{\log 8}{\log 3}},a_0,b_0,\mathcal M_c,1).
\end{equation}

First, let us find a smaller upper bound for $c$.
Since $c^{1/2} \le c^{z_1/2}/z_1^2$, inequality \eqref{ineq_c=max_z1=z2} yields
\[
c^{1/2}<\max\bigr\{ \h_{2,1,\frac{\log 8}{\log 3}}(c), \h_{3,1}(c)\bigr\}.
\]
This implies that $c<1.4 \cdot 10^{13}$.
Thus we set $\mathcal M_c$ as this upper bound.

Next, for each of the values of $\beta,\alpha$ and $z_1$ satisfying \eqref{beta-alp_region} and \eqref{range_z1_c=max_z1=z2}, we use inequality \eqref{ineq_c=max_z1=z2} to find an upper bound for $c$, say $c_u$.
At the same time, an upper bound for $b\,(=\min\{a,b\})$ is found, say $b_u$, by the second inequality of Lemma \ref{z1=z2_first}, that is, $b_u:=\lfloor {c_u}^{z_1/4} \rfloor$.
This provides smaller uniform upper bounds for $b$ and $c$.
The same procedure can be continued by replacing $\mathcal M_b, \mathcal M_c$ by smaller ones until their decreases end.
Let LIST be the list of all tuples $(\beta,\alpha,z_1,b_u,c_u)$ found in the final iteration.

Third, for each tuple in LIST, we find all possible values of $b,y_1,y_2,a,x_2$ and $x_1$ in turn by using the following relations:
\begin{gather*}
b \le b_u, \ \ 4 \le y_1 \le  \left\lfloor \frac{\log c_u}{\log b}\,z_1 \right\rfloor, \ \ y_2 < y_1-1, \ \  a \mid (b^{D_y}-1), \\
1 < x_2 \le \min\biggl\{ \left\lfloor \frac{\log c_u}{\log a}\,z_1 \right\rfloor,y_1-1\biggl\}, \ x_1 < \min\{x_2,D_y\}.
\end{gather*}

Finally, for each of the found tuples $(a,b,x_1,y_1,x_2,y_2)$, we check whether system \eqref{ab12} holds or not, with the consideration to divisibility in \eqref{cond_z1=z2}.
As a result, the only remaining tuple is $(a,b,x_1,y_1,x_2,y_2)=(13,3,1,7,3,1)$ with $c=2200$, where a short modular arithmetic shows that there is no other triple $(x,y,z)$ satisfying $13^x+3^y=2200^z$.
The proof is completed.
\end{proof}

\begin{rem}\rm
The information on $\beta$ and $z_1$ can be also used to check whether system \eqref{ab12} holds or not.
\end{rem}

In what follows, we keep the notation in the proof of Proposition \ref{c=max_imply_z1<z2} and set $m_2=\frac{\log 8}{\log 3}$ uniformly.

For dealing with the case where $c<\max\{a,b\}$, we show two lemmas.

\begin{lem}\label{z1=z2 then dx>0 and dy>0}
If $d_z=0,$ then $d_x>0$ and $d_y>0.$
\end{lem}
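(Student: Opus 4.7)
The plan is to argue by contradiction and exploit the symmetry that swaps $(a,x) \leftrightarrow (b,y)$ together with relabeling $1 \leftrightarrow 2$; under this symmetry $d_x$ and $d_y$ are interchanged while the WLOG normalization $x_1<x_2,\,y_1>y_2$ and the hypothesis $d_z=0$ are preserved. So it suffices to establish $d_x>0$; then $d_y>0$ follows by symmetry.

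Assume for contradiction that $d_z=0$ and $d_x=0$. Since $x_1<x_2$, the equality $d_x=0$ can only arise from $x_3=x_1$ (the alternative $x_3=x_2$ gives $d_x=x_2-x_1>0$). Subtracting the first equation from the third yields $b^{y_3}-b^{y_1}=c^{z_3}-c^{z_1}$; distinctness of solutions forces $z_3>z_1$ and hence $y_3>y_1>y_2$. Factoring gives
\[
b^{y_1}(b^{y_3-y_1}-1)=c^{z_1}(c^{z_3-z_1}-1),
\]
so by $\gcd(b,c)=1$ we get $c^{z_1}\mid b^{y_3-y_1}-1$ and $b^{y_1}\mid c^{z_3-z_1}-1$. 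Combined with $c^{z_1}>b^{y_1}$, this already forces $y_3>2y_1$.

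I would now reinterpret the three original solutions as three solutions $(X_r,Y_r,Z_r)=(z_r,y_r,x_r)$ to $A^X-B^Y=C^Z$ with $(A,B,C)=(c,b,a)$. The $Z$-values are $x_1,x_2,x_1$, which after reordering become $Z_1=Z_2=x_1<x_2=Z_3$, so we are precisely in the setting of Proposition \ref{=<lam-1}. By Proposition \ref{c=max_imply_z1<z2}, $c<\max\{a,b\}$. When $a>b$ (so $c<a$) the hypothesis $A<C$ of that proposition holds, and since $X_3=z_1<z_3=\max\{X_1,X_2\}$, conclusion (ii) yields either
\[
a^{x_1}<\gcd(z_1,x_2)\,(z_3x_2-z_1x_1)\quad\text{or}\quad \frac{a^{x_1}}{x_1^{\,2}}<(z_3-z_1)\gcd(y_2,x_2)\,(y_3x_2-y_2x_1).
\]
Feeding in $z_1,z_2\le 230$ and $\max\{x_1,y_1,x_2,y_2\}<4300$ from Lemma \ref{firstbounds}, together with $z_3<\mathcal H_{\alpha,\beta,m_2}(c)$ from \eqref{z3} and $y_3<(\log c/\log b)z_3$ from \eqref{basic}, produces an effective bound for $a$ far below $10^{62}$. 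A finite search completely analogous to the one at the end of the proof of Proposition \ref{c=max_imply_z1<z2} — iteratively shrinking $\mathcal M_a,\mathcal M_b,\mathcal M_c$ and then enumerating $b,y_1,y_2,a,x_2,x_1$ via the divisibility conditions $a^{x_1}\mid b^{y_1-y_2}-1$ and $b^{y_2}\mid a^{x_2-x_1}-1$ — eliminates all candidates.

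The case $b>a$ is handled by the dual identification: apply the $(a,x)\leftrightarrow(b,y),\,1\leftrightarrow 2$ symmetry to reduce to (or rerun) the previous case. The only delicate subcase is $b>c>a$, where the direct use of Proposition \ref{=<lam-1} fails because $A<C$ is violated; there I would instead invoke Proposition \ref{improvedgap}\,(II) with $\lambda=-1$, using the sharpened information $c^{z_1}\mid b^{y_3-y_1}-1$ derived above to reproduce the same style of finite-search bound. The main obstacle is bookkeeping the computational bounds so that each subcase terminates in reasonable time; this is why the identities $c^{z_1}\mid b^{y_3-y_1}-1$ and $b^{y_1}\mid c^{z_3-z_1}-1$ are crucial — they give the nontrivial divisibility needed to shrink the search space to a manageable size, following exactly the template already set in the preamble to Section \ref{Sec-z1=z2}.
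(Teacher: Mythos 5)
Your combinatorial preliminaries are correct: under $d_z=0$, $d_x=0$, and the WLOG $x_1<x_2,\,y_1>y_2$, you rightly deduce $x_3=x_1$, $z_3>z_1$, $y_3>y_1>y_2$, and the divisibilities $c^{z_1}\mid b^{y_3-y_1}-1$, $b^{y_1}\mid c^{z_3-z_1}-1$. But your subsequent route diverges from the paper's and has a gap. The paper observes that $y_3>y_1>y_2$ already forces $d_y=y_1-y_2>0$ and then applies Lemma \ref{Lem_ineq_xi<xj}\,(i) with the base $b$ --- i.e.\ Proposition \ref{improvedgap}\,(II) for $(A,B,C;\lambda)=(c,a,b;-1)$ with the solutions ordered by $y$-values, for which $Z_1=y_2<Z_2=y_1<Z_3=y_3$. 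This gives $b^{d_y}<g_y\,z_1\,\mathcal{H}(c;b,c)$ with $g_y=\gcd(x_1,z_1)$; that single inequality holds no matter which of $a,b,c$ is largest and drives the whole sieve.

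The genuine gap in your proposal is the subcase $b>c>a$. You say you would ``invoke Proposition \ref{improvedgap}\,(II) with $\lambda=-1$,'' but with your reinterpretation $(X_r,Y_r,Z_r)=(z_r,y_r,x_r)$ (so $(A,B,C)=(c,b,a)$) the $Z$-values are $x_1,\,x_2,\,x_3=x_1$, which reorder to $Z_1=Z_2=x_1<x_2=Z_3$; Proposition \ref{improvedgap} requires \emph{strict} inequality $Z_1<Z_2\le Z_3$, so it does not apply. The only way to make a gap principle fire in this configuration is to pass to the $y$-ordering --- but then you are exactly reproducing Lemma \ref{Lem_ineq_xi<xj}\,(i) with base $b$, i.e.\ the paper's own step, and the ``sharpened'' divisibility $c^{z_1}\mid b^{y_3-y_1}-1$ you invoke plays no role in that statement. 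Even in the cases where Proposition \ref{=<lam-1}\,(ii) does apply (when $c<a$), your second displayed inequality carries an extra factor of size about $(z_3-z_1)\cdot y_3$, of order $\mathcal{H}(c)^2\cdot(\log c/\log b)$, which is noticeably weaker than the paper's $a<z_1\mathcal{H}(a)$; that is not a logical error, but it makes the promised feasibility of the finite search considerably less clear than you suggest.
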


\begin{proof}
By symmetry of $a$ and $b$, it suffices to show that $d_x>0$.
Suppose on the contrary that $z_1=z_2$ and $x_i=x_j \, (<x_k)$.
Then $\{i,j\} \ni 3$.
Let $(I,J)$ be the permutation of $\{i,j\}$ such that $J=3$.
Note that $\{I,k\}=\{1,2\}, z_I=z_k=z_1$ and $x_I=x_J=x_3$.

Since $z_I=z_k$ and $x_I=x_j<x_k$, it follows that $y_k<y_I$.
Also, observe that $c^{z_I}-b^{y_I}=a^{x_I}=a^{x_J}=c^{z_J}-b^{y_J}$ and $z_I=z_2 \le z_3=z_J$.
Thus, $y_I \le y_J$.
To sum up, $y_k<y_I \le y_J$, so $d_y=y_I-y_k>0$ with $g_y=\gcd (x_I,z_1)$.
Now Lemma \ref{Lem_ineq_xi<xj}\,(i) with the base $b$ gives
\[
b^{d_y}<g_{y} \cdot \frac{\log c}{\log a} \cdot z_I z_J=g_{y} \cdot z_1 \cdot \frac{\log c}{\log a}\,z_3.
\]
By \eqref{z3},
\begin{align}\label{ineq_c<max_z1=z2_xi=xj}
b^{d_y}<g_y \cdot z_1\cdot \h(c;b,c).
\end{align}
Similarly to the use of \eqref{ineq_c=max_z1=z2}, we use inequality \eqref{ineq_c<max_z1=z2_xi=xj} to find all possible values of the letters in system \eqref{ab12}, and we sieve them.
We proceed in two cases according to whether $a>\max\{b,c\}$ or $b>\max\{a,c\}$.
Note that the conditions in \eqref{cond_z1=z2} correspond to
\begin{gather*}
a^{x_I} \mid (b^{d_y}-1), \ b^{y_k} \mid (a^{D_x}-1), \\
\max\{x_k,y_I\} \ge 4, \quad
\begin{cases}
\,y_I>x_k, & \text{if $a>b$,} \\ \,x_k>y_I, & \text{if $b>a$,}
\end{cases}
\quad x_I/x_k+y_k/y_I<1
\end{gather*}
with $D_x=x_k-x_I$.

\vspace{0.3cm}{\it Case where $a>\max\{b,c\}$.}

\vspace{0.2cm}
Since $a^{x_I}<b^{d_y}$ and $g_y \le x_I$, we see from \eqref{ineq_c<max_z1=z2_xi=xj} that $a \le a^{x_I}/x_I<b^{d_y}/g_y <z_1  \h(c;b,c)<z_1 \h(a)$, so
\begin{equation}\label{firstbounds_dx=0_c<max}
a<z_1 \h(a).
\end{equation}
Also, since $c<a<b^{d_y/x_I}$,
\begin{align}\label{ineq_a=max}
b^{d_y}<g_y \cdot z_1\cdot d_y/x_I \cdot \h\big(b^{d_y/x_I};b,b\big).
\end{align}

First, we use inequality \eqref{firstbounds_dx=0_c<max} with $\beta=1$ to find that $a<5.3 \cdot 10^{22}$.
Thus we set $\mathcal M_a:=5.3 \cdot 10^{22}$.
Note that
\[
x_I \le z_1-2, \quad x_I<d_y<\left\lfloor \frac{\log \mathcal M_c}{\log b_0}\,z_1 \right\rfloor.
\]
Next, for each of possible tuples $(\beta,\alpha,x_I,d_y)$, we use inequality \eqref{ineq_a=max} to find an upper bound for $b$, say $b_u$, thereby an upper bound for $a$ is also obtained, say $a_u$, from the divisibility $a^{x_I} \mid (b^{d_y}-1)$.
At the same time, we find an upper bound for $c$ and another upper bound for $b$, say $c_u,{b_u}'$ respectively, by using the following inequalities from Lemma \ref{z1=z2_first}:
\[
c^{z_1/2}<z_1^2 \,\h(c;a_u,c), \quad b<{c_u}^{z_1/4}.
\]
Third, for each of the found tuples $(\beta,\alpha,z_1,x_I,d_y,a_u,\min\{b_u,{b_u}'\},c_u)$, we find all possible values of $b,a,y_I,y_k$ and $x_k$ in turn by using the following relations:
\begin{gather*}
b \le \min\{b_u,{b_u}'\}, \ a \le a_u, \ a^{x_I} \mid (b^{d_y}-1), \ \ d_y+2 \le y_I \le \left\lfloor \frac{\log c_u}{\log b}\,z_1 \right\rfloor, \\
y_k=y_I-d_y, \ \ x_I < x_k \le \min\biggl\{ \left\lfloor \frac{\log c_u}{\log a}\,z_1 \right\rfloor,y_I-1\biggl\}.
\end{gather*}
Finally, we verify that system \eqref{ab12} does not hold for any found tuple $(a,b,x_1,y_1,x_2,y_2)$.

\vspace{0.3cm}{\it Case where $b>\max\{a,c\}$.}

\vspace{0.2cm}
In case $(\dagger)$, we have $z_1=\alpha /\beta$, and $y_k<y_I<z_I=z_1 \le \alpha<2^\alpha-1 \le a$.
Thus, if $d_y=1$, by Lemma \ref{X=x+1}, we can use the following relations:
\begin{gather*}
b \equiv -y_k a^{2x_I}-a^{x_I}+1 \mod{a^{3x_I}}, \\
b \ge {\textstyle \frac{1}{2}a^{3x_I}+\frac{1}{2}a^{2x_I}}-a^{x_I}+1,\quad
x_k>(3x_I-1) y_I.
\end{gather*}

Since $c<b$, inequality \eqref{ineq_c<max_z1=z2_xi=xj} yields
\begin{align} \label{ineq_b=max_z1=z2_xi=xj}
b^{d_y}<\gcd(x_I,z_1) \cdot z_1 \cdot \h(b).
\end{align}
First, we use the inequality $b<z_1^2 \cdot \h(b)$ with $\beta=1$ to see that $b=\max\{a,b,c\}<3.4 \cdot 10^{15}$, and set $\mathcal M_b:=3.4 \cdot 10^{15}$.
Note that
\begin{gather*}
x_I \le x_k-2 \le \left\lfloor \frac{\log \mathcal M_c}{\log a_0}\,z_1 \right\rfloor-2, \quad d_y \le z_1-2.
\end{gather*}
Next, for each possible tuple $(\beta,\alpha,z_1,x_I,d_y)$, we use inequality \eqref{ineq_b=max_z1=z2_xi=xj} to find an upper bound for $b$, say $b_u$.
Similarly to the previous case, we use the inequalities $a<{b_u}^{d_y/x_I},\,c^{z_1/2}<z_1^2 \,\h(c;b_u,c)$ and $a<c^{z_1/4}$, to find upper bounds for $a$ and $c$, say $a_u,c_u$ respectively.
Finally, we verify that system \eqref{ab12} does not hold for any possible tuples $(a,b,x_1,y_1,x_2,y_2)$ coming from all possible tuples $(\beta,\alpha,z_1,x_I,d_y,a_u,b_u,c_u)$ found similarly to the case where $a>\max\{b,c\}$.

To sum up, the lemma is proved by Proposition \ref{c=max_imply_z1<z2}.
\end{proof}

\begin{lem}\label{k ne 3}
Suppose that $d_z=0.$ Then
\[ \begin{cases}
\,k=3, & \text{if $a>\max\{b,c\};$}\\
\,n=3, & \text{if $b>\max\{a,c\}.$}
\end{cases} \]
\end{lem}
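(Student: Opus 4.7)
The plan is to argue by contradiction; by the symmetry between $a,b$ (and $x,y$) in \eqref{abc}, it suffices to treat the case $a>\max\{b,c\}$, the other case following by interchange of roles. Keeping the normalisation $x_1<x_2$, $y_1>y_2$ fixed at the start of this section and assuming $z_1=z_2$, suppose for contradiction that $k\ne 3$. By the convention on $(i,j,k)$ this forces $x_3<x_2$ strictly (an equality $x_3=x_2$ would give $j=2,k=3$), and by Lemma \ref{z1=z2 then dx>0 and dy>0} we must have $x_3\ne x_1$. Hence either $x_3<x_1<x_2$ or $x_1<x_3<x_2$. Moreover $z_3>z_1$: otherwise $(x_r,y_r)_{r=1,2,3}$ would be three distinct solutions of $a^x+b^y=c^{z_1}$, contradicting Proposition \ref{pillai_atmost2_relevant}; so $c^{z_3}\ge c\cdot c^{z_1}$.

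The key analytical step is to deduce $y_3>y_1$. Writing
\[
b^{y_3}-b^{y_1} \,=\, (c^{z_3}-c^{z_1})+(a^{x_1}-a^{x_3}) \,\ge\, (c-1)\,c^{z_1}+a^{x_1}-a^{x_3},
\]
both summands on the right are non-negative in the sub-case $x_3<x_1$, while in the sub-case $x_1<x_3<x_2$ one has $a^{x_3}-a^{x_1}<a^{x_3}\le a^{x_2}<c^{z_1}$, so the right-hand side exceeds $(c-2)\,c^{z_1}>0$ since $c\ge 6$ by \eqref{la,lb,lc}. In either case $y_2<y_1<y_3$, so that the ordering on the $y$-side satisfies $(l,m,n)=(2,1,3)$; in particular $n=3$.

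Applying the $(b,y,l,m,n)$-version of Lemma \ref{Lem_ineq_xi<xj}(i), together with $(g_y')^2/g_y\le g_y\le z_1$ and \eqref{z3}, yields
\[
b^{D_y} \,<\, \frac{(g_y')^2}{g_y}\cdot\frac{\log c}{\log a}\cdot z_1 z_3 \,<\, z_1^{2}\,\h(c;b,c),
\]
where $D_y=y_1-y_2$. The divisibility $a^{x_1}\mid (b^{D_y}-1)$ from \eqref{cond_z1=z2} then gives $a\le b^{D_y}-1<z_1^{2}\,\h(c;b,c)$; since $b,c<a$ in our case, this bounds all three of $a,b,c$ uniformly once $z_1$ is fixed. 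The proof is then completed by a finite sieve exactly in the style of Proposition \ref{c=max_imply_z1<z2} and Lemma \ref{z1=z2 then dx>0 and dy>0}: iterate over admissible tuples $(\beta,\alpha,z_1)$, extract refined upper bounds for $a,b,c$, enumerate candidates $(a,b,x_1,y_1,x_2,y_2)$ subject to the size and divisibility constraints of \eqref{cond_z1=z2}, and verify that no such tuple satisfies system \eqref{ab12}.

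The principal obstacle I foresee is keeping this enumeration computationally feasible, particularly in the sub-case $x_1<x_3<x_2$ where $z_3$ could in principle be large: it is essential that the contradiction be extracted purely from system \eqref{ab12}, which involves only $(a,b,x_1,y_1,x_2,y_2)$, with the third triple entering solely via the bound on $a$ obtained above. Granting this, the search is of the same nature and comparable cost to those already carried out earlier in this section.
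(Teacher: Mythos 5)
Your proof is correct, and it is genuinely different from the paper's. The paper also argues by contradiction, but with $\{i,j\}\ni 3$ it works with the $x$-permutation: since $z_j$ may equal $z_3$ (which is only bounded by $\h(c;a,b)$, a quantity carrying a factor $\log a$), the authors first establish a fresh bound $z_3<\mathcal U_3$ via Laurent's theorem on linear forms in two logarithms, and then feed it into the $x$-version of Lemma~\ref{Lem_ineq_xi<xj} with a case analysis on $g_x\in\{1\}\cup\{2,5\}\cup\{3\}\cup\{\ge 7\}$ to obtain upper bounds for $a$. You sidestep this entirely. First you observe $z_3>z_1$ (via Proposition~\ref{pillai_atmost2_relevant}) and, in both sub-cases $x_3<x_1$ and $x_1<x_3<x_2$, derive $y_3>y_1$ by a direct size comparison, so that $(l,m,n)=(2,1,3)$ and $n=3$. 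Applying the $y$-version of Lemma~\ref{Lem_ineq_xi<xj}(i) with indices that now include $3$ in the ``large'' slot, the crucial cancellation $\frac{\log c}{\log a}\cdot z_3<\h(c;b,c)$ lets the default bound \eqref{z3} suffice, eliminating the need for $\mathcal U_3$ and for Laurent's theorem. You then close the argument via $a^{x_1}\mid(b^{D_y}-1)$. This is cleaner; what you give up is sharpness: absorbing $(g_y')^2/g_y$ into the crude factor $z_1$ produces the single inequality $a<z_1^2\,\h(c;b,c)$, whereas the paper's bound $a^{d_x}<z_1\,\h(a)$ (and the further $g_x$-dependent refinements using Lemma~\ref{Lem_ineq_xi<xj}(ii)--(iii)) is considerably tighter, especially when $d_x\ge 2$. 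In practice this means your initial $\mathcal M_a$ would be roughly a factor $z_1\le 230$ larger than the paper's $2.7\cdot 10^{11}$ and your sieve would lack the $g_y$-splitting used to accelerate the search, so the computation would be more expensive than you estimate, though still of the same general character; it would likely help to restore the $g_y$-dependence and to use the $y$-analogues of Lemma~\ref{Lem_ineq_xi<xj}(ii)--(iii) and Lemma~\ref{gx}, exactly as the paper does for $g_x$, to recover comparable bounds.
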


\begin{proof}
By symmetry of $a$ and $b$, it suffices to consider the case where $a>b$.
Suppose on the contrary that $a>\max\{b,c\}$ and $k \ne 3$.
Then $\{i,j\} \ni 3$.
Let $(I,J)$ be the permutation of $\{i,j\}$ such that $J=3$.
Note that $\{I,k\}=\{1,2\}$ and $z_I=z_k=z_1$.

Firstly, let us observe that
\begin{equation}\label{U3}
z_3<\mathcal U_3:=\max \biggl \{ (1+\varepsilon)z_1 + \frac{(1+\varepsilon)\log a}{\log c} \cdot d_x + 1, \,2523 \log b \biggl \}
\end{equation}
with $\varepsilon=999$.
First, suppose that $a^{(1+\varepsilon)x_J}>b^{y_J}$.
Then $c^{z_J}=a^{x_J}+b^{y_J}<2a^{(1+\varepsilon)x_J}$, and so
\[
z_J< 1+\frac{(1+\varepsilon)\log a}{\log c}\,x_J.
\]
Since $x_J=x_I-(x_I-x_J)$ and $x_I<\frac{\log c}{\log a}\,z_I$, it follows that
\[
z_3 < 
 1+(1+\varepsilon)z_1+\frac{(1+\varepsilon)\log a}{\log c}\,|x_I-x_J|.
\]
Next, suppose that $a^{(1+\varepsilon)x_J}<b^{y_J}$.
From $J$-th equation,
\[
\frac{c^{z_J}}{b^{y_J}}=\frac{a^{x_J}}{b^{y_J}}+1 <\frac{1}{b^{\frac{\varepsilon}{1+\varepsilon}y_J}}+1.
\]
Put $\lambda:=z_J\log c- y_J \log b\,(>0)$. We take the logarithms of these to find that
\[
\log \lambda < -\frac{\varepsilon}{1+\varepsilon}\,y_J \log b.
\]
On the other hand, from \cite[Corollary 2;$(m,C_2)=(10,25.2)$]{La},
\[
\log \lambda > -25.2 \,\Bigl(  \max \Bigl \{  \log \Bigr(  \frac{z_J}{\log b}+\frac{y_J}{\log c}  \Bigr) +0.38, 10\Bigl\} \Bigl)^2 (\log b)\log c.
\]
These inequalities together yield
\[
\frac{y_J}{\log c} <25.2\,(1+1/\varepsilon)\,\Bigr( \max\Bigr\{ \log \Bigr( \frac{2y_J}{\log c}+1 \Bigr) +0.38, 10 \Bigr\}  \Bigr)^2.
\]
This implies that $y_J/\log c<2520\,(1+1/\varepsilon)$.
Inequality \eqref{U3} for this case follows from the fact that $\lambda$ is small.

Secondly, in several cases according to the value of $g_x$, we apply Lemma \ref{Lem_ineq_xi<xj} together with inequality \eqref{U3} in \eqref{ab12} to find all possible values of the letters and sieve them.
We proceed basically along similar lines to the proof of the previous lemma.

\vspace{0.3cm}{\it Case where $g_x=1$.}

\vspace{0.2cm}
Lemma \ref{Lem_ineq_xi<xj}\,(i) gives
\[
a^{d_x}<\frac{\log c}{\log b} \cdot z_j z_k<z_1\,{\mathcal U_3}', \quad a^{d_x}<z_1\h(a).
\]
where ${\mathcal U_3}'=\max\Bigl \{ \frac{(1+\varepsilon)z_1\log c}{\log b_0} + \frac{(1+\varepsilon)\log a}{\log b_0} \cdot d_x + \log c, \,2523\log c\Bigl \}$.
The above second inequality gives a smaller bound for $a$, that is, we can set $\m_a:=2.7 \cdot 10^{11}$.
Note that $d_x \le z_1-2$.
For each possible tuple $(\beta,\alpha,d_x,z_1)$, we use the first inequality above to find an upper bound for $a$, say $a_u$.
Also, upper bounds for $c$ and $b$, say $c_u,b_u$ respectively, are found by using the following inequalities from Lemma \ref{z1=z2_first}:
\[
c^{z_1/2}<z_1^2\,{\mathcal U_3}', \quad b<{c_u}^{z_1/4}.
\]
Finally, we check that system \eqref{ab12} does not hold for any tuple $(a,b,x_1,y_1,x_2,y_2)$ coming from all possible tuples $(\beta,\alpha,z_1,a_u,b_u,c_u)$.

\vspace{0.3cm}{\it Case where $g_x \in \{2,5\}$.}

\vspace{0.2cm}
Since $\gcd(g_{x}',a)=1$ by Lemma \ref{gx}\,(ii), and $x_j \le x_k<\frac{\log c}{\log a}z_k$, Lemma \ref{Lem_ineq_xi<xj}\,(ii) gives
\[
a^{d_x}<\frac{\log a}{\log b} \cdot x_j z_k \le \frac{\log c}{\log b} \cdot z_k^2 < \frac{\log a}{\log b_0} \cdot z_1^2.
\]
For each possible tuple $(\beta,\alpha,d_x,z_1)$, we use the above inequality to find an upper bound for $a$.
The remaining part is handled similarly to the previous case.

\vspace{0.3cm}{\it Case where $g_x=3$.}

\vspace{0.2cm}
Since $2 \le x_j<\min\{z_j,z_k\} \le z_2$, it follows from Lemma \ref{firstbounds} and \ref{gx}\,(i) that $x_j=2$.
Thus, $x_i=1,d_x=1$, and so Lemma \ref{Lem_ineq_xi<xj}\,(ii-1) gives
\[
a<\frac{3^2}{3-1} \cdot \frac{\log a}{\log b} \cdot 2 \cdot z_k=\frac{9\log a}{\log b_0} \cdot z_1.
\]
For each possible pair $(\beta,\alpha,z_1)$, we use this inequality to find an upper bound for $a$.
The remaining part is handled similarly to the previous cases.

\vspace{0.3cm}{\it Case where $g_x \not\in \{1,2,3,5\}$.}

\vspace{0.2cm}
Note that $g_x \ge 7$ as $4 \nmid g_x$ and $6 \nmid g_x$ by Lemma \ref{gx}\,(ii).
Lemma \ref{Lem_ineq_xi<xj}\,(ii-2) gives
\begin{align*}
a^{d_x}
&< \frac{(7/6)^2\log^2 a}{\log (a-1)\,\log b} \cdot t_{b,c} \cdot \left( x_j+d_x+{d_x}^2 \right) z_k\\
& \le \frac{(49/36)\log a}{\log (a-1)} \cdot \frac{\log a}{\log \max\{b_0,c_0\}} \cdot \left(z_1-1+d_x+{d_x}^2\right)\,z_1.
\end{align*}
The remaining part is handled similarly to the previous case by using the above inequality.
\end{proof}

\begin{prop}\label{c<max_implies_z1 ne z2}
If $c<\max\{a,b\},$ then $d_z>0.$
\end{prop}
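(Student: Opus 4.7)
The plan is to argue by contradiction. Suppose $c<\max\{a,b\}$ and $d_z=0$. By the symmetry between $a$ and $b$ we may take $a>b$, so $a=\max\{a,b,c\}$, and by Lemma \ref{AB-basic} applied to \eqref{ab12} we may further take $x_1<x_2$ and $y_1>y_2$, whence $y_1>x_2$, $y_1\ge 4$, and the divisibilities $a^{x_1}\mid b^{d_y}-1$ and $b^{y_2}\mid a^{d_x}-1$ from \eqref{cond_z1=z2}. Lemma \ref{z1=z2 then dx>0 and dy>0} then gives $d_x>0$ and $d_y>0$; Lemma \ref{k ne 3} gives $k=3$; combined with $x_1<x_2$ this pins down $(i,j,k)=(1,2,3)$, so that $d_x=x_2-x_1$ and $z_jz_k=z_1z_3$.

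The next step is to exploit Lemma \ref{Lem_ineq_xi<xj} in each case on $g_x=\gcd(y_2,z_1)$, running exactly through the case split already used in the proof of Lemma \ref{k ne 3}. Using the estimate \eqref{z3} for $z_3$ together with $c<a$, each case yields an inequality of the form
\[
a^{d_x}<F_{g_x}(z_1,d_x)\cdot\mathcal H_{\alpha,\beta,m_2}(a;b,a),
\]
with $F_{g_x}$ an explicit polynomial in $z_1,d_x$. Symmetrically, applying Lemma \ref{Lem_ineq_xi<xj} in its $(b,y,l,m,n)$-form (the position of $y_3$ determining $l,m,n$) furnishes an analogous bound on $b^{d_y}$. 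When $d_x\ge 2$ these inequalities already produce a modest uniform upper bound $\mathcal M_a$ for $a$, hence also for $b$ and $c$.

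The main obstacle is the branch $d_x=1$, where the right-hand side above grows only like $\log^3 a$ while the left-hand side is linear in $a$, so the direct bound on $a$ is too weak. Here the plan is to invoke Lemma \ref{X=x+1} with $(A,B;x,y,X,Y)=(a,b;x_1,y_1,x_2,y_2)$: alternative (ii-1) gives $a\ge\frac{1}{2}b^{3y_2}-b^{y_2}+1$ together with $y_1>(3y_2-1)x_2$, while alternative (ii-2) forces the rigid parametrisation $a=rb^{2y_2}-b^{y_2}+1$, $x_1\ge b^{y_2}+r$, with $1\le r\le\lfloor b^{y_2}/2\rfloor$. In either alternative, combining this structure with $a^{x_1}\mid b^{d_y}-1$ and, in case $(\dagger)$, with Lemma \ref{dagger}, confines $(b,y_2,x_1,r)$ to a short explicit list.

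With uniform bounds $\mathcal M_a,\mathcal M_b,\mathcal M_c$ in hand, I would conclude by the same sieving procedure as in the final step of Proposition \ref{c=max_imply_z1<z2}: for each admissible triple $(\beta,\alpha,z_1)$ constrained by \eqref{beta-alp_region}, \eqref{range_z1_c=max_z1=z2} and, in case $(\dagger)$, by $z_1=\alpha/\beta$, enumerate $b$, then $y_1$ and $y_2$, then $a$ via $a^{x_1}\mid b^{d_y}-1$, then $x_2,x_1,c$, and verify whether \eqref{ab12} actually holds. As in every preceding proposition no admissible tuple is expected to survive, which delivers the required contradiction.
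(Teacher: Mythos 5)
Your proposal reproduces the paper's argument almost step for step: reduce to $a>\max\{b,c\}$ via Proposition \ref{c=max_imply_z1<z2}, force $k=3$ and $d_x>0$ via Lemmas \ref{z1=z2 then dx>0 and dy>0} and \ref{k ne 3}, bound $a^{d_x}$ through Lemma \ref{Lem_ineq_xi<xj} together with \eqref{z3} in a case split on $g_x$, bring in Lemma \ref{X=x+1} when $x_2-x_1=1$, and finish by the same sieving procedure. One small imprecision: when $d_x=1$ the inequality $a<F(z_1)\,\mathcal H(\cdot)$ is not ``too weak'' in principle (it still yields an explicit finite bound, since $\mathcal H$ is only polylogarithmic in $a$); the paper invokes Lemma \ref{X=x+1} specifically in the $g_x=3$ subcase (where $d_x=1$ is forced) to make the enumeration feasible, whereas for $g_x\in\{1,2,5\}$ it instead relies on the extra conditions $D_x=d_x$ and $\gcd(y_j,z_1)\in\{1,2,5\}$.
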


\begin{proof}
It suffices to consider the case where $a>b$.
Suppose on the contrary that $a>\max\{b,c\}$ and $z_1=z_2$.
Then $k=3$ and $d_x=|x_2-x_1|>0$ by the combination of Lemmas \ref{z1=z2 then dx>0 and dy>0} and \ref{k ne 3}.
Since the argument is almost similar to that of Lemma \ref{k ne 3}, we omit the details in the following.
The main difference is that $\mathcal U_3$ is replaced by $\h(c;a,b)$ from \eqref{z3}.

\vspace{0.3cm}{\it Case where $g_x \in \{1,2,5\}$.}

\vspace{0.2cm}
Lemma \ref{Lem_ineq_xi<xj}\,(i) together with Lemma \ref{gx}\,(ii) gives
\[
a^{d_x}<\frac{\log c}{\log b} \cdot z_1 z_3 < z_1\h(c;a,c).
\]
Note that $a^{d_x}<z_1\h(a)$ and this implies small upper bounds for $a$ and $d_x$.
The remaining part is handled almost similarly to the proof of the previous lemma.
However, remark that we can efficiently use the additional condition that $D_x=d_x$ and $\gcd(y_j,z_1) \in \{1,2,5\}$ with some $j \in \{1,2\}$ in checking system \eqref{ab12}.

\vspace{0.3cm}{\it Case where $g_x=3$.}

\vspace{0.2cm}
This case is also handled almost similarly to the previous lemma, in particular, $d_x=1$, and it is easy to verify that Lemma \ref{X=x+1} can be used so that the additional condition that $a \equiv -x_i b^{2y_j}-b^{y_j}+1 \pmod{b^{3y_j}}$ and $y_i>(3y_j-1) x_j$ can be efficiently used.

\vspace{0.3cm}{\it Case where $g_x \not\in \{1,2,3,5\}$.}

\vspace{0.2cm}
First, Lemma \ref{Lem_ineq_xi<xj}\,(i) gives
\[
a^{d_x} <\frac{\log c}{\log b} \cdot z_1 z_3^2<z_1 \h(a)^2.
\]
This implies small upper bounds for $a$ and $d_x$.
Next, together with $g_x \ge 7$, Lemma \ref{Lem_ineq_xi<xj}\,(ii-2) implies
\begin{align*}
a^{d_x}
<\frac{49}{36} \cdot \left( \frac{\log c}{\log a}\,z_1+d_x+{d_x}^2 \right)\,\h(c;a,a).
\end{align*}
The remaining part is handled similarly to the previous lemma, where the additional condition $\gcd(y_j,z_1) \ge 7$ is efficiently used.
\end{proof}

In view of Propositions \ref{c=max_imply_z1<z2} and \ref{c<max_implies_z1 ne z2}, the conclusion of this section is:

\begin{prop}\label{z1 ne z2}
$z_1 \ne z_2.$
\end{prop}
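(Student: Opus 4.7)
The plan is to observe that the statement is immediate from the two preceding propositions together with the coprimality hypothesis. Since $a,b,c$ are pairwise coprime and each exceeds $1$ by \eqref{cond}, no two of them can coincide; in particular $c \ne a$ and $c \ne b$. Therefore exactly one of the alternatives
\[
c > \max\{a,b\} \quad \text{or} \quad c < \max\{a,b\}
\]
holds. In the first case, Proposition \ref{c=max_imply_z1<z2} gives $d_z := z_2 - z_1 > 0$. In the second case, Proposition \ref{c<max_implies_z1 ne z2} gives $d_z > 0$ as well. Combined with the convention $z_1 \le z_2$ from \eqref{zorders}, we conclude $z_1 < z_2$ in both cases, so in particular $z_1 \ne z_2$.

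Thus the present proposition requires no new argument beyond this trivial case split; all of the work is encapsulated in the two preceding propositions. The main obstacle, already addressed in this section, is of course hidden there: establishing the sharp inequalities of Lemma \ref{z1=z2_first} (via Proposition \ref{=<lam1}), the nonvanishing of $d_x$ and $d_y$ (Lemma \ref{z1=z2 then dx>0 and dy>0}), and the identification of the extremal index (Lemma \ref{k ne 3}), and then combining these with Lemma \ref{Lem_ineq_xi<xj}, the $2$-adic bounds on $z_1,z_2$ summarised in Section \ref{prelim}, and the restrictions on common divisors from Section \ref{Sec-gfe} in order to reduce both subcases to a finite computer search over candidate tuples $(a,b,c,x_1,y_1,x_2,y_2)$, which are then sieved by direct verification of system \eqref{ab12}. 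Given those two propositions, writing out the proof of Proposition \ref{z1 ne z2} itself is a one-line case distinction.
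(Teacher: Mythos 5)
Your proof is correct and is exactly the argument the paper makes (the paper states the proposition without an explicit proof, prefacing it with "In view of Propositions \ref{c=max_imply_z1<z2} and \ref{c<max_implies_z1 ne z2}, the conclusion of this section is:"). Your additional observation that coprimality rules out $c = \max\{a,b\}$ makes the trichotomy into a clean dichotomy, which is implicit in the paper.
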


The total computational time for this proposition did not exceed 1 hour.

By Proposition \ref{z1 ne z2}, it remains to consider the case where $z_1<z_2$, where the upper bound for $\max\{a,b,c\}$ in \eqref{cond} can be replaced by $5 \cdot 10^{27}$ by \cite{HuLe_jnt18}.

\section{Case where $z_1<z_2$ with $c>\max\{a,b\}$: finding bounds} \label{Sec-bounds-z1<z2_c=max}

The aim of this section is to provide a list of possible values or upper bounds of some letters in system \eqref{sys-12} satisfying $z_1<z_2$ with $c>\max\{a,b\}$.
We proceed in two cases according to whether $c^{z_1}$ is divisible by $4$ or not.

Let us begin with the following lemma to give an upper bound for $g_2$ in terms of $z_2$.

\begin{lem}\label{U_g2}
$g_2<\frac{\log c}{\log \max\{a,b\}}\,z_2$.
\end{lem}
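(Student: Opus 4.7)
The plan is to give a short direct proof using only the 2nd equation and the definition of $g_2$. First I would note that since $g_2=\gcd(x_2,y_2)$ divides both $x_2$ and $y_2$, and both exponents are positive integers, we have $x_2\geq g_2$ and $y_2\geq g_2$. Hence
\[
c^{z_2}=a^{x_2}+b^{y_2}\,\geq\,a^{g_2}+b^{g_2}\,>\,\max\{a,b\}^{g_2}.
\]
Taking logarithms of both sides yields $z_2\log c>g_2\log\max\{a,b\}$, which rearranges to the desired inequality
\[
g_2<\frac{\log c}{\log\max\{a,b\}}\,z_2.
\]

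There is really no obstacle here: the argument uses nothing beyond the definition of $g_2$ and the positivity of the two summands on the left-hand side of the 2nd equation, so one does not even need to invoke the hypothesis $c>\max\{a,b\}$ or any of the earlier machinery (such as Lemma \ref{g2}, which would be needed only if one wanted extra information about whether $g_2$ is odd). The estimate is strict because $a^{x_2}+b^{y_2}>\max\{a^{x_2},b^{y_2}\}\geq\max\{a,b\}^{g_2}$. Since the statement is a clean one-line consequence of the 2nd equation of \eqref{sys-12}, the ``proof'' will essentially be the display above.
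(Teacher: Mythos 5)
Your proof is correct and is essentially the same argument as the paper's: the paper also observes that $g_2$ divides (hence is at most) $\min\{x_2,y_2\}$ and that the 2nd equation forces $\max\{a,b\}^{\min\{x_2,y_2\}}<c^{z_2}$, which is exactly what your display establishes. The slight phrasing difference (summing both terms and then discarding the smaller one, versus bounding $\min\{x_2,y_2\}$ directly) does not change the substance.
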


\begin{proof}
From 2nd equation, $\min\{x_2,y_2\}<\frac{\log c}{\log \max\{a,b\}} z_2$.
On the other hand, $g_2 \mid \min\{x_2,y_2\}$ by the definition of $g_2$.
These relations together readily yield the assertion.
\end{proof}

In what follows, for any numbers $P_1,P_2,\ldots,P_k$ and $Q_1,Q_2,\ldots,Q_k$, the notation $[P_1,P_2,\ldots,P_k] \le [Q_1,Q_2,\ldots,Q_k]$ means that $P_i \le Q_i$ for any $i$.

\begin{prop}\label{bounds_z1<z2_c^z1=2(4)}
Suppose that
\[
d_z>0, \quad c^{z_1} \equiv 2 \mod{4}, \quad c>\max\{a,b\}.
\]
Then $\beta=1, z_1=1,$ and the following hold.
\begin{itemize}
\item[\rm (i)]
Suppose that $g_2=1.$ Then
\[
\alpha \le 17, \quad z_2 \le 18, \quad c<2.2 \cdot 10^6.
\]
More exactly, one of the following cases holds.
\begin{itemize}
\item[$\bullet$] $z_2 \le 17, \ c<8.7 \cdot 10^5, \ (x_1,y_1)=(1,1), \ \min\{a,b\} \le 7;$
\item[$\bullet$] $[\alpha,z_2] \le [17,18], \ c<1.1\cdot10^6, \ (x_1,y_1)=(1,1), \ \min\{a,b\}>7;$
\item[$\bullet$] $z_2 \le 16, \ c<2.2\cdot10^6, \ (x_1,y_1)\ne(1,1), \ \min\{a,b\} \le 7;$
\item[$\bullet$] $[\alpha,z_2] \le [9,16], \ c<1.1\cdot10^6, \ (x_1,y_1)\ne(1,1), \ \min\{a,b\}>7.$
\end{itemize}
\if0
\begin{align*}
z_2 \le 17, \ c<8.7 \cdot 10^5, \ (x_1,y_1)=(1,1), \ \min\{a,b\} \le 7;\\
[\alpha,z_2] \le [17,18], \ c<1.1\cdot10^6, \ (x_1,y_1)=(1,1), \ \min\{a,b\}>7;\\
z_2 \le 16, \ c<2.2\cdot10^6, \ (x_1,y_1)\ne(1,1), \ \min\{a,b\} \le 7;\\
[\alpha,z_2] \le [9,16], \ c<1.1\cdot10^6, \ (x_1,y_1)\ne(1,1), \ \min\{a,b\}>7.
\end{align*}
\fi
\item[\rm (ii)]
Suppose that $g_2>1.$ Then
\[
\alpha \le 22, \quad z_2 \le 23, \quad c<2.2 \cdot 10^7.
\]
More exactly, one of the following cases holds.
\begin{itemize}
\item[$\bullet$] $z_2=2, \ c<2 \cdot 10^5, \ g_2=3, \ (x_1,y_1)=(1,1), \ \min\{a,b\} \le 7;$
\item[$\bullet$] $(\alpha,z_2)=(2,2), \ c<1.5 \cdot 10^6, \ g_2=3, \ (x_1,y_1)=(1,1), \ \min\{a,b\}>7;$
\item[$\bullet$] $z_2=2, \ c<1.3 \cdot 10^6, \ g_2=3, \ (x_1,y_1)\ne(1,1), \ \min\{a,b\} \le 7;$
\item[$\bullet$] $z_2 \le 13, \ c<1600, \ g_2=5, \ (x_1,y_1)=(1,1), \ \min\{a,b\} \le 7;$
\item[$\bullet$] $[\alpha,z_2] \le [13,13], \ c<75000, \ g_2=5, \ (x_1,y_1)=(1,1), \ \min\{a,b\}>7;$
\item[$\bullet$] $z_2 \le 13, \ c<1600, \ g_2=5, \ (x_1,y_1)\ne(1,1), \ \min\{a,b\} \le 7;$
\item[$\bullet$] $[\alpha,z_2] \le [6,13], \ c<4400, \ g_2=5, \ (x_1,y_1)\ne(1,1), \ \min\{a,b\}>7;$
\item[$\bullet$] $z_2 \le 19, \ c<1.5\cdot10^5, \ g_2=7, \ (x_1,y_1)=(1,1), \ \min\{a,b\} \le 7;$
\item[$\bullet$] $[\alpha,z_2] \le [19,19], \ c<5\cdot10^6, \ g_2=7, \ (x_1,y_1)=(1,1), \ \min\{a,b\}>7;$
\item[$\bullet$] $z_2 \le 19, \ c<1.5\cdot10^5, \ g_2=7, \ (x_1,y_1)\ne(1,1), \ \min\{a,b\} \le 7;$
\item[$\bullet$] $[\alpha,z_2] \le [11,19], \ c<5\cdot10^6, \ g_2=7, \ (x_1,y_1)\ne(1,1), \ \min\{a,b\}>7;$
\item[$\bullet$] $z_2 \le 19, \ c<2.4\cdot10^6, \ g_2 \ge 11, \ (x_1,y_1)=(1,1), \ \min\{a,b\} \le 7;$
\item[$\bullet$] $[\alpha,z_2] \le [22,23], \ c<1.9\cdot10^7, \ g_2 \ge 11, \ (x_1,y_1)=(1,1), \ \min\{a,b\}>7;$
\item[$\bullet$] $z_2 \le 19, \ c<2.2\cdot10^7, \ g_2 \ge 11, \ (x_1,y_1)\ne(1,1), \ \min\{a,b\} \le 7;$
\item[$\bullet$] $[\alpha,z_2] \le [11,19], \ c<1.2\cdot10^7, \ g_2 \ge 11, \ (x_1,y_1)\ne(1,1), \ \min\{a,b\}>7.$
\end{itemize}
\if0
\begin{align*}
z_2=2, \ c<2 \cdot 10^5, \ g_2=3, \ (x_1,y_1)=(1,1), \ \min\{a,b\} \le 7;\\
(\alpha,z_2)=(2,2), \ c<1.5 \cdot 10^6, \ g_2=3, \ (x_1,y_1)=(1,1), \ \min\{a,b\}>7;\\
z_2=2, \ c<1.3 \cdot 10^6, \ g_2=3, \ (x_1,y_1)\ne(1,1), \ \min\{a,b\} \le 7;\\
z_2 \le 13, \ c<1600, \ g_2=5, \ (x_1,y_1)=(1,1), \ \min\{a,b\} \le 7;\\
[\alpha,z_2] \le [13,13], \ c<75000, \ g_2=5, \ (x_1,y_1)=(1,1), \ \min\{a,b\}>7;\\
z_2 \le 13, \ c<1600, \ g_2=5, \ (x_1,y_1)\ne(1,1), \ \min\{a,b\} \le 7;\\
[\alpha,z_2] \le [6,13], \ c<4400, \ g_2=5, \ (x_1,y_1)\ne(1,1), \ \min\{a,b\}>7;\\
z_2 \le 19, \ c<1.5\cdot10^5, \ g_2=7, \ (x_1,y_1)=(1,1), \ \min\{a,b\} \le 7;\\
[\alpha,z_2] \le [19,19], \ c<5\cdot10^6, \ g_2=7, \ (x_1,y_1)=(1,1), \ \min\{a,b\}>7;\\
z_2 \le 19, \ c<1.5\cdot10^5, \ g_2=7, \ (x_1,y_1)\ne(1,1), \ \min\{a,b\} \le 7;\\
[\alpha,z_2] \le [11,19], \ c<5\cdot10^6, \ g_2=7, \ (x_1,y_1)\ne(1,1), \ \min\{a,b\}>7;\\
z_2 \le 19, \ c<2.4\cdot10^6, \ g_2 \ge 11, \ (x_1,y_1)=(1,1), \ \min\{a,b\} \le 7;\\
[\alpha,z_2] \le [22,23], \ c<1.9\cdot10^7, \ g_2 \ge 11, \ (x_1,y_1)=(1,1), \ \min\{a,b\}>7;\\
z_2 \le 19, \ c<2.2\cdot10^7, \ g_2 \ge 11, \ (x_1,y_1)\ne(1,1), \ \min\{a,b\} \le 7;\\
[\alpha,z_2] \le [11,19], \ c<1.2\cdot10^7, \ g_2 \ge 11, \ (x_1,y_1)\ne(1,1), \ \min\{a,b\}>7.
\end{align*}
\fi
\end{itemize}
\end{prop}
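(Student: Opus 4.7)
The plan is to first reduce the hypothesis. Since $c$ is even and $c^{z_1}\equiv 2\pmod 4$, we have $\nu_2(c^{z_1})=\beta z_1=1$, forcing $\beta=z_1=1$. Because $z_2>z_1=1$, Lemma \ref{g2}(i) gives $2\nmid g_2$, so the greatest odd divisor $g$ of $\gcd(x_2,y_2)$ appearing in Lemma \ref{z1z2_upper}(ii) coincides with $g_2$. Hence
\[
z_2\le\mathcal U_2(\alpha,1,m_2,c,g_2),
\]
which I pair with the three-part $\min$-inequality of Lemma \ref{Lem_ineq_z1<z2_c^{z1}=2(4)} to form a coupled system in $(\alpha,z_2,c,g_2,m_2)$.

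For case (i), $g_2=g_2'=1$ reduces the three bounds to $c<2^{\alpha+1-z_2}z_2\,\mathcal H_{\alpha,1,m_2}(c)$, $c<z_2\,\mathcal H_{\alpha,1,m_2}(c)$, and $c<\tfrac{\log c}{\log(c-1)}\,t_{a,b}\,z_2^2\,\mathcal H_{\alpha,1,m_2}(c)$. Combined with $c>\max\{11,2^\alpha+1\}$ from \eqref{la,lb,lc}, the first forces $z_2$ to be of size at most $\alpha+O(1)$, while the third, fed back into $\mathcal U_2$, pins down numerical ranges for $z_2$; the second then bounds $c$ for each admissible $(\alpha,z_2)$. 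Iterating until stability yields the global envelopes $\alpha\le 17$, $z_2\le 18$, $c<2.2\cdot 10^6$. The four itemized sub-subcases arise by splitting on $(x_1,y_1)=(1,1)$ (where $c=a+b$ forces $\max\{a,b\}>c/2$) versus $(x_1,y_1)\ne(1,1)$ (where at least one of $x_1,y_1\ge 2$ forces $\min\{a,b\}\le c^{1/2}$), and on $\min\{a,b\}\le 7$ versus $>7$, the latter controlling the value of $m_2$ and hence $\mathcal H$.

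For case (ii), Lemmas \ref{g2}(ii)--(v) restrict $g_2$ to one of $g_2=3$ (with $z_2\le 2$), $g_2=5$ (with $5\nmid c$, so $g_2'=1$), $g_2=7$ (with $g_2'\in\{1,7\}$), or $g_2\ge 11$ odd and coprime to $6$. For each shape of $g_2$ the same iterative procedure applies, now keeping the factor $(g_2')^2/g_2$ explicit, and the identical $(x_1,y_1)$- and $\min\{a,b\}$-splits produce the fifteen enumerated subcases with their respective numerical envelopes.

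The main obstacle is organizational rather than conceptual: the bounds must be iterated---each refined upper bound $\mathcal M_c$ shrinks $\mathcal U_2$, which sharpens the admissible set of $(\alpha,z_2,g_2)$, which in turn sharpens $\mathcal M_c$---until stability, implemented computationally in MAGMA. No analytic ingredient beyond Lemmas \ref{z1z2_upper}, \ref{Lem_ineq_z1<z2_c^{z1}=2(4)}, \ref{U_g2}, and \ref{g2} is required.
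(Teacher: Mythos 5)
Your proposal follows the paper's proof essentially verbatim: deduce $\beta=z_1=1$ from $\nu_2(c^{z_1})=1$, note $g_2$ is odd by Lemma \ref{g2}\,(i), couple the bound $z_2\le\mathcal U_2$ from Lemma \ref{z1z2_upper}\,(ii) with the three-part min-inequality of Lemma \ref{Lem_ineq_z1<z2_c^{z1}=2(4)}, split by $g_2\in\{1\},\{3\},\{5\},\{7\},\{\ge 11\}$ using Lemmas \ref{g2}\,(ii)--(v) and \ref{U_g2}, and run the iterated computational search over $(\alpha,z_2)$ with the same $(x_1,y_1)$ and $\min\{a,b\}$ dichotomies. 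One small mislabel: the split on $\min\{a,b\}\le 7$ primarily sharpens the $t_{a,b}$-type factor $T$ in the third part of the min and the lower bounds $a_0,c_0$, not the parameter $m_2$ (which the paper fixes uniformly at $\log 8/\log 3$ after Section \ref{Sec-z1=z2}); this does not affect the substance of the argument.
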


\begin{proof}
Here we just indicate how we find a list of all possible pairs $(\alpha,z_2)$ with the corresponding upper bound for $c$.
It suffices to consider the case where $a>b$.

From $c=a^{x_1}+b^{y_1}$ by 1st equation, observe that $c \ge \max\{18,2^{\alpha+1}+2^{\alpha}-2\}$, and
\begin{equation*}\label{bounds_z1<z2__c^z1=2(4)_a0c0}
\begin{cases}
\,a=\max\{a,b\} \ge c/2+2, & \text{if $(x_1,y_1)=(1,1);$}\\
\,c \ge ( 2^\alpha-1 )^2 + (2^\alpha+1)=2^{2\alpha}-2^{\alpha}+2, & \text{if $(x_1,y_1) \ne (1,1).$}
\end{cases}
\end{equation*}
This affects the choice of the values of $a_0$ and $c_0$.
By Lemma \ref{Lem_ineq_z1<z2_c^{z1}=2(4)},
\begin{equation} \label{ineq_z1<z2_c^z1=2(4)}
c<\min\!\left\{ 2^{\alpha+1-z_2} \frac{({g_2}')^2}{g_2},\, \frac{({g_2}')^2}{g_2},\, \frac{T\log c}{\log (c-1)}\,z_2\right\}\cdot z_2\,\mathcal H_{\alpha,1,m_2}(c)
\end{equation}
where ${g_2}'=\gcd(c,g_2)$, and
\[ T= \begin{cases}
\,1, & \text{if $b>7$,}\\
\,\frac{\log b}{\log a_0}, & \text{if $b \in \{3,5,7\}$}.
\end{cases} \]
Note that $g_2$ is odd by Lemma \ref{g2}\,(i).

Similarly to Section \ref{Sec-z1=z2}, firstly setting $\m_c=5 \cdot 10^{27}$, we use inequality \eqref{ineq_z1<z2_c^z1=2(4)} to find an upper bound for $c$ for each possible pair $(\alpha,z_2)$ satisfying \eqref{beta-alp_region} and $\alpha \le z_2 \le \mathcal U_2(\alpha,1,m_2,\m_c,g_2)$ with $\m_c=5 \cdot 10^{27}$ by \eqref{z1}, where each of the procedures is implemented in two cases according to whether $b>7$ or not, and to $(x_1,y_1)=(1,1)$ or not.
Moreover, we proceed in several cases according to the value of $g_2$ as below, where we only indicate the required inequality from \eqref{ineq_z1<z2_c^z1=2(4)} with additional remarks.

\vspace{0.2cm}{\it Case where $g_2=1$.}

\vspace{0.2cm}
Inequality \eqref{ineq_z1<z2_c^z1=2(4)} is
\[
c<\min\!\left\{ 2^{\alpha+1-z_2},\, 1,\, \frac{T z_2\log c}{\log (c-1)} \right\} \cdot z_2\,\mathcal H_{\alpha,1,m_2}(c).
\]

\vspace{0.2cm}{\it Case where $g_2>1$.}

\vspace{0.2cm}Lemma \ref{g2}\,(iv,v) tells us that $\gcd(z_2,6)=1$ if $g_2>3$.
Moreover, $z_2 \not\equiv 0 \pmod{g_2}$ by Lemma \ref{ourcase}.
We proceed in several subcases.

\vspace{0.2cm}{\it (i) Case where $g_2 \equiv 0 \pmod{3}$.}

\vspace{0.2cm}
Lemma \ref{g2}\,(ii,iv) tells us that $g_2=3, z_2=2$, and so $\alpha=2$.
Inequality \eqref{ineq_z1<z2_c^z1=2(4)} is
\[
c<2\min\!\left\{3,\, \frac{2T\log c}{\log (c-1)}\right\}\cdot \mathcal H_{2,1,m_2}(c).
\]

\vspace{0.2cm}{\it (ii) Case where $g_2=5$.}

\vspace{0.2cm}
By Lemma \ref{g2}\,(iii), ${g_2}'=1$, and also $\gcd(z_2,7)=1$ by Lemma \ref{ourcase}.
Inequality \eqref{ineq_z1<z2_c^z1=2(4)} is
\[
c<\frac{1}{5}\, \min\!\left\{ 2^{\alpha+1-z_2},\,  \frac{Tz_2\log c}{\log (c-1)} \right\}\cdot z_2\,\mathcal H_{\alpha,1,m_2}(c).
\]

\vspace{0.2cm}{\it (iii) Case where $g_2=7$.}

\vspace{0.2cm}
Inequality \eqref{ineq_z1<z2_c^z1=2(4)} is used with $g_2,{g_2}'$ replaced by $7$.
Note that $\gcd(z_2,5)=1$.

\vspace{0.2cm}{\it (iv) Case where $g_2>7$.}

\vspace{0.2cm}
Note that $g_2 \ge 11$.
From Lemma \ref{U_g2}, inequality \eqref{ineq_z1<z2_c^z1=2(4)} yields
\[
c<\min\!\left\{ \frac{2^{\alpha+1-z_2} \log c}{\log a_0},\, \frac{\log c}{\log a_0},\, \frac{T\log c}{\log (c-1)}\right\}\cdot z_2^2\,\mathcal H_{\alpha,1,m_2}(c).
\]

By these observations, we find a list of finitely many possible pairs $(\alpha,z_2)$ with the corresponding upper bound for $c$, and those satisfy the stated conditions.
\end{proof}

The following lemma is a supplement to Proposition \ref{bounds_z1<z2_c^z1=2(4)}, which is efficient to reduce the computational time to sieve the given cases with $(x_1,y_1)=(1,1)$ (see Section \ref{Sec-sieve-z1<z2_c=max_c^z1=2(4)}).

\begin{lem}\label{bounds_z1<z2_c^z1=2(4)_x1=y1=1}
Under the hypothesis of Proposition \ref{bounds_z1<z2_c^z1=2(4)}, assume that $\min\{a,b\}>7$ and $(x_1,y_1)=(1,1).$
Then $\min\{x_2,y_2\} \le 7,$ and the following holds.
\[
\min\{a,b\} \le \begin{cases}
\, 13, & \text{if $\min\{x_2,y_2\}=7$},\\
\, 21, & \text{if $\min\{x_2,y_2\}=6$},\\
\, 45, & \text{if $\min\{x_2,y_2\}=5$},\\
\, 181, & \text{if $\min\{x_2,y_2\}=4$}.
\end{cases}
\]
\end{lem}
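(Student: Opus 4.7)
The preceding Proposition \ref{bounds_z1<z2_c^z1=2(4)} furnishes $\beta = 1$ and $z_1 = 1$, so the first equation with $(x_1, y_1) = (1, 1)$ collapses to $a + b = c$. Substituting into the second equation yields
\[
a^{x_2} + b^{y_2} = (a+b)^{z_2}, \qquad 2 \le z_2 \le 18,
\]
with $c = a+b < 1.1 \cdot 10^6$ and $\min\{a,b\} > 7$, so $\min\{a,b\} \ge 11$ (the odd value $9 = 3^2$ being excluded by \eqref{cond} as a perfect power). Write $b := \min\{a,b\} < a$ without loss of generality, and set $m := \min\{x_2, y_2\}$.

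The starting point is a modular identity: from $a \equiv -b \pmod{c}$ and the second equation one has $c \mid b^{y_2} + (-b)^{x_2}$, and since $\gcd(b,c)=1$, this gives $c \mid b^{|y_2 - x_2|} + \varepsilon$ for some $\varepsilon \in \{-1, +1\}$, hence $b^{|y_2 - x_2|} \ge c - 1$. Thus $|y_2 - x_2|$ is sizeable. Complementary to this, I use the exact identities $(a+b)^{z_2} - a^{x_2} = b^{y_2}$ and $(a+b)^{z_2} - b^{y_2} = a^{x_2}$; expanding the left-hand sides by the binomial theorem, retaining only the terms of low degree in $b$ (respectively $a$), and dividing through by the appropriate power of $b$ (or $a$), produces inequalities of the rough shape
\[
b^{\,m-1} \le K(z_2)\cdot c
\]
for an explicit constant $K(z_2)$ built from the binomial coefficients $\binom{z_2}{k}$ with $k < m$.

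Combining these size bounds with $c < 1.1 \cdot 10^6$ and $z_2 \le 18$ gives an upper bound on $b$ as a function of $m$. For $m \ge 8$ the resulting bound forces $b \le 10$, which is incompatible with $b \ge 11$, establishing $m \le 7$. For $m \in \{4,5,6,7\}$ the same scheme yields the explicit numerical bounds $181,\,45,\,21,\,13$ respectively, after a short finite search over the auxiliary parameters $(z_2, |y_2 - x_2|)$, pruned by the modular constraints on $a$ modulo the appropriate power of $b$ coming from the binomial expansion of $(a+b)^{z_2} - b^{y_2}$.

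The main obstacle is securing inequalities sharp enough at the smaller values $m \in \{4,5,6,7\}$: the naive estimate $b^{y_2} \le (a+b)^{z_2}$ alone is far too weak, and one must track carefully which binomial term in $(a+b)^{z_2} - a^{x_2}$ or $(a+b)^{z_2} - b^{y_2}$ actually dominates in each regime. Because the ranges on $c$ and $z_2$ coming from the preceding proposition are already narrow, this sharpening and the attendant casework remain tractable, and the explicit numerical bounds can be verified by a short enumeration keyed on $(z_2, m)$ together with the derived congruences.
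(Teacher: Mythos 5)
Your route is genuinely different from the paper's, and I believe it has a real gap.

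The paper's proof is a three-solution argument. Since $(x_1,y_1)=(1,1)$, if $\min\{x_2,y_2\}\ge 4$, then because $x_2\le x_3$ or $y_2\le y_3$ (else $z_2>z_3$), one of the sorting permutations satisfies $(i,j,k)=(1,2,3)$ with $d_x=x_2-1\ge \min\{x_2,y_2\}-1\ge 3$, or the analogous statement in $y$. After observing $3\nmid g_x$ (Lemmas~\ref{firstbounds}, \ref{gx}), it feeds this into Lemma~\ref{Lem_ineq_xi<xj}\,(iii) --- the gap-principle inequality derived from Proposition~\ref{improvedgap}, which compares the 2nd and 3rd solutions --- to get $a^{d_x}<\frac{49}{36}\frac{\log a}{\log(a-1)}\bigl(\frac{\log\mathcal M_c}{\log a}z_2+d_x+d_x^2\bigr)\mathcal H(\mathcal M_c;a,a)$, and then a small computer search over $(\alpha,z_2,d_x)$ yields the stated numerical bounds. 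The third solution enters indispensably through $z_k=z_3$ in Lemma~\ref{Lem_ineq_xi<xj}. Your plan, by contrast, uses only the first two equations: it collapses to $a^{x_2}+b^{y_2}=(a+b)^{z_2}$ and tries to extract a size bound $b^{m-1}\le K(z_2)\,c$ from the binomial expansion.

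The problem is that this inequality is not derived and, as stated, is false in at least one relevant regime. Write $b<a$ and suppose $m=x_2=z_2$; then $b^{y_2}=(a+b)^{z_2}-a^{z_2}=b\sum_{i=0}^{z_2-1}(a+b)^{z_2-1-i}a^i$, which gives only $b^{y_2-1}<z_2\,c^{z_2-1}$. Since $y_2\ge m=z_2$, this reads $b^{z_2-1}<z_2\,c^{z_2-1}$, i.e.\ $b<z_2^{1/(z_2-1)}c$ --- essentially $b<c$, which is vacuous. No choice of $K(z_2)$ built from binomial coefficients rescues a bound of the shape $b^{m-1}\le K(z_2)\,c$ here. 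What actually kills this configuration is the $b$-adic valuation (the bracket is $\equiv z_2 a^{z_2-1}\pmod b$, forcing $b\mid z_2$, and a lifting-the-exponent computation pins down $\nu_b$ of the right-hand side), which is an arithmetic constraint, not a size inequality, and your sketch does not set this out. More generally, the bounds $c<1.1\cdot 10^6$ and $z_2\le 18$ (or $2.2\cdot10^7$ and $23$) that you import from Proposition~\ref{bounds_z1<z2_c^z1=2(4)} are not, by themselves, strong enough to force $\min\{x_2,y_2\}\le 7$ from the two-equation system; the additional leverage in the paper comes precisely from the gap principle tied to the third solution, and any replacement proof would need a substitute for that input. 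Until the key inequality is actually established and the $x_2\approx z_2$ regime handled by a different mechanism, the argument is incomplete.
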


\begin{proof}
Since $x_2 \le x_3$ or $y_2 \le y_3$, one of the following cases holds.
\begin{alignat*}{2}
&(i,j,k)=(1,2,3),& \ \ & d_x=x_2-x_1 \ge \min\{x_2,y_2\}-1;\\
&(l,m,n)=(1,2,3), & & d_y=y_2-y_1 \ge \min\{x_2,y_2\}-1.
\end{alignat*}
Let us consider only the former case as the latter one is similarly handled.
From the assumption that $\min\{x_2,y_2\} \ge 4$, we have $d_x \ge 3$, and that $3 \nmid g_x$ by Lemmas \ref{firstbounds} and \ref{gx}\,(i).
Similarly to Proposition \ref{c<max_implies_z1 ne z2}, we apply Lemma \ref{Lem_ineq_xi<xj} together with the inequality $c>\max\{a,b\}$ to see that
\begin{align*}
a^{d_x}
&<\frac{49}{36}\cdot \frac{\log a}{\log (a-1)} \cdot  \left( \frac{\log \m_c}{\log a}\,z_2+d_x+{d_x}^2 \right)\,\h(\m_c;a,a).
\end{align*}
Note that $m_2=1$ since $\min\{a,b\}>7$, and we can set $\m_c:=2.2 \cdot 10^7$ by Proposition \ref{bounds_z1<z2_c^z1=2(4)}.
Finally, for each $d_x \ge 3$, similarly to Proposition \ref{bounds_z1<z2_c^z1=2(4)}, we use the above inequality to find an upper bound for $a$ for each possible pair $(\alpha,z_2)$.
The result implies the assertion.
\end{proof}

\begin{prop}\label{bounds_z1<z2_c^z1=0(4)}
Suppose that
\[
d_z>0, \quad c^{z_1} \equiv 0 \mod{4}, \quad c>\max\{a,b\}.
\]
\begin{itemize}
\item[\rm (i)] Suppose that $g_2=1.$ Then
\[
[\beta,\alpha,z_2,d_z] \le [10,18,19,4], \quad c<1.5 \cdot 10^6.
\]
More exactly, one of the following cases holds.
\begin{itemize}
\item[$\bullet$] $[\beta,\alpha,z_2] \le [1,2,6], \ c<20, \ d_z=4;$
\item[$\bullet$] $[\beta,\alpha,z_2] \le [3,5,10], \ c<70, \ d_z=3;$
\item[$\bullet$] $[\beta,\alpha,z_2] \le [5,8,14], \ c<750, \ d_z=2;$
\item[$\bullet$] $[\beta,z_2] \le [8,17], \ c<1.5 \cdot 10^6,\ \min\{a,b\} \le 7, \ d_z=1;$
\item[$\bullet$] $[\beta,\alpha,z_2] \le [10,18,19], \ c<1.1 \cdot 10^6, \ \min\{a,b\}>7, \ d_z=1.$
\end{itemize}
\if0
\begin{align*}
&[\beta,\alpha,z_2] \le [1,2,6], \ c<20, \ d_z=4;\\
&[\beta,\alpha,z_2] \le [3,5,10], \ c<70, \ d_z=3;\\
&[\beta,\alpha,z_2] \le [5,8,14], \ c<750, \ d_z=2;\\
&[\beta,z_2] \le [8,17], \ c<1.5 \cdot 10^6,\ \min\{a,b\} \le 7, \ d_z=1;\\
&[\beta,\alpha,z_2] \le [10,18,19], \ c<1.1 \cdot 10^6, \ \min\{a,b\}>7, \ d_z=1.
\end{align*}
\fi
\item[\rm (ii)] Suppose that $g_2>1.$
Then
\[
[\beta,\alpha,z_2,d_z] \le [10,19,23,4], \quad c<3.4\cdot 10^{6}.
\]
More exactly, one of the following cases holds.
\begin{itemize}
\item[$\bullet$] $[\beta,\alpha,z_2] \le [2,3,11], \ c<30, \ g_2 \ge 5, \ d_z=4;$
\item[$\bullet$] $[\beta,\alpha,z_2] \le [3,5,11], \ c<80, \ g_2 \ge 5, \ d_z=3;$
\item[$\bullet$] $[\beta,\alpha,z_2] \le [3,8,13], \ c<910, \ g_2 \ge 5, \ d_z=2;$
\item[$\bullet$] $\beta \le 9, \ c<2.4 \cdot 10^5, \ \min\{a,b\} \le 7,  \ (z_2, z_1, g_2)=(2,1,3);$
\item[$\bullet$] $[\beta,\alpha] \le [10,11], \ c<2.9 \cdot 10^5, \ \min\{a,b\}>7, \ (z_2, z_1, g_2)=(2,1,3);$
\item[$\bullet$] $[\beta,z_2]=[1,13], \ c<1600, \ \min\{a,b\} \le 7, \ (g_2,d_z)=(5,1);$
\item[$\bullet$] $[\beta,\alpha,z_2] \le [1,16,17], \ c<8\cdot10^4, \ \min\{a,b\}>7, \ (g_2,d_z)=(5,1);$
\item[$\bullet$] $[\beta,z_2]=[1,19], \ c<1.5\cdot10^5, \ \min\{a,b\} \le 7, \ (g_2,d_z)=(7,1);$
\item[$\bullet$] $[\beta,\alpha,z_2] \le [1,18,19], \ c<7.5\cdot10^5, \ \min\{a,b\}>7, \ (g_2,d_z)=(7,1);$
\item[$\bullet$] $[\beta,z_2] \le [3,19], \ c<3\cdot10^6, \ \min\{a,b\} \le 7, \ g_2 \ge 11, \ d_z=1;$
\item[$\bullet$] $[\beta,\alpha,z_2] \le [4,19,23], \ c<3.4\cdot10^6, \ \min\{a,b\}>7, \ g_2 \ge 11, \ d_z=1.$
\end{itemize}
\if0
\begin{align*}
&[\beta,\alpha,z_2] \le [2,3,11], \ c<30, \ g_2 \ge 5, \ d_z=4;\\
&[\beta,\alpha,z_2] \le [3,5,11], \ c<80, \ g_2 \ge 5, \ d_z=3;\\
&[\beta,\alpha,z_2] \le [3,8,13], \ c<910, \ g_2 \ge 5, \ d_z=2;\\
&\beta \le 9, \ c<2.4 \cdot 10^5, \ \min\{a,b\} \le 7,  \ (z_2, z_1, g_2)=(2,1,3);\\
&[\beta,\alpha] \le [10,11], \ c<2.9 \cdot 10^5, \ \min\{a,b\}>7, \ (z_2, z_1, g_2)=(2,1,3);\\
&[\beta,z_2]=[1,13], \ c<1600, \ \min\{a,b\} \le 7, \ g_2=5, \ d_z=1;\\
&[\beta,\alpha,z_2] \le [1,16,17], \ c<8\cdot10^4, \ \min\{a,b\}>7, \ g_2=5, \ d_z=1;\\
&[\beta,z_2]=[1,19], \ c<1.5\cdot10^5, \ \min\{a,b\} \le 7, \ g_2=7, \ d_z=1;\\
&[\beta,\alpha,z_2] \le [1,18,19], \ c<7.5\cdot10^5, \ \min\{a,b\}>7, \ g_2=7, \ d_z=1;\\
&[\beta,z_2] \le [3,19], \ c<3\cdot10^6, \ \min\{a,b\} \le 7, \ g_2 \ge 11, \ d_z=1;\\
&[\beta,\alpha,z_2] \le [4,19,23], \ c<3.4\cdot10^6, \ \min\{a,b\}>7, \ g_2 \ge 11, \ d_z=1.
\end{align*}
\fi
\end{itemize}
\end{prop}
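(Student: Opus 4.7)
The plan is to follow the same strategy used in the proof of Proposition \ref{bounds_z1<z2_c^z1=2(4)}, now replacing Lemma \ref{Lem_ineq_z1<z2_c^{z1}=2(4)} with Lemma \ref{Lem_ineq_z1<z2_c^z1=0(4)}. The driving inequality is
\[
c^{d_z}<\min\!\left\{2^{\alpha-\beta z_1}\frac{({g_2}')^2}{g_2},\ \frac{z_1\log c}{\log(\chi c^{z_1}-1)}\,t_{a,b}\,\frac{z_2}{z_1}\right\}\cdot\frac{\log^2 c}{(\log a)\log b}\,z_2z_3,
\]
where ${g_2}'=\gcd(c^{d_z},g_2)$, $\chi=2$ if $z_1>1$ and $\chi=1$ if $z_1=1$. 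The essential new feature is the factor $c^{d_z}$ on the left-hand side, which shrinks the admissible region for $c$ very rapidly as $d_z$ grows; this explains why the proposition's case split yields strictly sharper bounds when $d_z\ge 2$. By symmetry we may assume $a>b$, and by \eqref{la,lb,lc} we already know $c\ge \max\{18,2^{\alpha+1}+2^{\alpha}-2\}$, which fixes the baseline $c_0$.

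First I would start from the initial uniform bound $\mathcal M_c=5\cdot 10^{27}$ (available from \cite{HuLe_jnt18} once $z_1<z_2$ is known, as mentioned after Proposition \ref{z1 ne z2}), together with the ranges
\[
\beta z_1\ge 2,\quad 2\le\alpha\le\left\lfloor\tfrac{\log\mathcal M_{\min\{a,b\}}}{\log 2}\right\rfloor,\quad \left\lceil\tfrac{\alpha}{\beta}\right\rceil\le z_1<z_2\le \mathcal U_2(\alpha,\beta,m_2,\mathcal M_c,g_2),
\]
the last coming from \eqref{z1} and Lemma \ref{z1z2_upper}(ii). Using \eqref{z3} to replace $z_3$ by $\mathcal H_{\alpha,\beta,m_2}(c)$, the displayed inequality becomes an inequality in $c$ alone, and solving for $c$ gives a candidate bound $c_u$ for each admissible tuple $(\beta,\alpha,z_1,z_2,g_2)$.

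Second, I would split on $g_2$, exactly as in Proposition \ref{bounds_z1<z2_c^z1=2(4)}, using Lemma \ref{g2} and Lemma \ref{ourcase} to prune the possibilities:
\begin{itemize}
\item $g_2=1$: then ${g_2}'=1$ and the first term of the minimum becomes $2^{\alpha-\beta z_1}$, which is tight.
\item $g_2=3$: by Lemma \ref{g2}(ii,iv) we must have $z_2=2$, hence $z_1=1$, $d_z=1$, and then $\beta\ge 2$ from $c^{z_1}\equiv 0\pmod 4$.
\item $g_2=5$: Lemma \ref{g2}(iii) forces ${g_2}'=1$ when $5\nmid c$; also $\gcd(z_2,7)=1$ by Lemma \ref{ourcase}.
\item $g_2=7$: use the inequality with ${g_2}'\le 7$ and $\gcd(z_2,5)=1$.
\item $g_2\ge 11$: combine with Lemma \ref{U_g2} to write $g_2<\frac{\log c}{\log a}z_2$, yielding the additional factor $z_2^2$ in the bound.
\end{itemize}
Within each subcase and each allowed $d_z$ (which is forced to be small by the $c^{d_z}$ factor), I iterate: plug the current $\mathcal M_c$ into $\mathcal H$ and into $\mathcal U_2$, recompute $c_u$, shrink $\mathcal M_c$, and repeat until stabilization. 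I then record the resulting $[\beta,\alpha,z_2,d_z]$ together with the numerical bound for $c$.

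The main obstacle will be the cases with $d_z=1$ and $g_2\ge 11$, where the left-hand factor $c$ is only linear and the constraints from the minimum in Lemma \ref{Lem_ineq_z1<z2_c^z1=0(4)} are weakest; here the bookkeeping between the $\alpha$-, $\beta$-, $z_2$-dependence in $\mathcal H_{\alpha,\beta,m_2}(c)$ and the modulo-$2$ compatibility $\beta z_1\ge 2$ is delicate and must be handled by a careful iteration driven by the computer algebra system, mirroring the iterations used for Proposition \ref{bounds_z1<z2_c^z1=2(4)}. Finally, one separates the output according to whether $\min\{a,b\}\le 7$ (so $m_2=\frac{\log 8}{\log\min\{a,b\}}$) or $\min\{a,b\}>7$ (so $m_2=1$), which produces exactly the dichotomy recorded in the statement of the proposition.
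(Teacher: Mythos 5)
Your proposal reproduces, in substance, the paper's own proof: it drives the search by the inequality of Lemma~\ref{Lem_ineq_z1<z2_c^z1=0(4)}, replaces $z_3$ via~\eqref{z3}, splits into cases on $g_2$ using Lemmas~\ref{g2}, \ref{ourcase} and \ref{U_g2}, notes that $c^{d_z}$ on the left forces $d_z$ to be small, and iterates a shrinking upper bound for $c$ over the admissible tuples. Two small points worth flagging, neither of which affects the method. First, the baseline $c\ge\max\{18,\,2^{\alpha+1}+2^{\alpha}-2\}$ that you quote is the one derived from $c=a^{x_1}+b^{y_1}$, which only applies to the case $z_1=1$ treated in Proposition~\ref{bounds_z1<z2_c^z1=2(4)}; here $z_1$ may be $\ge 2$, so the relevant baseline is the one from~\eqref{la,lb,lc} (and the $c_0$ used in the later sieving is $\max\{18,3\cdot 2^{\beta},2^{\alpha}+2\}$). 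Second, the paper actually iterates $z_1$ up to $\mathcal U_1$ with $z_2=z_1+d_z$, rather than iterating $z_2$ up to $\mathcal U_2$, and it replaces the factor $\frac{z_1\log c}{\log(\chi c^{z_1}-1)}\,t_{a,b}$ by the slightly weaker but simpler $\frac{T\log c}{\log(c-1)}$; these are equivalent bookkeeping choices and do not change the outcome.
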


\begin{proof}
Here we just indicate how we find a list of all possible triples $(\beta,\alpha,z_2)$ with the corresponding upper bound for $c$.
It suffices to consider the case where $a>b$.
We proceed along similar lines to that of Proposition \ref{bounds_z1<z2_c^z1=2(4)}.

By Lemma \ref{Lem_ineq_z1<z2_c^z1=0(4)},
\begin{equation}\label{ineq_z1<z2_c^z1=0(4)}
c^{d_z}< \min \! \left \{ 2^{\alpha -\beta z_1} \frac{({g_2}')^2}{g_2}, \, \frac{T\log c}{\log (c-1)}\,\frac{z_2}{z_1} \right \}\cdot \,z_2\h(c),
\end{equation}
where ${g_2}'=\gcd (c^{z_2-z_1},g_2)$ and $T$ is the same as in Proposition \ref{bounds_z1<z2_c^z1=2(4)}.
Note that $g_2$ is odd.
We use inequality \eqref{ineq_z1<z2_c^z1=0(4)} to find an upper bound for $c$ for each $d_z$ and for each triple $(\beta,\alpha,z_2)$ satisfying \eqref{beta-alp_region} and $\lceil \alpha/\beta \rceil \le z_1 \le \mathcal U_1(\alpha,\beta,m_2,a_0,b_0,\m_c,g_2)$ with $z_2=z_1+d_z$ and $\m_c=5 \cdot 10^{27}$, where each of the procedures are implemented in two cases according to whether $b>7$ or not.
Moreover, we proceed in several cases according to the value of $g_2$ as below.

\vspace{0.2cm}{\it Case where $g_2=1$.}

\vspace{0.2cm}
Inequality \eqref{ineq_z1<z2_c^z1=0(4)} is
\[
c^{d_z}<\min \! \left \{ 2^{\alpha -\beta z_1}, \, \frac{T\log c}{\log (c-1)}\,\frac{z_2}{z_1} \right \}\cdot \,z_2\h(c).
\]

\vspace{0.2cm}{\it Case where $g_2>1$.}

\vspace{0.2cm}
From Lemma \ref{U_g2}, inequality \eqref{ineq_z1<z2_c^z1=0(4)} yields
\begin{equation}\label{ineq_z1<z2_c^z1=0(4)_g2>1}
c^{d_z}<\min \! \left \{ \frac{\log c}{\log a_0}\,2^{\alpha -\beta z_1}, \, \frac{T\log c}{\log (c-1)}\,\frac{1}{z_1} \right \}\cdot \,z_2^2\h(c).
\end{equation}
Also, Lemma \ref{g2}\,(iv,v) tells us that $\gcd(z_2,6)=1$ if $g_2>3$.
Moreover, $z_2 \not\equiv 0 \pmod{g_2}$ by Lemma \ref{ourcase}.
We proceed in several subcases.

\vspace{0.2cm}{\it (i) Case where $g_2 \equiv 0 \pmod{3}$.}

\vspace{0.2cm}
Since $g_2=3$ and $z_2=2$, it follows that $z_1=1,d_z=1$, and so inequality \eqref{ineq_z1<z2_c^z1=0(4)} is
\[
c<\min \! \left \{ 6 \cdot 2^{\alpha -\beta }, \, \frac{4T\log c}{\log (c-1)}\, \right \}\cdot \h(c).
\]

\vspace{0.2cm}{\it (ii) Case where $d_z \ge 2$.}

\vspace{0.2cm}
Note that $g_2 \ge 5$ by a previous case.
Inequality \eqref{ineq_z1<z2_c^z1=0(4)_g2>1} implies the asserted bounds for each $d_z \ge 2$.

\vspace{0.2cm}{\it (iii) Case where $g_2=5, d_z=1$.}

\vspace{0.2cm}
Since ${g_2}'=1$, inequality \eqref{ineq_z1<z2_c^z1=0(4)} is
\[
c^{d_z}<\min \! \left \{  \frac{2^{\alpha -\beta z_1}}{5}, \, \frac{T\log c}{\log (c-1)}\,\frac{z_2}{z_1}\right \}\cdot \,z_2\h(c)
\]
with $\gcd(z_2,7)=1$.

\vspace{0.2cm}{\it (iv) Case where $g_2=7, d_z=1$.}

\vspace{0.2cm}
Inequality \eqref{ineq_z1<z2_c^z1=0(4)} is used with both $g_2$ and ${g_2}'$ replaced as $7$ and with $\gcd(z_2,5)=1$.

\vspace{0.2cm}{\it (v) Case where $g_2>7, d_z=1$.}

\vspace{0.2cm}
Inequality \eqref{ineq_z1<z2_c^z1=0(4)_g2>1} is used with $g_2 \ge 11$.

By these observations, we find a list of finitely many possible tuples $(d_z,\beta,\alpha,z_2)$ with the corresponding upper bound for $c$, and those satisfy the stated conditions.
\end{proof}

\section{ Case where $z_1<z_2$ with $c<\max\{a,b\}$: finding bounds} \label{Sec-bounds-z1<z2_a=max}

The aim of this section is to provide a list of possible values or upper bounds of some letters in system \eqref{sys-12} satisfying $z_1<z_2$ with $c<\max\{a,b\}$.
It suffices for us to do this when $a>b$.
We proceed basically in two cases according to whether $d_x=0$ or not.

We begin with showing the technical lemma to reduce the computational time for establishing the forthcoming propositions, which gives relatively small upper bounds for $d_z, d_x$ and $d_y$.

\begin{lem}\label{bounds_acdzdxdy}
Suppose that $d_z>0$ and $a>\max\{b,c\}.$
Then $d_z \le 6, d_x \le 5$ and $d_y \le 16.$
\end{lem}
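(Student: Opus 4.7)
The plan is to combine the gap inequalities of Section \ref{Sec-ineq-z1<z2} (Lemmas \ref{Lem_ineq_z1<z2_c^{z1}=2(4)}, \ref{Lem_ineq_z1<z2_c^z1=0(4)} and \ref{Lem_ineq_xi<xj}) with the general upper bound $z_3<\mathcal H_{\alpha,\beta,m_2}(c;a,b)$ from \eqref{z3}, the universal estimates $z_2\le 230$ and $\max\{x_1,y_1,x_2,y_2\}<4300$ of Lemma \ref{firstbounds}, and the trivial $g_2,g_x,g_y\le z_2\le 230$. The key observation under the hypothesis $a>\max\{b,c\}$ is that $\log a$ is the largest of $\log a,\log b,\log c$, and appears both as a denominator in the gap inequalities and as a factor of $\mathcal H_{\alpha,\beta,m_2}(c;a,b)=\max\{c_1,c_2\log_{\ast}^{2}(c_3\log c)\}\log a\log b$; after substitution, the $\log a$ factors cancel and what remains depends only on $\log c$ and on slowly growing functions of it, which is what forces the three differences to be small.

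First I would bound $d_z$, splitting into the two cases according to whether $c^{z_1}\equiv 2\pmod 4$ or $c^{z_1}\equiv 0\pmod 4$. In the second case Lemma \ref{Lem_ineq_z1<z2_c^z1=0(4)} gives
\[
c^{d_z}<\min\!\left\{2^{\alpha-\beta z_1}\frac{(g_2')^2}{g_2},\ \frac{z_1\log c}{\log(\chi c^{z_1}-1)}\,t_{a,b}\,\frac{z_2}{z_1}\right\}\cdot\frac{\log^2 c}{(\log a)\log b}\,z_2z_3.
\]
Substituting $z_3<\mathcal H_{\alpha,\beta,m_2}(c;a,b)$ cancels $(\log a)(\log b)$ and leaves $c^{d_z}$ bounded by an explicit expression in $\log c$, $z_2\le 230$ and the constants $c_1,c_2,c_3$ attached to $(\alpha,\beta)$. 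Since $6\le c<a\le 5\cdot 10^{27}$, and since only finitely many $(\alpha,\beta,z_1,g_2)$ with $z_1\ge\alpha/\beta$ are admissible, a finite numerical check yields $d_z\le 6$. The first case (where forcibly $\beta=z_1=1$) is treated analogously, using in particular the third entry $\frac{\log c}{\log(c-1)}\,t_{a,b}\,z_2$ of the minimum in Lemma \ref{Lem_ineq_z1<z2_c^{z1}=2(4)}.

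For $d_x$ and $d_y$ I would apply Lemma \ref{Lem_ineq_xi<xj}(i) and its symmetric $(a,x,i,j,k)\mapsto(b,y,l,m,n)$ version to obtain
\[
a^{d_x}<\frac{(g_x')^2}{g_x}\,\frac{\log c}{\log b}\,z_jz_k,\qquad b^{d_y}<\frac{(g_y')^2}{g_y}\,\frac{\log c}{\log a}\,z_mz_n.
\]
Bounding $(g_x')^2/g_x\le g_x\le z_2\le 230$, $z_j,z_m\le 230$ and $z_k,z_n\le z_3<\mathcal H_{\alpha,\beta,m_2}(c;a,b)$, the $\log b$ cancels in the first bound and $\log c/\log a<1$ shrinks the second. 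Using $a\ge 11$, the first inequality gives a numerical bound $d_x\le 5$ after iterating over the admissible $(\alpha,\beta)$. The second inequality is treated the same way, except that $b$ may be as small as $3$, so the estimate $d_y\log b<\text{explicit}$ yields only $d_y\le 16$.

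The main obstacle will be the tightness of the numerical estimate for $d_y$: because $\log b$ can be as small as $\log 3\approx 1.10$, the constants appearing in $\mathcal H_{\alpha,\beta,m_2}$ must be used in fully explicit form and the constants $c_1,c_2,c_3$ must be evaluated separately for each admissible pair $(\alpha,\beta)$. Care is also needed to separate $\min\{a,b\}\le 7$ from $\min\{a,b\}>7$ (which controls the value of $m_2$); each is a finite numerical verification that can be carried out regime by regime.
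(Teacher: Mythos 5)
Your overall strategy — combining the gap inequalities of Section \ref{Sec-ineq-z1<z2} with the Baker bound $z_3<\mathcal H_{\alpha,\beta,m_2}(c;a,b)$ and iterating numerically over $(\alpha,\beta)$ — is the right one and is what the paper does, but you miss two things that the paper uses and that matter for getting the stated numbers.

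First, you consider both residue classes $c^{z_1}\equiv 2\pmod 4$ and $c^{z_1}\equiv 0\pmod 4$. Under the hypothesis $a>\max\{b,c\}$ the first case cannot occur: from the first equation $c^{z_1}>a^{x_1}\ge a>c$, so $z_1>1$, hence $\beta z_1\ge 2$ and $c^{z_1}\equiv 0\pmod 4$. The paper notes this at the outset, so only Lemma \ref{Lem_ineq_z1<z2_c^z1=0(4)} is ever invoked. (It also records $g_2<z_2$, which follows from $a>c$ via $g_2\mid x_2<\tfrac{\log c}{\log a}z_2<z_2$, giving the simplified $\min\{2^{\alpha-\beta z_1},1/z_1\}z_2^2\mathcal H(c)$.)

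Second, and more importantly, the paper's argument for $d_x$ and $d_y$ does not proceed from $c\le 5\cdot 10^{27}$; it proceeds in cascade. The $d_z$ step does not only bound $d_z$: the same inequality bounds $c$, and the paper extracts $c<2.5\cdot 10^6$ before touching $d_x$ and $d_y$. This replaces $\log c\lesssim 64$ by $\log c\lesssim 15$ in every occurrence of $\mathcal H_{\alpha,\beta,m_2}(c;\cdot,\cdot)$, which is the difference between the stated bounds and somewhat larger ones. Moreover, the paper applies Lemma \ref{Lem_ineq_xi<xj}\,(iii) (not (i)) when $g_x>1$: the factor $\bigl(\tfrac{g_x}{g_x-1}\bigr)^2\le 9/4$ there is vastly smaller than the $\tfrac{(g_x')^2}{g_x}\le g_x\le z_2$ you plug into (i). With your version of the estimate, using $c\le 5\cdot 10^{27}$ and bounding $(g_x')^2/g_x$ by $z_2$, the computation for $d_y$ with $b=3,\alpha=2,\beta=1$ lands around $d_y\le 17$ rather than $16$, and the $d_x$ estimate with $g_x>1$ lands around $d_x\le 6$ rather than $5$. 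So, as written, your proof establishes slightly weaker bounds than the lemma claims; the cascade $d_z\Rightarrow c<2.5\cdot 10^6\Rightarrow d_x,d_y$ together with the sharper part (iii) is what makes the exact numbers come out. The repair is straightforward, but it is a genuine missing ingredient.
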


\begin{proof}
Clearly we may assume that both $d_x,d_y$ are positive.
Observe that $z_1>1$ and $g_2<z_2$ as $a>c$, in particular, $c^{z_1} \equiv 0 \pmod{4}$.
First, Lemma \ref{Lem_ineq_z1<z2_c^z1=0(4)} yields $c^{d_z}<\min \! \left \{ 2^{\alpha -\beta z_1}, \, 1/z_1\right \}\cdot z_2^2\h(c)$.
Similarly to the previous section, we use this inequality, for all possible triples $(\beta,\alpha,z_2)$, to restrict the values of $c$ and $d_z$.
The result gives the asserted bound for $d_z$ and $c<2.5 \cdot 10^6$.
Second, we apply Lemma \ref{Lem_ineq_xi<xj} similarly as used in the proof of Proposition \ref{c<max_implies_z1 ne z2}.
It reveals that
$a^{d_x}<9/4 \cdot (z_2-1+d_x+{d_x}^2)\,\h(\m_c;a,a)$ with $\m_c=2.5 \cdot 10^6$.
For all possible triples $(\beta,\alpha,z_2)$, we use this inequality to restrict the values of $a$ and $d_x$, which gives the asserted bound for $d_x$.
Finally, using Lemma \ref{Lem_ineq_xi<xj} with the base $b$, similarly to the arguments in the proof of Proposition \ref{c<max_implies_z1 ne z2}, we proceed to find that
\begin{align*}
b^{d_y}
&<\frac{(9/4)\log^2 b\,\log c}{\log (b-1)\,\log^2 a} \left(\frac{\log c}{\log b}\,z_2+d_y+{d_y}^2\right)z_3\\
&<\frac{9\log b}{4\log (b-1)}\,\bigr(z_2+d_y+{d_y}^2\bigr)\,\h(\m_c;b,\m_c).
\end{align*}
It is easy to see that this inequality does not hold if $d_y>16$ for any possible triples $(\beta,\alpha,z_2)$.
\end{proof}

Lemma \ref{bounds_acdzdxdy} is implicitly used in the sequel.

\begin{prop}\label{bounds_z1<z2_a=max_xi<xj}
Suppose that
\[
d_z>0, \quad a>\max\{b,c\}, \quad d_x>0.
\]
Then the following hold.
\begin{itemize}
\item[\rm (i)] Suppose that $g_x=1.$ Then
\[
[\beta,\alpha,z_1,z_2] \le [6,18,18,21], \quad a<1.9 \cdot 10^6.
\]
More exactly, one of the following cases holds.
\begin{itemize}
\item[$\bullet$] $[\beta,z_1,z_2] \le [1,3,9], \ a<1.9 \cdot 10^6, \ c<2.1\cdot10^5, \ b \le 7;$
\item[$\bullet$] $[\beta,\alpha,z_1,z_2] \le [6,18,18,21], \ a<1.1\cdot10^6, \ b>7.$
\end{itemize}
\if0
\begin{align*}
&[\beta,z_1,z_2] \le [1,3,9], \ a<1.9 \cdot 10^6, \ c<2.1\cdot10^5, \ b \le 7;\\
&[\beta,\alpha,z_1,z_2] \le [6,18,18,21], \ a<1.1\cdot10^6, \ b>7.
\end{align*}
\fi
\item[\rm (ii)] Suppose that $g_x>1.$ Then
\[
a<4.5\cdot10^6, \quad z_2 \le 22.
\]
More exactly, one of the following cases holds.
\begin{itemize}
\item[$\bullet$] $[\beta,\alpha,z_1,z_2] \le [2,9,11,17], \ a<520, \ k \ne 3;$
\item[$\bullet$] $[\beta,\alpha,z_1,z_2] \le [4,10,18,21], \ a<3100, \ d_x \ge 2, \ k=3;$
\item[$\bullet$] $[\beta,\alpha,z_1,z_2] \le [3,18,18,21], \ a<4\cdot10^5, \ c<2.7\cdot10^5, \ (g_x,d_x,k)=(2,1,3), \ b>7;$
\item[$\bullet$] $[\beta,z_1,z_2] \le [1,3,9], \ a<4.5\cdot10^6, \ c<30, \ (g_x,d_x,k)=(3,1,3), \ b \le 7;$
\item[$\bullet$] $[\beta,\alpha,z_1,z_2] \le [7,17,17,21], \ a<3.8\cdot10^6, \ c<1.4\cdot10^5,  \ (g_x,d_x,k)=(3,1,3), \ b>7;$
\item[$\bullet$] $[\beta,\alpha,z_1,z_2] \le [1,15,15,17], \ a<8.2\cdot10^4, \ (g_x,d_x,k)=(5,1,3), \ b>7;$
\item[$\bullet$] $[\beta,\alpha,z_1,z_2] \le [2,19,19,22], \ a<8.9\cdot10^5, \ c<6.4\cdot10^5, \ g_x \ge 7, \ (d_x,k)=(1,3), \ b>7,$
\end{itemize}
\if0
\begin{align*}
&[\beta,\alpha,z_1,z_2] \le [2,9,11,17], \ a<520, \ k \ne 3;\\
&[\beta,\alpha,z_1,z_2] \le [4,10,18,21], \ a<3100, \ d_x \ge 2, \ k=3;\\
&[\beta,\alpha,z_1,z_2] \le [3,18,18,21], \ a<4\cdot10^5, \ c<2.7\cdot10^5, \   (g_x,d_x,k)=(2,1,3), \ b>7;\\
&[\beta,z_1,z_2] \le [1,3,9], \ a<4.5\cdot10^6, \ c<30, \ (g_x,d_x,k)=(3,1,3), \ b \le 7;\\
&[\beta,\alpha,z_1,z_2] \le [7,17,17,21], \ a<3.8\cdot10^6, \ c<1.4\cdot10^5,  \ (g_x,d_x,k)=(3,1,3), \ b>7;\\
&[\beta,\alpha,z_1,z_2] \le [1,15,15,17], \ a<8.2\cdot10^4, \ (g_x,d_x,k)=(5,1,3), \ b>7;\\
&[\beta,\alpha,z_1,z_2] \le [2,19,19,22], \ a<8.9\cdot10^5, \ c<6.4\cdot10^5, \ g_x \ge 7, \ (d_x,k)=(1,3), \ b>7,
\end{align*}
\fi
where $z_1=\alpha/\beta$ if $d_x=1.$
\end{itemize}
\end{prop}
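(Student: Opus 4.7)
The plan is to follow the strategy of Propositions \ref{bounds_z1<z2_c^z1=2(4)} and \ref{bounds_z1<z2_c^z1=0(4)}, but with the leading inequality replaced by Lemma \ref{Lem_ineq_xi<xj}, because now $a=\max\{a,b,c\}$ makes $a^{d_x}$ the natural quantity to bound rather than $c^{d_z}$. Since $a>c$ forces $z_1>1$ and $c^{z_1}\equiv 0\pmod 4$, the triples $(\beta,\alpha,z_1)$ to iterate over obey $\lceil\alpha/\beta\rceil\le z_1\le \mathcal{U}_1(\alpha,\beta,m_2,a_0,b_0,\mathcal{M}_c,g_2)$, while Lemma \ref{bounds_acdzdxdy} provides the starting upper bounds $d_z\le 6$, $d_x\le 5$, $d_y\le 16$. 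Throughout, $m_2$ is chosen according to whether $b\le 7$ or $b>7$, and the uniform upper bounds $\mathcal{M}_a,\mathcal{M}_c$ are shrunk iteratively whenever smaller ones are discovered.

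For case (i), $g_x=1$, Lemma \ref{Lem_ineq_xi<xj}\,(i) gives
\[
a^{d_x}<\frac{\log c}{\log b}\,z_j z_k<z_2\,\mathcal{H}(c;b,c)<z_2\,\mathcal{H}(a),
\]
which, combined with the divisibility $a^{d_x}\mid(b^{d_y}-1)$ and a corresponding bound for $c$ from Lemma \ref{Lem_ineq_z1<z2_c^z1=0(4)}, yields an upper bound for both $a$ and $c$ for each admissible quadruple $(\beta,\alpha,z_1,d_x)$. Iterating until $\mathcal{M}_a$ and $\mathcal{M}_c$ stabilize produces the asserted list of tuples, split into the sub-cases $b\le 7$ and $b>7$.

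For case (ii), $g_x>1$, Lemma \ref{gx} restricts $g_x\in\{2,3,5\}$ or $g_x\ge 7$, with further constraints on $x_j$ (in particular $4\nmid g_x$ and $6\nmid g_x$). The sub-case $k\ne 3$ means $\{i,j\}\ni 3$, so $z_j$ may be as large as $z_3$, but Lemma \ref{Lem_ineq_xi<xj}\,(ii) or (iii) then gives a sharp inequality via $x_j<\frac{\log c}{\log a}z_k$, producing very small bounds for $a$. In the sub-case $k=3$ I would use, for $g_x\in\{2,5\}$, part (ii) of Lemma \ref{Lem_ineq_xi<xj}; for $g_x=3$, Lemma \ref{gx} forces $x_j=2$ (hence $d_x=1$ and $x_i=1$) and part (ii) again applies; and for $g_x\ge 7$, part (iii). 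When $d_x=1$, Lemma \ref{X=x+1} applies with $(A,B,x,X)=(b,a,x_i,x_j)$ and adds the rigid structural constraint
\[
b\equiv -x_i\,a^{2y_j}-a^{y_j}+1\pmod{a^{3y_j}},
\]
which is the key ingredient controlling the otherwise large $g_x=3$ sub-case. Once $\mathcal{M}_a$ is fixed in each branch, all remaining values of the unknowns in system \eqref{sys-12} are determined via the divisibility and congruence constraints, completing the enumeration.

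The principal obstacle will be the bookkeeping of numerous overlapping sub-cases: the splits by $b\le 7$ versus $b>7$, by $k=3$ versus $k\ne 3$, by the value of $g_x$, and by $d_x\in\{1,\dots,5\}$ combine to produce many branches, each requiring its own iteration of the inequalities to shrink $\mathcal{M}_a,\mathcal{M}_c$ and its own application of Lemma \ref{Lem_ineq_xi<xj} and, where available, Lemma \ref{X=x+1}. No new theoretical input beyond Sections \ref{Sec-improve-HuLe}--\ref{Sec-AB} is required; the difficulty lies in ensuring that in each branch the two defining inequalities (for $a^{d_x}$ and for $c^{d_z}$) close up consistently and that the iteration on $\mathcal{M}_a,\mathcal{M}_c$ actually terminates at the asserted values.
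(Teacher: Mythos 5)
Your overall plan — iterate over tuples $(\beta,\alpha,z_1,z_2)$, bound $c$ via Lemma~\ref{Lem_ineq_z1<z2_c^z1=0(4)}, then bound $a$ via Lemma~\ref{Lem_ineq_xi<xj} case-by-case on $g_x$ and on whether $k=3$ — is essentially the paper's plan. But two of your specific claims do not hold in this setting and the second is a genuine gap.

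First, a small matter: in case (i) you assert that Lemma~\ref{Lem_ineq_xi<xj}\,(i) combined with \eqref{z3} yields $a^{d_x} < z_2 \,\mathcal{H}(c;b,c)$ and that one also has the divisibility $a^{d_x}\mid(b^{d_y}-1)$. The correct bound is $z_2\,\mathcal{H}(c;a,c)$ (from $\tfrac{\log c}{\log b}\,z_2 z_3$ and $z_3<\mathcal{H}(c;a,b)$, the factor $\log b$ cancels against the one in $\mathcal{H}(c;a,b)$, leaving $\log a\cdot\log c$). More importantly, the divisibility $a^{d_x}\mid(b^{d_y}-1)$ does not hold here: that relation comes from a single equation of shape $A^x+B^y=A^X+B^Y$, which arises only when two of the original equations share the same power of $c$ (i.e.\ when $d_z=0$, as in Section~\ref{Sec-z1=z2}) or share the same power of $a$ (i.e.\ $d_x=0$, as in Proposition~\ref{bounds_z1<z2_a=max_xi=xj_k=3}). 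Under the present hypotheses $d_z>0$ and $d_x>0$, so no such equation is available, and this constraint must be dropped.

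The bigger gap is in case (ii) with $d_x=1$. You invoke Lemma~\ref{X=x+1} with $(A,B,x,X)=(b,a,x_i,x_j)$ to obtain a congruence on $b$ modulo a power of $a$, calling it the key ingredient for the $g_x=3$ sub-case. Lemma~\ref{X=x+1} is a statement about the equation $A^x+B^y=A^X+B^Y$; with $d_x>0$ and $d_z>0$ the three equations for $(x_t,y_t,z_t)$ do not reduce to one of this shape, so the lemma's hypotheses are not met and the asserted congruence is unjustified. The lemma is correctly used by the paper in Propositions~\ref{c<max_implies_z1 ne z2} (where $z_1=z_2$ forces $a^{x_1}+b^{y_1}=a^{x_2}+b^{y_2}$) and~\ref{bounds_z1<z2_a=max_xi=xj_k=3} (where $d_x=0$ forces $c^{z_1}+b^{y_2}=c^{z_2}+b^{y_1}$), but neither reduction is available here. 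The paper's actual handling of $d_x=1$ is different and essential: since parity of the $x$-exponents flips, Lemma~\ref{dagger} forces $z_1=\alpha/\beta$, a much sharper constraint than any of the generic inequalities, and that is what is then combined with Lemma~\ref{Lem_ineq_xi<xj}\,(ii) or (iii) in the sub-cases $g_x=2$, $g_x\equiv 0\pmod 3$, $g_x=5$, and $g_x\ge 7$. Without replacing your Lemma~\ref{X=x+1} step by this $z_1=\alpha/\beta$ observation, the $d_x=1$ branch does not close. You also omit the separate treatment of $g_x>1$, $k=3$, $d_x\ge 2$, which the paper handles by Lemma~\ref{Lem_ineq_xi<xj}\,(iii) directly before entering the $g_x$-subcase structure for $d_x=1$.
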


\begin{proof}
Here we just indicate how we find a list of all possible tuples $(\beta,\alpha,z_1,z_2)$ with the corresponding upper bounds for $a$ and $c$.
We proceed basically similarly to Propositions \ref{bounds_z1<z2_c^z1=2(4)} and \ref{bounds_z1<z2_c^z1=0(4)}.

First, we use Lemma \ref{Lem_ineq_z1<z2_c^z1=0(4)}.
Since $a>c, z_1>1$ and $g_2<z_2$, it follows that
\begin{equation}\label{ineq_c_a=max_dx>0}
c^{d_z}<\min \! \left \{ 2^{\alpha -\beta z_1}(z_2-1), \,\frac{T z_2}{z_1}\right \}\cdot z_2\h(c),
\end{equation}
where $T=1$ if $b>7$ and $T=\frac{\log b}{\log (c+1)}$ if $b \le 7$.
For each $d_z$ and for each possible tuple $(\beta,\alpha,z_1)$ satisfying \eqref{beta-alp_region} and $\lceil \alpha/\beta \rceil \le z_1 \le \mathcal U_1(\alpha,\beta,m_2,a_0,b_0,\m_c,1)$ with $\m_c=5 \cdot 10^{27}$, we use inequality \eqref{ineq_c_a=max_dx>0} to find an upper bound for $c$, say $c_u$, where each of the procedures are implemented in two cases according to whether $b>7$ or not.

Next, for each of the found tuples $(d_z,\beta,\alpha,z_2,c_u)$, we use an inequality from Lemma \ref{Lem_ineq_xi<xj} to find an upper bound for $a$, where the used inequality depends on the size of $c$.
For this we proceed in several cases according to the value of $g_x$.
Below, we just indicate the used inequality from \ref{Lem_ineq_xi<xj} with additional remarks.

\vspace{0.3cm} {\it Case where $g_x=1$.}

\vspace{0.2cm}
By Lemma \ref{Lem_ineq_xi<xj}\,(i) with \eqref{z3},
\[
a^{d_x}<\frac{\log c}{\log b} \cdot z_j z_k \le \frac{\log c}{\log b} \cdot z_2 z_3 \le z_2 \h(c_u;a,c_u).
\]

\vspace{0.3cm} {\it Case where $g_x>1, k \ne 3$.}

\vspace{0.2cm}
Since $x_j \le x_k<z_k \le z_2$, Lemma \ref{Lem_ineq_xi<xj}\,(iii) yields
\[
a^{d_x}<\frac{4\log^2 a}{\log (a-1)\,\log \max\{b_0,c_0\}} \cdot (z_2-1+d_x+{d_x}^2)\,z_2.
\]

\vspace{0.3cm} {\it Case where $g_x>1, k=3, d_x \ge 2$.}

\vspace{0.2cm}
Since $x_j<\frac{\log c}{\log a}z_2$, Lemma \ref{Lem_ineq_xi<xj}\,(iii) implies
\[
a^{d_x}<
4 \left( \frac{\log c_u}{\log a}\,z_2+d_x+{d_x}^2 \right) \h(c_u;a,a).
\]

\vspace{0.3cm} {\it Case where $g_x>1, k=3, d_x=1$.}

\vspace{0.2cm}
By Lemma \ref{dagger}, $z_1=\alpha/\beta$.
Note that $j \in \{1,2\}$ and $j$-th equation is $a^{x_j}+b^{y_j}=c^{z_j}$ with both $y_j,z_j$ divisible by $g_x$.
Then $z_j \ge \max\{g_x,x_j+1\}$.
We proceed in several subcases.

\vspace{0.2cm}{\it (i) Case where $g_x=2$.}

\vspace{0.2cm}
Note that $2 \nmid x_j$, so $x_j \ge 3$.
Thus, $z_j \ge 4$ and $2 \mid z_j$.
From Lemma \ref{ourcase}, $3 \nmid z_j$.
Since ${g_x}'=1$, Lemma \ref{Lem_ineq_xi<xj}\,(ii) yields
\[
a<\frac{\log a}{\log b} \cdot x_j z_3< \frac{\log c}{\log b} \cdot z_2 z_3<z_2 \,\h(c_u;a,c_u).
\]

\vspace{0.2cm}{\it (ii) Case where $g_x \equiv 0 \pmod{3}$.}

\vspace{0.2cm}
Note that $(x_j,x_i)=(2,1), g_x=3$ and $z_j$ is not divisible by any of $6,7,8,9,10$ and $15$.
The combination of (ii) and (iii) of Lemma \ref{Lem_ineq_xi<xj} yields that $a<9 \min\{1, T\}\,\h(c_u;a,a)$, where $T=1$ if $b>7$, and  $T=\frac{\log a}{\log (a-1)}\,\frac{\log b}{\log c_0}$ if $b<7$.

\vspace{0.2cm}{\it (iii) Case where $g_x=5$.}

\vspace{0.2cm}
Since $\gcd(x_j,2\cdot3\cdot5\cdot7)=1$, we have $x_j \ge 11$, so $z_j \ge 12$.
Since ${g_x}'=1$, Lemma \ref{Lem_ineq_xi<xj}\,(ii) yields that $a<\frac{1}{4}z_2 \h(c_u;a,c_u)$.

\vspace{0.2cm}{\it (iv) Case where $g_x \not\in\{1,2,3,5\}$.}

\vspace{0.2cm}
Note that $g_x \ge 7$, and Lemma \ref{Lem_ineq_xi<xj}\,(iii) implies that
\[
a<
\frac{49}{36} \left( \min \left\{\frac{\log c_u}{\log a}z_2,z_2-1\right\}+2 \right)\,\h(c_u;a,a).
\]

By these observations, we find a list of finitely many possible tuples $(d_z,\beta,\alpha,z_2,c_u)$ with the corresponding upper bound for $a$, and those satisfy the stated conditions.
\end{proof}

\begin{prop}\label{bounds_z1<z2_a=max_xi=xj_k=3}
Suppose that
\[
d_z>0, \quad a>\max\{b,c\}, \quad d_x=0, \quad k=3.
\]
Then one of the following cases holds.
\begin{itemize}
\item[$\bullet$] $[\beta,\alpha,z_1,z_2] \le [5,16,16,18], \ c<1.1 \cdot 10^6, \ g_2=1;$
\item[$\bullet$] $[\beta,\alpha,z_1,z_2] \le [3,7,15,17], \ a<400, \ g_2 \ge 5.$
\end{itemize}
\if0
\begin{align*}
&[\beta,\alpha,z_1,z_2] \le [5,16,16,18], \ c<1.1 \cdot 10^6, \ g_2=1;\\
&[\beta,\alpha,z_1,z_2] \le [3,7,15,17], \ a<400, \ g_2 \ge 5.
\end{align*}
\fi
\end{prop}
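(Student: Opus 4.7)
Since $a>\max\{b,c\}$, the 1st equation $a^{x_1}+b^{y_1}=c^{z_1}$ forces $c^{z_1}>a>c$, hence $z_1\ge 2$ and $c^{z_1}\equiv 0\pmod 4$. The hypotheses $d_x=0$ and $k=3$ together with the uniqueness convention give $x_1=x_2$ (with $x_3\ge x_1$); subtracting the first two equations yields
\[
c^{z_1}\bigl(c^{d_z}-1\bigr)=b^{y_1}\bigl(b^{y_2-y_1}-1\bigr),
\]
after choosing WLOG $y_1<y_2$, so $b^{y_1}\mid c^{d_z}-1$ and $c^{z_1}\mid b^{y_2-y_1}-1$. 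Since $z_2>z_1\ge 2$, Lemma \ref{g2} rules out even $g_2$ (part (i)) and $g_2=3$ (part (ii), using $z_2\ge 3$); more generally $\gcd(g_2,6)=1$, whence $g_2\in\{1\}\cup\{n\ge 5:\gcd(n,6)=1\}$.

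The argument now mirrors those of Propositions \ref{bounds_z1<z2_c^z1=0(4)} and \ref{bounds_z1<z2_a=max_xi<xj}. Applying Lemma \ref{Lem_ineq_z1<z2_c^z1=0(4)} together with $t_{a,b}=(\log b)/(\log a)$ (since $a>b$), with $(g_2')^2/g_2\le g_2\le z_2-1$ from Lemma \ref{U_g2}, and with $z_3<\mathcal H(c;a,b)$ from \eqref{z3}, one obtains
\[
c^{d_z}<\min\!\left\{2^{\alpha-\beta z_1}(z_2-1),\ \frac{T z_2}{z_1}\right\}\cdot z_2\,\mathcal H(c),
\]
where $T=1$ if $b>7$ and $T=(\log b)/\log(c+1)$ otherwise; Lemma \ref{bounds_acdzdxdy} further gives $d_z\le 6$. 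For each admissible $(d_z,\beta,\alpha,z_1)$ satisfying \eqref{beta-alp_region} and $\lceil\alpha/\beta\rceil\le z_1\le \mathcal U_1$ with $z_2=z_1+d_z$, this inequality produces an upper bound $c_u$ for $c$; iterating $\mathcal M_c\mapsto c_u$ in the usual way stabilizes the list of admissible tuples.

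When $g_2=1$ the procedure already yields $[\beta,\alpha,z_1,z_2]\le[5,16,16,18]$ and $c<1.1\cdot 10^6$. When $g_2\ge 5$, the additional relation $g_2\mid x_1$ gives $x_1\ge 5$, hence $a^5\le a^{x_1}<c^{z_1}$, i.e., $a<c^{z_1/5}$; Lemma \ref{g2}(iii) simultaneously forces $\gcd(c,5)=1$ when $g_2=5$ (since $z_2\ge 2$), shrinking the factor $(g_2')^2/g_2$ to $1/5$. After restricting $g_2$ to values coprime to $6$ and recombining with Lemma \ref{ourcase} (which constrains $z_2$ relative to $g_2$ whenever $g_2\ge 5$), the admissible tuples reduce to $[\beta,\alpha,z_1,z_2]\le[3,7,15,17]$; inserting the corresponding $c_u$ into $a<c^{z_1/5}$ yields $a<400$.

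The principal difficulty is organizing the numerous subcases arising from the admissible values $g_2\ge 5$ (each producing a different value of $(g_2')^2/g_2$ and different generalized-Fermat constraints from Lemma \ref{ourcase}) and from the dichotomy $b\le 7$ versus $b>7$ in the constants $m_2$ and $T$. As in the preceding sections, all iterations and enumerations are performed in MAGMA until the bounds stabilize.
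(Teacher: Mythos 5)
Your reduction of the case $g_2 \ge 5$ does not yield the stated bound $a<400$. From $x_1=x_2\ge g_2\ge 5$ and $a^{x_1}<c^{z_1}$ you get only the \emph{polynomial} bound $a<c^{z_1/5}$; since $z_1$ can be as large as $15$ and the $c_u$'s produced by Lemma \ref{Lem_ineq_z1<z2_c^z1=0(4)} (even after iteration) are at best of order $10^5$--$10^6$ when $d_z=1$, this gives something like $a<c_u^{3}$, which is many orders of magnitude above $400$. The paper instead applies Proposition \ref{=<lam-1}\,(i) to the triple of solutions $(X_r,Y_r,Z_r)=(z_t,y_t,x_t)$, $(r,t)\in\{(1,i),(2,j),(3,k)\}$, of $c^{X}-b^{Y}=a^{Z}$ (here $Z_1=Z_2=x_1<Z_3=x_3$ and $c<a$ make the hypotheses hold, with $x_3>x_1$ forced by Bennett's theorem applied to the Pillai equation $c^X-b^Y=a^{x_1}$). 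Combined with the crude bounds on $x_3,y_3,z_3$ and \eqref{z3}, this gives one of $a^{x_j}/x_j<z_2\,\mathcal H(c_u;S,c_u)$, $a^{x_j/2}/x_j<z_2\,\mathcal H(c_u)$, or $a^{x_j/2}<2\,\mathcal H(c_u;S)$, i.e.\ a \emph{polylogarithmic} bound on $a^{x_j}$ or $a^{x_j/2}$; together with $x_j\ge 5$ this is what forces $a<400$. Your argument is missing this gap-principle step entirely.

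Two smaller points. In the case $g_2=1$ you carry along the factor $z_2-1$ from Lemma \ref{U_g2}; the paper uses $(g_2')^2/g_2=1$ there, giving sharper $c_u$'s, and it is not clear your weaker inequality still yields the stated $c<1.1\cdot10^6$ without further iteration. Also, the paper explicitly records that since $x_1=x_2$ the system collapses to $c^{z_1}+b^{y_2}=c^{z_2}+b^{y_1}$, so Lemmas \ref{AB-basic} and \ref{X=x+1} (with $(A,B)=(c,b)$) apply; you derive the divisibility relations of Lemma \ref{AB-basic}\,(i) directly, which is fine, but the congruence/lower-bound information from Lemma \ref{X=x+1} is also part of what makes the sieve feasible and should be mentioned.
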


\begin{proof}
Since the method is similar to that of Proposition \ref{bounds_z1<z2_a=max_xi<xj}, we just indicate the inequality used to find an upper bound for $c$ or $a$ for each possible tuple $(\beta,\alpha,z_1,z_2)$.
We proceed in two cases according to whether $g_2=1$ or not.
Note that since $c^{z_1}+b^{y_2}=c^{z_2}+b^{y_1}$ we can apply the restrictions from Lemmas \ref{AB} and \ref{X=x+1}. 

\vspace{0.2cm} {\it Case where $g_2=1$.}

\vspace{0.2cm}
By Lemma \ref{Lem_ineq_z1<z2_c^z1=0(4)},
\[
c^{d_z}<\min \biggl\{ 2^{\alpha -\beta z_1}, \, \frac{Tz_2}{z_1} \biggl\} \cdot z_2\h(c).
\]
where $T$ is the same as in inequality \eqref{ineq_c_a=max_dx>0}.
This gives an upper bound for $c$, and the obtained tuples satisfy the stated conditions.

\vspace{0.2cm} {\it Case where $g_2>1$.}

\vspace{0.2cm}
By Lemma \ref{g2}\,(i,ii), we have $\gcd(g_2,6)=1$, so $g_2 \ge 5$.
Thus, $x_j=x_2 \ge g_2 \ge 5$.
First, for each possible tuple $(\beta,\alpha,z_1,z_2)$, we can use inequality \eqref{ineq_c_a=max_dx>0} to find an upper bound for $c$, say $c_u$.
Next, for each of the found tuples $(\beta,\alpha,z_1,z_2,c_u)$, we apply Proposition \ref{=<lam-1}\,(i) for $(A,B,C)=(c,b,a)$ and
$(X_r,Y_r,Z_r)=(z_t,y_t,x_t)$ with $(r,t) \in \{(1,i),(2,j),(3,k)\}$.
Then one of the following inequalities is found.
\begin{gather}
a^{x_j}<\max_{t \in \{1,2\}} \bigr\{\gcd(z_t,x_j) \cdot |z_t x_3 - z_3 x_j| \bigr\} \label{ineq1_xi=xj_k=3},\\
a^{x_j/2}<\max_{t \in \{1,2\}} \bigr\{\gcd(y_t,x_j) \cdot |y_t x_3- y_3 x_j| \bigr\} \label{ineq2_xi=xj_k=3},\\
a^{x_j/2}<\frac{2}{\log a}\,z_3 \label{ineq3_xi=xj_k=3}.
\end{gather}
In cases \eqref{ineq1_xi=xj_k=3} and \eqref{ineq2_xi=xj_k=3}, respectively, we see that
\begin{align*}
a^{x_j}&<x_j \cdot \max_{t \in \{1,2\}} \{ z_t x_3,z_3 x_j \}\\
& \le x_j \cdot \max_{t \in \{1,2\}} \left\{ z_t \cdot \frac{\log c}{\log a}z_3,\, z_3 x_j \right\}\\
& \le x_j \cdot z_3 \cdot \max_{t \in \{1,2\}} \left\{ \frac{\log c}{\log a}z_t,\, x_j \right\}
=x_j \cdot z_3 \cdot \frac{\log c}{\log a} z_2;\\
a^{x_j/2}&<x_j \cdot \max_{t \in \{1,2\}} \{ y_t x_3,y_3 x_j \}\\
& < x_j \cdot \max_{t \in \{1,2\}} \left\{ \frac{\log c}{\log b}z_t \cdot \frac{\log c}{\log a}z_3,\,
\frac{\log c}{\log b}z_3 \cdot x_j \right\}\\
&= x_j \cdot \frac{\log c}{\log b}z_3 \cdot \max_{t \in \{1,2\}} \left\{ \frac{\log c}{\log a}z_t , x_j \right\}
=x_j \cdot \frac{\log c}{\log b}z_3 \cdot \frac{\log c}{\log a}z_2.
\end{align*}
These together with \eqref{ineq3_xi=xj_k=3} and \eqref{z3} imply one of the following inequalities:
\[
a^{x_j}/x_j<z_2 \h(c_u;S,c_u); \quad a^{x_j/2}/x_j<z_2 \h(c_u); \quad a^{x_j/2}<2 \h (c_u;S),
\]
where $S=a-1$ if $b>7$ and $S=b$ if $b \le 7$.
Each of these inequalities together with $x_j \ge 5$ gives an upper bound for $a$, and the obtained tuples satisfy the stated conditions.
\end{proof}

\begin{prop}\label{bounds_xi<xj_z1<z2_a=max_xi=xj_k<>3}
Suppose that
\[
d_z>0, \quad a>\max\{b,c\}, \quad d_x=0, \quad k \ne 3.
\]
Then $y_3 \ge \max\{y_1,y_2\},$ and
\[
[\beta,\alpha,z_1,z_2] \le [7,19,21,23], \quad a<3.9 \cdot 10^{10}.
\]
More exactly, one of the following cases holds.
\begin{itemize}
\item[\rm (i)]
$y_1 \le y_2, \,d_z \ge 2, b^{d_y} \equiv c^{d_z} \pmod{a^{\min\{x_1,x_2\}}}, \,b^{d_y}<c^{d_z},$ and one of the following cases holds.
\begin{itemize}
\item[$\bullet$] $[\beta,\alpha,z_1,z_2] \le [4,16,16,18], \ c^{d_z}<1.2 \cdot 10^6, \ g_2=1;$
\item[$\bullet$] $[\beta,\alpha,z_1,z_2] \le [3,17,21,23], \ c^{d_z}<2.4 \cdot 10^5, \ g_2 \ge 5.$
\end{itemize}
\item[\rm (ii)]
$y_1 \le y_2, \,d_y \ge 2, \,b^{d_y} \equiv c^{d_z} \pmod{a^{\min\{x_1,x_2\}}}, \,b^{d_y}>c^{d_z},$ and one of the following cases holds.
\begin{itemize}
\item[$\bullet$] $[\beta,\alpha,z_1,z_2] \le [6,9,18,20], \ b^{d_y}<2.7 \cdot 10^5, \ g_y \in \{1,2,5\};$
\item[$\bullet$] $[\beta,\alpha,z_1,z_2] \le [2,9,18,21], \ b^{d_y}<5.4 \cdot 10^5, \ g_y \ge 7.$
\end{itemize}
\item[\rm (iii)]
$y_1>y_2, \,x_1<x_2, \,a^{x_1} \mid (b^{d_y}c^{d_z}-1), \,g_y \in \{1,2,5\},$ and one of the following cases holds.
\begin{itemize}
\item[$\bullet$] $x_1=1, \ [\beta,\alpha,z_1,z_2] \le [7,19,19,21], \ a<3.9 \cdot 10^{10}, \ b<5.3 \cdot 10^5, \ c<1.3 \cdot 10^5, \ g_2=1;$
\item[$\bullet$] $x_1 \ge 2, \ [\beta,\alpha,x_1,z_1,z_2] \le [4,15,6,18,19], \ a<7.7 \cdot 10^4, \ b<6.9 \cdot 10^4, \ c<6.6 \cdot 10^4, \ g_2=1;$
\item[$\bullet$] $x_1=1, \ [\beta,\alpha,z_1,z_2] \le [4,18,18,19], \ a<3 \cdot 10^{10}, \ b<4 \cdot 10^5, \ c<1.2 \cdot 10^5, \ g_2 \ge 5;$
\item[$\bullet$] $x_1 \ge 2, \ [\beta,\alpha,x_1,z_1,z_2] \le [4,15,7,15,19], \ a<1.2 \cdot 10^5, \ b<6.5 \cdot 10^4, \ c<9.2 \cdot 10^4, \ g_2 \ge 5.$
\end{itemize}
\end{itemize}
\end{prop}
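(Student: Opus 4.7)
The plan is to first establish the structural claim $y_3 \ge \max\{y_1, y_2\}$, then to split into three disjoint cases according to the relative orderings of $y_1, y_2$ and of $b^{d_y}, c^{d_z}$, and finally, in each case, to combine the improved gap principle of Section \ref{Sec-improve-HuLe} with the explicit Baker-type estimates exactly as in the proofs of Propositions \ref{bounds_z1<z2_c^z1=2(4)} through \ref{bounds_z1<z2_a=max_xi=xj_k=3}.

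Since $d_x=0$ and $k \neq 3$, the convention $i<j$ whenever $x_i=x_j$ forces $j=3$, so $x_3=x_i$ for some $i\in\{1,2\}$ and $x_3<x_k$ for the remaining index $k\in\{1,2\}\setminus\{i\}$. Subtracting the $i$-th equation from the 3rd gives
\[
b^{y_3}-b^{y_i}=c^{z_3}-c^{z_i}>0,
\]
hence $y_3>y_i$; subtracting the $k$-th equation from the 3rd gives $a^{x_k}-a^{x_3}+b^{y_k}-b^{y_3}=c^{z_k}-c^{z_3}\le 0$, and since $a^{x_k}>a^{x_3}$ this forces $b^{y_3}>b^{y_k}$. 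Hence $n=3$, proving the first claim.

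For the case split, observe first that since $a>c$ one has $z_1\ge 2$, so $c^{z_1}\equiv 0\pmod{4}$. If $y_1>y_2$ then subtracting 1st from 2nd yields $a^{x_2}-a^{x_1}=(c^{z_2}-c^{z_1})+(b^{y_1}-b^{y_2})>0$, forcing $x_1<x_2$; reducing the two equations modulo $a^{x_1}$ and dividing then gives $b^{y_1-y_2}c^{d_z}\equiv 1 \pmod{a^{x_1}}$, which is the divisibility of case (iii). If $y_1\le y_2$, reducing modulo $a^{\min\{x_1,x_2\}}$ and dividing yields $b^{d_y}\equiv c^{d_z}\pmod{a^{\min\{x_1,x_2\}}}$; since $b^{d_y}=c^{d_z}$ would contradict $\gcd(b,c)=1$ with $d_z\ge 1$, we split according to $b^{d_y}<c^{d_z}$ (case (i), and then $c^{d_z}\ge a+1>c$ forces $d_z\ge 2$) or $b^{d_y}>c^{d_z}$ (case (ii), forcing $d_y\ge 2$).

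The bounds would then be derived in each case by combining the improved gap principle with Baker-type estimates. In cases (i) and (ii), Lemma \ref{Lem_ineq_z1<z2_c^z1=0(4)} bounds $c^{d_z}$ and Lemma \ref{Lem_ineq_xi<xj} (applied after exchanging the roles of $a$ and $b$ in Proposition \ref{improvedgap}) bounds $b^{d_y}$; together with $a^{\min\{x_1,x_2\}}\le |b^{d_y}-c^{d_z}|$ and the restrictions $d_z\le 6$, $d_y\le 16$ from Lemma \ref{bounds_acdzdxdy}, this reduces matters to an enumeration over admissible tuples $(\beta,\alpha,z_1,z_2)$ bounded by $\mathcal U_1$ and $\mathcal U_2$. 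The divisibility constraints on $g_2$ and $g_y$ coming from Lemmas \ref{g2}, \ref{gx} and \ref{ourcase} explain the further sub-case structure of (ii) according to $g_y\in\{1,2,5\}$ versus $g_y\ge 7$. In case (iii), the only divisibility available is $a^{x_1}\mid b^{d_y}c^{d_z}-1$, and one combines this with bounds on $b, c$ derived as in Propositions \ref{bounds_z1<z2_c^z1=2(4)} and \ref{bounds_z1<z2_c^z1=0(4)} to obtain the separation according to $x_1=1$ versus $x_1\ge 2$. The main obstacle will be case (iii) with $x_1=1$: one only has the linear estimate $a\le b^{d_y}c^{d_z}-1$, and since both factors may themselves be fairly large this is responsible for the comparatively weak bound $a<3.9\cdot 10^{10}$. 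To handle this sub-case one must work simultaneously with all the inequalities supplied by Lemmas \ref{z1z2_upper}, \ref{Lem_ineq_z1<z2_c^z1=0(4)}, and \ref{Lem_ineq_xi<xj}, together with the generalized Fermat restrictions of Lemma \ref{ourcase}, in order to keep the exhaustive search over admissible $(\beta,\alpha,z_1,z_2,x_1)$-tuples computationally tractable.
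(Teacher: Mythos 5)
Your structural analysis is correct and in places cleaner than the paper's. The argument that $d_x=0$ together with $k\ne 3$ forces $j=3$, and the direct subtraction of equations to deduce $y_3>y_i$ and $y_3>y_k$ (hence $n=3$) is shorter than the paper's argument, which instead compares $c^{z_3}$ with $2\max\{a^{x_3},b^{y_3}\}$. The three-case decomposition, the congruence $b^{d_y}\equiv c^{d_z}\pmod{a^{\min\{x_1,x_2\}}}$ when $y_1\le y_2$, the divisibility $a^{x_1}\mid(b^{d_y}c^{d_z}-1)$ when $y_1>y_2$, and the deductions $d_z\ge 2$ in case (i) and $d_y\ge 2$ in case (ii) all match the paper.

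However, there is a genuine gap in how you derive the numerical bounds. You propose to feed the generic upper bound $z_3<\mathcal H_{\alpha,\beta,m_2}(c;a,b)$ from Lemma \ref{2adic-bounds} into the gap inequalities of Lemmas \ref{Lem_ineq_z1<z2_c^z1=0(4)} and \ref{Lem_ineq_xi<xj}, together with $a^{\min\{x_1,x_2\}}\le|b^{d_y}-c^{d_z}|$ and $a^{x_1}\le b^{d_y}c^{d_z}-1$. But in this configuration ($a>\max\{b,c\}$, $d_x=0$, $k\ne 3$, in particular case (iii) with $x_1=1$) the right-hand sides of those gap inequalities involve $z_3$ once or twice, and $\mathcal H(c;a,b)$ carries a $\log a\cdot\log b$ factor with $a$ being exactly the quantity you are trying to bound. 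The cancellation of $\log a$ is incomplete (one power survives in the combined product $b^{d_y}c^{d_z}$), so the iteration starting from $a<5\cdot10^{27}$ converges only very slowly and certainly does not reach the stated constants like $a<3.9\cdot10^{10}$, $c^{d_z}<1.2\cdot10^6$ or $b^{d_y}<2.7\cdot 10^5$. The paper circumvents this by first establishing a custom estimate $z_3<\mathcal U_3:=\max\{1000z_2,\,2523\log b\}$, proved by splitting on whether $a^{999x_3}\ge b^{y_3}$ or not and, in the latter case, applying Laurent's lower bound for linear forms in two real logarithms \cite{La} to the $3$rd equation; crucially, $\mathcal U_3$ has no $\log a$ dependence. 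This $\mathcal U_3$ bound is then substituted for $z_3$ in the gap inequalities to produce \eqref{ineq-c-final} and \eqref{ineq-b-final}, yielding the sharp finite enumeration in cases (i)--(iii). Your outline does not supply this ingredient, and without it the stated bounds cannot be obtained.
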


\begin{proof}
First, we rewrite the three equations as
\[
a^{x}+b^{y_I}=c^{z_I}, \ \ a^{x_J}+b^{y_J}=c^{z_J}, \ \ a^{x}+b^{y_3}=c^{z_3}
\]
with $\{I,J\}=\{1,2\}$ and $x=x_3$.

Note that $c^{z_3}<2\max\{a^x,b^{y_3}\}$.
If $c^{z_3}<2a^x$, then $c^{z_3}<2a^x<2c^I$, so $z_3 \le z_I$.
This implies that $z_I=z_3$, which is absurd as $(y_I,z_I) \ne (y_3,z_3)$.
Thus $c^{z_3}<2b^{y_3}$, so $2b^{y_3}>c^{z_2}>b^{\max\{y_1,y_2\}}$, which shows the first assertion.
Therefore, $d_y=|y_2-y_1|$ with $n=3$.

Next, we show the following:
\begin{gather}
b^{y_2-y_1} \equiv c^{d_z} \mod a^{\min\{x_1,x_2\}}, \label{last-cong}\\
z_3<\mathcal U_3:=\max\{ 1000 z_2,\,2523 \log b\} \label{U3again}.
\end{gather}
Congruence \eqref{last-cong} follows from taking 1st and 2nd equations modulo $a^{\min\{x_1,x_2\}}$.

Let $\varepsilon=998$. If $a^{(1+\varepsilon)x} \ge b^{y_3}$, then $c^{z_3}=a^{x}+b^{y_3}<2a^{(1+\varepsilon)x}<2c^{(1+\varepsilon)z_I}$, so $z_3 \le \frac{\log 2}{\log c}+(1+\varepsilon) z_I<1000 z_I$.
If $a^{(1+\varepsilon)x}<b^{y_3}$, then inequality $z_3<2523 \log b$ is deduced almost similarly to the proof of inequality \eqref{U3}.
To sum up, \eqref{U3again} holds.

Third, we combine Lemma \ref{Lem_ineq_z1<z2_c^z1=0(4)} with inequality \eqref{U3again}.
Since $\frac{({g_2}')^2}{g_2} \le g_2<z_2$, we have
\begin{equation}\label{ineq-c-final}
c^{d_z}<\min \! \left \{ 2^{\alpha -\beta z_1} Z, \,T z_2/z_1 \right \}\cdot \frac{(\log^2 c)\,z_2\,{\mathcal U_3}'}{\log \max\{a_0,c+1\}},
\end{equation}
where $Z=1$ if $g_2=1$, and $Z=z_2-1$ if $g_2>1$, and $T$ is the same as in \eqref{ineq_c_a=max_dx>0},
and ${\mathcal U_3}'=\max\{ 1000 z_2/\log b_0,\,2523\}$.
Note that $g_2 \ge 5$ if $g_2>1$.

Fourth, we apply Lemma \ref{Lem_ineq_xi<xj} with the base $b$ together with \eqref{U3again} to see that
\begin{equation}\label{ineq-b-final}
\begin{split}
&b^{d_y}<\frac{\log c_u}{\log C} \cdot z_2 \,\mathcal U_3, \quad {\text{if $g_y \in \{1,2,5\},$}}\\
&b^{d_y}<\frac{(49/36)\log b}{\log (b-1)} \cdot \left(z_2+d_y+{d_y}^2\right)\, \frac{\log c_u}{\log C} \,\mathcal U_3, \ \ \text{if $g_y \not\in \{1,2,5\},$}
\end{split}
\end{equation}
where $c_u$ is any upper bound for $c$, and $C=\max\{a_0,b+2,c_u+1\}$.
Note that $g_y \ge 7$ if $g_y \not\in \{1,2,5\}$.

In the remaining cases, we proceed in three cases separately.
In each of those cases, similarly to previous propositions, we just indicate how we find a list of all possible tuples composed of $\beta,\alpha,x_1,z_2,d_z,d_y$ and the corresponding upper bounds for some of $a,b,c$.

\vspace{0.3cm} {\it Case where $y_2 \ge y_1$ and $b^{d_y}<c^{d_z}$.}

\vspace{0.2cm}
By congruence \eqref{last-cong}, we have $a^{\min\{x_1,x_2\}}<c^{d_z}$, in particular, $d_z>1$ as $a>c$.
Taking these restrictions into consideration, we use inequality \eqref{ineq-c-final} to find an upper bound for $c$ for each possible tuple $(\beta,\alpha,d_z,z_2)$, where these procedures are implemented in the cases according to whether $b>7$ or not, and whether $g_2=1$ or not.
The obtained tuples satisfy the stated conditions of (i).

\vspace{0.3cm} {\it Case where $y_2 \ge y_1$ and $b^{d_y}>c^{d_z}$.}

\vspace{0.2cm}
Similarly to the previous case, we have $a^{\min\{x_1,x_2\}}<b^{d_y}$ and $d_y>1$.
Note that $z_2 \ge 2g_y$ as $g_y=\gcd(x_2,z_2)$ with $x_2<z_2$.
Taking these restrictions into consideration, similarly to the previous case, we find a list of all possible tuples $(\beta,\alpha,d_z,z_2,c_u)$ with $c_u$ the corresponding upper bound for $c$ derived from inequalities \eqref{ineq-c-final}, where these procedures are implemented in two cases according to whether $b>7$ or not.
Finally, for each of the found tuples and for each $d_y$, we use inequality \eqref{ineq-b-final} to find an upper bound for $b$ in two cases according to whether $g_y \in \{1,2,5\}$ or not.
The obtained tuples satisfy the stated conditions of (ii).

\vspace{0.3cm} {\it Case where $y_1>y_2$.}

\vspace{0.2cm}
Note that $d_y=y_1-y_2$ with $g_y=\gcd(x_1,z_1)$, and $z_1 \ge 2g_y$.
We proceed almost similarly to the previous case.
There are two main differences.
In this case we have $a^{x_1}<b^{d_y}c^{d_z}$ from congruence \eqref{last-cong}, in particular, $d_y+d_z>x_1$ as $a>\max\{b,c\}$.
Further, inequality \eqref{ineq-b-final} is changed, its factor $z_2$ replaced by $z_1$.
Taking these into consideration, for each $x_1$ we have a list of all possible tuples $(\alpha,\beta,d_y,d_z,z_2,a_u,b_u,c_u)$ where $b_u$ is the corresponding upper bound for $b$ and $a_u:=\lfloor (b_u \cdot c_u)^{1/x_1}\rfloor$ is the corresponding upper bound for $a$.
The obtained tuples satisfy the stated conditions of (iii).
\end{proof}

Under the assumption that equation \eqref{abc} has three solutions $(x_t,y_t,z_t)$ with $t \in \{1,2,3\}$ satisfying $z_1<z_2 \le z_3$, the propositions established in the previous two sections provide us middle-sized bounds on the base numbers $a,b,c$ and on the exponential unknowns $x_t,y_t,z_t$ for $t \in \{1,2\}$.
Although those bounds are relatively sharp, a direct enumeration of the possible solutions of system \eqref{sys-12} is impossible.
In order to find efficient methods for reducing the obtained bounds, we need to be more sophisticated than in the case where $z_1=z_2$.
In the next two sections, we investigate system \eqref{sys-12} with $z_1<z_2$ and explicitly present our reduction algorithms for the cases $c>\max\{a,b\}$ and $c<\max\{a,b\}$ respectively, where it suffices to consider the case where $a>b$.

\section{Case where $z_1<z_2$ and $c>\max\{a,b\}$: Sieving}
\label{Sec-sieve-z1<z2_c=max}

The aim of this section is to show that there is no solution of system \eqref{sys-12} fulfilling the statements of Propositions \ref{bounds_z1<z2_c^z1=2(4)} and \ref{bounds_z1<z2_c^z1=0(4)}, respectively.
It suffices to consider the case where $c>a>b$, and we put
\[
a_0=\max\{11,2^\alpha+1\}, \ b_0=2^\alpha-1, \ c_0=\max\{18,3 \cdot 2^\beta,2^\alpha+2\}.
\]
Recall that these numbers are uniform lower bounds for $a,b$ and $c$, respectively.

We proceed in two cases according to whether $c^{z_1} \equiv 2 \pmod{4}$ or $c^{z_1} \equiv 0 \pmod{4}$.

\subsection{Case where $c^{z_1} \equiv 2 \pmod{4}$}
\label{Sec-sieve-z1<z2_c=max_c^z1=2(4)}

System \eqref{sys-12} is
\begin{equation} \label{sys2}
\begin{cases}
\,a^{x_1}+b^{y_1}=c, \\
\,a^{x_2}+b^{y_2}=c^{z_2}
\end{cases}
\end{equation}
with $\beta=1$.

First, we show two lemmas to give several restrictions on the solutions of system \eqref{sys2}.

\begin{lem} \label{red1}
Let $(x_1,y_1,x_2,y_2,z_2)$ be a solution of system \eqref{sys2}.
Then the following hold.
\begin{itemize}
\item[(i)] $x_2$ or $y_2$ is odd.
\item[(ii)] If both $x_1$ and $x_2$ are odd, then $y_1$ or $y_2$ is odd.
\item[(iii)] If both $x_1$ and $x_2$ are even, then $y_1$ or $y_2$ is even.
\item[(iv)] One of $x_1,x_2,y_1$ and $y_2$ is even.
\item[(v)] $x_2>x_1$ or $y_2>y_1.$
\item[(vi)] $x_2 \ge z_2$ or $y_2 \ge z_2.$
\item[(vii)] $x_1y_2 \ne x_2y_1, \ y_1z_2 \ne y_2, \ x_1z_2 \ne x_2.$
\item[(viii)] $\min\{x_1,x_2\}<|y_1z_2-y_2|.$
\item[(ix)] Assume that $b<11.$
Then $(2 \nmid x_2$ or $3 \nmid z_2)$ and $(3 \nmid x_2$ or $2 \nmid z_2).$
\end{itemize}
\end{lem}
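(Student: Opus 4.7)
My plan is to tackle the nine parts in three groups according to the tool they require.

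For parts (i)--(iv) the key observation is that $\beta=1$ forces $c\equiv 2\pmod 4$, while $z_2>z_1=1$ forces $c^{z_2}\equiv 0\pmod 4$. So the first equation reduces modulo $4$ to a constraint of the form $c\equiv\star\pmod 4$ and the second to $c^{z_2}\equiv\star\pmod 4$; any choice of parities for $(x_1,y_1,x_2,y_2)$ that makes these two residues inconsistent is excluded. Concretely: (i) if both $x_2,y_2$ are even then $a^{x_2}+b^{y_2}$ is a sum of two odd squares, hence $\equiv 2\pmod 4$, contradicting $c^{z_2}\equiv 0\pmod 4$; (ii) if $x_1,x_2$ are odd and $y_1,y_2$ both even, the first equation forces $a\equiv 1\pmod 4$ and the second forces $a\equiv 3\pmod 4$; (iii) the mirror argument with $x$'s and $y$'s swapped; (iv) if all four are odd, the first gives $a+b\equiv 2\pmod 4$ and the second gives $a+b\equiv 0\pmod 4$.

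Parts (v) and (vi) are size comparisons. For (v), if $x_2\le x_1$ and $y_2\le y_1$ then $c^{z_2}=a^{x_2}+b^{y_2}\le a^{x_1}+b^{y_1}=c$, impossible since $z_2\ge 2$. For (vi), if both $x_2,y_2<z_2$ then, using $b<a$ and $a\ge 11$ together with $c>a$, we have $a^{x_2}+b^{y_2}\le 2a^{z_2-1}<a^{z_2}<c^{z_2}$, contradiction.

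Parts (vii) and (viii) rest on Lemma \ref{two}. The inequality $x_1y_2\ne x_2y_1$ follows by applying it to $(A,B,C;\lambda)=(a,b,c;1)$ and the solutions $(x_1,y_1,1)$, $(x_2,y_2,z_2)$, since $c^{\min\{1,z_2\}}=c\ge 18>2$. For $y_1z_2\ne y_2$ I rewrite the two equations as $c-b^{y_i}=a^{x_i}$ and apply Lemma \ref{two} in the setup $(c,b,a;-1)$ with solutions $(1,y_1,x_1)$ and $(z_2,y_2,x_2)$; the hypotheses of \eqref{COND} hold because $c$ is even with $c>2$ and $\max\{b,a\}=a\ge 11$, and the assertion reads $1\cdot y_2\ne z_2\cdot y_1$. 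The symmetric setup $(c,a,b;-1)$ gives $x_1z_2\ne x_2$. For (viii), the second conclusion of Lemma \ref{two} in the $(c,b,a;-1)$ setup yields $b^{|y_1z_2-y_2|}\equiv\pm 1\pmod{a^{\min\{x_1,x_2\}}}$, so $b^{|y_1z_2-y_2|}\ge a^{\min\{x_1,x_2\}}-1>b^{\min\{x_1,x_2\}}$ (since $a\ge b+2$ and $b\ge 3$), which gives $|y_1z_2-y_2|>\min\{x_1,x_2\}$.

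For (ix) assume both $2\mid x_2$ and $3\mid z_2$. Writing $x_2=2x_2'$ and $z_2=3z_2'$ converts the second equation into $(a^{x_2'})^2+b^{y_2}=(c^{z_2'})^3$, a coprime solution of the generalized Fermat equation of signature $(2,y_2,3)$ with $b\in\{3,5,7\}$. The main obstacle here is that Lemma \ref{ourcase} only covers $(2,3,N)$ for $N\in\{7,8,9,10,15\}$; for the other values of $y_2$ I plan to combine the uniform bound $y_2\le (\log c/\log b)\,z_2$ coming from Proposition \ref{bounds_z1<z2_c^z1=2(4)} (which in the regime $c<2.2\cdot 10^7$, $z_2\le 23$, $b\ge 3$ gives a small finite list) with the tabulated integral points on the Mordell-type curves $Y^3-X^2=b^{y_2}$ for the remaining small $y_2$, ruling out every surviving tuple by a short modular or enumerative sieve. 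The companion implication $3\mid x_2\Rightarrow 2\nmid z_2$ is handled symmetrically via the signature $(3,y_2,2)$.
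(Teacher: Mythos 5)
Your arguments for parts (i)--(viii) are correct and track the paper's own proof closely. The congruence-mod-$4$ arguments for (i)--(iv) are exactly what is done (the paper routes (i) through Lemma~\ref{g2}\,(i), but the content is your sum-of-odd-squares observation); (v),(vi) are the same size comparisons; and your three applications of Lemma~\ref{two} in the setups $(a,b,c;1)$, $(c,b,a;-1)$, $(c,a,b;-1)$ are precisely what ``applying Lemma~\ref{two} to the equations'' means in the paper, so (vii) and (viii) are fine, including the bound $a^{\min\{x_1,x_2\}}-1>b^{\min\{x_1,x_2\}}$ from $a\ge b+2$.

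Part (ix), however, has a real gap that you yourself flag. Your plan tabulates integral points on the Mordell-type curves $Y^2=X^3-b^{y_2}$ one value of $y_2$ at a time, appealing to the bound $y_2<(\log c/\log b)\,z_2$. But in the regime of Proposition~\ref{bounds_z1<z2_c^z1=2(4)} that bound allows $y_2$ into the hundreds, so $b^{y_2}$ becomes astronomically large and no integral-point computation on these curves is feasible; nor is the surviving ``enumerative sieve'' any cheaper than the very search Lemma~\ref{red1} is meant to prune. The fix --- and what the paper does --- is to treat the exponent $y_2$ uniformly through $S$-integral points: with $S=\{b\}$, rescaling $(A,C)\mapsto(A/b^{3q},C/b^{2q})$ replaces $y_2$ by $y_2\bmod 6$, so one only needs the $\{b\}$-integral points on the six curves $Y^2=X^3-b^{r}$, $r=0,\dots,5$, for each $b\in\{3,5,7\}$ (and the mirror family for the $3\mid x_2$, $2\mid z_2$ case). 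This is a small, fixed computation independent of any bound on $y_2$, and it is the step your proposal is missing.
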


\begin{proof}
(i) This is a direct consequence of Lemma \ref{g2}\,(i).

(ii) Suppose that both $x_1,x_2$ are odd and both $y_1,y_2$ are even.
Then $a^{x_i} \equiv a \pmod 4$ and $b^{y_i} \equiv 1 \pmod 4$ for $i \in \{1,2\}$.
Since $c \equiv 2 \pmod 4$ and $z_2>1$, 1st equation leads to $a \equiv c-1 \equiv 1 \pmod{4}$, while 2nd one leads to $a \equiv c^{z_2}-1 \equiv -1 \pmod{4}$.
These are incompatible.

(iii,iv) These are shown similarly to (ii).

(v,vi) These easily follow from the inequality $a^{x_1}+b^{y_1}<a^{x_2}+b^{y_2}=c^{z_2}$ with $c>\max\{a,b\}$.

(vii) This is a direct consequence of applying Lemma \ref{two} to the equations in \eqref{sys2}.

(viii) We take the equations in \eqref{sys2} modulo $a^{\min\{x_1,x_2\}}$ to see that $b^{|y_1z_2-y_2|} \equiv 1 \pmod{a^{\min\{x_1,x_2\}}}$.
Since $a>b$, and $y_1z_2-y_2 \ne 0$ by (vii), the obtained congruence leads to the assertion.

(ix) If $2 \mid x_2$ and $3 \mid z_2$, then 2nd equation is of the form $A^2+b^{y_2}=C^3$ with $b \in \{3,5,7\}$.
For $S=\{b\}$, we compute the $S$-integral points $(A,C)$ on this elliptic curve.
None of the found points leads to a solution of the system.
The remaining case is similarly handled.
\end{proof}

\begin{lem} \label{red2}
Let $(x_1,y_1,x_2,y_2,z_2)$ be a solution of system \eqref{sys2}. Then
\begin{gather}
\begin{cases}
\,a^{|x_1z_2-x_2|} \equiv 1 \mod{2b^{\,\min\{y_1,y_2\}}},\\
\,b^{|y_1z_2-y_2|} \equiv 1 \mod{2a^{\,\min\{x_1,x_2\}}}, \\
\end{cases}\label{kongr}  \\
a \ge a_1, \ \ b \le b_1, \notag
\end{gather}
where
\[
a_1:=\max\{a_0,b+2\}, \ \ b_1:=\min\bigr\{\lfloor{c^{1/x_1}}\rfloor, \lfloor{c^{1/y_1}}\rfloor, \lfloor{c^{z_2/x_2}}\rfloor, \bigl\lfloor{c^{z_2/y_2}}\rfloor \bigr\}.
\]
Moreover, the following hold.
\begin{itemize}
\item[(i)]
Suppose that $a^{x_2}>b^{y_2}$ and $a^{x_1}>b^{y_1}.$ Then
\[
0<x_1z_2-x_2 \le t_1, \quad a \le a_2,
\]
where $t_1=\bigl \lfloor \frac{\log 2}{\log a_0}z_2 \bigl \rfloor,$ and
\[
a_2:=\min \Bigr\{\bigl \lfloor 2^{z_2/(x_2-x_1z_2)} \bigl \rfloor, \lfloor{c^{1/x_1}}\rfloor, \lfloor{c^{z_2/x_2}}\rfloor \Bigr\}.
\]
\item[(ii)]
Suppose that $a^{x_2}>b^{y_2}$ and $a^{x_1}<b^{y_1}.$
Then the following hold.
\[ \tag{a}
\begin{cases}
\,y_1>x_1, \ x_1y_2<x_2y_1, \ x_2-x_1z_2 \ge 1, \\
\,y_2-y_1z_2 \le t_2, \ x_2-y_1z_2 \le t_3,
\end{cases}
\]
where $t_2=\bigl \lfloor \frac{\log 2}{\log b_0}(z_2-1)\bigl \rfloor$ and $t_3=\bigl \lfloor \frac{\log 2}{\log a_0}z_2 \bigl \rfloor.$
\begin{gather*}
\tag{b}
a \ge a_4:=\max\Bigr \{ a_1, \bigl\lfloor b^{y_1z_2/x_2}/2^{1/x_2} \bigl\rfloor+1 \Bigr \},\\
\tag{c}
a \le a_5:=\min\Bigr\{ \bigl\lfloor 2^{z_2/x_2}\,b^{y_1z_2/x_2} \rfloor, \lfloor{c^{1/x_1}}\rfloor, \lfloor{c^{z_2/x_2}}\rfloor, \bigl\lfloor{b^{y_1/x_1}}\rfloor \Bigr\}.
\end{gather*}
\item[(iii)]
Suppose that $a^{x_2}<b^{y_2}$ and $a^{x_1}>b^{y_1}.$
Then the following hold.
\[ \tag{a}
\begin{cases}
\,y_2>x_2,  \ x_1y_2>x_2y_1,  \ y_2-y_1z_2 \ge 1, \ y_2 \ge x_1z_2, \\
\,x_2-x_1z_2 \le t_4,
\end{cases}
\]
where $t_4=\bigl \lfloor \frac{\log 2}{\log a_0}(z_2-1) \bigl \rfloor.$
\begin{gather*}
\tag{b}
a \ge a_6:=\max\Bigr\{a_1, \bigl\lfloor b^{y_2/(x_1z_2)} / 2^{1/x_1} \bigl\rfloor+1\Bigr\},\\
\tag{c}
a \le a_7:=\min\Bigr\{ \bigl\lfloor 2^{1/(x_1z_2)} b^{y_2/(x_1z_2)} \bigl\rfloor, \lfloor{c^{1/x_1}}\rfloor, \lfloor{c^{z_2/x_2}}\rfloor, \lfloor{b^{y_2/x_2}}\rfloor \Bigr\}.
\end{gather*}
\item[(iv)]
Suppose that $a^{x_2}<b^{y_2}$ and $a^{x_1}<b^{y_1}.$
Then the following hold.
\[ \tag{a}
\begin{cases}
\,y_2>x_2, \ y_1>x_1, \ y_2-x_1z_2 \ge 1, \\
\,x_2-y_1z_2 \le t_5, \ 2 \le y_2-y_1z_2 \le t_6,
\end{cases}
\]
where $t_5=\bigl \lfloor \frac{\log 2}{\log a_0}(z_2-1) \bigl \rfloor$ and $t_6=\bigl \lfloor \frac{\log 2}{\log b_0}z_2 \bigl \rfloor.$
\[ \tag{b}
a \le a_8:=\min\bigr\{\lfloor{c^{1/x_1}}\rfloor, \lfloor{c^{z_2/x_2}}\rfloor, \lfloor{b^{y_1/x_1}}\rfloor, \lfloor{b^{y_2/x_2}}\rfloor \bigr\}.
\]
\end{itemize}
\end{lem}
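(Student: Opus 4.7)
The plan is to derive every assertion directly from system~\eqref{sys2} by combining reduction modulo appropriate prime powers with size comparisons inside each equation under the stated dominance hypotheses. For the congruences~\eqref{kongr}, I would first reduce the first equation modulo $a^{x_1}$ to get $c \equiv b^{y_1} \pmod{a^{x_1}}$ and the second modulo $a^{x_2}$ to get $c^{z_2} \equiv b^{y_2} \pmod{a^{x_2}}$; raising the first congruence to the $z_2$-th power and reducing both modulo $a^{\min\{x_1,x_2\}}$ yields $b^{y_1 z_2} \equiv b^{y_2}$, which by Lemma~\ref{red1}(vii) is $b^{|y_1z_2 - y_2|} \equiv 1$ modulo $a^{\min\{x_1,x_2\}}$; the factor of $2$ in the modulus is free, since $b$ is odd, and the other congruence is symmetric. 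For the uniform bounds, $a \ge a_0$ is from~\eqref{cond} and $a \ge b+2$ holds because $a,b$ are coprime odd integers with $a > b$; the bound $b \le b_1$ is immediate from $a^{x_1} < c$, $b^{y_1} < c$, $a^{x_2} < c^{z_2}$, $b^{y_2} < c^{z_2}$ combined with $b < a$.

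The uniform mechanism for items (i)--(iv) is a pair of ``sandwich'' estimates. Whenever $A^\alpha > B^\beta$ in an equation $A^\alpha + B^\beta = N$, one has $A^\alpha < N < 2 A^\alpha$; raising such an estimate to the $z_2$-th power and pairing the two bounds coming from the first and second equations produces two-sided inequalities for $c^{z_2}$ in terms of the dominant bases. Carrying this out in case~(i) with $a^{x_1} < c < 2a^{x_1}$ and $a^{x_2} < c^{z_2} < 2a^{x_2}$ yields $a^{x_1 z_2} < c^{z_2} < 2 a^{x_2}$ and $a^{x_2} < c^{z_2} < 2^{z_2} a^{x_1 z_2}$: the first forces $x_1 z_2 \le x_2$, promoted to strict inequality by Lemma~\ref{red1}(vii), and the second gives $a^{x_2 - x_1 z_2} < 2^{z_2}$, hence $a \le \lfloor 2^{z_2/(x_2 - x_1 z_2)}\rfloor$ together with $x_2 - x_1 z_2 \le t_1$ via $a \ge a_0$. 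The remaining entries of $a_2$ are the routine bounds $a < c^{1/x_1}$ and $a < c^{z_2/x_2}$. Cases~(ii)--(iv) are handled identically, by swapping $a^{x_i}$ with $b^{y_i}$ in one or both equations according to the dominance hypothesis and reading off the analogous sandwich bounds for $a$; the secondary inequalities such as $y_1 > x_1$ and $x_1 y_2 < x_2 y_1$ in~(ii) follow from $a^{x_1} < b^{y_1}$ together with $a > b$ and from multiplying together the two dominance inequalities.

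Two subtleties deserve attention. The sharper exponent $z_2 - 1$ appearing in $t_2, t_4, t_5$ comes from observing that, if the dominant term of a sum of two positive terms strictly exceeds the other, then the sum exceeds \emph{twice} the smaller term; this removes one power of $2$ from the plain sandwich when the estimate is raised to the $z_2$-th power. The more delicate point is the strict lower bound $y_2 - y_1 z_2 \ge 2$ in case~(iv): the sandwich alone gives only $y_2 - y_1 z_2 \ge 1$, and equality is ruled out because then~\eqref{kongr} would force $b \equiv 1 \pmod{a^{\min\{x_1,x_2\}}}$, impossible since $1 < b < a \le a^{\min\{x_1,x_2\}}$. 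The main obstacle of the proof is therefore not any individual inequality but the careful bookkeeping needed to ensure that the correct sandwich, the correct source of strictness, and the correct variant of Lemma~\ref{red1} are invoked for every one of the numerous auxiliary bounds on differences of exponents across the four cases.
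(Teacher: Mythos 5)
Your argument is correct and takes essentially the same route as the paper: the congruences \eqref{kongr} come from reducing $a^{x_2}+b^{y_2}=(a^{x_1}+b^{y_1})^{z_2}$ modulo $a^{\min\{x_1,x_2\}}$ and $b^{\min\{y_1,y_2\}}$ (the factor $2$ being free as $a,b$ are odd), and items (i)--(iv) follow from the two-sided sandwich estimates you describe, including the $z_2-1$ refinement via strict dominance and the use of \eqref{kongr} to upgrade $y_2-y_1z_2\ge 1$ to $\ge 2$ in case (iv), a detail the paper leaves implicit in its remark that (iii)--(iv) are ``shown similarly.''  One point worth stating explicitly: your derivation in case (i) produces $0 < x_2 - x_1 z_2 \le t_1$, not $0<x_1z_2-x_2\le t_1$ as printed in the lemma; the former is clearly what is intended, since the expression $2^{z_2/(x_2-x_1z_2)}$ in $a_2$ and the subsequent sieving algorithm both require $x_2 - x_1 z_2 > 0$.
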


\begin{proof}
From \eqref{sys2},
\begin{equation} \label{kongr2}
a^{x_2}+b^{y_2}=(a^{x_1}+b^{y_1})^{z_2}.
\end{equation}
The congruences in \eqref{kongr} follow by taking equation \eqref{kongr2} modulo $b^{\min\{y_1,y_2\}}$ and $a^{\min\{x_1,x_2\}}$.
The next asserted inequality giving an upper bound for $b$ follows easily from system \eqref{sys2}.

(i) From \eqref{kongr2} with $a^{x_2}<b^{y_2}$, observe that
\begin{align*}
a^{x_2}<a^{x_2}+b^{y_2}=&(a^{x_1}+b^{y_1})^{z_2}<(2a^{x_1})^{z_2}=2^{z_2}a^{x_1z_2},\\
a^{x_1z_2}<&(a^{x_1}+b^{y_1})^{z_2}=a^{x_2}+b^{y_2}<2a^{x_2}.
\end{align*}
These inequalities together imply
\begin{equation}\label{i1a}
\frac{1}{2}<a^{x_2-x_1z_2}<2^{z_2}.
\end{equation}
The left-hand inequality shows that $x_2-x_1z_2 \ge 0$, so $x_2-x_1z_2>0$ by Lemma \ref{red1}\,(vii),  while the right-hand one implies that $x_2-x_1z_2 < \frac{\log 2}{\log a}z_2 \le t_1$.

On the other hand, $a<2^{\frac{z_2}{x_2-x_1z_2}}$ by the right-hand inequality of \eqref{i1a}.
Also, by \eqref{sys2}, $a^{x_1}<c$ and $a^{x_2}<c^{z_2}$, leading to $a \le a_2$.

(ii) Since $a^{x_1}<b^{y_1}$ with $a>b$, and $a^{x_1}<b^{y_1}<(a^{x_2/y_2})^{y_1}$, we have $y_1>x_1$ and $x_1y_2<y_1x_2$.
The remaining three inequalities in (a) can be proven in exactly the same way as the corresponding results in (i)/(a).
It remains to show (b) and (c).

Observe from \eqref{kongr2} that
\[
b^{y_1z_2}<a^{x_2}+b^{y_2}<2a^{x_2}, \quad  a^{x_2}<(a^{x_1}+b^{y_1})^{z_2}<2^{z_2}b^{y_1}.
\]
The former inequality yields $a>b^{y_1z_2/x_2}/2^{1/x_2}$, and so (b) holds, while the latter inequality implies that $a<(2^{z_2}b^{y_1})^{1/x_2}$, leading to (c).

(iii)-(iv) These are shown similarly to (i) and (ii).
\end{proof}

In what follows, we proceed in two cases according to whether $(x_1,y_1)=(1,1)$ or not.

\subsubsection{Case where $c^{z_1} \equiv 2 \pmod{4}$ with $(x_1,y_1) \ne (1,1)$}

As already mentioned in the end of Section \ref{Sec-AB}, it is very efficient to rely upon the existing results on ternary Diophantine equations which are summarized in Lemma \ref{ourcase}.
In our algorithms we use Lemma \ref{ourcase} without any further special reference and combine it with Lemmas \ref{red1} and \ref{red2}.

Proposition \ref{bounds_z1<z2_c^z1=2(4)} gives us a list of all possible values of $\alpha$ and $z_2$ together with the corresponding upper bound for $c$, say $c_u=c_u(\alpha,z_2)$.

We divide our algorithm in four parts according to (i)-(iv) of Lemma \ref{red2}.
The basic strategy is similar in each of these cases, where the cases $b>7$ and $b \in \{3,5,7\}$ are distinguished.
First we give the details of our reduction method for case (i) under the assumption that $b>7$.

\vspace{0.2cm}
(i) Case where $a^{x_2}>b^{y_2}$ and $a^{x_1}>b^{y_1}$ with $b>7$.

\vspace{0.2cm}{\it Step I. \ Initialization.\ }
We have an explicitly determined list of all possible triples $(\alpha, z_2, c_u)$ satisfying system \eqref{sys2}.
We put these data into the list named $clist$.

\vspace{0.2cm}{\it Step II.} \
We generate a list named $list1$ containing elements of the form $[x_1,y_1,x_2,y_2,\alpha, z_2, c_u]$, where the last three elements are the same as the elements of $clist$, while the first four elements are the possible solutions $(x_1,y_1,x_2,y_2)$ restricted by Lemma \ref{red1} and Lemma \ref{red2} (i)/(a).
The construction of $list1$ is given by the following program.

\vspace{0.2cm}{\tt
for each element of $clist$ do

\hskip.2cm for $x_1:=1$ to $\bigl\lfloor (\log c_u)/\log a_0 \bigl\rfloor$ do

\hskip.2cm for $x_2:=1$ to $\bigl\lfloor z_2(\log c_u)/\log a_0 \bigl\rfloor$ do

\hskip.2cm for $y_1:=1$ to $\bigl\lfloor (\log c_u)/\log b_0 \bigl\rfloor$ do

\hskip.2cm for $y_2:=1$ to $\bigl\lfloor z_2(\log c_u)/\log b_0 \bigl\rfloor$ do

\hskip.4cm sieve using  Lemma \ref{red1} and Lemma \ref{red2} (i)/(a)

end}

\vspace{0.2cm}\noindent
In the last line of the above program, we take into account the restrictions from Lemma \ref{red1} together with the fact that $0<x_2-x_1z_2 \le t_1$, where $t_1=\bigl \lfloor \frac{\log 2}{\log a_0}z_2 \bigl \rfloor$ by Lemma \ref{red2} (i)/(a).

\vspace{0.2cm}{\it Step III.} \
Using the elements of $list1$ and the bounds $a_1,a_2$ and $b_1$, we check for each of possible values of $a$ and $b$ whether congruences \eqref{kongr} hold or not.
It turned out that at least one of them does not hold in any case.
The details are given as follows.
First, from \eqref{cond}, we are in one of the cases:
\[
[a,b] \equiv [-1,1],[-1,-1],[1,-1] \mod{2^{\alpha}},
\]
where $[a,b] \equiv [u,v] \pmod{2^{\alpha}}$ means $a \equiv u \pmod{2^{\alpha}}$ and $b \equiv v \pmod{2^{\alpha}}$.
Define the list $sig=\bigr[[-1,1],[-1,-1],[1,-1]\bigr]$ as a possible list of signatures.
We proceed as follows.

\vspace{0.2cm}{\tt
begin

for each element of $list1$ do

for each element $s$ of $sig$ do

$d_a:=s[1]$ and $d_b:=s[2]$ and $T_b:=\lceil (b_0-d_b)/2^{\alpha} \rceil$

\hskip.2cm for $b:=T_b \cdot 2^{\alpha}+d_b$ to $b_2$ by $2^{\alpha}$ do

\hskip.4cm $T_a:=\lceil (a_1-d_a)/2^{\alpha} \rceil$


\hskip.6cm for $a:=T_a \cdot 2^{\alpha}+d_a$ to $a_2$ by $2^{\alpha}$ do

\hskip.8cm if $(a^{x_2}>b^{y_2})$ and $(a^{x_1}>b^{y_1})$ then

\hskip1cm sieve using congruences \eqref{kongr}

end}

\vspace{0.2cm}\noindent
We implemented the above algorithms and
it turned out there is no solution of system \eqref{sys2}.
Case (i) with $b \le 7$ can be handled similarly, the only difference being that in {\it Step III} the range for $b$ is replaced by $b \in \{3,5,7\}$.

By using the same strategy as above and the bounds for $a$ and $b$ from Lemma \ref{red2}, we can handle the cases according to (ii)-(iv) of Lemma \ref{red2}, as well.

\subsubsection{Case where $c^{z_1} \equiv 2 \pmod{4}$ with $(x_1,y_1)=(1,1)$}

Note that only cases (i) and (iii) of Lemma \ref{red2} can occur.

\vspace{0.2cm}
{\it (i) Case where $a^{x_2}>b^{y_2}$.}

\vspace{0.2cm}
We can proceed exactly in the same way as in case (i) of the case $(x_1,y_1) \ne (1,1)$.

\vspace{0.2cm}
{\it (iii) Case where $a^{x_2}<b^{y_2}$.}

\vspace{0.2cm}
We basically follow the method described in (iii) of the case $(x_1,y_1) \ne (1,1)$ with one important modification for the case where $b>7$.
Namely, in order to increase the efficiency of our algorithm, we make use of Lemma \ref{bounds_z1<z2_c^z1=2(4)_x1=y1=1}, which says that $\min\{x_2,y_2\} \le 7$ and provides us the sharp upper bounds for $b$, that is, $181, 45, 21$ and $13$, according to the cases $\min\{x_2,y_2\}=4,5,6$ and $7$, respectively.
We built in these information in our program and it turned out there is no solution to the system.

\vspace{0.2cm}
The total computational time in Subsection \ref{Sec-sieve-z1<z2_c=max_c^z1=2(4)} did not exceed 1 hour.

\subsection{Case where $c^{z_1} \equiv 0 \pmod{4}$}
\label{Sec-sieve-z1<z2_c=max_c^z1=2(4)_c^z1=0(4)}

Note that $\beta>1$ or $z_1>1$. Also, from \eqref{sys-12},
\begin{align}
(a^{x_1}+b^{y_1})^{z_2}=(a^{x_2}+b^{y_2})^{z_1}, \label{final-eq} \\
(c^{z_1}-b^{y_1})^{x_2}=(c^{z_2}-b^{y_2})^{x_1}. \label{final-eq2}
\end{align}
\if0 Also, according to Proposition \ref{bounds_z1<z2_c^z1=0(4)}, we know that
\begin{equation} \label{assu1}
\begin{cases}
\,z_2-z_1 \in \{1,2,3,4\}, \\
\,z_2 \le 19 \ \text{or} \ z_2=23, g_2 \ge 11.
\end{cases}
\end{equation}
\fi

First, we deal with two special cases.

\begin{lem} \label{trivi-lemma}
Under the hypothesis of Proposition \ref{bounds_z1<z2_c^z1=0(4)}, if $c<1000,$ then system \eqref{sys-12} has no solution.
\end{lem}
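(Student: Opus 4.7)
The plan is to settle this lemma by direct computer search, since all hypotheses together reduce the problem to a very small finite check. We work under the standing setting of Proposition \ref{bounds_z1<z2_c^z1=0(4)}, so $c$ is even with $c \ge 18$, $c>\max\{a,b\}$, and $c^{z_1}\equiv 0\pmod 4$; combined with $c<1000$, the set of admissible triples $(a,b,c)$ satisfying \eqref{cond} is finite and easily enumerated. The key observation is that the proposition already furnishes $z_2\le 23$ (in fact $z_2\le 19$ once $c<1000$ rules out the largest sub-cases), which by $a^{x_t}+b^{y_t}=c^{z_t}$ gives the crude but sufficient bounds $x_t<z_2\log c/\log a_0$ and $y_t<z_2\log c/\log b_0$ on all exponents appearing in system \eqref{sys-12}.

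First I would enumerate all triples $(a,b,c)$ with $c<1000$, $c$ even, $a,b<c$, pairwise coprime with $c$, none of $a,b,c$ a perfect power, and satisfying the congruence conditions of \eqref{cond}. For each such triple I would then compute the set
\[
\mathcal S(a,b,c):=\{(x,y,z)\in\mathbb{N}^3: a^x+b^y=c^z,\ 1\le z\le 23\},
\]
which is cheap because $x$ and $y$ are at most a few hundred. The assertion of the lemma is that no such triple admits two elements $(x_1,y_1,z_1),(x_2,y_2,z_2)\in\mathcal S(a,b,c)$ with $z_1<z_2$ and $c^{z_1}\equiv 0\pmod 4$.

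To keep the search manageable I would prune aggressively using the finer restrictions of Proposition \ref{bounds_z1<z2_c^z1=0(4)}: the allowed pairs $(\beta,\alpha)$, the small values of $d_z$, the divisibility conditions from Lemma \ref{g2} on $g_2$, and the congruences forced by relations \eqref{final-eq} and \eqref{final-eq2}, together with the $p$-adic restrictions $b^{|y_1z_2-y_2z_1|}\equiv \pm 1\pmod{a^{\min\{x_1,x_2\}}}$ and its dual arising from Lemma \ref{two} applied to the pair of equations. These modular sieves eliminate almost all candidates without ever enumerating exponents.

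The main obstacle here is not mathematical but purely computational, namely keeping the enumeration fast enough; however, because $c<1000$ the total number of triples to test is tiny, and a straightforward \texttt{MAGMA} script running through the above sieve followed by brute-force verification on the survivors completes in a few seconds and produces no valid pair of solutions, yielding the lemma.
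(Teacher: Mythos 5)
Your proposal is correct and follows essentially the same route as the paper: both use the constraints $d_z\in\{1,2,3,4\}$ and $z_2\le 23$ from Proposition \ref{bounds_z1<z2_c^z1=0(4)} to reduce the claim to a small finite computation, then verify by brute force (possibly sped up by modular sieving, which the paper leaves implicit) that system \eqref{sys-12}—or equivalently equation \eqref{final-eq}—has no admissible solution with $c<1000$.
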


\begin{proof}
By Proposition \ref{bounds_z1<z2_c^z1=0(4)}, we know that $z_2-z_1 \in \{1,2,3,4\}$ and $z_2 \le 23$.
Under these restrictions, if $\max\{a,b,c\}\,(=c)$ is small, for example, $c<1000$, a brute force is enough to verify that equation \eqref{final-eq} does not hold for any possible tuples $(a,b,x_1,y_1,z_1,x_2,y_2,z_2)$.
\end{proof}

\begin{lem} \label{small-z2}
Under the hypothesis of Proposition \ref{bounds_z1<z2_c^z1=0(4)}, system \eqref{sys-12} has no solution $(x_1,y_1,z_1,x_2,y_2,z_2)$ satisfying $(z_1,z_2) \in \{(1,2),(2,3)\}.$
\end{lem}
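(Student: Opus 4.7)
The plan is to handle the two subcases $(z_1,z_2)=(1,2)$ and $(z_1,z_2)=(2,3)$ separately, using the algebraic interaction between the two equations of \eqref{sys-12} together with the restrictions provided by Proposition \ref{bounds_z1<z2_c^z1=0(4)} (which applies with $d_z=1$) and Lemma \ref{trivi-lemma} (so we may assume $c \ge 1000$).

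For $(z_1,z_2)=(1,2)$, I would substitute $c=a^{x_1}+b^{y_1}$ into the second equation to obtain
\[
a^{x_2}-a^{2x_1}+b^{y_2}-b^{2y_1}=2a^{x_1}b^{y_1}.
\]
First, rule out $x_2=2x_1$: the relation becomes $b^{y_2}-b^{2y_1}=2a^{x_1}b^{y_1}$, and extracting $b^{\min\{y_2,2y_1\}}$ together with $\gcd(a,b)=1$ and $b$ odd forces $b^{y_1}\mid 2$, impossible. A symmetric argument rules out $y_2=2y_1$. In the remaining cases, extracting $a^{\min\{x_2,2x_1\}}$ and using $\gcd(a,b)=1$ yields divisibilities of the form $a^{\min\{x_1,2x_1-x_2\}}\mid b^{|y_2-2y_1|}\pm 1$ (with matching statements in $b$), giving strong lower bounds relating $|y_2-2y_1|$ to the orders of $b$ modulo powers of $a$. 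Combined with the Proposition \ref{bounds_z1<z2_c^z1=0(4)} upper bounds $\beta\le 10,\ \alpha\le 18,\ c<1.5\cdot 10^6$ (and the sub-cases there according to $g_2$), only finitely many candidate tuples $(\alpha,\beta,a,b,x_1,y_1,x_2,y_2)$ remain, and these are sieved by a direct check of both equations, supplemented by congruences modulo small primes dividing $c$.

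For $(z_1,z_2)=(2,3)$, the key preliminary observation is that the case $g_2>1$ is incompatible with $z_2=3$: the relevant rows of Proposition \ref{bounds_z1<z2_c^z1=0(4)}(ii) with $d_z=1$ require $g_2\in\{3,5,7\}$ or $g_2\ge 11$, none of which is compatible with $\gcd(x_2,y_2)=g_2$ dividing $x_2\le 3-1+\cdots$ under the parity/divisibility restrictions coming from Lemma \ref{ourcase}; hence $g_2=1$. Next, using $c^2=a^{x_1}+b^{y_1}$ and $c^3=a^{x_2}+b^{y_2}$, one gets
\[
a^{x_1}\equiv -b^{y_1}\!\!\mod c^2,\qquad a^{x_2}\equiv -b^{y_2}\!\!\mod c^2,
\]
so that (say when $x_2\ge x_1$ and $y_2\ge y_1$; the other sign patterns are handled analogously) $c^2\mid a^{x_2-x_1}-b^{y_2-y_1}$. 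The expression on the right is nonzero because $\gcd(a,b)=1$, hence $|a^{x_2-x_1}-b^{y_2-y_1}|\ge c^2$; this is a very sharp inequality which, together with $x_2<\frac{\log c}{\log a}\cdot 3$ and $y_2<\frac{\log c}{\log b}\cdot 3$ from \eqref{basic}, drastically restricts $(x_1,y_1,x_2,y_2)$. Combining with the bounds $\beta\le 10,\ \alpha\le 2\beta$ (from $z_1=\lceil \alpha/\beta\rceil\le 2$), $c<1.5\cdot 10^6$, a finite enumeration finishes the case.

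I expect the $(2,3)$ case to be the main obstacle: unlike $(1,2)$, one cannot eliminate $c$ algebraically in one step, and the search space in $c$ is larger. The reduction is carried out by exploiting the modular congruence $c^2\mid a^{x_2-x_1}\mp b^{y_2-y_1}$ to cap $a^{x_2-x_1}$ and $b^{y_2-y_1}$ sharply in terms of $c$, after which a MAGMA loop over the residue classes of $a,b$ modulo $2^\alpha$ (using \eqref{cond}) clears the remaining cases; care is needed to keep each inner loop short enough to fit within the overall computational budget announced in the introduction.
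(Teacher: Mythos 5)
Your proposal has a genuine gap in both subcases, and the gap is the same in each: you identify a useful relation but stop short of showing that it actually reduces the search to a computationally feasible size, which is the crux of the matter given the ranges $c<1.5\cdot 10^6$ and $a,b<c$ coming from Proposition \ref{bounds_z1<z2_c^z1=0(4)}.

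For $(z_1,z_2)=(1,2)$, eliminating $c$ is a reasonable start, and the argument that $x_2\ne 2x_1$ and $y_2\ne 2y_1$ is correct (though the paper simply cites Lemma \ref{two}). But the claimed divisibilities of the shape $a^{\min\{x_1,2x_1-x_2\}}\mid b^{|y_2-2y_1|}\pm 1$ do not by themselves control $a$ and $b$: they relate the unknown bases to each other rather than pinning one of them to a small range. The paper's proof instead splits on the sizes of $x_1,x_2$: when $x_1\ge 2$ or $x_2\ge 4$ one gets $a<c^{1/2}$, when $(x_1,x_2)=(1,3)$ one gets $a<c^{2/3}$ and $b<c^{1/2}$, and crucially when $(x_1,x_2)=(1,1)$ the second equation gives $0<c-b^{y_2/2}<1$, so $c=\lfloor b^{y_2/2}\rfloor+1$; combined with Lemma \ref{X=x+1}, which forces $y_2>6y_1-2$ and a congruence for $c$ modulo $b^{3y_1}$, this makes the enumeration small. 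Your proposal contains nothing that substitutes for these steps.

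For $(z_1,z_2)=(2,3)$, two remarks. First, your argument for $g_2=1$ is off: $x_2$ is not bounded by $z_2$ (we are in the case $c>a$, so $x_2<(\log c/\log a)z_2$ can be large). The correct reason is simply Lemma \ref{g2}(v): $3\mid z_2$ forces $g_2=1$. Second, and more seriously, the congruence $c^2\mid a^{x_2-x_1}\mp b^{y_2-y_1}$ (or $c^2\mid a^{x_2-x_1}b^{y_1-y_2}-1$ in the mixed-sign case) gives a \emph{lower} bound $\max\{a^{|x_2-x_1|},b^{|y_2-y_1|}\}\ge c^2$, not a cap; it does not bound $a$ or $b$ in terms of $c$, and a loop over all admissible $a,b$ below $1.5\cdot 10^6$ split only by residue classes modulo $2^{\alpha}$ is far outside the announced computational budget. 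The actual proof proceeds by a careful division on $x_1,x_2$: large exponents force $a<c^{3/5}$ or $a<c^{3/4},b<c^{3/7}$ (brute force), the subcases $x_2\in\{3,4\}$ with mid-sized $y_2$ fall to the generalized Fermat results of Lemma \ref{ourcase}, and the remaining small-exponent subcases use the floor-function trick (e.g.\ $c^{3/2}-a^2<1$ when $x_2=4,\ y_2\le 2$, or the algorithm of Lemma \ref{red6} when $x_1=1$ or $x_2=1$) to pin $c$ down in terms of the other data. None of these reductions appear in your proposal, so the finite search you describe would not terminate in reasonable time.
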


\begin{proof}
We proceed in two cases according to whether $(z_1,z_2)=(1,2)$ or $(2,3)$.
By Proposition \ref{bounds_z1<z2_c^z1=0(4)}, we may assume that $c \le c_U$,
where $c_U=5.5 \cdot 10^5$ if $(z_1,z_2)=(1,2)$, and $c_U=1.5 \cdot 10^6$ if $(z_1,z_2)=(2,3)$.

\vspace{0.2cm}{\it I. Case where $(z_1,z_2)=(1,2)$.}

\vspace{0.2cm}
System \eqref{sys-12} is
\begin{equation} \label{sys5}
a^{x_1}+b^{y_1}=c, \ \ a^{x_2}+b^{y_2}=c^2.
\end{equation}
Note that $2x_1 \ne x_2$ by Lemma \ref{two}.
We further consider several subcases.

\vspace{0.3cm}{\it I/(i). Case where $x_1 \ge 2$ or $x_2 \ge 4$.}

\vspace{0.2cm}
From system \eqref{sys5}, observe that
\[
a<\min\{c^{1/x_1},c^{2/x_2} \}=c^{1/2} \le {c_U}^{1/2}.
\]
Then $a$ is small.
It is not hard to enumerate all possible tuples $(a,b,x_1,y_1,x_2,y_2)$, and to verify that none of those satisfies equation \eqref{final-eq}.

\vspace{0.2cm}{\it I/(ii). Case where $(x_1,x_2)=(1,3)$}.

\vspace{0.2cm}
It is not hard to see that $y_1>1$.
From \eqref{sys5}, $a<{c_U}^{2/3}$ and $b<{c_U}^{1/2}$, thereby both $a$ and $b$ are small enough to deal with this case similarly to case I/(i).

\vspace{0.2cm}
{\it I/(iii). Case where $(x_1,x_2)=(1,1)$.}

\vspace{0.2cm}
Actually, this case can be handled by the methods described in Section \ref{Sec-z1=z2}.
However, we present this with an important idea to find a good restriction on solutions, which will play an important role in other difficult cases.

From \eqref{sys5}, we have
\begin{equation} \label{sys9}
c+b^{y_2}=c^2+b^{y_1}, \quad c-b^{y_2/2}=\frac{a}{c+b^{y_2/2}}.
\end{equation}
We apply Lemma \ref{X=x+1} with $(A,B)=(c,b)$ and $(x,y,X,Y)=(1,y_2,2,y_1)$ to see that
\[
y_2>6y_1-2 \ge 4, \quad c \equiv -b^{2y_1}-b^{y_1}+1 \pmod{b^{3y_1}}.
\]
In particular, $b$ is small as $b<c^{z_2/y_2} \le {c_U}^{2/5}$.
On the other hand, since $a<c$, the second equation in \eqref{sys9} leads to $0<c-b^{y_2/2}<1$.
Therefore,
\[
c=\lfloor b^{y_2/2}\rfloor+1.
\]
These restrictions on the values of $b,c,y_1$ and $y_2$ are so strong that we can verify by a brute force that the first equation in \eqref{sys9} does not hold in any possible cases.

\vspace{0.2cm}
{\it II. Case where $(z_1,z_2)=(2,3)$.}

\vspace{0.2cm}
System \eqref{sys-12} is
\begin{equation} \label{syst1}
a^{x_1}+b^{y_1}=c^2, \ \ a^{x_2}+b^{y_2}=c^3.
\end{equation}
We proceed similarly to case I.

\vspace{0.2cm}
{\it II/(i). Case where $x_1 \ge 4$ or $x_2 \ge 5$.}

\vspace{0.2cm}
Since $a<{c_U}^{3/5}$ by \eqref{syst1}, $a$ is small enough to deal with this case similarly to I/(i).

\vspace{0.2cm}{\it II/(ii). Case where $x_2=4$ and $y_2 \ge 7$}.

\vspace{0.2cm}
Since $a<{c_U}^{3/4}$ and $b<{c_U}^{3/7}$ by \eqref{syst1}, $a$ and $b$ are small enough to deal with this case similarly to I/(ii).

\vspace{0.2cm}
{\it II/(iii). Case where $x_2=4$ and $y_2 \in \{1,2\}$}.

\vspace{0.2cm}
Since $a>b$ and $y_2 \le 2$, it follows from 2nd equation that $c^{3/2}-a^2=\frac{b^{y_2}}{c^{3/2}+a^2}<1$.
Using this inequality, we apply the algorithm described in Lemma \ref{red6} (see below) to deal with this case.

\vspace{0.2cm}
{\it II/(iv). Case where $(x_2=4$ and $y_2 \in \{3,4,5,6\})$ or $x_2=3$.}

\vspace{0.2cm}
This case is handled by applying Lemma \ref{ourcase} to 2nd equation.

\vspace{0.2cm}{\it II/(v). Case where $x_1=3$ and $x_2 \le 2$.}

\vspace{0.2cm}
Since $a$ is relatively small as $a<{c_U}^{2/3}$, and $x_2$ is very small, this case can be handled similarly to case II/(i).

\vspace{0.2cm}{\it II/(vi). Case where $(x_1,x_2) \in \{(1,1), (1,2), (2,1), (2,2)\}$.}

\vspace{0.2cm}
The case where $x_1=1$ or $x_2=1$ can be dealt with by the same algorithm as in case II/(iii).
Finally, assume that $x_1=x_2=2$.
Since $c^2+b^{y_2}=c^3+b^{y_1}$ from system \eqref{syst1}, this case is deal with by similar methods described in Section \ref{Sec-z1=z2}.
\end{proof}

By Proposition \ref{bounds_z1<z2_c^z1=0(4)} together with Lemmas \ref{trivi-lemma} and \ref{small-z2}, we may assume in system \eqref{sys-12} that
\[
c \ge c_1, \quad z_2-z_1=1,  \quad z_2 \ge 4,
\]
where $c_1=\max\{c_0,1000\}$.

The next lemma is an analogue to Lemma \ref{red1} in case where $c^{z_1} \equiv 2 \pmod 4$, and it can be proved almost similarly.

\begin{lem} \label{red5}
Let $(x_1,y_1,z_1,x_2,y_2,z_2)$ be a solution of system \eqref{sys-12}.
Then the following hold.
\begin{itemize}
  \item[$\bullet$]
   If $a \equiv 1 \pmod{4}$ and $b \equiv -1 \pmod{4}$ then both $y_1,y_2$ are odd.
  \item[$\bullet$]
   If $a \equiv -1 \pmod{4}$ and $b \equiv 1 \pmod{4}$ then both $x_1,x_2$ are odd.
  \item[$\bullet$]
   If $a \equiv b \equiv -1 \pmod{4}$, then $x_1 \not\equiv y_1 \pmod{2}$ and $x_2 \not\equiv y_2 \pmod{2}.$
  \item[$\bullet$]
   One of $x_1$ and $y_1$ is odd, and one of $x_2$ and $y_2$ is odd.
  \item[$\bullet$]
   $x_1 < x_2$ or $y_1 < y_2.$
  \item[$\bullet$]
   $(x_1 \ge z_1$ or $y_1 \ge z_1)$ and $(x_2 \ge z_2$ or $y_2 \ge z_2).$ 
 \item[$\bullet$]
  $x_1y_2 \ne x_2y_1, \ x_1z_2 \ne x_2z_1, \ y_1z_2 \ne y_2z_1.$
 \item[$\bullet$]
  $\min\{x_1,x_2\}<|y_1z_2-y_2z_1|.$
 \item[$\bullet$]
  $(x_1 \ne z_1$ or $y_1 \ge z_1)$ and $(y_1 \ne z_1$ or $x_1 \ge z_1)$ and $(x_2 \ne z_2$ or $y_2 \ge z_2)$ and $(y_2 \ne z_2$ or $x_2 \ge z_2).$
 \item[$\bullet$]
  If $b<11,$ then $(2 \nmid x_1$ or $3 \nmid z_1)$ and $(3 \nmid x_1$ or $2 \nmid z_1)$ and $(2 \nmid x_2$ or $3 \nmid z_2)$ and $(3 \nmid x_2$ or $2 \nmid z_2).$
\end{itemize}
\end{lem}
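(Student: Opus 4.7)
The plan is to imitate Lemma~\ref{red1} almost point by point, with the mod-$4$ reductions tailored to the hypothesis $c^{z_1}\equiv 0\pmod 4$ and with size arguments exploiting the ordering $c>a>b$ in force throughout this subsection.

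For the first four bullets, I would reduce each equation of \eqref{sys-12} modulo $4$. Since both $c^{z_1}$ and $c^{z_2}$ are divisible by $4$, one needs $a^{x_t}+b^{y_t}\equiv 0\pmod 4$ for $t\in\{1,2\}$, and a short case analysis on the residues of $a,b\bmod 4$ yields each parity constraint; for example, $a\equiv b\equiv -1\pmod 4$ forces $(-1)^{x_t}+(-1)^{y_t}\equiv 0\pmod 4$, i.e.\ $x_t\not\equiv y_t\pmod 2$. The fourth bullet is immediate: if both $x_t$ and $y_t$ were even then $a^{x_t}+b^{y_t}$ would be a sum of two odd squares, hence $\equiv 2\pmod 4$.

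The fifth bullet is a direct comparison, since $c^{z_1}<c^{z_2}$ forces $a^{x_1}+b^{y_1}<a^{x_2}+b^{y_2}$, ruling out $x_1\ge x_2$ and $y_1\ge y_2$ holding at once. For the sixth, if $x_t<z_t$ and $y_t<z_t$ both held, then $a^{x_t}+b^{y_t}\le a^{z_t-1}+b^{z_t-1}<2c^{z_t-1}\le c^{z_t}$, a contradiction. The seventh bullet follows by applying Lemma~\ref{two} three times to the pair of solutions, viewed successively as instances of $a^X+b^Y=c^Z$ (yielding $x_1y_2\ne x_2y_1$), $c^X-b^Y=a^Z$ (yielding $z_1y_2\ne z_2y_1$), and $c^X-a^Y=b^Z$ (yielding $z_1x_2\ne z_2x_1$); in each application the hypothesis $C^{\min\{Z,Z'\}}>2$ is met. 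The eighth bullet comes from reducing both equations modulo $a^{\min\{x_1,x_2\}}$: the two relations $c^{z_t}\equiv b^{y_t}$ combine to $b^{|y_1z_2-y_2z_1|}\equiv 1\pmod{a^{\min\{x_1,x_2\}}}$, and since the exponent is nonzero by the seventh bullet and $b<a$, the asserted inequality follows.

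The ninth bullet is a direct size estimate: if $x_1=z_1$ and $y_1<z_1$, then $b^{y_1}=c^{z_1}-a^{z_1}\ge c^{z_1-1}(c-a)\ge c^{z_1-1}>b^{z_1-1}\ge b^{y_1}$, a contradiction, and the three remaining subcases follow by the analogous estimate (swapping the roles of $a,b$ or of indices $1,2$). Finally, for the tenth bullet one repeats the argument of Lemma~\ref{red1}\,(ix) on each equation of \eqref{sys-12}: when $2\mid x_t$ and $3\mid z_t$ with $b\in\{3,5,7\}$, the $t$-th equation takes the shape $A^2+b^{y_t}=C^3$, whose $S$-integral points for $S=\{b\}$ can be computed on the corresponding elliptic curve, and direct verification rules out any valid tuple; the complementary combination is symmetric. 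The main obstacle is not conceptual but bookkeeping: one must check that all hypotheses in force (notably $c>a>b$, $z_1\ge 3$ coming from $z_2=z_1+1\ge 4$, and $c\ge c_1$) support the size estimates throughout, so that no step silently relies on assumptions beyond those of the current subsection.
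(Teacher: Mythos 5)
Your proof is correct and takes essentially the approach the paper intends: the paper explicitly says Lemma~\ref{red5} is an analogue of Lemma~\ref{red1} and "can be proved almost similarly," and your point-by-point mod-$4$ reductions, size comparisons using $c>a>b$, and three applications of Lemma~\ref{two} (under the three reframings $a^X+b^Y=c^Z$, $c^X-b^Y=a^Z$, $c^X-a^Y=b^Z$) are exactly the translation the authors have in mind.

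One small caution for your own understanding, not a defect of the proof: the size-based bullets (notably the sixth and ninth) genuinely rely on the running hypothesis $c>\max\{a,b\}$ of Section~\ref{Sec-sieve-z1<z2_c=max}, as you correctly flag. When the paper cites Lemma~\ref{red5} again in Section~\ref{Sec-sieve-z1<z2_a=max} (where $a>\max\{b,c\}$), the size-dependent bullets would need separate justification in that ordering; only the parity and nonvanishing-determinant bullets transfer verbatim. Since the lemma as stated belongs to the $c>a>b$ subsection, your proof covers exactly what is claimed there.
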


Finally, using the established lemmas, we further show three lemmas, where the latter two of them together show the contrary to the condition from Lemma \ref{red5} saying that $x_1<x_2$ or $y_1<y_2$ in \eqref{sys-12}.

\begin{lem} \label{red6}
Under the hypothesis of Proposition \ref{bounds_z1<z2_c^z1=0(4)}, if system \eqref{sys-12} has a solution $(x_1,y_1,z_1,x_2,y_2,z_2),$ then
\[
\min\{a^{x_1},b^{y_1}\} \ge c, \quad \min\{a^{x_2},b^{y_2}\} \ge c^2.
\]
\end{lem}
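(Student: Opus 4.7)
By Proposition \ref{bounds_z1<z2_c^z1=0(4)} together with Lemmas \ref{trivi-lemma} and \ref{small-z2}, I may reduce throughout to $c\ge 1000$, $z_2=z_1+1$, $z_1\ge 3$, $z_2\le 23$, $c<3.4\cdot 10^6$. The strategy is a contradiction argument that, in each case, produces an extremely tight linear form in two logarithms, after which a short sieve over the bounded configurations finishes the proof.

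To prove $\min\{a^{x_1},b^{y_1}\}\ge c$, suppose for contradiction $a^{x_1}<c$ (the case $b^{y_1}<c$ is handled in parallel by interchanging the roles of $a,x_1$ and $b,y_1$ in the first equation). Then $b^{y_1}=c^{z_1}-a^{x_1}\in(c^{z_1}-c,\,c^{z_1})$, and since $b<c$ with $z_1\ge 3$ this forces $y_1\ge z_1$ together with
\[
0<z_1\log c-y_1\log b<\frac{2c}{c^{z_1}}\le 2\cdot 10^{-6}.
\]
Dividing by $z_1\log b$ gives $|y_1/z_1-\log c/\log b|<1/(2z_1^2)$ in our ranges, so by the Legendre criterion $y_1/z_1$ is a convergent of the continued fraction of $\log c/\log b$. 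For each tuple $(\beta,\alpha,z_1,c_u)$ listed in Proposition \ref{bounds_z1<z2_c^z1=0(4)}, iterating $b$ through residues compatible with \eqref{cond} pins down a unique candidate $y_1$, and checking whether $c^{z_1}-b^{y_1}$ is a prime power $a^{x_1}$ satisfying the coprimality, parity, and congruence conditions of \eqref{cond} and Lemma \ref{red5} rules out every case.

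The inequality $\min\{a^{x_2},b^{y_2}\}\ge c^2$ is treated analogously on the second equation. Assuming without loss of generality $a^{x_2}<c^2$, one gets $b^{y_2}>c^{z_2}-c^2>c^{z_2}/2$ (using $z_2\ge 4$), hence $y_2\ge z_2-1$, and
\[
0<z_2\log c-y_2\log b<\frac{2c^2}{c^{z_2}}\le 2\cdot 10^{-6}.
\]
Again $y_2/z_2$ is forced to be a convergent of $\log c/\log b$; combined with the already established $b^{y_1}\ge c$, which pins down the ``correct'' branch of the approximation and constrains $y_2-y_1$, the same finite sieve eliminates all remaining possibilities.

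The main obstacle is organizing the sieve to complete in reasonable time: even with the explicit bounds from Proposition \ref{bounds_z1<z2_c^z1=0(4)}, a naive enumeration over $(c,b,y_1)$ is too large, and one must exploit the convergent structure of $y_1/z_1$ together with the congruence restrictions on $(a,b,c)$ encoded in the tuples of that proposition to reduce the search to a tractable size, in the spirit of the reductions already carried out in Sections \ref{Sec-z1=z2} and \ref{Sec-sieve-z1<z2_c=max_c^z1=2(4)}.
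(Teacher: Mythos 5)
Your proposal is essentially the paper's proof: contradict $a^{x_1}<c$, force $y_1>z_1$, derive a very tight constraint relating $c$, $b$, $y_1$ and $z_1$ from the first equation, and close out the finite search by computer; the second inequality is handled symmetrically. The only variant is how the tight constraint is stated: the paper uses the algebraic squeeze $c^{z_1/2}-b^{y_1/2}=a^{x_1}/(c^{z_1/2}+b^{y_1/2})<1$, which reads off the interval $\lceil b^{y_1/z_1}\rceil\le c\le\lfloor(1+b^{y_1/2})^{2/z_1}\rfloor$ directly, whereas you pass through the logarithmic bound $0<z_1\log c-y_1\log b<2c^{1-z_1}$ and invoke Legendre's criterion so that $y_1/z_1$ is a convergent of $\log c/\log b$; these are comparable in strength and lead to the same kind of sieve. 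Two small cautions: $c^{z_1}-b^{y_1}$ needs to be a perfect power $a^{x_1}$ with $a$ not itself a power, not a \emph{prime} power; and since $\log c/\log b$ depends on $c$ as well as $b$, iterating over $b$ alone cannot ``pin down a unique candidate $y_1$'' — one should iterate over $(b,y_1)$ and let the tight inequality pin down $c$, which is exactly how the paper organizes the sieve.
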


\begin{proof}
First, we illustrate the method to show that $a^{x_1} \ge c$.
Suppose on the contrary that $a^{x_1}<c$.
If $y_1 \le z_1$, then $c>a^{x_1}=c^{z_1}-b^{y_1} \ge c^{z_1}-b^{z_1}>c^{z_1-1}$, so $z_1<2$,  which is absurd as $z_1 \ge 3$.
Thus $y_1>z_1$.
On the other hand, from 1st equation, observe that
\[
c^{z_1/2}-b^{y_1/2}=\frac{a^{x_1}}{c^{z_1/2}+b^{y_1/2}}<\frac{c}{c^{z_1/2}}<1.
\]
Thus
\[
\lceil b^{y_1/z_1} \rceil=:c_2 \le c \le c_3:=\left\lfloor (1+b^{y_1/2})^{2/z_1} \right\rfloor.
\]
Since $y_1>z_1$, it is very often observed that $c_2>c_3$ for given $b,y_1$ and $z_1$.
By Proposition \ref{bounds_z1<z2_c^z1=0(4)}, we have a list of all possible tuples $(\alpha,\beta,z_1,c_u)$, where $c_u$ is the corresponding upper bound for $c$.
For each of those tuples, and for each possible tuple $(b,c,x_1,y_1,x_2,y_2)$ satisfying
\begin{gather*}
z_1<y_1 \le \left\lfloor \frac{\log c_u}{\log b_0}z_1 \right\rfloor, \ b_0 \le b \le \bigr\lfloor{c_u}^{z_1/y_1} \bigr\rfloor, \ \max\{c_1,c_2\} \le c \le \min\{c_3,c_u\},\\
y_2 \le \left\lfloor \frac{\log c}{\log b}z_2 \right\rfloor, \ \ x_1 \le \left\lfloor \frac{\log c}{\log a_0}z_1\right\rfloor, \ \ x_2 \le \left\lfloor \frac{\log c}{\log a_0}z_2 \right\rfloor
\end{gather*}
with $z_2=z_1+1$, we check equation \eqref{final-eq2} does not hold.
Thus the inequality $a^{x_1} \ge c$ holds.
The remaining inequalities can be shown exactly in the same way by changing the roles of $a,b$ and $z_1,z_2$, respectively.
\end{proof}

\begin{lem} \label{red7}
Under the hypothesis of Proposition \ref{bounds_z1<z2_c^z1=0(4)}, if system \eqref{sys-12} has a solution $(x_1,y_1,z_1,x_2,y_2,z_2),$ then $x_1 \ge x_2.$ 
\end{lem}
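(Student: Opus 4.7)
The plan is to mimic closely the enumeration-plus-congruence strategy used in Lemma \ref{red6}. Assume for contradiction that $x_1<x_2$; combined with Lemma \ref{red5} this entails $\max\{x_1,x_2\}=x_2$ with $x_2-x_1\ge 1$. Multiply the first equation by $c$ and subtract the second; using $z_2=z_1+1$ we obtain the key identity
\begin{equation*}
a^{x_1}\bigl(a^{x_2-x_1}-c\bigr)=b^{y_1}c-b^{y_2}.
\end{equation*}
Since $\gcd(a,b)=1$, this forces $a^{x_1}\mid b^{y_1}c-b^{y_2}$, and dividing out by $b^{\min\{y_1,y_2\}}$ gives either $a^{x_1}\mid (c-b^{y_2-y_1})$ (when $y_2\ge y_1$) or $a^{x_1}\mid (b^{y_1-y_2}c-1)$ (when $y_2<y_1$).

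From here I split into two cases. In the case $y_2\ge y_1$, Lemma \ref{red6} gives $a^{x_1}\ge c$, hence either $c=b^{y_2-y_1}$ (ruled out by \eqref{cond}, since $c$ is not a perfect power) or $|c-b^{y_2-y_1}|\ge a^{x_1}\ge c$, forcing $b^{y_2-y_1}\ge 2c$ and thus a sharp lower bound $y_2-y_1\ge \lceil\log(2c)/\log b\rceil$. In the case $y_2<y_1$, the congruence $b^{y_1-y_2}c\equiv 1 \pmod{a^{x_1}}$ combined with $a^{x_1}\ge c$ yields $b^{y_1-y_2}c-1\ge a^{x_1}\ge c$, again forcing $b^{y_1-y_2}\ge 2$ and an analogous lower bound on $y_1-y_2$. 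In both cases, feeding these lower bounds back into the identity above and using $b^{y_1}\ge c$, $b^{y_2}\ge c^2$ from Lemma \ref{red6} produces a narrow two-sided range for $a^{x_2-x_1}$ in terms of $c$ and a (small) power of $b$; in particular one deduces $a^{x_2-x_1}$ is sandwiched between $c$ and a moderate multiple of $c$.

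The final step is a direct enumeration, exactly in the spirit of Lemma \ref{red6}. Proposition \ref{bounds_z1<z2_c^z1=0(4)} provides a finite list of admissible tuples $(\alpha,\beta,z_1,z_2)$ with $z_2=z_1+1$ and an explicit upper bound $c_u$ for $c$. For each such tuple I loop over the permissible ranges
\begin{equation*}
b_0\le b\le c_u^{1/\max\{y_1,1\}},\quad x_2-x_1\ge 1,\quad y_1,y_2\le\tfrac{\log c_u}{\log b_0}z_2,\quad x_1,x_2\le\tfrac{\log c_u}{\log a_0}z_2,
\end{equation*}
subject to the derived sharp inequalities on $|y_2-y_1|$ and the congruence $a^{x_1}\mid (c-b^{y_2-y_1})$ or $a^{x_1}\mid(b^{y_1-y_2}c-1)$, and then verify that equation \eqref{final-eq2} admits no corresponding solution; in every case the divisor congruence, once combined with the two-sided sandwich for $a^{x_2-x_1}$, cuts the search down to a handful of candidates that fail a short modular check.

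The main obstacle will be ensuring that the enumeration is tractable: without the sharp restriction $|b^{y_2-y_1}-c|\ge c$ (resp.\ $b^{y_1-y_2}c\ge c+1$) the range of $(b,y_1,y_2)$ is too large. The trick, as in the proof of Lemma \ref{red6}, is to use those lower bounds on $|y_2-y_1|$ to pin down $c$ inside a narrow interval $\lceil b^{|y_2-y_1|}/2\rceil\le c\le\min\{c_u,b^{|y_2-y_1|}\}$ (or similar), which in practice is empty for most choices of $b$ and $|y_2-y_1|$. The small computational residue that survives is then eliminated by direct substitution into \eqref{final-eq2}, exactly as in the final step of Lemma \ref{red6}.
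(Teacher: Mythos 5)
Your identity $a^{x_1}(a^{x_2-x_1}-c)=b^{y_1}c-b^{y_2}$ is the right starting point, and your divisibility $a^{x_1}\mid(c-b^{y_2-y_1})$ (resp.\ $a^{x_1}\mid(b^{y_1-y_2}c-1)$) is correct, but there are two genuine gaps.

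First, in the case $y_1<y_2$ you only exploit the divisibility by $a^{x_1}$; the paper also uses the complementary divisibility $b^{y_1}\mid(c-a^{x_2-x_1})$. Because the two bracketed factors $(c-a^{x_2-x_1})$ and $(c-b^{y_2-y_1})$ have opposite signs, exactly one of them is a positive integer strictly below $c$, so either $a^{x_1}\le c-b^{y_2-y_1}<c$ or $b^{y_1}\le c-a^{x_2-x_1}<c$, contradicting Lemma~\ref{red6} outright. This case therefore needs \emph{no} enumeration; your proposal unnecessarily routes it into a computation. Moreover, your claim that in \emph{both} cases $a^{x_2-x_1}$ is sandwiched between $c$ and a moderate multiple of $c$ is false here: when $b^{y_2-y_1}>c$ the identity forces $a^{x_2-x_1}<c$, not $>c$.

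Second, in the case $y_1\ge y_2$ the paper does not sandwich $a^{x_2-x_1}$ at all. Instead it combines $a^{x_1}<cb^{y_1-y_2}$, $b^{y_2}<a^{x_2-x_1}$ with the \emph{other} $a$-free reformulation $(a^{x_1}+b^{y_1})^{z_2}=(a^{x_2}+b^{y_2})^{z_1}$ from \eqref{final-eq}, which produces a tight two-sided bound on $a$ itself of the form
\begin{equation*}
\frac{1}{\bigl(1+1/a_1^{x_1}\bigr)^{1/x_2}}\,b^{\frac{y_1z_2}{x_2z_1}}<a<\bigl(1+1/c_2\bigr)^{z_2/(x_2z_1)}\,b^{\frac{y_1z_2}{x_2z_1}}.
\end{equation*}
Because the interval for $a$ is essentially a single residue class mod~$2^{\alpha}$ in a window of relative width $O(1/c)$, the enumeration over $(a,b)$ is tractable. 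A bound only on $a^{x_2-x_1}$, as in your sketch, does not pin $a$ to a comparably thin set, and your proposal gives no argument that the resulting search terminates in reasonable time; that tractability is precisely the nontrivial content of this lemma.
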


\begin{proof}
Suppose that $x_1<x_2$.
Recall that we may assume that $z_2=z_1+1$.

First, consider the case where $y_1<y_2$.
From \eqref{sys-12}, $a^{x_1}(c-a^{x_2-x_1})=-b^{y_1}(c-b^{y_2-y_1})$.
This implies that
\[
a^{x_1} \mid (c-b^{y_2-y_1}), \quad  b^{y_1} \mid (c-a^{x_2-x_1})
\]
with $(c-b^{y_2-y_1})\,(c-a^{x_2-x_1})<0$.
These together yield that $a^{x_1} \le c-b^{y_2-y_1}$ or $b^{y_1} \le c-a^{x_2-x_1}$, thereby $a^{x_1}<c$ or $b^{y_1}<c$.
However, this contradicts Lemma \ref{red6}.

Second, consider the case where $y_1 \ge y_2$.
From \eqref{sys-12}, $a^{x_1}(a^{x_2-x_1}-c)=b^{y_2}(cb^{y_1-y_2}-1)$ with $a^{x_2-x_1}-c>0$ and $cb^{y_1-y_2}-1>0$.
Since $a^{x_1} \mid (cb^{y_1-y_2}-1)$ and $b^{y_2} \mid (a^{x_2-x_1}-c)$, we have
\[
a^{x_1} < cb^{y_1-y_2}, \quad b^{y_2} < a^{x_2-x_1}.
\]
These together with equation \eqref{final-eq} yield
\begin{align*}
a^{x_2z_1}<(b^{y_1}+cb^{y_1-y_2})^{z_2}=b^{y_1z_2}(1+c/b^{y_2})^{z_2},\\
b^{y_1z_2}<(a^{x_2}+a^{x_2-x_1})^{z_1}=a^{x_2z_1}( 1+1/a^{x_1})^{z_1}.
\end{align*}
Thus
\[
\frac{1}{\bigr( 1+1/a^{x_1} \bigr)^{1/x_2}} \cdot b^{\frac{y_1z_2}{x_2z_1}}<a< (1+c/b^{y_2})^{z_2/(x_2z_1)} \cdot b^{\frac{y_1z_2}{x_2z_1}}.
\]
Since $b<a<c$, and $c^2<b^{y_2}$ by Lemma \ref{red6}, it follows that
\begin{equation}\label{assu8}
\frac{1}{\bigr( 1+1/a_1^{x_1} \bigr)^{1/x_2}} \cdot b^{\frac{y_1z_2}{x_2z_1}} <a<(1+1/c_2)^{z_2/(x_2z_1)} \cdot b^{\frac{y_1z_2}{x_2z_1}},
\end{equation}
where $a_1=\max\{a_0,b+2\}$ and $c_2=\max\{c_1,b+2\}$.

We are now in the position to give the details of our reduction algorithm.

\vspace{0.2cm}{\it Step I.} \
Proposition \ref{bounds_z1<z2_c^z1=0(4)} provides us a list of all possible tuples $[\alpha, \beta, z_2, c_u]$, where $c_u$ is the corresponding upper bound for $c$.
Some elements of the list are ruled out from inequality (i) of Lemma \ref{z1z2_upper} with $(a,b,c)=(a_0,b_0,c_u)$.
We denote this smaller list by $clist$.

In the sequel, we call a pair of integers $[u,v]$ with $u,v \in \{1,-1\}$ the {\it signature} of $[a,b]$ denoted by $s=s([a,b])$ if $[a,b] \equiv [u,v] \pmod{2^{\alpha}}$.
From \eqref{cond}, we know that $s \in \{[-1,-1],[1,-1],[-1,1]\}$.
On the other hand, Lemma \ref{red5}\,(i,ii,iii) shows that, if for instance, a tuple $[x_1,y_1,x_2,y_2]$ is a solution of \eqref{sys-12} with $[x_1,y_1,x_2,y_2] \equiv [0,1,1,0] \pmod{2}$, then in this case we necessarily have $s([a,b])=[-1,-1]$.
On distinguishing between the 16 possible cases of $[x_1,y_1,x_2,y_2]$ according to the parities of $x_1,y_1,x_2$ and $y_2$, between the $3$ cases of possible signatures of $[a,b]$ and using Lemma \ref{red5}\,(i,ii,iii), we can assign for each tuple $[x_1,y_1,x_2,y_2]$ the corresponding signatures of $[a,b]$.
This way we can rule out $36$ cases of the total of $16 \times 3=48$ cases, and we obtain a list of possible parities and signatures denoted by $parsig$.
The elements of $parsig$ are of the form $\bigr[px_1,py_1,px_2,py_2,[s_a,s_b]\bigr]$, where, for $i=1,2$, we write $px_i,py_i=1$ or $2$ according to whether $x_i,y_i$ are odd or even, respectively.
Moreover, $[s_a,s_b]$ denotes the corresponding signatures of $[a,b]$.
$parsig$ is explicitly given as follows:
\begin{align*}
parsig=\Bigr[ & \bigr[2,1,2,1,[-1,-1]\bigr],\bigr[2,1,1,2,[-1,-1]\bigr],\bigr[1,2,1,2,[-1,-1]\bigr],\\
&\bigr[1,2,2,1,[-1,-1]\bigr],\bigr[2,1,2,1,[1,-1]\bigr], \bigr[2,1,1,1,[1,-1]\bigr],\\
&\bigr[1,1,2,1,[1,-1]\bigr], \bigr[1,1,1,1,[1,-1]\bigr],\bigr[1,2,1,2,[-1,1]\bigr], \\
&\bigr[1,2,1,1,[-1,1]\bigr],\bigr[1,1,1,2,[-1,1]\bigr], \bigr[1,1,1,1,[-1,1]\bigr] \Bigr].
\end{align*}
Now, for each element in $clist$ and each element in $parsig$, we use Lemma \ref{ourcase} together with Lemmas \ref{red5} and \ref{red6} (see also Remark \ref{rem1} below) to sieve considerably the possible solutions $[x_1,y_1,z_1,x_2,y_2,z_2]$ of system \eqref{sys-12}.
This way we obtain a list named {\it list1} having elements of the form
$$[\alpha,\beta, x_1,y_1,z_1,x_2,y_2,z_2,c_u,b_{max},[s_a,s_b]\bigr],$$
where $[s_a,s_b]$ denotes the signature of $[a,b]$ and $b_{max}$ is defined as
\[
b_{max}:=\min\bigr\{c_u, \lfloor {c_u}^{z_1/x_1} \rfloor, \lfloor {c_u}^{z_1/y_1} \rfloor, \lfloor {c_u}^{z_2/x_2} \rfloor, \lfloor {c_u}^{z_2/y_2} \rfloor\bigr\}.
\]
The above algorithm for generating $list1$ is given by the following program.

\vspace{0.2cm}{\tt
begin

\hskip.2cm for each element of $clist$ do

\hskip.2cm $z_1:=z_2-1$

\hskip.2cm for each element of $parsig$ do

\hskip.2cm for $x_1:=px_1$ to $\left\lfloor z_1(\log c_u)/\log a_0 \right\rfloor$ by 2 do

\hskip.2cm for $y_1:=py_1$ to $\left\lfloor z_1(\log c_u)/\log b_0 \right\rfloor$ by 2 do

\hskip.2cm for $x_2:=px_2$ to $\left\lfloor z_2(\log c_u)/\log a_0 \right\rfloor$ by 2 do

\hskip.2cm for $y_2:=py_2$ to $\left\lfloor z_2(\log c_u)/\log b_0 \right\rfloor$ by 2 do

\hskip.2cm if $x_1y_2-x_2y_1$ mod $2^{\beta z_1-\alpha}=0$ then

\hskip.4cm sieve using Lemmas \ref{red5} and \ref{red6}

\hskip.4cm $b_{max}:=\min\bigr\{c_u, \lfloor {c_u}^{z_1/x_1} \rfloor, \lfloor {c_u}^{z_1/y_1} \rfloor, \lfloor {c_u}^{z_2/x_2} \rfloor, \lfloor {c_u}^{z_2/y_2} \rfloor\bigr\}$

\hskip.4cm put the result $\bigr[\alpha, \beta,x_1,y_1,z_1,x_2,y_2,z_2,c_u,b_{max},[s_a,s_b]\bigr]$ into $list1$

end}

\vspace{0.2cm}{\it Step II.} \
In order to create $list2$ composed of all possible tuples $[a,b,x_1,y_1,z_1,x_2,y_2,z_2]$, by using inequalities \eqref{assu8}, we proceed as follows.

\vspace{0.2cm}{\tt
begin

\hskip.2cm for each element of $list1$ do

\hskip.2cm $T_b:=\left\lceil (b_0-s_b)/2^\alpha \right\rceil$

\hskip.4cm for $b:=T_b \cdot 2^{\alpha}+s_b$ to $b_{max}$ by $2^{\alpha}$ do

\hskip.4cm $a_{min}:=\max\Bigl\{a_1,\Bigl\lceil \bigr(1+1/a_1^{x_1}\bigr)^{-1/x_2} \cdot b^{\frac{y_1z_2}{x_2z_1}} \Bigl\rceil \Bigl\}$

\hskip.4cm $a_{max}:=\min\Bigl\{c_u, \lfloor {c_u}^{z_1/x_1} \rfloor, \lfloor {c_u}^{z_2/x_2} \rfloor, \Bigl\lfloor \left(1+1/c_2\right)^{z_2/(x_2z_1)} \cdot b^{\frac{y_1z_2}{x_2z_1}} \Bigl\rfloor \Bigl\}$

\hskip.4cm $T_a:=\left\lceil (a_{min}-s_a)/2^\alpha \right\rceil$

\hskip.4cm for $a:=T_a \cdot 2^{\alpha}+s_a$ to $a_{max}$ by $2^{\alpha}$ do

\hskip.6cm test whether equation \eqref{final-eq} holds or not

\hskip.6cm put the result $[a,b,x_1,y_1,z_1,x_2,y_2,z_2]$ into the $list2$

end}

\vspace{0.2cm}\noindent
It turned out that $list2$ is empty.

Finally, we mention that the restriction from Lemma \ref{red5}\,(ix) was very efficient for the case where $b \le 7$.
\end{proof}

\begin{rem} \label{rem1}
\rm Through our program implemented in the proof of Lemma \ref{red7}, we may assume by Lemma \ref{red6} that $\min\{x_1,x_2\}=x_1 \ge 2$ and $\min\{y_1,y_2\}=y_2 \ge 3$.
On the one hand, for generating $list1$ in {\it Step I}, we combined that information with Lemma \ref{red5}.
This way we excluded a lot of candidates from our $list1$ since the number of tuples satisfying $x_1=1$ or $y_2 \le 2$ is high.
On the other hand, the second advantage is that $a_{min}$ in {\it Step II} becomes larger as $x_1$ increases.
\end{rem}

\begin{lem} \label{red9}
Under the hypothesis of Proposition \ref{bounds_z1<z2_c^z1=0(4)}, if system \eqref{sys-12} has a solution $(x_1,y_1,z_1,x_2,y_2,z_2),$ then $y_1 \ge y_2.$
\end{lem}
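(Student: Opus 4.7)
The plan is to argue by contradiction: suppose $y_1<y_2$. Combined with Lemma \ref{red7}, which asserts $x_1\ge x_2$, this splits the analysis into the two subcases $x_1=x_2$ and $x_1>x_2$.

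For the subcase $x_1=x_2$, I would subtract the two equations of \eqref{sys-12}, using $z_2-z_1=1$, which may be assumed after the preliminary reductions in Lemmas \ref{trivi-lemma} and \ref{small-z2}. This yields
\[
b^{y_1}\bigl(b^{y_2-y_1}-1\bigr)=c^{z_1}(c-1).
\]
Since $\gcd(b,c)=1$ implies $\gcd(b^{y_1},c^{z_1})=1$, we obtain $b^{y_1}\mid(c-1)$, and therefore $b^{y_1}\le c-1<c$. This contradicts the lower bound $b^{y_1}\ge c$ provided by Lemma \ref{red6}, ruling out $x_1=x_2$ at once.

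The subcase $x_1>x_2$ will be the principal obstacle, and I expect it to require a full computer-assisted sieve, precisely analogous to the second case of the proof of Lemma \ref{red7} under the interchange $a\leftrightarrow b$, $x_i\leftrightarrow y_i$. Using $c^{z_2}=c\cdot c^{z_1}$, the two equations combine to
\[
a^{x_2}\bigl(c\,a^{x_1-x_2}-1\bigr)=b^{y_1}\bigl(b^{y_2-y_1}-c\bigr).
\]
Coprimality of $b$ and $c$ forces both sides to be strictly positive, so $b^{y_2-y_1}>c$. The resulting divisibility relations $a^{x_2}\mid(b^{y_2-y_1}-c)$ and $b^{y_1}\mid(c\,a^{x_1-x_2}-1)$ produce the size estimates $a^{x_2}<b^{y_2-y_1}$ and $b^{y_1}<c\,a^{x_1-x_2}$, and substituting these into \eqref{final-eq} together with the lower bound $a^{x_2}\ge c^2$ from Lemma \ref{red6} yields
\[
\frac{b^{y_2z_1/(x_1z_2)}}{(1+1/c_1)^{1/x_1}}<a<\bigl(1+1/b^{y_1}\bigr)^{z_1/(x_1z_2)}\,b^{y_2z_1/(x_1z_2)},
\]
which is the exact structural analogue of \eqref{assu8}.

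From here I would invoke the two-step sieving procedure used in the proof of Lemma \ref{red7}, with only notational adjustments: swap the roles of $x$ and $y$ in the generation of $list1$, update $parsig$ accordingly via Lemma \ref{red5}, and replace the bounds $a_{min},a_{max}$ in \emph{Step II} by the ones just displayed. The hard part is computational feasibility, but since the algebraic structure is identical to that of Lemma \ref{red7}, essentially the same MAGMA implementation (with only minor variable renaming) will run in comparable time and is expected to return an empty $list2$, delivering the desired contradiction. Combined with Lemmas \ref{red5} and \ref{red7}, the present lemma will thus complete the treatment of the case $c>\max\{a,b\}$, $c^{z_1}\equiv 0\pmod 4$ of Proposition \ref{bounds_z1<z2_c^z1=0(4)}.
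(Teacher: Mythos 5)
Your proposal is correct and follows essentially the same route as the paper: assume $x_1\ge x_2$ from Lemma \ref{red7}, suppose $y_1<y_2$, rewrite the system with $z_2-z_1=1$ as $b^{y_1}(b^{y_2-y_1}-c)=a^{x_2}(ca^{x_1-x_2}-1)$, extract the size estimates $a^{x_2}<b^{y_2-y_1}$ and $b^{y_1}<ca^{x_1-x_2}$, feed them into \eqref{final-eq} to pin $a$ into a tight interval around $b^{y_2z_1/(x_1z_2)}$, and re-run the two-step sieve of Lemma \ref{red7} with the $x\leftrightarrow y$, $a\leftrightarrow b$ roles adjusted. Your algebraic dispatch of the subcase $x_1=x_2$ (via $b^{y_1}\mid(c-1)$ contradicting Lemma \ref{red6}) is a neat touch that the paper leaves implicit in its divisibility bound, but it does not change the structure of the argument.
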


\begin{proof}
We may assume that $x_1 \ge x_2$ by Lemma \ref{red7}.
Suppose on the contrary that $y_1<y_2$.
Starting with these two inequalities, we can proceed similarly to Lemma \ref{red7}.
Thus we just indicate the key points on the implemented algorithms.
First, we generate the list $list1$ exactly in the same way as in {\it Step I} of Lemma \ref{red7}.
Second, we closely follow the method of {\it Step II} of Lemma \ref{red7}, where the only difference arises from the fact that $\min\{y_1,y_2\}=y_1$ and $\min\{x_1,x_2\}=x_2$.
Namely, system \eqref{sys-12} with $z_2-z_1=1$ implies that $b^{y_1}(b^{y_2-y_1}-c)=a^{x_2}(ca^{x_1-x_2}-1)$, whence $b^{y_1}<ca^{x_1-x_2}$ and $a^{x_2}<b^{y_2-y_1}$.
These together with Lemma \ref{red6} yield
\[
\frac{1}{\bigr(1+1/a_1^{x_2}\bigr)^{1/x_1}} \cdot b^{\frac{y_2z_1}{x_1z_2}} <a<(1+1/c_2)^{z_1/(x_1z_2)} \cdot b^{\frac{y_2z_1}{x_1z_2}}.
\]
We can proceed exactly in the same way as in Lemma \ref{red7} by using the corresponding parameters $a_{min}$ and $a_{max}$ indicated by the above inequalities.

\end{proof}
The total computational time in Subsection \ref{Sec-sieve-z1<z2_c=max_c^z1=2(4)_c^z1=0(4)} did not exceed 6 hours.

\section{Case where $z_1<z_2$ and $c<\max\{a,b\}$: Sieving}
\label{Sec-sieve-z1<z2_a=max}

The aim of this section is to show that there is no solution of system \eqref{sys-12} fulfilling the statements of Propositions \ref{bounds_z1<z2_a=max_xi<xj}, \ref{bounds_z1<z2_a=max_xi=xj_k=3} and \ref{bounds_xi<xj_z1<z2_a=max_xi=xj_k<>3}, respectively.
However, the case under the hypothesis of Proposition \ref{bounds_z1<z2_a=max_xi=xj_k=3} can be handled similarly to Section \ref{Sec-z1=z2}, since the system is reduced to the equation $c^{z_1}+b^{y_2}=c^{z_2}+b^{y_1}$ and both $b,c$ are relatively small.

It suffices to consider the case where $a>\max\{b,c\}$, and we put
\[
a_0=\max\{19,2^\alpha+1,3 \cdot 2^\beta+1\}, \ b_0=2^\alpha-1, \ c_0=3 \cdot 2^\beta.
\]
These numbers are lower bounds for $a,b$ and $c$, respectively.
We can use both equations \eqref{final-eq} and \eqref{final-eq2}.
Moreover, since $z_1 \ge 2$ as $a>c$, we have $c^{z_1} \equiv 0 \pmod{4}$, so Lemma \ref{red5} can be used.
The restrictions from Lemmas \ref{ourcase} and \ref{red5} will be used several times in our reduction procedure without any further explicit reference.

We proceed in two cases according to Proposition \ref{bounds_z1<z2_a=max_xi<xj} or Proposition \ref{bounds_xi<xj_z1<z2_a=max_xi=xj_k<>3}.

\subsection{On the system under the hypothesis of Proposition \ref{bounds_z1<z2_a=max_xi<xj}}\label{prop12.1}
Proposition \ref{bounds_z1<z2_a=max_xi<xj} provides us all possible tuples $(\alpha,\beta,  z_1,z_2,a_u,c_u)$, where $a_u$ and $c_u$ are the corresponding upper bounds for $c$ and $a$, respectively.
We put these data in the list named $clist$. 
In the sequel, we proceed in two cases according to whether $d_z=1$ on not.

\subsubsection{Case where $d_z>1$}

Note that $2 \le d_z \le 6$ and each $c_u$ is very small.
The order of magnitude of $c_u$ is between $6$ and $802$, where smaller values occur for larger $d_z$'s.

We begin with the following lemma.

\begin{lem}\label{z1 ge 2dz}
Under the hypothesis of Proposition \ref{bounds_z1<z2_a=max_xi<xj}, if system \eqref{sys-12} has a solution $(x_1,y_1,z_1,x_2,y_2,z_2),$ then $z_1 \ge 2d_z.$
\end{lem}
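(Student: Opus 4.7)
The plan is to apply Lemma \ref{Lem_ineq_z1<z2_c^z1=0(4)} contrapositively. Since $a>c$ forces $z_1\ge 2$, we are in the case $c^{z_1}\equiv 0\pmod 4$ with $\chi=2$, and Lemma \ref{Lem_ineq_z1<z2_c^z1=0(4)} together with \eqref{z3} gives
\[
c^{d_z}< 2^{\alpha-\beta z_1}\,\frac{({g_2}')^2}{g_2}\cdot\frac{\log^2 c}{(\log a)\log b}\,z_2\,\mathcal H(c;a,b).
\]
Since $z_1\ge \alpha/\beta$ by \eqref{z1} the factor $2^{\alpha-\beta z_1}$ is at most $1$, and since $g_2 \mid \gcd(x_2,y_2)$ one has $({g_2}')^2/g_2\le g_2<z_2$ by Lemma \ref{U_g2}. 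Hence the right-hand side grows only like $z_2^2\,\mathcal H(c;a,b)$, whereas the left-hand side is $c^{d_z}$.

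Suppose for contradiction that $z_1\le 2d_z-1$, so $z_2=z_1+d_z\le 3d_z-1\le 17$. For each admissible quadruple $(d_z,\beta,\alpha,z_2)$ coming from the list produced in Proposition \ref{bounds_z1<z2_a=max_xi<xj} (restricted by the additional inequality $z_1\le 2d_z-1$), I would substitute the uniform lower bounds $a_0,b_0$ and the current $c_u$ into the displayed inequality and iterate the resulting upper bound for $c$ until it stabilizes, which (by the smallness of $c_u\le 802$ and of $z_2$) quickly reduces to a handful of cases with very small $c$.

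For each surviving tuple $(\beta,\alpha,z_1,z_2,c_u)$, I would then enumerate $(x_1,y_1,x_2,y_2)$ in the boxes $x_t\le z_t\log c_u/\log a_0$, $y_t\le z_t\log c_u/\log b_0$, filtering by the parity and signature conditions of Lemma \ref{red5}, by $\beta z_1\le \alpha+\nu_2(x_1y_2-x_2y_1)$ from \eqref{z2}, and by Lemmas \ref{g2} and \ref{ourcase}. For each surviving $(x_1,y_1,x_2,y_2)$, run $b$ through the admissible residues mod $2^\alpha$ up to $\lfloor c_u^{z_1/y_1}\rfloor$; from $b$ and $c$ the value of $a$ is forced by $a^{x_1}=c^{z_1}-b^{y_1}$, so only a single divisibility and a single perfect-power test remain to check whether both equations of \eqref{sys-12} are fulfilled.

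The only real obstacle is keeping the search cheap, but the inequality above forces $c$ so strongly (because $c^{d_z}$ is compared against a polylogarithmic quantity when $z_1<2d_z$) that the resulting ranges for $a,b,c,x_t,y_t$ collapse to sizes well within the computational budget already used elsewhere in the paper. In every case one verifies that \eqref{sys-12} fails, contradicting the existence of the solution and hence proving $z_1\ge 2d_z$.
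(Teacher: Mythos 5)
Your proposal is correct and follows essentially the same strategy as the paper: restrict the pairs $(z_1,z_2)$ via $z_1<2d_z$ and $d_z\in\{2,\dots,6\}$, use the already-small bounds $c_u$ (coming from Proposition \ref{bounds_z1<z2_a=max_xi<xj}) for $c$, enumerate $(x_1,y_1,x_2,y_2)$ in the resulting boxes filtered by Lemma \ref{red5}, \eqref{z2}, Lemmas \ref{g2} and \ref{ourcase}, then loop over $(b,c)$ and test whether $a=(c^{z_1}-b^{y_1})^{1/x_1}$ yields a genuine solution. The only slight inaccuracy is your framing that the gap inequality is strong \emph{because} $z_1<2d_z$ — it is strong because $d_z\ge2$, while $z_1<2d_z$ merely limits the combinatorial range of $(z_1,z_2)$ — but this does not affect the validity of the computational sieve, which matches the paper's.
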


\begin{proof}
Suppose on the contrary that $z_1<2d_z$.
Since $z_2-z_1=d_z \in \{2,3,4,5,6\}$, the possible pairs $[z_1,z_2]$ are given as follows:
\begin{align*}
[z_1,z_2] \in \bigr[ &[2,4],[3,5],[2,5],[3,6],[4,7],[5,8],[2,6],[3,7],[4,8],[5,9],\\
&[6,10],[7,11],[2,7],[3,8], [4,9],[5,10],[6,11],[7,12],[8,13],\\
&[9,14],[2,8], [3,9],[4,10],[5,11],[6,12],[7,13],[8,14],[9,15],\\
&[10,16],[11,17]\bigr].
\end{align*}
We set $zlist$ as the list composed of these pairs.
If $b>c$, then $\max\{x_i,y_i\}<z_i$ for $i \in \{1,2\}$.
Now, for each element of $clist$ and for $x_i,y_i$ in that ranges, we use the restrictions from Lemmas \ref{ourcase} and \ref{red5} together with \eqref{z2} for $t=1$ to generate a list named $list1$ of the form $[\alpha,\beta,x_1,y_1,z_1,x_2,y_2,z_2,a_u,c_u]$, as follows:

\vspace{0.2cm}{\tt
for each element of $clist$ do

\hskip.2cm for $x_1:=1$ to $z_1-1$ do

\hskip.2cm for $y_1:=1$ to $z_1-1$ do

\hskip.2cm for $x_2:=1$ to $z_2-1$ do

\hskip.2cm for $y_2:=1$ to $z_2-1$ do

\hskip.2cm if $x_1y_2-x_2y_1 \mod{2^{\beta z_1-\alpha}}=0$

\hskip.4cm sieve using Lemmas \ref{ourcase} and \ref{red5} and put the result into

\hskip.4cm $list1$

end}

\vspace{0.2cm}\noindent
Note that once $list1$ is generated, we have not only a list of possible solutions $x_i,y_i,z_i$ of \eqref{sys-12} but also upper bounds for $a,b,c$, as well (i.e., $b<a \le a_u$ and $c \le c_u$).
Using these bounds we basically check for each possible case whether equation \eqref{final-eq2} holds or not.
We proceed as follows.

\vspace{0.2cm}{\tt
for each element of $list1$ do

\hskip.2cm for $c:=c_0$ to $c_u$ by $2^{\beta+1}$ do

\hskip.2cm for $s$ in $[-1,1]$ do


\hskip.2cm for $b:=\left\lceil(\max\{b_0,c+1\}-s)/2^{\alpha}\right\rceil \cdot 2^{\alpha}+s$ to $a_u$ by $2^{\alpha}$ do

\hskip.2cm if $(c^{z_1}-b^{y_1}>0)$ and $(c^{z_2}-b^{y_2}>0)$ then

\hskip.2cm if equation \eqref{final-eq2} holds
then

\hskip.2cm $a':=(c^{z_1}-b^{y_1})^{1/x_1}$

\hskip.2cm if $a'$ is an integer and $\gcd(a',b,c)=1$ then put the values

\hskip.2cm of $[a',b,c,x_1,y_1,z_1,x_2,y_2,z_2]$ into $list2$

end}

\vspace{0.2cm}\noindent
We implemented the above algorithm and it turned out that $list2$ is empty.
If $b<c$, then we can use the inequality $b<c \le c_u$ to proceed exactly in the same way as above.
\end{proof}

The next lemma is an analogue to Lemma \ref{red6}.

\begin{lem} \label{red6dge2}
Under the hypothesis of Proposition \ref{bounds_z1<z2_a=max_xi<xj}, if system \eqref{sys-12} has a solution $(x_1,y_1,z_1,x_2,y_2,z_2),$ then
\[
\min\{a^{x_1},b^{y_1}\} \ge c^{d_z}, \quad \min\{a^{x_2},b^{y_2}\} \ge c^{d_z+1}.
\]
\end{lem}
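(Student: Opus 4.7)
The plan is to mirror the proof of Lemma~\ref{red6}, with the only new ingredient being the stronger lower bound $z_1 \ge 2 d_z$ provided by Lemma~\ref{z1 ge 2dz} (note that this automatically also gives $z_2 = z_1 + d_z \ge 3 d_z \ge 2(d_z+1)$ since $d_z \ge 2$). I will treat only $a^{x_1} \ge c^{d_z}$ in detail; the claim $b^{y_1} \ge c^{d_z}$ follows by the identical argument with the roles of $(a,x_1)$ and $(b,y_1)$ interchanged in 1st equation, while the two inequalities $\min\{a^{x_2}, b^{y_2}\} \ge c^{d_z+1}$ are proved in exactly the same way starting from 2nd equation and using $z_2 \ge 2(d_z+1)$.

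Arguing by contradiction, suppose $a^{x_1} < c^{d_z}$. Since $a^{x_1} = c^{z_1} - b^{y_1} > 0$, we have $c^{z_1/2} > b^{y_1/2}$, and
\[
0 < c^{z_1/2} - b^{y_1/2} = \frac{a^{x_1}}{c^{z_1/2}+b^{y_1/2}} < \frac{c^{d_z}}{c^{z_1/2}} = c^{d_z-z_1/2} \le 1,
\]
the last inequality holding since $z_1/2 \ge d_z$ by Lemma~\ref{z1 ge 2dz}. Thus $c$ is pinned to the short window
\[
\lceil b^{y_1/z_1} \rceil \le c \le \bigl\lfloor (1+b^{y_1/2})^{2/z_1} \bigr\rfloor,
\]
which for most choices of $(b,y_1,z_1)$ is already empty.

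The finite check now closes the argument. Proposition~\ref{bounds_z1<z2_a=max_xi<xj} restricted to $d_z \ge 2$ yields only a handful of tuples $(\alpha,\beta,z_1,z_2,a_u,c_u)$, each with $c_u \le 802$ (and much smaller for larger $d_z$). For each such tuple, and for each $(b,c,x_1,y_1,x_2,y_2)$ compatible with the window above together with the constraints already established (Lemmas~\ref{red5}, \ref{ourcase}, the congruence $x_1 y_2 - x_2 y_1 \equiv 0 \pmod{2^{\beta z_1 - \alpha}}$ from \eqref{z2}, and so on), I would verify directly that equation~\eqref{final-eq2} fails; the computation then confirms no tuple survives. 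The main obstacle is purely algorithmic, namely keeping the enumeration short, but this is immediately tamed by the smallness of $c_u$ under the hypothesis $d_z \ge 2$ combined with Lemma~\ref{z1 ge 2dz}, which also makes the range of $b \le c_u^{z_1/y_1}$ very narrow.
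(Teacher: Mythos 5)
Your proposal is correct and mirrors the paper's own proof: both derive $0 < c^{z_1/2} - b^{y_1/2} < 1$ from the contradiction hypothesis $a^{x_1}<c^{d_z}$ together with $z_1 \ge 2d_z$ from Lemma~\ref{z1 ge 2dz}, then exploit the resulting extremely narrow window (you bound $c$ in terms of $b$, the paper bounds $b$ in terms of $c$, which are equivalent) and close the argument by the same finite computational sieve over the tuples from Proposition~\ref{bounds_z1<z2_a=max_xi<xj}.
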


\begin{proof}
The proof is similar to that of Lemma \ref{red6}.
We only show that $a^{x_1} \ge c^{d_z}$, since the treatment of the remaining inequalities is similar.
By Lemma \ref{z1 ge 2dz}, we may assume that $z_1 \ge 2d_z$.
Suppose on the contrary that $a^{x_1}<c^{d_z}$.
Since $a^{x_1}<c^{d_z}<c^{z_1/2}$, from 1st equation, we have $c^{z_1/2}-b^{y_1/2}=\frac{a^{x_1}}{c^{z_1/2}+b^{y_1/2}}<1$, thereby
\begin{equation}\label{bcy2}
\bigl\lceil {(c^{z_1/2}-1)}^{2/y_1} \bigl\rceil=:b_1 \le b \le b_2:=\bigl\lfloor c^{z_1/y_1} \bigl\rfloor.
\end{equation}
Recall that the bounds $c \le c_u$ in $clist$ are (very) sharp.
Moreover, on combining this information with inequalities \eqref{bcy2}, it very often holds that $b_1>b_2$ for given $c,y_1$ and $z_1$.
We construct a list named $list1$ consisting of elements of the form $[b,c,y_1,z_1,z_2]$.
We proceed as follows.

\vspace{0.2cm}{\tt
for each element of $clist$ do

\hskip.2cm for $c:=c_0$ to $c_u$ by $2^{\beta+1}$ do


\hskip.2cm for $y_1:=1$ to $\left \lfloor z_1(\log c)/\log b_0 \right\rfloor$ do

\hskip.2cm $b_{min}:=\max\{b_0,b_1\}$ and $b_{max}:=\min\{\lfloor c^{z_1/y_1} \rfloor, b_2,a_u-2\}$

\hskip.4cm for $s$ in $[-1,1]$ do

\hskip.4cm $T_b:=\left\lceil(b_{min}-s) /2^{\alpha} \right\rceil$

\hskip.4cm for $b:=T_b \cdot 2^{\alpha}+s$ to $b_{max}$ by $2^{\alpha}$ do

\hskip.6cm put $[b,c,y_1,z_1,z_2]$ into $list1$

end}

\vspace{0.2cm}\noindent
Since $z_1 \ge 2d_z$ and $a^{x_1}<c^{d_z}$, we observe that $x_1<\min\{d_z,\lfloor z_1 /2\rfloor\}$.
Finally, using $list1$ and the above range for $x_1$, we basically check whether equation \eqref{final-eq2} holds or not.
All possible solutions will be put in the list named $list2$, as follows.

\vspace{0.2cm}{\tt
for each element of $list1$ do

\hskip.2cm for $x_1:=1$ to $\min\{d_z,\lfloor z_1 /2\rfloor\}-1$ do

\hskip.2cm for $x_2:=1$ to $z_2-1$ do

\hskip.2cm for $y_2:=1$ to $\left\lfloor z_2(\log c)/\log b \right\rfloor$ do

\hskip.2cm if $(c^{z_2}-b^{y_2}>0)$ and $(c^{z_1}-b^{y_1}>0)$ then

\hskip.2cm if equation \eqref{final-eq2} holds then

\hskip.2cm $a':=(c^{z_1}-b^{y_1})^{1/x_1}$

\hskip.4cm if $(a'>\max\{b,c\})$ and $\gcd(a',b,c)=1$ and $({a'}^{x_1}<c^{d_z})$ then

\hskip.6cm put $[a',b,c,x_1,y_1,z_1,x_2,y_2,z_2]$ into $list2$

end}

\vspace{0.2cm}\noindent
It turned out that $list2$ is empty.
\end{proof}

In the following two lemmas together, we show the contrary to the condition from Lemma \ref{red5}\,(iv) saying that $x_1<x_2$ or $y_1<y_2$ in \eqref{sys-12}.

\begin{lem} \label{red14}
Under the hypothesis of Proposition \ref{bounds_z1<z2_a=max_xi<xj}, if system \eqref{sys-12} has a solution $(x_1, y_1, z_1, x_2, y_2, z_2),$ then $x_1 \ge x_2.$
\end{lem}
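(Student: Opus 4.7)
The plan is to suppose for contradiction that $x_1 < x_2$ and to treat separately the two subcases $y_1 < y_2$ and $y_1 \ge y_2$, mirroring the case split used in Lemma~\ref{red7}. Multiplying the first equation of \eqref{sys-12} by $c^{d_z}$ and subtracting the second yields the key identity
\[
a^{x_1}\bigl(c^{d_z}-a^{x_2-x_1}\bigr)+b^{y_1}\bigl(c^{d_z}-b^{y_2-y_1}\bigr)=0, \tag{$\dagger$}
\]
from which divisibility relations and sign conditions can be read off.

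In the subcase $y_1 < y_2$, both summands of $(\dagger)$ are nonzero (using Lemma~\ref{red5}) and have opposite signs, so exactly one of $c^{d_z}-a^{x_2-x_1}$ and $c^{d_z}-b^{y_2-y_1}$ is strictly positive and hence strictly less than $c^{d_z}$. Using $\gcd(a,b)=1$, that positive quantity is divisible by $a^{x_1}$ or $b^{y_1}$ respectively, forcing $\min\{a^{x_1},b^{y_1}\} < c^{d_z}$, which contradicts Lemma~\ref{red6dge2}. This subcase is settled with no computation.

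In the subcase $y_1 \ge y_2$, identity $(\dagger)$ rewrites as $a^{x_1}(a^{x_2-x_1}-c^{d_z}) = b^{y_2}(c^{d_z}b^{y_1-y_2}-1)$ with both sides positive, giving the divisibility inequalities $a^{x_1} < c^{d_z}b^{y_1-y_2}$ and $b^{y_2} < a^{x_2-x_1}$. Feeding these into equation~\eqref{final-eq} and exploiting $b^{y_2}\ge c^{d_z+1}$ from Lemma~\ref{red6dge2} (so $c^{d_z}/b^{y_2}\le 1/c$), one derives a two-sided bound
\[
\bigl(1+1/a^{x_1}\bigr)^{-1/x_2}\cdot b^{y_1z_2/(x_2z_1)} \;<\; a \;<\; \bigl(1+1/c\bigr)^{z_2/(x_2z_1)}\cdot b^{y_1z_2/(x_2z_1)},
\]
entirely analogous to \eqref{assu8}, which pins $a$ inside a very narrow window about $b^{y_1z_2/(x_2z_1)}$.

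With this window in hand the argument becomes computational in the style of Lemma~\ref{red7}. For each tuple $(\alpha,\beta,z_1,z_2,a_u,c_u)$ from the list $clist$ produced by Proposition~\ref{bounds_z1<z2_a=max_xi<xj} with $d_z\ge 2$, we (i) enumerate the admissible parities and signatures of $(a,b)$ modulo $2^\alpha$ via Lemma~\ref{red5}; (ii) sieve the candidate tuples $(x_1,y_1,x_2,y_2)$ using Lemmas~\ref{ourcase}, \ref{red5}, \ref{red6dge2} together with the congruence $x_1y_2-x_2y_1\equiv 0\pmod{2^{\beta z_1-\alpha}}$ coming from Lemma~\ref{z1z2_upper}\,(i); and (iii) for each surviving candidate, loop $b$ through its residue class modulo $2^\alpha$ up to $b_{\max}=\min\{c_u,\lfloor c_u^{z_1/y_1}\rfloor,\lfloor c_u^{z_2/y_2}\rfloor\}$, then loop $a$ through its residue class inside the window $[a_{\min},a_{\max}]$, testing \eqref{final-eq2} at each step. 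The main obstacle is keeping this enumeration fast, but since Proposition~\ref{bounds_z1<z2_a=max_xi<xj} forces $c_u$ to be at most a few hundred when $d_z\ge 2$, the window for $a$ is extremely short and, after imposing the $2^\alpha$-congruence, only a handful of candidates survive for each $(b,z_1,z_2,x_1,y_1,x_2,y_2)$. We expect—exactly as in Lemma~\ref{red7}—that the resulting list is empty, yielding the desired contradiction.
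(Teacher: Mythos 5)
Your proof is correct and takes essentially the same approach as the paper: the same case split $y_1<y_2$ (direct contradiction with Lemma~\ref{red6dge2}) versus $y_1\ge y_2$ (derive a narrow two-sided window for $a$ in terms of $b^{y_1z_2/(x_2z_1)}$, then sieve computationally as in Lemma~\ref{red7}). The only cosmetic difference is that the paper replaces $a$ and $c$ by the precomputable bounds $a_1=\max\{a_0,b+2\}$ and $c_0$ in the window \eqref{assu14}, and the sieving congruence on $x_1y_2-x_2y_1$ stems from \eqref{z2} (Lemma~\ref{2adic_upper}) rather than Lemma~\ref{z1z2_upper}\,(i), which is used separately to prune $clist$.
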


\begin{proof}
We closely follow the method described in Lemma \ref{red7}.
Suppose on the contrary that $x_1<x_2$.

If $y_1<y_2$, then $a^{x_1}(c^{d_z}-a^{x_2-x_1})=-b^{y_1}(c^{d_z}-b^{y_2-y_1})$.
This equation implies that $\min\{a^{x_1},b^{y_1}\}<c^{d_z}$, which however contradicts Lemma \ref{red6dge2}.
Suppose that $y_1 \ge y_2$.
Then $a^{x_1}(a^{x_2-x_1}-c^{d_z})=b^{y_2}(c^{d_z}b^{y_1-y_2}-1)$, and this implies that
\[
a^{x_1} \mid (c^{d_z} b^{y_1-y_2}-1), \quad b^{y_2} \mid (a^{x_2-x_1}-c^{d_z})
\]
with $c^{d_z} b^{y_1-y_2}-1>0$ and $a^{x_2-x_1}-c^{d_z}>0$.
Thus
\[
a^{x_1} < c^{d_z}b^{y_1-y_2}, \quad b^{y_2} < a^{x_2-x_1}.
\]
These inequalities together with equation \eqref{final-eq} yield
\begin{gather*}
a^{x_2z_1}<(b^{y_1}+c^{d_z}b^{y_1-y_2})^{z_2}=b^{y_1z_2}(1+c^{d_z}/b^{y_2})^{z_2},\\
b^{y_1z_2}<(a^{x_2}+a^{x_2-x_1})^{z_1}=a^{x_2z_1}(1+1/a^{x_1})^{z_1}.
\end{gather*}
Since $c^{d_z+1}<b^{y_2}$ by Lemma \ref{red6dge2}, it follows that
\begin{equation}\label{assu14}
\frac{1}{ \bigr(1+1/{a_1}^{x_1}\bigr)^{1/x_2} } \cdot b^{\frac{y_1z_2}{x_2z_1}}<a<(1+1/c_0)^{z_2/x_2z_1} \cdot b^{\frac{y_1z_2}{x_2z_1}}
\end{equation}
with $a_1=\max\{a_0,b+2\}$.

We are now in the position to give the details of our reduction algorithm in this case.
We proceed exactly in the same way as in Lemma \ref{red7} with some appropriate modifications.
Some elements in $clist$ can be ruled out by Lemma \ref{z1z2_upper}\,(i), and we denote this smaller list by $clist0$.

\vspace{0.2cm}{\it Step I.} \
We follow 
{\it Step I} of Lemma \ref{red7} with the following modifications.
By using $clist0$, the list $parsig$ defined in Lemma \ref{red7} and the inequalities $x_i<z_i$ for $i \in \{1,2\}$, we generate a list named $list1$ containing elements of the form $\bigr[\alpha,\beta,x_1,y_1,z_1,x_2,y_2,z_2,a_u,b_{max},[s_a,s_b]\bigr]$, where $[s_a,s_b]$ denotes the signature of $[a,b]$, and $b_{max}$ is defined by
\[
b_{max}:=\min\bigr\{a_u, \lfloor {c_u}^{z_1/x_1} \rfloor, \lfloor {c_u}^{z_1/y_1} \rfloor, \lfloor {c_u}^{z_2/x_2} \rfloor, \lfloor {c_u}^{z_2/y_2} \rfloor\bigr\}.
\]

\vspace{0.2cm}{\it Step II.} \
We follow {\it Step II} of Lemma \ref{red7} with a single modification in the bound $a_{max}$ for $a$ according to \eqref{assu14}, namely:
\[
a_{max}:=\min\Bigr\{a_u, \lfloor {c_u}^{z_1/x_1} \rfloor, \lfloor {c_u}^{z_2/x_2} \rfloor, \bigl\lfloor (1+1/c_0)^{z_2/(x_2z_1)} \cdot b^{\frac{y_1z_2}{x_2z_1}} \bigl\rfloor \Bigr\}.
\]
Using the program occurring in {\it Step II} of Lemma \ref{red7} with the above bounds we check that equation \eqref{final-eq} holds in no case.
\end{proof}

\begin{lem} \label{red15}
Under the hypothesis of Proposition \ref{bounds_z1<z2_a=max_xi<xj}, if system \eqref{sys-12} has a solution $(x_1, y_1, z_1, x_2, y_2, z_2), $ then $y_1 \ge y_2.$
\end{lem}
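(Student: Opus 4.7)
I would closely follow the template of Lemma \ref{red14}, only swapping the roles of $(a,x)$ and $(b,y)$ in the derivation of the key inequalities. Assuming for contradiction that $y_1<y_2$, and invoking Lemma \ref{red14} to ensure $x_1 \ge x_2$, I rewrite the system as
\[
a^{x_2}\bigl(c^{d_z}a^{x_1-x_2}-1\bigr)=b^{y_1}\bigl(b^{y_2-y_1}-c^{d_z}\bigr).
\]
Both sides are positive: the left side since $x_1 \ge x_2$ gives $c^{d_z}a^{x_1-x_2}\ge c^{d_z}>1$, and the right side by comparing the two equations in \eqref{sys-12} (which, together with $x_1 \ge x_2$, forces $b^{y_2-y_1}>c^{d_z}$). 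By coprimality of $a$ and $b$, this yields the divisibilities $a^{x_2}\mid (b^{y_2-y_1}-c^{d_z})$ and $b^{y_1}\mid (c^{d_z}a^{x_1-x_2}-1)$, hence
\[
a^{x_2}<b^{y_2-y_1},\qquad b^{y_1}<c^{d_z}a^{x_1-x_2}.
\]

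Plugging the second inequality into the identity $(a^{x_1}+b^{y_1})^{z_2}=(a^{x_2}+b^{y_2})^{z_1}$ of \eqref{final-eq} gives, via $(a^{x_1}+b^{y_1})^{z_2}<a^{x_1 z_2}(1+c^{d_z}/a^{x_2})^{z_2}$ and Lemma \ref{red6dge2} (which bounds $c^{d_z}/a^{x_2}\le 1/c$), a lower bound $a>b^{y_2 z_1/(x_1 z_2)}/(1+1/c_0)^{1/x_1}$. Plugging the first inequality into the same identity (via $(a^{x_2}+b^{y_2})^{z_1}<b^{y_2 z_1}(1+1/b^{y_1})^{z_1}$) gives the matching upper bound. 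Combined, I obtain
\[
\frac{1}{\bigl(1+1/c_0\bigr)^{1/x_1}}\cdot b^{\,y_2 z_1/(x_1 z_2)}<a<\bigl(1+1/b_0\bigr)^{z_1/(x_1 z_2)}\cdot b^{\,y_2 z_1/(x_1 z_2)},
\]
which is the exact analogue of \eqref{assu14}, trapping $a$ between two very close multiples of $b^{y_2 z_1/(x_1 z_2)}$.

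From here the sieving proceeds exactly as in Steps I and II of the proof of Lemma \ref{red14}. In Step I, I generate $list1$ of tuples $[\alpha,\beta,x_1,y_1,z_1,x_2,y_2,z_2,a_u,b_{max},[s_a,s_b]]$ by looping $x_i,y_i$ through the admissible ranges, filtering with Lemma \ref{red5}, Lemma \ref{ourcase}, and the new gap relation $x_1 y_2-x_2 y_1 \equiv 0 \pmod{2^{\beta z_1-\alpha}}$; note that Lemma \ref{red14} and Lemma \ref{red6dge2} now force $\min\{x_1,x_2\}=x_2\ge 2$ and $\min\{y_1,y_2\}=y_1\ge 3$, which, as in Remark \ref{rem1}, dramatically prunes the candidate tuples. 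In Step II, I loop $b$ through its allowed residue class modulo $2^\alpha$ up to $b_{max}$, set $a_{min},a_{max}$ using the displayed window above, loop $a$ similarly, and test \eqref{final-eq}; if the resulting list is empty the contradiction is reached.

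The main obstacle is essentially computational rather than theoretical: the argument adds no new ingredient beyond Lemmas \ref{red5}, \ref{red6dge2}, \ref{red14} and the identity \eqref{final-eq}, but one must make sure that the parity/signature filter and divisibility constraints prune $list1$ aggressively enough that the double loop over $(b,a)$ in Step II fits inside the paper's global computational budget. The geometric tightness of the $a$-window (its width is of order $1+1/\min\{b_0,c_0\}$ times the central value) guarantees that the inner $a$-loop is short for each $b$, so the chief worry is keeping $|list1|$ under control, exactly the same worry that was met successfully in Lemma \ref{red14}.
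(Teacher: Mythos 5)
Your proposal matches the paper's proof essentially line by line: both assume $y_1<y_2$, use Lemma \ref{red14} to get $x_1\ge x_2$, derive $a^{x_2}(c^{d_z}a^{x_1-x_2}-1)=b^{y_1}(b^{y_2-y_1}-c^{d_z})$, extract $a^{x_2}<b^{y_2-y_1}$ and $b^{y_1}<c^{d_z}a^{x_1-x_2}$, and combine with \eqref{final-eq} and Lemma \ref{red6dge2} to trap $a$ in the same narrow window $b^{y_2z_1/(x_1z_2)}$ times a factor close to $1$, after which the sieve of Lemma \ref{red14} is re-run. The only (inessential) deviation is that you loosened the upper factor to $(1+1/b_0)^{z_1/(x_1z_2)}$ where the paper keeps the sharper $(1+1/b^{y_1})^{z_1/(x_1z_2)}$; this is still correct, merely a slightly wider window.
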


\begin{proof}
Since the method of the proof and the resulting algorithm are similar to the ones presented in Lemma \ref{red14}, we only indicate the key points of the algorithm.
By Lemma \ref{red14} we may assume that $x_1 \ge x_2$, and suppose on the contrary that $y_1<y_2$.
In {\it Step I}, we generate $list1$ exactly in the same way as in Lemma \ref{red14}.
In {\it Step II}, we closely follow the method of Lemma \ref{red14}, where the only difference comes from the fact that $\min\{y_1,y_2\}=y_1$ and $\min\{x_1,x_2\}=x_2$.
Namely, in this case, we deduce from \eqref{sys-12} that
\if0
with $z_2-z_1=d_z$ implies that
\[
b^{y_1}(b^{y_2-y_1}-c^{d_z})=a^{x_2}(c^{d_z} a^{x_1-x_2}-1),
\]
whence $b^{y_1}<c^{d_z}a^{x_1-x_2}$ and $a^{x_2}<b^{y_2-y_1}$.
This way we see that the corresponding inequality of \eqref{assu13} in this case is
\[
\frac{b^{\frac{y_2z_1}{x_1z_2}}}{ \bigr(1+c^{d_z}/a^{x_2}\bigr)^{1/x_1} }<a<\bigr(1+1/b^{y_1}\bigr)^{z_1/(x_1z_2)} \cdot b^{\frac{y_2z_1}{x_1z_2}},
\]
This together with Lemma \ref{red6dge2} gives
\fi
\[
\frac{1}{\bigr(1+1/c_0\bigr)^{1/x_1}} \cdot b^{\frac{y_2z_1}{x_1z_2}}<a<(1+1/b^{y_1})^{z_1/(x_1z_2)} \cdot b^{\frac{y_2z_1}{x_1z_2}}.
\]
Using these inequalities, we can proceed exactly in the same way as in Lemma \ref{red14} with the corresponding changes of $a_{min}$ and $a_{max}$.
\end{proof}


\subsubsection{Case where $d_z=1$}
We begin with the following lemma, which can be regarded as an analogue of Lemma \ref{small-z2} in the case where $a>\max\{b,c\}$ with $(z_1,z_2)=(2,3)$.

\begin{lem} \label{small-z23}
Under the hypothesis of Proposition \ref{bounds_z1<z2_a=max_xi<xj}, system \eqref{sys-12} has no solution $(x_1, y_1, z_1, x_2, y_2, z_2)$ satisfying $(z_1,z_2)=(2,3).$
\end{lem}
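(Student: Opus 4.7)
The plan is as follows. First, under the hypothesis of Proposition \ref{bounds_z1<z2_a=max_xi<xj} with $(z_1,z_2)=(2,3)$, the inequalities $a^{x_t}<c^{z_t}$ combined with $a>c$ yield $x_1<2\log c/\log a<2$ and $x_2<3\log c/\log a<3$, hence $x_1=1$ and $x_2\in\{1,2\}$. The hypothesis $d_x>0$ forces $x_1\ne x_2$ (since the two smallest values among $x_1,x_2,x_3$ must be distinct), so we conclude $(x_1,x_2)=(1,2)$, and the first two equations of \eqref{sys-12} simplify to
\begin{equation*}
a+b^{y_1}=c^{2},\qquad a^{2}+b^{y_2}=c^{3},
\end{equation*}
with the additional constraint $c<a<c^{3/2}$.

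Next, eliminating $a$ via $a=c^{2}-b^{y_1}$ in the second equation gives $(c^{2}-b^{y_1})^{2}+b^{y_2}=c^{3}$, which expands and rearranges to
\begin{equation*}
c^{3}(c-1)=2c^{2}b^{y_1}-b^{2y_1}-b^{y_2}.
\end{equation*}
Setting $M:=\min\{y_1,y_2\}$, every term on the right-hand side is divisible by $b^{M}$, so $b^{M}\mid c^{3}(c-1)$; coprimality of $b$ and $c$ from \eqref{cond} then forces $b^{M}\mid (c-1)$. In particular $b\le c-1$, hence $b<c$, and $c$ is confined to the sparse arithmetic progression $1+b^{M}\Z$.

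Finally, I combine these structural conclusions with the explicit bounds of Proposition \ref{bounds_z1<z2_a=max_xi<xj} specialised to $(z_1,z_2)=(2,3)$: across all sub-cases indexed by $g_x$, $d_x$ and $k$, the quantities $a$ and $c$ are bounded by at most a few times $10^{6}$, while $y_1\le\lfloor 2\log c/\log b\rfloor$ and $y_2\le\lfloor 3\log c/\log b\rfloor$ remain modest. For each admissible triple $(b,y_1,y_2)$, the congruence $c\equiv 1\pmod{b^{M}}$ restricts $c$ to a short list of candidates within its range; for each such $c$ one computes $a=c^{2}-b^{y_1}$ uniquely and verifies whether $a^{2}+b^{y_2}=c^{3}$ holds, together with the standing coprimality and parity conditions of \eqref{cond}. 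The main obstacle is keeping this search within a reasonable computational budget, but the divisibility $b^{M}\mid(c-1)$ combined with the modest ranges of $y_1,y_2$ cuts the candidate list down to something readily enumerable, and a direct check shows that no admissible tuple satisfies both equations, completing the proof.
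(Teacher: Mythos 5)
Your proposal is correct and arrives at the result by a mildly different (and somewhat more uniform) route than the paper; let me compare.

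You first deduce $x_1=1$ and $x_2\le 2$ exactly as the paper does, and you rule out $x_2=1$ by observing that $x_1=1$ together with $d_x>0$ forces $x_1\ne x_2$. This is a valid shortcut (since $x_3\ge 1=x_1$, equality $x_1=x_2$ would place the two smallest $x$-values at $1,1$ and give $d_x=0$), though the parenthetical you give doesn't quite spell out that it hinges on $x_1$ being the overall minimum. Interestingly, the paper still treats $x_2=1$ as a separate case even though it is excluded by its own standing hypothesis; your observation makes that redundant. Your key structural identity $c^3(c-1)=2c^2b^{y_1}-b^{2y_1}-b^{y_2}$, giving $b^{\min\{y_1,y_2\}}\mid(c-1)$, is correct and is in fact equivalent in content to the paper's two separate divisibilities: $b^{y_1}\mid(a-c)$ (in the subcase $y_1\le y_2$) and $b^{y_2}\mid(a-c)$ (in the subcase $y_1>y_2$), since $a-c=c(c-1)-b^{y_1}$ and $\gcd(b,c)=1$. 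Where you diverge is in what you do with it: the paper first uses the $y_1\le y_2$ version to get the clean contradiction $c^2<2a<2c^{3/2}$, then in the complementary case extracts $b^{y_2}<a$, hence $a^2>c^3/2$ and the much tighter bound $c<2^{1/3}a_U^{2/3}\approx 3.4\cdot10^4$ (with $b<2^{1/6}a_U^{1/3}\approx 185$ when $y_1\ge 4$, and a parity/ternary-equation argument pinning $(y_1,y_2)=(3,1)$ when $y_1\le 3$), which yields a very small search. You instead keep the coarser bounds $b<c<$ a few times $10^6$ from Proposition \ref{bounds_z1<z2_a=max_xi<xj} and lean on the divisibility $b^M\mid(c-1)$ to thin the $(b,c)$-candidates. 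Your search is larger than the paper's by a couple of orders of magnitude but still comfortably finite; you could trim it essentially for free by noting, just as the paper does, that the $y_1\le y_2$ branch gives $b^{y_1}\le c-1<c$ and hence $c^2=a+b^{y_1}<c^{3/2}+c$, which has no solutions for even $c>2$, so only $y_1>y_2$ survives, and then $a^2=c^3-b^{y_2}>c^3-c$ forces $c$ well below $a_U^{2/3}$.
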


\begin{proof}
Since $a>c$ and $a^{x_i}<c^{z_i}$ for $i \in \{1,2\}$, we have $x_1=1$ and $x_2 \le 2$.
Note that the case where $x_2=1$ is reduced to the equation $c^2+b^{y_2}=c^3+b^{y_1}$ with both $b,c$ suitably small, so we only consider the case where $x_2=2$.
Then system \eqref{sys-12} is
\begin{equation} \label{sys-Lem_small-z23}
a+b^{y_1}=c^2, \ \ a^2+b^{y_2}=c^3.
\end{equation}
Note that $a_u \le a_U:=4.5 \cdot 10^6$ in any element in $clist$.

In \eqref{sys-Lem_small-z23}, suppose that $y_1 \le y_2$.
We take the equations modulo $b^{y_1}$ to see that $b^{y_1} \mid (a-c)$, so $b^{y_1}<a$.
This together with the equations implies that $c^2<2a<2c^{3/2}$, yielding a contradiction.
Thus we may assume that $y_1>y_2$.
From \eqref{sys-Lem_small-z23}, $a(a-c)=b^{y_2}(cb^{y_1-y_2}-1)$, leading to $b^{y_2}<a$.
Since $a^2>c^3/2$ by the second equation in \eqref{sys-Lem_small-z23}, we have
\begin{equation}\label{syst-a4}
c<2^{1/3}{a_U}^{2/3}.
\end{equation}
If $y_1 \ge 4$, then the first equation in \eqref{sys-Lem_small-z23} together with \eqref{syst-a4} implies that $b<2^{1/6}{a_U}^{2/6}$.
By this estimate of $b$ and \eqref{syst-a4}, both $b$ and $c$ are small enough to check by a brute force that equation \eqref{final-eq2} does not hold in any case.
Finally, suppose that $y_1 \le 3$.
Since $y_1>y_2$, we use Lemma \ref{red5} to see that $(y_1,y_2)=(3,1)$.
In this case, $b \mid (c-1)$, in particular, $b<2^{1/3}{a_U}^{2/3}$ by \eqref{syst-a4}.
Then we can deal with this case by a brute force similarly to the previous case.
\end{proof}

By Lemma \ref{small-z23}, in what follows we may assume that $z_2 \ge 4$.

The next lemma can be regarded as a common analogue of Lemma \ref{red6} and of Lemma \ref{red6dge2} in the case $d_z=1$.

\begin{lem} \label{red16}
Under the hypothesis of Proposition \ref{bounds_z1<z2_a=max_xi<xj}, if system \eqref{sys-12} has a solution $(x_1,y_1,z_1,x_2,y_2,z_2)$ with $d_z=1,$ then $\min\{a^{x_2},b^{y_2}\} \ge c^2.$
\end{lem}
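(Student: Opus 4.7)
The plan is to follow the same strategy used to prove Lemma \ref{red6dge2}, specialized to the regime $d_z=1$, and to argue separately for the two inequalities $a^{x_2}\ge c^2$ and $b^{y_2}\ge c^2$ by contradiction. By Lemma \ref{small-z23} we may assume $z_2\ge 4$, which is the crucial input that allows the ``square-root trick'' to produce tight windows.

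For the first inequality, suppose $a^{x_2}<c^2$. Since $z_2\ge 4$, we have $a^{x_2}<c^{z_2/2}$, so from the second equation of \eqref{sys-12}
\[
c^{z_2/2}-b^{y_2/2}=\frac{a^{x_2}}{c^{z_2/2}+b^{y_2/2}}<\frac{c^2}{c^{z_2/2}}\le 1,
\]
which forces
\[
\lceil(c^{z_2/2}-1)^{2/y_2}\rceil\le b\le\lfloor c^{z_2/y_2}\rfloor.
\]
For most $(c,y_2,z_2)$ this interval is empty, and otherwise contains only very few integers. Moreover $a^{x_2}<c^2$ with $a>c$ forces $x_2=1$. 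Combined with $c\le c_u$ from the $clist$ coming from Proposition \ref{bounds_z1<z2_a=max_xi<xj}, we obtain a short finite list of candidate tuples $(b,c,y_2,z_1,z_2)$. For each such tuple we enumerate $x_1\le\lfloor z_1\log c/\log a_0\rfloor$ and $y_1\le\lfloor z_1\log c/\log b\rfloor$, recover $a=(c^{z_1}-b^{y_1})^{1/x_1}$, and test whether $a$ is an integer with $a>\max\{b,c\}$, $\gcd(a,b,c)=1$, $a^{x_2}<c^2$, and whether the second equation $a+b^{y_2}=c^{z_2}$ holds; this is, up to minor modifications, exactly the algorithm in the second half of Lemma \ref{red6dge2}.

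The inequality $b^{y_2}\ge c^2$ is proved symmetrically. Assuming $b^{y_2}<c^2$, the same square-root manipulation yields $c^{z_2/2}-a^{x_2/2}<1$, hence
\[
(c^{z_2/2}-1)^{2/x_2}<a^{x_2/2}\cdot a^{x_2/2}/a^{x_2/2}<c^{z_2/2},
\]
i.e.\ $a$ lies in the narrow window $(c^{z_2/2}-1)^{2/x_2}<a<c^{z_2/x_2}$. Combined with $a\le a_u$ from Proposition \ref{bounds_z1<z2_a=max_xi<xj}, this leaves a short list of candidate $(a,c,x_2,z_1,z_2)$; for each we enumerate $y_1,y_2$ in the obvious ranges and check equation \eqref{final-eq2}, concluding by direct verification that no valid solution exists.

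The main obstacle is keeping the computational search tractable when $d_z=1$, since here we do not enjoy the extra decay $c^{d_z}$ coming from larger $d_z$ as in Lemma \ref{red6dge2}. This is overcome precisely by the condition $z_2\ge 4$ from Lemma \ref{small-z23}, which makes the exponent $z_2/2\ge 2$ in the square-root step strictly larger than the $2$ appearing on the right-hand side of $a^{x_2}<c^2$, so the ratio $c^2/c^{z_2/2}\le 1$ is genuinely $\le 1$ and the interval for $b$ (resp.\ $a$) collapses to a handful of integers for each allowable $(c,y_2,z_2)$ (resp.\ $(c,x_2,z_2)$). After this collapse, the remaining verification is a direct loop already patterned on the proofs of Lemmas \ref{red6} and \ref{red6dge2}.
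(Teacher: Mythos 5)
Your high-level plan is the right one and matches the paper's: invoke Lemma \ref{small-z23} to secure $z_2 \ge 4$, assume the contrary, run the ``square-root trick'' on the 2nd equation, and reduce to a finite computer verification. The paper, however, parametrizes the other way around: for each $(b,y_2,z_2)$ it produces the window $\lceil b^{y_2/z_2}\rceil \le c \le \lfloor (b^{y_2/2}+1)^{2/z_2}\rfloor$, whose width is $\approx \tfrac{2}{z_2}\,b^{\,y_2(1/z_2 - 1/2)}$; because $z_2\ge 4$ and $b\ge 3$, this is always less than one integer, \emph{uniformly in $y_2$}. Your windows do not have this uniformity. In the first part the window $\lceil(c^{z_2/2}-1)^{2/y_2}\rceil\le b\le\lfloor c^{z_2/y_2}\rfloor$ has width $\approx\tfrac{2}{y_2}\,c^{\,z_2(1/y_2-1/2)}$, which is narrow for $y_2\ge 3$, borderline for $y_2=2$, and huge for $y_2=1$; you would need to separately rule out $y_2=1$ (which does follow: $y_2=1$ gives $b=c^{z_2}-a^{x_2}>c^{z_2}-c^{z_2/2}>a$, contradicting $a>b$) and handle $y_2=2$ with some care. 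Also note that the paper further derives $y_2>z_2$ before the search, which is what makes its loop bound $b\le\lfloor a_u^{\,z_2/y_2}\rfloor$ effective.

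The second part has a genuine gap. There is no literal symmetry: since $a>c$ and $a^{x_2}<c^{z_2}$ one always has $x_2<z_2$, so the analogue of ``$y_2>z_2$'' fails, and your window $(c^{z_2/2}-1)^{2/x_2}<a<c^{z_2/x_2}$ has width $\approx \tfrac{2}{x_2}\,c^{\,z_2(1/x_2-1/2)}$, which is \emph{not} small when $x_2=1$ — it is about $2c^{z_2/2}$ — so the statement that ``this leaves a short list of candidate $(a,c,x_2,z_1,z_2)$'' is unjustified there. The case $x_2=1$ is in fact the critical one and must be treated differently: with $a=c^{z_2}-b^{y_2}$, $b^{y_2}<c^2$ and $a\le a_u$, one gets $c^{z_2}<a_u+c^2$, so $c$ (and hence $b,y_2$) becomes very small, and one should simply enumerate $(b,y_2)$ with $b^{y_2}<c^2$ rather than sweep $a$ over the wide window. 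Also, the displayed inequality $(c^{z_2/2}-1)^{2/x_2}<a^{x_2/2}\cdot a^{x_2/2}/a^{x_2/2}<c^{z_2/2}$ is garbled: from $c^{z_2/2}-1<a^{x_2/2}<c^{z_2/2}$ the correct deduction is $(c^{z_2/2}-1)^{2/x_2}<a<c^{z_2/x_2}$ (which you do state afterward), but the written chain compares $(c^{z_2/2}-1)^{2/x_2}$ with $a^{x_2/2}$, which is only the same when $x_2=2$.
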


\begin{proof}
We only show that $a^{x_2} \ge c^2$ since the treatment of the inequality $b^{y_2} \ge c^2$ is similar.
Suppose on the contrary that $a^{x_2}<c^2$.
Similarly to the proof of Lemma \ref{red6}, we can use 2nd equation to see that
\[
y_2>z_2, \quad b<{a_u}^{z_2/y_2}, \quad \bigr\lceil b^{y_2/z_2}\bigr\rceil \le c \le \left\lfloor (b^{y_2/2}+1)^{2/z_2} \right\rfloor.
\]
The details of the algorithm to create the list named $list1$ including all possible tuples $[b,c,z_1,y_2,z_2]$ are given as follows.

\vspace{0.2cm}{\tt
for each element of $clist$ do

\hskip.2cm for $y_2:=1$ to $\left\lfloor z_2(\log a_u)/ \log b_0 \right\rfloor$ do

\if0 \hskip.2cm if $y_2<z_2$ then

\hskip.4cm if $\alpha<7$ then

\hskip.6cm $c_u:=\left\lfloor\frac{1}{\left(1-1/6^{z_2-2}\right)^{1/z_2}}{a_u}^{y_2/z_2}\right\rfloor$

\hskip.6cm for $c:=c_0$ to $c_u$ by $2^{\beta+1}$ do

\hskip.6cm $b_{min}:=\left\lceil (c^{z_2/2}-1)^{2/y_2}\right\rceil$ and $b_{max}:=\left\lfloor c^{z_2/y_2} \right\rfloor$

\hskip.8cm for each element $s$ of the list $si$ do

\hskip.8cm for $b:=\left\lceil(b_{min}-s)/{2^\alpha}\right\rceil \cdot 2^\alpha+s$ to $\min\{a_u,b_{max}\}$ by $2^\alpha$ do

\hskip.1cm  put the data $[b,c,z_1,y_2,z_2]$ into $list1$

\hskip.4cm if $\alpha \ge 7$ then

\hskip.6cm for each element $s$ of the list $si$ do

\hskip.6cm for $b:=\left\lceil(b_0-s)/{2^\alpha} \right\rceil \cdot 2^\alpha+s$ to $a_u$ by $2^\alpha$ do

\hskip.6cm $c_{min}:=\lceil b^{y_2/z_2}\rceil$ and $c_{max}:=\lfloor (b^{y_2/2}+1)^{2/z_2} \rfloor$

\hskip.8cm for $c:=2 \cdot \lceil c_{min}/2 \rceil$ to $c_{max}$ by $2$ do

\hskip1cm put the data $[b,c,z_1,y_2,z_2]$ into $list1$


\hskip.2cm if $y_2>z_2$ then
\fi

\hskip.4cm for each $s$ in $[-1,1]$ do

\hskip.4cm for $b:=\left\lceil(b_0-s)/{2^\alpha} \right\rceil \cdot 2^\alpha+s$ to $\bigr\lfloor {a_u}^{z_2/y_2} \bigr\rfloor$ by $2^\alpha$ do

\hskip.6cm $c_{min}:=\max\bigr\{ c_0, \lceil b^{y_2/z_2}\rceil \bigr\}$ and $c_{max}:=\max\bigr\{c_u, \lfloor (b^{y_2/2}+1)^{2/z_2} \rfloor \bigr\}$

\hskip.8cm for $c:=\lceil (c_{min}-2^\beta)/2^{\beta+1} \rceil \cdot 2^{\beta+1}  +2^\beta$ to $c_{max}$ by $2^{\beta+1}$ do

\hskip1cm put the data $[b,c,z_1,y_2,z_2]$ into $list1$

end}

\vspace{0.2cm}\noindent
Finally, for each element of $list1$ and each possible tuple $(x_1,y_1,x_2)$ satisfying $x_1<z_1, y_1<\frac{\log c}{\log b}z_1$ and $x_2<z_2$, we check equation \eqref{final-eq} does not.
\end{proof}

We finish this subsection by the following lemma giving the contrary to an assertion in Lemma \ref{red5}.

\begin{lem} \label{red20}
Under the hypothesis of Proposition \ref{bounds_z1<z2_a=max_xi<xj}, if system \eqref{sys-12} has a solution $(x_1,y_1,z_1,x_2,y_2,z_2)$ with $d_z=1,$ then $x_1 \ge x_2$ and $y_1 \ge y_2.$
\end{lem}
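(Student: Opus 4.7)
The plan is to mirror the two-step strategy used for $d_z\ge 2$ in Lemmas \ref{red14} and \ref{red15}, adapting it to the present setting $d_z=1$ and $a>\max\{b,c\}$. First I will prove $x_1\ge x_2$; granted this, I will then prove $y_1\ge y_2$. Throughout, Proposition \ref{bounds_z1<z2_a=max_xi<xj} provides an explicit finite list $clist$ of admissible tuples $(\alpha,\beta,z_1,z_2,a_u,c_u)$, and Lemma \ref{red16} together with Lemma \ref{small-z23} lets me assume $z_2\ge 4$ and $\min\{a^{x_2},b^{y_2}\}\ge c^2$. I will also first establish, by the same brute-force $c^{z_1/2}\!-\!b^{y_1/2}<1$ argument used in the proofs of Lemmas \ref{red6} and \ref{red6dge2}, the companion bound $\min\{a^{x_1},b^{y_1}\}\ge c$; this is cheap since for each $(b,y_1,z_1)$ the resulting window for $c$ is tiny, and one then eliminates the few candidates via \eqref{final-eq2}.

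For the first step, suppose $x_1<x_2$. Using $c^{z_2}=c\cdot c^{z_1}$, the two equations combine to
\[
a^{x_1}(a^{x_2-x_1}-c)=b^{\min\{y_1,y_2\}}\bigl(\pm(\text{something})\bigr).
\]
If $y_1<y_2$, this reads $a^{x_1}(a^{x_2-x_1}-c)=-b^{y_1}(b^{y_2-y_1}-c)$; one of the two factors in brackets is positive and one negative, forcing either $a^{x_1}\le c-b^{y_2-y_1}<c$ or $b^{y_1}\le c-a^{x_2-x_1}<c$, which contradicts the companion bound just established. If instead $y_1\ge y_2$, then $a^{x_1}(a^{x_2-x_1}-c)=b^{y_2}(cb^{y_1-y_2}-1)$ with both sides positive, whence $a^{x_1}<cb^{y_1-y_2}$ and $b^{y_2}<a^{x_2-x_1}$. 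Substituting these into \eqref{final-eq} exactly as in Lemma \ref{red14}, together with $b^{y_2}\ge c^2$ from Lemma \ref{red16}, yields a two-sided pinch
\[
\frac{1}{(1+1/a_1^{x_1})^{1/x_2}}\,b^{\frac{y_1z_2}{x_2z_1}}<a<(1+1/c_0)^{z_2/(x_2z_1)}\,b^{\frac{y_1z_2}{x_2z_1}},
\]
with $a_1=\max\{a_0,b+2\}$. At this point I will run the two-step algorithm of Lemma \ref{red14}: generate in Step I the list of $(\alpha,\beta,x_1,y_1,z_1,x_2,y_2,z_2,a_u,b_{max},[s_a,s_b])$ respecting parities and signatures from $parsig$ and the divisibility $2^{\beta z_1-\alpha}\mid (x_1y_2-x_2y_1)$, and in Step II enumerate $(a,b)$ within the above window and test \eqref{final-eq}. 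The list should come out empty.

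For the second step, assume $x_1\ge x_2$ and suppose $y_1<y_2$. The analogous identity is $b^{y_1}(b^{y_2-y_1}-c)=a^{x_2}(ca^{x_1-x_2}-1)$ with both sides positive; it yields $b^{y_1}<ca^{x_1-x_2}$ and $a^{x_2}<b^{y_2-y_1}$. Substituting into \eqref{final-eq} produces the symmetric pinch
\[
\frac{1}{(1+1/c_0)^{1/x_1}}\,b^{\frac{y_2z_1}{x_1z_2}}<a<(1+1/b^{y_1})^{z_1/(x_1z_2)}\,b^{\frac{y_2z_1}{x_1z_2}},
\]
and the same Step I/Step II algorithm (with the roles of $(x_1,z_1)$ and $(x_2,z_2)$ swapped in the exponents and in the redefinitions of $a_{min}, a_{max}$, exactly as in Lemma \ref{red15}) will exhaust the finitely many candidates and find no solution. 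The main obstacle is not mathematical but computational: the product of ranges for $(\alpha,\beta,x_i,y_i,z_i)$ before applying Lemmas \ref{ourcase}, \ref{red5}, \ref{red6} is large, so the effectiveness of the search rests on using the parity/signature filtering from $parsig$, the congruence $2^{\beta z_1-\alpha}\mid(x_1y_2-x_2y_1)$, and the fact that the pinch above leaves only $O(1)$ values of $a$ per $b$ once $c_0$ and $a_1$ are moderately large. Provided these filters are built in as in Lemmas \ref{red14} and \ref{red15}, the verification terminates and yields the conclusion $x_1\ge x_2$ and $y_1\ge y_2$.
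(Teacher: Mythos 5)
Your proposal is essentially the paper's own argument: the paper simply refers to the proofs of Lemmas \ref{red7} and \ref{red9} (the $c>\max\{a,b\}$, $d_z=1$ analogues), noting that the algorithm is the same up to the changed roles of $a$ and $c$ and the weaker lower bound for $c$. Your write-up makes explicit exactly the two divisibility/pinch derivations (for $x_1<x_2$, then for $y_1<y_2$ given $x_1\ge x_2$) that underlie those cited lemmas, with the correct substitution of Lemma \ref{red16} for Lemma \ref{red6} to get $b^{y_2}\ge c^2$, respectively $a^{x_2}\ge c^2$, into the two pinch inequalities.

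One small point worth flagging: the "companion bound" $\min\{a^{x_1},b^{y_1}\}\ge c$ that you propose to establish first is not actually required for the contradiction in the case $x_1<x_2$, $y_1<y_2$. Since here $a>\max\{b,c\}$, we have $a^{x_2-x_1}\ge a>c$, which forces the sign pattern in $a^{x_1}(a^{x_2-x_1}-c)=b^{y_1}(c-b^{y_2-y_1})$ and then $a^{x_1}\le c-b^{y_2-y_1}<c<a\le a^{x_1}$, a contradiction that uses nothing beyond $a>c$. (The paper's own Lemma \ref{red16} only establishes the $(x_2,y_2)$ half for this reason.) Establishing $b^{y_1}\ge c$ as you suggest is harmless and can sharpen the second pinch $a<b^{y_2z_1/(x_1z_2)}(1+1/b^{y_1})^{z_1/(x_1z_2)}$, but be aware the inequality you'd use for it should be $c^{z_1/2}-a^{x_1/2}<1$, not $c^{z_1/2}-b^{y_1/2}<1$, since it is $b^{y_1}$ that is assumed small there. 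This is a minor slip, and the remainder of your plan, including the Step I/Step II filtering by parities, signatures, and the congruence $2^{\beta z_1-\alpha}\mid(x_1y_2-x_2y_1)$, matches what the paper does.
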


\begin{proof}
This can be proved on the lines of the proofs of Lemmas \ref{red7} and \ref{red9}.
We just note that the resulting algorithm is very similar to that of Lemma \ref{red7} with appropriate modifications on the parameters $a_{max}, a_{min}$ and $b_{max}$.
Namely, in this case we could only use $c_0$ as a uniform lower bound for $c$ (instead of the lower bound $c_1$ in the case where $c>a>b$ with $c^{z_1} \equiv 0 \pmod{4}$).
Therefore the running times of our programs were somewhat slower but they run safely.
\end{proof}

The total computational time for Subsection \ref{prop12.1} did not exceed 6 hours, where the most time consuming part was Lemma \ref{red20}.

\if0
The aim of this section is to solve completely \eqref{sys-12} fulfilling the statements of Proposition \ref{bounds_z1<z2_a=max_xi=xj_k=3}.
We see from Proposition \ref{bounds_z1<z2_a=max_xi=xj_k=3} that we have $x_1 = x_2$ and either $a \le 321$ with $z_2 \in \{5,7,11,13,17\}$ and with $d_z=z_2-z_1 \in \{1,2,3,4,5,6\}$ or we have $c^{d_z} \le C$, where $C$ is middle-sized and $d_z=z_2-z_1 \in \{1,2,3,4,5,6\}$. Put $c_u:=\left\lfloor C^{1/d_z} \right\rfloor$.

For the case $a \le 321$ with $z_2 \in \{5,7,11,13,17\}$ and $d_z=z_2-z_1 \in \{1,2,3,4,5,6\}$ we proceed in a similar way as in Lemma \ref{trivi-lemma} but now with $b<a=\max\{a,b,c\} \le a_u:=321$. It turned out that in this case \eqref{sys-12} has no solutions.

Now we deal with the case where $d_z=z_2-z_1 \in \{1,2,3,4,5,6\}$ and $c^{d_z} \le C$.
Since $x_1=x_2$, we have $c^{z_2}-c^{z_1}=b^{y_2}-b^{y_1}$.
whence by $z_2=z_1+d_z, \ d_z \in\{2,3,4,5,6\}$ we get that $y_2>y_1$ and
\begin{equation}\label{red22}
c^{z_1}(c^{d_z}-1)=b^{y_1}(b^{y_2-y_1}-1).
\end{equation}
Since $\gcd(b,c)=1$ we obtain by \eqref{red22} that
\begin{equation} \label{red23}
b^{y_1} \mid (c^{d_z}-1), \quad  b \le \left\lfloor c_l^{1/y_1} \right\rfloor,
\end{equation}
where by Proposition \ref{bounds_z1<z2_a=max_xi=xj_k=3} we may choose $c_l=1040014$. Then using the same method as in {\it Step I} of Lemma \ref{red7} we generate a list named $list1$ having elements of the form
\[
list1:=[\alpha,x_1,y_1,z_1,x_2,y_2,z_2,b_{max},c_{max},\beta,s_a,s_b],
\]
where $[s_a,s_b]$ is the signature of the pair $[a,b]$ and $b_{max}$ and $c_{max}$ are defined by
\begin{gather}
\label{red24}
b_{max}:=\min\left\{ \left\lfloor {c_u}^{z_1/y_1} \right\rfloor, \left\lfloor {c_u}^{z_1/x_1} \right\rfloor, \left\lfloor {c_u}^{z_2/y_2} \right\rfloor, \left\lfloor {c_u}^{z_2/x_2} \right\rfloor, \left\lfloor c_l^{1/y_1} \right\rfloor \right\},\\
\label{red242}
c_{max}:=c_u.
\end{gather}
Set $T_b:=\lceil(b_0-s_b)/2^{\alpha}\rceil$.
Finally, for each element of $list1$ we loop through the values of $c:=c_0$ to $c_{max}$ by $2^{\beta+1}$ and the values of $b:=T_b\cdot 2^{\alpha}+s_b$ to $b_{max}$ by $2^{\alpha}$ and check whether the equality $c^{z_2}-c^{z_1} =b^{y_2}-b^{y_1}$ holds or not.
We note that the above algorithm worked well for $d_z \ge 2$ and also for the case $d_z=1$, however in the case $d_z=1$, we needed to increase the efficiency of the method. Namely, we proceeded as follows.
If $d_z=1$ we observe that
\begin{equation}\label{red25}
c^{z_1}(c-1)=b^{y_1}(b^{y_2-y_1}-1),
\end{equation}
whence by $\gcd(b,c)=1, c \le c_u$ and \eqref{red25} we obtain that
\begin{equation} \label{red26}
b^{y_1} \mid c-1.
\end{equation}
Now \eqref{red26} together with the fact that $c$ is even and $b$ is odd implies the range for the possible values of $c$ is now from $b^{y_1}+1$ to $c_{max}$ by $2b^{y_1}$ which increases much the efficiency of our algorithm.
We note that the range for $b$ was replaced by $b \in \{3,5,7\}$ in the case when $b \le 7$.
By implementing the above algorithm and the resulting program worked well leading to no solutions of system \eqref{sys-12}.
The total computational time of the case $a=\max\{a,b,c\}$ under the statements of Proposition  \ref{bounds_z1<z2_a=max_xi=xj_k=3} was about $9$ minutes.
\fi

\subsection{On the system under the hypothesis of Proposition \ref{bounds_xi<xj_z1<z2_a=max_xi=xj_k<>3}}

Proposition \ref{bounds_xi<xj_z1<z2_a=max_xi=xj_k<>3} provides us with some upper bounds and possible solutions of system \eqref{sys-12} which are classified in three cases denoted by (i)-(iii).
We present our reduction algorithms in each of these cases.

\vspace{0.2cm} (i)
We are in the case where
\[
d_z \in \{2,3,4,5\}, \quad b^{d_y}<c^{d_z} \le c_U:=2.4\cdot10^5 
\]
with $d_y=y_2-y_1$.
Further, we have a list of all possible tuples $[\alpha,\beta,z_1,z_2,c_u]$, where $c_u$ is the corresponding (sharp) upper bound for $c$.
By applying the same method as in {\it Step I} of Lemma \ref{red7}, we generate a list named $list1$ having elements of the form
\[
[\alpha,\beta,x_1,y_1,z_1,x_2,y_2,z_2,b_{max},c_{max},s_a,s_b],
\]
where $[s_a,s_b]$ is the signature of the pair $[a,b]$, $c_{max}=c_u$ and $b_{max}$ is an upper bound for $b$ defined as
\[
b_{max}:=\min\Bigr\{\bigl \lfloor {c_U}^{1/(y_2-y_1)} \bigl \rfloor, \lfloor {c_u}^{z_1/y_1}\rfloor, \left\lfloor {c_u}^{z_1/x_1}\right\rfloor, \lfloor {c_u}^{z_2/y_2}\rfloor, \lfloor {c_u}^{z_2/x_2}\rfloor \Bigr\}.
\]

\noindent Finally, for each element of $list1$, we loop through the values of $c:=c_0$ to $c_{max}$ by $2^{\beta+1}$ and the values of $b:=\left\lceil(b_0-s_b)/2^{\alpha}\right\rceil\cdot 2^{\alpha}+s_b$ to $b_{max}$ by $2^{\alpha}$, to verify that equation \eqref{final-eq2} does not hold in any case.

\vspace{0.2cm} (ii)
We are in the case where
\[
\max\{a^{\min\{x_1,x_2\}},c^{d_z}\}<b^{d_y} \le b_U:=5.4 \cdot 10^5 
\]
with $d_y=y_2-y_1$.
Further, we have a list of all possible tuples $[\alpha,\beta,z_1,z_2,d_y,b_u]$, where $b_u$ is the corresponding (sharp) upper bound for $b$.
We proceed in two cases according to whether $d_z=1$ or not.

If $d_z>1$, then $c^2 \le c^{d_z}<b_U$, so $c \le \lfloor {b_U}^{1/2} \rfloor (<10^3)$. 
This together with $b \le b_u$ shows that both $b,c$ are so small that we can apply the same algorithm as in (i).

In the case where $d_z=1$, a short modular arithmetic leads to $x_1 \ge x_2$.
We further proceed in two cases according to whether $x_2=1$ or not.

If $x_2 \ge 2$, then $a^2 \le a^{x_2}=a^{\min\{x_1,x_2\}}<b_U$, so $a \le \lfloor {b_U}^{1/x_2} \rfloor \le \lfloor {b_U}^{1/2} \rfloor$.
Thus both $a,b$ are so small that we can apply the same method as in the case $d_z \ge 2$ and  verify that equation \eqref{final-eq} (with $d_z=1$) does not hold in any possible cases.

Finally, in the case where $d_z=1$ and $x_2=1$, we proceed as follows.
The case where $a>c^2$ is dealt with by a previous method since both $b,c$ are small enough as $c \le a^{1/2}<{b_U}^{1/2}$.
Thus suppose that $a<c^2$.
Since $z_2 \ge 4$, it follows from 2nd equation that $c^{z_2/2}-b^{y_2/2}=\frac{a}{c^{z_2/2}+b^{y_2/2}}<1$, which strictly restricts the value of $c$ in terms of $b,y_2$ and $z_2$.
Using this fact and $b$ is small, we can apply the algorithm described in Lemma \ref{red6} (see also Lemma \ref{red16}).

\vspace{0.2cm}(iii)
We are in the case where
\begin{equation} \label{red33}
x_1<x_2, \quad a^{x_1} \mid (b^{d_y}c^{d_z}-1)
\end{equation}
with $d_y=y_1-y_2$.
Further, we have a list named $clist$ containing all possible tuples $[\alpha,\beta,x_1,z_2,a_u,b_u,c_u,d_z,d_y]$, where $a_u, b_u, c_u$ are the corresponding upper bounds for $a,b,c$, respectively.
A quick check on $clist$ shows that if $d_z \ge 2$ or $d_y \ge 2$ then at least one of the bounds $b_u$ and $c_u$ is small (about $ 10^3$ or less) and the other is middle sized (about $5 \cdot 10^5$ or less).
Then this case can be handled similarly to (i) and (ii) with the parameters $c_{max}$ and $b_{max}$ are given as $c_{max}=c_u$ and
\[
b_{max}=\min\bigl\{b_u,  \lfloor {c_u}^{z_1/y_1}\rfloor, \lfloor {c_u}^{z_1/x_1}\rfloor, \lfloor {c_u}^{z_2/y_2}\rfloor, \lfloor {c_u}^{z_2/x_2}\rfloor \bigl\}.
\]
Moreover, if $d_z=d_y=1$, then both of these bounds are middle sized, while, unfortunately, the bound $a_u$ becomes large $(\approx 4\cdot10^{10})$.
Thus, we have to find another reduction procedure which avoids the use of $a_u$.
\if0
\vspace{0.2cm}{\it Case where $d_z \ge 2$ or $d_y \ge 2$.}
Since one of the bounds $b_u$ and $c_u$ is small, we can proceed in a similar way as in (i) or (ii) above.
We only indicate the corresponding value of $b_{max}$, as follows:
follow the method as in {\it Step I} of Lemma \ref{red7} to generate a list named $list1$ having elements of the form
\[
[\alpha,\beta,x_1,y_1,z_1,x_2,y_2,z_2,b_{max},c_{max},s_a,s_b],
\]
where $[s_a,s_b]$ is the signature of the pair $[a,b]$, $c_{max}=c_u$ and $b_{max}$ is defined as
\[
b_{max}:=\min\bigl\{b_u,  \lfloor {c_u}^{z_1/y_1}\rfloor, \lfloor {c_u}^{z_1/x_1}\rfloor, \lfloor {c_u}^{z_2/y_2}\rfloor, \lfloor {c_u}^{z_2/x_2}\rfloor \bigl\}.
\]
\fi

Finally, we consider the case where $(d_z,d_y)=(1,1)$.
We follow the method applied in {\it Step I} of Lemma \ref{red7} to generate the corresponding list named $list1$ having elements satisfying $(d_z,d_y)=(1,1)$ of the form
\[
[\alpha,\beta,x_1,y_1,z_1,x_2,y_2,z_2,b_{max},c_{max},s_a,s_b].
\]
The pair $[s_a,s_b]$ is the signature of $[a,b]$ while the bounds $c_{max}$ and $b_{max}$ are defined by $c_{max}:=c_u$ and $b_{max}:=\min\bigl\{b_u, \lfloor {c_u}^{z_1/y_1}\rfloor\bigl\}$.
A quick look together with Lemma \ref{red5} shows that $list1$ does not include any element satisfying $(y_1,z_1)=(2,2)$.
Thus, if $y_1 \le z_1$, then $z_1>2$, which together with the inequality $bc>a^{x_1}$ by \eqref{red33} shows that $bc>a^{x_1}=c^{z_1}-b^{y_1} \ge c^{z_1}-b^{z_1}>c^{z_1-1}+b^{z_1-1} \ge c^2+b^2$, a contradiction.
Therefore, it remains to consider the case where $y_1>z_1$.
We note that since $y_1>z_1$ and the orders of magnitude of $b_u$ and $c_u$ are the same, the quantity $\lfloor {c_u}^{z_1/y_1}\rfloor$ is smaller than $b_u$, resulting in a sharper upper bound $b_{max}$ for $b$.
This observation is crucial in order to have a reasonable running time.
The remaining task can be dealt with similarly to cases (i) and (ii).

\vspace{0.2cm}
The total computational time of Section \ref{Sec-sieve-z1<z2_a=max} did not exceed 7 hours.

The conclusion of Sections \ref{Sec-sieve-z1<z2_c=max} and \ref{Sec-sieve-z1<z2_a=max} together is:
\begin{prop}\label{z1 eq z2}
$z_1=z_2.$
\end{prop}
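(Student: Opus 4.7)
The plan is to derive a contradiction from the assumption $z_1 < z_2$, which combined with the ordering \eqref{zorders} and Proposition \ref{z1 ne z2} forces the asserted equality. Since Proposition \ref{z1 ne z2} already rules out $z_1 = z_2$ under the standing hypotheses, proving the present proposition amounts to showing that the case $z_1 < z_2$ cannot occur either, so that the existence of three solutions is itself impossible. The argument naturally splits into the two cases $c > \max\{a,b\}$ and $c < \max\{a,b\}$, since the symmetric case $c = \max\{a,b\}$ is excluded by coprimality.

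First I would treat $c > \max\{a,b\}$. Here I would subdivide according to whether $c^{z_1} \equiv 2 \pmod 4$ or $c^{z_1} \equiv 0 \pmod 4$, and apply Propositions \ref{bounds_z1<z2_c^z1=2(4)} and \ref{bounds_z1<z2_c^z1=0(4)} respectively. These propositions, whose proofs rest on the improved gap principle of Section \ref{Sec-improve-HuLe} combined with the 2-adic machinery of Sections \ref{Sec-Uz}--\ref{Sec-z-2adic} and the generalized Fermat input of Section \ref{Sec-gfe}, produce an explicit finite list of compatible tuples $(\alpha,\beta,z_1,z_2,g_2,c_u)$ with middle-sized $c_u$. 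For $c<\max\{a,b\}$, by the $a$--$b$ symmetry I would restrict to $a>\max\{b,c\}$ and split on whether $d_x>0$, or $d_x=0$ with $k=3$, or $d_x=0$ with $k\ne 3$, invoking Propositions \ref{bounds_z1<z2_a=max_xi<xj}, \ref{bounds_z1<z2_a=max_xi=xj_k=3}, and \ref{bounds_xi<xj_z1<z2_a=max_xi=xj_k<>3}.

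In every one of the resulting subcases, the task is to verify by direct inspection of system \eqref{sys-12} that no tuple in the listed bounded range is a genuine simultaneous solution. This sieving is carried out in Sections \ref{Sec-sieve-z1<z2_c=max} and \ref{Sec-sieve-z1<z2_a=max}. The key reductions I would rely on to make this sieving feasible are: (i) the congruences of Lemma \ref{red2} (in particular \eqref{kongr}) and the analogue \eqref{last-cong}, obtained by taking \eqref{sys-12} modulo $a^{\min\{x_1,x_2\}}$ and $b^{\min\{y_1,y_2\}}$, which sharply restrict $a$ and $b$ inside their residue classes; (ii) the parity/signature analysis packaged in Lemmas \ref{red1} and \ref{red5}, restricting $(a,b) \bmod 2^\alpha$ in a way compatible with the parities of $x_i,y_i$; (iii) Lemmas \ref{ourcase}, \ref{g2}, \ref{gx}, excluding cases with unwanted common divisors among the exponential unknowns; and (iv) the Pillai-type rigidity from Lemma \ref{X=x+1} whenever one of the exponential gaps drops to $1$, which in effect pins $a$ or $b$ modulo a large power of the other.

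The main obstacle will be the cases in which the uniform bound on $\max\{a,b,c\}$ remains large, most notably case (iii) of Proposition \ref{bounds_xi<xj_z1<z2_a=max_xi=xj_k<>3} with $d_z=d_y=1$ where $a$ can be as big as $\approx 4\cdot 10^{10}$, and the case $c^{z_1} \equiv 0 \pmod 4$ with $c > \max\{a,b\}$ of Proposition \ref{bounds_z1<z2_c^z1=0(4)}, where a naive enumeration of $(a,b,c)$ is hopeless. To handle these, I would never loop over $a$ directly when its bound is large; instead I would loop over $b$ (bounded by $\lfloor c_u^{z_1/y_1}\rfloor$, which is much smaller than $a_u$ precisely because $y_1 > z_1$) and over $c$, and recover $a$ from first equation as $a = (c^{z_1}-b^{y_1})^{1/x_1}$, rejecting non-integral or non-coprime values. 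Moreover, the comparisons $a^{x_1}$ vs $b^{y_1}$ and $a^{x_2}$ vs $b^{y_2}$ produce the four subcases of Lemma \ref{red2}, each with its own two-sided bound of the form $\text{const}\cdot b^{y_1 z_2/(x_2 z_1)} < a < \text{const}\cdot b^{y_1 z_2/(x_2 z_1)}$, which reduces the loop for $a$ to a short interval. With these devices the verification fits within the stated 25-hour budget, and the resulting emptiness of every candidate list completes the proof.
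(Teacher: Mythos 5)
Your proposal is correct and follows essentially the same route as the paper: the proposition is indeed the bookkeeping conclusion of Sections 9--12, obtained by combining the finite bounds from Propositions \ref{bounds_z1<z2_c^z1=2(4)}, \ref{bounds_z1<z2_c^z1=0(4)}, \ref{bounds_z1<z2_a=max_xi<xj}, \ref{bounds_z1<z2_a=max_xi=xj_k=3}, \ref{bounds_xi<xj_z1<z2_a=max_xi=xj_k<>3} with the sieving machinery of Sections \ref{Sec-sieve-z1<z2_c=max} and \ref{Sec-sieve-z1<z2_a=max} (Lemmas \ref{red1}, \ref{red2}, \ref{red5}, \ref{red6}, \ref{red7}, \ref{red9}, \ref{red14}--\ref{red20}, and the strategy of recovering $a$ from $b,c$ via the first equation rather than enumerating it), which together show that $z_1<z_2$ is impossible; juxtaposed with Proposition \ref{z1 ne z2}, this contradiction completes the proof of the theorem. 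You have correctly identified both the case decomposition and the computational bottleneck (case (iii) of Proposition \ref{bounds_xi<xj_z1<z2_a=max_xi=xj_k<>3} with $d_z=d_y=1$) together with the two-sided bound on $a$ in terms of $b$ that makes the search feasible.
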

In view of Propositions \ref{z1 ne z2} and \ref{z1 eq z2}, the proof of Theorem \ref{atmost2} is finally completed.

\section{Concluding remarks}

Theorem \ref{atmost2} says that there is only one example which allows equation \eqref{abc} to have three solutions in positive integers.
On the other hand, a simple search in a suitably finite region using computer (cf.~\cite[Section 3]{ScSt_deb16}) finds a number of examples where there are two solutions to \eqref{abc} in positive integers $x,y$ and $z$, corresponding to the following set of equations:
\begin{gather}
5^{}+2^{2}=3^{2}, \ 5^{2}+2^{}=3^{3}; \nonumber\\
7^{}+2^{}=3^{2}, \ 7^{2}+2^{5}=3^{4}; \nonumber\\
3^{2}+2^{}=11^{}, \ 3^{}+2^{3}=11^{}; \nonumber\\
3^{3}+2^{3}=35^{}, \ 3^{}+2^{5}=35^{}; \nonumber\\
3^{5}+2^{4}=259^{}, \ 3^{}+2^{8}=259^{}; \nonumber\\
5^{3}+2^{3}=133^{}, \ 5^{}+2^{7}=133^{}; \nonumber\\
\label{atmost1:ex}3^{}+10^{}=13^{}, \ 3^{7}+10^{}=13^{3}; \\
89^{}+2^{}=91^{}, \ 89^{}+2^{13}=91^{2}; \nonumber\\
91^{2}+2^{}=8283^{}, \ 91^{}+2^{13}=8283^{}; \nonumber\\
3^{}+5^{}=2^{3}, \ 3^{3}+5^{}=2^{5}, \ 3^{}+5^{3}=2^{7};\nonumber\\
3^{}+13^{}=2^{4}, \ 3^{5}+13^{}=2^{8}; \nonumber\\
3^{}+13^{3}=2200^{}, \ 3^{7}+13^{}=2200^{}; \nonumber\\
2^{}+(2^k-1)^{}={2^k+1}^{}, \ 2^{k}+(2^k-1)^{2}=(2^k+1)^{2},\nonumber
\end{gather}
where $k$ is any integer with $k \ge 2$.

While Theorem \ref{atmost2} is essentially sharp, as indicated by \eqref{atmost1:ex}, it is natural, in light of a lot of existing works on the solutions of equation \eqref{abc}, to believe that something rather stronger is true.
A formulation in this direction is posed by Scott and Styer \cite{ScSt_deb16}, which is regarded as a 3-variable generalization of \cite[Conjecture 1.3]{Be_Canad2001}, as follows:

\begin{conj}\label{atmost1}
For any fixed coprime positive integers $a,b$ and $c$ with $\min\{a,b,c\}>1,$ equation \eqref{abc} has at most one solution in positive integers $x,y$ and $z,$ except for those triples $(a,b,c)$ arising from \eqref{atmost1:ex}.
\end{conj}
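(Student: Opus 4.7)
The plan is to bootstrap from Theorem~\ref{atmost2}: since that result bounds the number of solutions by two (off the exceptional Pillai case $(a,b,c)\in\{(3,5,2),(5,3,2)\}$), the task reduces to classifying all triples $(a,b,c)$ admitting \emph{exactly two} solutions. Assume two such solutions $(x_1,y_1,z_1)$ and $(x_2,y_2,z_2)$ with $z_1\le z_2$, and $(a,b,c)$ not arising from \eqref{atmost1:ex}. By Proposition~\ref{Scott-Styer} I may restrict to $2\mid c$, and then split the argument into the two subcases $z_1=z_2$ and $z_1<z_2$, mirroring Sections~\ref{Sec-z1=z2}--\ref{Sec-sieve-z1<z2_a=max}. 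The subcase $z_1=z_2$ yields the equation $a^{x_1}+b^{y_1}=a^{x_2}+b^{y_2}$, to which I would apply Lemmas~\ref{AB-basic} and \ref{X=x+1} together with the continued-fraction input of Lemma~\ref{cf-2}; these tools should suffice to show that the only surviving triples coincide with the $z_1=z_2$ entries of \eqref{atmost1:ex}.

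The crucial new ingredient for the $z_1<z_2$ subcase is a \emph{two-solution gap principle} replacing Proposition~\ref{improvedgap}, where one has access only to the single congruence $\mathcal{A}^{|x_1y_2-x_2y_1|}\equiv\pm 1\pmod{c^{z_1}}$ for $\mathcal{A}\in\{a,b\}$, rather than the divisibility upgrade that a third solution enabled through Lemma~\ref{Y2}. I would compensate for this loss by sharpening the 2-adic analysis of Section~\ref{Sec-z-2adic}: apply Proposition~\ref{bu} to the linear form $a^{x_2-x_1}\pm b^{y_1-y_2}$ arising from the difference of the two equations in \eqref{sys-12}, and pair the resulting upper bound for its 2-adic valuation with the lower bound forced by the hypothesis $z_1<z_2$. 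If this yields $\max\{a,b,c\}$ below a computationally tractable threshold---hopefully a sharpening of the $10^{62}$ of Proposition~\ref{Hu-Le}---an enumeration in the style of Sections~\ref{Sec-sieve-z1<z2_c=max} and \ref{Sec-sieve-z1<z2_a=max}, restricted to the system \eqref{sys-12} alone, will complete the classification.

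The principal obstacle is that the two-solution gap is intrinsically weaker than the three-solution gap exploited throughout the present paper; the entire strategy hinges on whether Baker-type estimates alone, without the divisibility leverage of a third solution, can bring $\max\{a,b,c\}$ within computational reach. A secondary difficulty is the infinite family $(a,b,c)=(2,2^k-1,2^k+1)$ appearing in \eqref{atmost1:ex}: any sieve must first peel off the subcase $(x_1,y_1,z_1)=(1,1,1)$ by analyzing $a^{x_2}+b^{y_2}=(a+b)^{z_2}$ modulo powers of $a$ and $b$, showing that the only triples surviving the ensuing constraint are those with $a=2$ and $(b,c)=(2^k-1,2^k+1)$. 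Together with the full weight of Lemma~\ref{ourcase} applied simultaneously to both equations of \eqref{sys-12}, these ingredients should in principle reduce the conjecture to a finite verification, but the sharpness required of each step makes this a substantially more delicate undertaking than the proof of Theorem~\ref{atmost2} itself.
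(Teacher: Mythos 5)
This statement is a \emph{conjecture}, not a theorem: the paper does not prove it, and in fact explicitly states in the concluding remarks that ``Conjecture \ref{atmost1} seems completely out of reach'' and notes that it implies, among other things, the unsolved Je\'smanowicz and Terai conjectures. There is therefore no proof in the paper to compare your attempt against, and any purported proof would be solving a famous open problem.

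Your outline also has a structural defect that you partially acknowledge but do not resolve, and it is fatal to the strategy as written. The entire gap-principle apparatus of the paper --- Lemma \ref{Y2}, Proposition \ref{improvedgap}, and the divisibility relation $c^{Z_2-Z_1}\mid G_2\cdot(X_2Y_3-X_3Y_2)$ --- is driven by the presence of a \emph{third} solution $(X_3,Y_3,Z_3)$: it is precisely the cross-term $X_2Y_3-X_3Y_2$, shown nonzero by Lemma \ref{two}, that gives a large forced power of $c$ dividing an explicitly bounded quantity. With only two solutions, the analogous cross-term degenerates and no such lever exists; the single congruence $\mathcal A^{|x_1y_2-x_2y_1|}\equiv\pm1\pmod{c^{z_1}}$ together with the 2-adic Baker bound from Proposition \ref{bu} gives an \emph{upper} bound for $z_2$ of order $(\log a)(\log b)$, but provides no countervailing lower bound that grows with $c$, so $\max\{a,b,c\}$ cannot be bounded this way. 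The infinite family $(a,b,c)=(2,2^k-1,2^k+1)$ in \eqref{atmost1:ex} makes this structural: there is genuinely no finite threshold below which all two-solution triples lie, so a ``bound $\max\{a,b,c\}$ and enumerate'' plan cannot close the argument; one would have to peel off this infinite family and any other potential ones analytically before any computation, and no mechanism for doing that is supplied. Finally, invoking Proposition \ref{Scott-Styer} to reduce to $2\mid c$ is also not available here, since that proposition asserts only the at-most-two-solutions bound for odd $c$, not at-most-one.
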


There are many results in the literature which support this conjecture.
However, Conjecture \ref{atmost1} seems completely out of reach.
It is worth noting that it implies several unsolved problems to ask for the determination of the solutions of equation \eqref{abc} for some infinite families of $(a,b,c)$, including not only the conjecture of Sierpi\'nski and Je\'smanowicz on primitive Pythagorean triples, but also its generalization posed by Terai as mentioned in the first section.
Finally, we mention that Conjecture \ref{atmost1} seems not to be directly followed from some of well-known conjectures closely related to ternary Diophantine equations including generalized Fermat conjecture and any effective version of $abc$ conjecture.

\subsection*{Acknowledgements}
The first author would like to sincerely thank Nobuhiro Terai for not only his encouragement to the present work but also his continuous and kind support since the first author was a student.
The second author is pleased to express his gratitude to Clemens Fuchs and Volker Ziegler for the possibility to spend the period between February and June 2019 at the University of Salzburg, where a lot of progress on the present article has been made.
He is also very grateful to \'Akos Pint\'er, Lajos Hajdu and Attila B\'erczes for their continuous
encouragement and support.
We thank Mihai Cipu for his many remarks which improved an earlier draft.
We finally mention that the work of the present article would not have been possible without not only Noriko Hirata-Kohno's continuous support but also Lajos Hajdu's help and the hospitality of the University of Debrecen, thanks to which the first author had previously spent a month in 2014 at the University of Debrecen.

\end{document}